\numberwithin{equation}{section}
\newtheorem{theorem}{Theorem}
\newtheorem{lemma}{Lemma}[section]
\newtheorem{proposition}{Proposition}
\newtheorem{definition}{Definition}
\newtheorem{dugma}{Example}
 \DeclareMathOperator*{\sumsum}{\sum\sum}
\def\ep{{\varepsilon}}
\newcommand{\card}{\operatorname{card}}
\newcommand{\bR}{\mathbb R}
\newcommand{\bC}{\mathbb C}
\newcommand{\bZ}{\mathbb Z}
\newcommand{\bD}{\mathbb D}
\newcommand{\bT}{\mathbb T}
\newcommand{\bN}{\mathbb N}
\newcommand{\bP}{\mathbb P}
\newcommand{\bE}{\mathbb E}
\begin{document}

\title[Zero distribution of power series]{Zero distribution of power series and binary correlation of coefficients}

\author{Jacques Benatar} 
\thanks{The first author supported by ERC Advanced Grant 692616}   
\author{Alexander Borichev}
\thanks{The second author partially supported by the project ANR-18-CE40-0035}  
\author{Mikhail Sodin}
\thanks{The third author supported by ERC Advanced Grant 692616 and by ISF Grant 382/15} 

\begin{abstract}
We study
the distribution of zeroes of power series with infinite radius of convergence. 
The coefficients of the series have the form $\xi(n)a(n)$, where $a$
is a smooth sequence of positive numbers, and $\xi$ is a sequence of complex-valued
multipliers having binary correlations and no gaps in the spectrum.
We show that under certain assumptions on the smoothness of the sequence $a$ and on
the binary correlations of the multipliers $\xi$, the zeroes of the power series
are equidistributed with respect to a radial measure defined by the sequence $a$.

We apply our approach to several examples of the sequence $\xi$:
(i) IID sequences,
(ii) sequences $e(\alpha n^2)$ with Diophantine $\alpha$,
(iii) random multiplicative sequences, (iv) the Golay--Rudin--Shapiro sequence,
(v) the indicator function of the square-free integers, (vi) the Thue--Morse sequence.
\end{abstract}

\maketitle

\section{Introduction and main results}
In this work, we study the
following problem: {\em how does the sequence of multipliers $\xi\colon \bZ_+\to\bC$
affect the zero distribution of the entire function represented by the
power series} $F_\xi(z) = \sum_{n\ge 0} \xi (n) a(n) z^n$?
The theory of entire functions has no general results pertaining to this classical
question, there are only several case studies initiated by L\'evy, Littlewood, Offord,
and others. The most studied and the only relatively well understood instances
are the cases of IID sequences $\xi$ (see Littlewood--Offord~\cite{LO}, Offord~\cite{Off-Rand1, Off-Rand2}, Kabluchko--Zaporozhets~\cite{KZ}, Nazarov--Nishry--Sodin~\cite{NNS}) and of
lacunary sequences $a$ (see Hayman~\cite{Hayman}, Hayman--Rossi~\cite{HR}, and Offord~\cite{Off-Lac1, Off-Lac2}), in which case the sequence of multipliers $\xi$ plays no essential role.

\medskip
A new approach to this problem, which is based on spectral properties of the sequence $\xi$,
was launched in~\cite{BBS, BNS}. It appears that certain estimates for the
autocorrelations
\[
\frac1{B-A}\, \sum_{A\le n< B} \xi (n) \bar{\xi}(n+h), \qquad h\in\bZ_+\,,
\]
yield the angular equidistribution of zeroes of the entire functions of exponential type
\begin{equation}\label{eq-specialF_xi}
F_\xi (z) = \sum_{n\ge 0} \xi(n)\, \frac{z^n}{n!}\,.
\end{equation}
More precisely,
\[
n_{F_\xi} (r; \theta_1, \theta_2) = \frac{\theta_2-\theta_1+o(1)}{2\pi}\, r\,, \qquad r\to\infty,
\]
where $n_{F_\xi} (r; \theta_1, \theta_2)$ is the number of zeroes of $F_\xi$ (counted with multiplicities) in the closed sector $\{z\colon 0\le |z|\le r, \theta_1 \le \arg z \le \theta_2 \}$.
In~\cite{BBS} the authors proved that the same angular equidistribution 
holds under the assumption that $\xi$ is
a Wiener sequence without lacunas in its spectrum. Recall that the sequence $\xi$ is called
{\em a Wiener sequence}, if for every $h\in\bZ_+$, the limit
\[
r(h) = \lim_{N\to\infty}\, \frac1N\, \sum_{0\le n < N} \xi(n) \bar{\xi}(n+h)
\]
exists.
Extending the sequence $r$ to $\bZ$ by setting $r(-h)=\bar{r}(h)$, we get a positive-definite sequence,
which is, therefore, given by the sequence of Fourier coefficients of a non-negative measure
$\chi_\xi$ on the unit circle $\bT$, $r=\widehat{\chi}_\xi$, $\chi_\xi\in M_+(\bT)$.
The measure $\chi_\xi$ is called {\em the spectral measure} of the Wiener sequence $\xi$, and the
closed support of $\chi_\xi$ is called {\em the spectrum} of $\xi$.
This approach permitted us to prove the angular equidistribution of zeroes of the function $F_\xi$
defined in~\eqref{eq-specialF_xi} for many  sequences $\xi$ that were intractable using
the previously known techniques.

\medskip
In this work, we advance in several directions, studying the zero distribution of $F_\xi$
on local scales and replacing the sequence $a(n)=\tfrac1{n!}$ by a rather wide class of ``smooth sequences''. The uniform transportation distance provides a convenient set-up for this study.

It is worth mentioning that our interest in
the zero distribution on local scales was a result of a conversation on some aspects of the work Lester--Matom\"aki--Radziwi{\l}{\l}~\cite{LMR}, which one of the authors had
several years ago with Steve Lester and Maks Radziwi{\l}{\l}.

\subsection{Smooth sequences $a$}
A positive sequence $a$ will be called {\em smooth} if
\begin{equation}\label{eq:a_n}
a(n) = \exp\Bigl[ -\int_0^n \varphi \Bigr]\,,
\end{equation}
where $\varphi$ is a non-negative, increasing, concave $C^2$-function on $[0, \infty)$
satisfying
\[
\varphi (0) = 0, \quad \lim_{t\to\infty} \varphi (t) = \infty, \quad \lim_{t\to\infty} \varphi' (t)= 0\,,
\]
with some quantitative bounds on $|\varphi''|$.
Different sequences $\xi$ will require different bounds on $|\varphi''|$, but
the following condition~\eqref{eq:phi} will suffice for all instances of the sequence
$\xi$ considered in this work (except for the Thue--Morse sequence,
which requires a stronger restriction):
\begin{equation}\label{eq:phi}
(\varphi')^{2+\ep} \lesssim_\ep |\varphi''|
\lesssim_\ep (\varphi')^{2-\ep}
\qquad {\rm for\ every\ } \ep>0.
\end{equation}
These assumptions are not too restrictive, and the class
of entire functions with smooth Taylor coefficients contains functions of zero and infinite order of
growth.

In what follows, we will mostly use the  inverse $\psi=\varphi^{-1}$ rather than $\varphi$ itself.
This is a convex function, which, together with its derivative, grows to $+\infty$, and satisfies the
regularity condition
\begin{equation}\label{eq:psi1}
(\psi')^{1-\ep} \lesssim_\ep \psi''
\lesssim_\ep (\psi')^{1+\ep}
\qquad {\rm for\ every\ } \ep>0,
\end{equation}
which readily follows from~\eqref{eq:phi}. In turn,~\eqref{eq:psi1} yields similar bounds for the
function $\psi'$:
\begin{equation}\label{eq:psi2}
\psi^{1-\ep} \lesssim_\ep \psi'
\lesssim_\ep \psi^{1+\ep}
\qquad {\rm for\ every\ } \ep>0.
\end{equation}

\subsection{The reference measure $\gamma$}
To measure the growth of the entire function $F_\xi$, we use a slightly smoothed maximal term
of the power series $\sum_{n\ge 0} a(n) z^n$, letting
\[
\mu (R) =\max_{t\ge 0}\, \exp\Bigl[ t\log R - \int_0^t \varphi\Bigr].
\]
Then, assuming that $R\ge 1$, we get
\[
\log \mu (R) =\int_0^{\log R} \psi (s)\, {\rm d}s  = \int_1^R \frac{\nu(r)}{r}\, {\rm d}r\,,
\]
where the function $\nu =\psi \circ \log $ is the smoothed central index of the same power series.

We expect that for Wiener sequences $\xi$ without lacunas in their spectrum, the subharmonic functions $\log|F_\xi(z)|$ and $\log \mu (|z|)$ are sufficiently close to each other, and therefore, the counting
measure of zeroes of $F_\xi$,
\[
n_{F_\xi} = \, \sum_{\lambda\colon F_\xi(\lambda)=0}\, \delta_\lambda,
\]
is close to the radial measure
\[
\gamma = (2\pi)^{-1}\, \Delta \log\mu (|z|) =
\sigma(r) r^{-1} {\rm d}r \otimes \, {\rm d}\theta,
\qquad z=re(\theta)\,,
\]
where $\sigma = \psi' \circ \log r$, and $e(\theta)=e^{2\pi {\rm i}\theta}$.

Note that we could
equally use the functions $\mu_1(R) = \sum_{n\ge 0} a(n)R^n$ or
$\mu_2(R)=\bigl(\sum_{n\ge 0} a(n)^2 R^{2n} \bigr)^{1/2}$, and the corresponding Riesz measures
$\gamma_i = (2\pi)^{-1}\Delta \log\mu_i (|z|)$, $i=1, 2$. The difference between the measures
$\gamma$, $\gamma_1$, and $\gamma_2$ is inessential for our purposes.

\subsection{($\gamma, \rho$)-equidistribution}
To measure the proximity of $n_{F_\xi}$ and $\gamma$, we use a classical lattice-point-counting idea usually attributed to Gauss: for every compact set $K\subset\bC$,
\[
m(K_-) \le \# (\bZ^2 \cap K) \le m(K_+)\,,
\]
where $m$ is the planar Lebesgue measure, $K_+$ is the $\sqrt{2}$-Euclidean neighbourhood of $K$,
and $K_-$ is a subset of $K$, which consists of points that are $\sqrt{2}$-separated from the boundary of $K$. 
Thus, $|\# (Z^2 \cap K) - m(K)|$ is bounded by the Lebesgue measure of the $\sqrt{2}$-neighbourhood of the the boundary of $K$. 

Equivalently, the uniform transportation distance between the counting
measure $\sum_{a\in\bZ^2} \delta_a$ and the Lebesgue measure $m$ does not exceed $\sqrt{2}$.
In our setting, the zeroes of $F_\xi$ play the role of lattice points, the measure $\gamma$, defined 
above, replaces the Lebesgue measure $m$, and the Euclidean metric is replaced by a slowly varying metric, which locally looks Euclidean.

We introduce a distance $d_\rho$ on $\bC$, letting
\[
d_\rho (z_1, z_2) = \inf_\ell\, \int_\ell\,  \frac{|{\rm d}w|}{\rho(w)}\,,
\qquad z_1, z_2\in \bC,
\]
where the infimum is taken over all $C^1$-curves $\ell$ connecting the points $z_1$ and $z_2$.
Here and elsewhere, $\rho$ is a positive $C^1$-smooth radial function on $\bC$
such that $\rho'(R) = o(1)$ for $R\to\infty$. We will call the function
$\rho$ a {\em radial gauge}. For any set $X\subset\bC$, we denote by
\[
X_{+\tau} = \{z\in\bC\colon d_\rho(z, X)< \tau\}
\]
the $\tau$-neighbourhood of $X$.

\begin{definition}
The counting measure $n_{F_\xi}$ is said to be
$(\gamma, \rho)$-equi\-di\-stributed if
there exist positive constants $C$ and $\tau$ such that, for any compact set $K\subset\bC$,
\[
| n_{F_\xi} (K) - \gamma (K)| \le C\gamma((\partial K)_{+\tau})\,.
\]
\end{definition}

Note that $(\gamma, \rho)$-equidistribution is an asymptotic characteristic of zeroes of 
$F_\xi$, which is not affected by multiplying the gauge $\rho$ by a positive constant.

It is worth mentioning that, under our assumptions, $(\gamma,\rho)$-equidistribu\-tion of
zeroes of $F_\xi$
is equivalent to the seemingly stronger finiteness of the uniform transportation
distance, $\operatorname{Tra}_{d_\rho}(n_{F_\xi}, \gamma) < \infty$. We will
not be using the transportation distance $\operatorname{Tra}_{d_\rho}$ in the bulk of
this work, so we recall its definition and prove the aforementioned equivalence in Appendix~A.

\subsubsection{\ldots\, and how to use it}

First, we consider the disks $R\bar\bD = \{|z|\le R\}$. Recalling that $\mu (R)$ vanishes on $[0, 1]$,
we get
\[
\gamma(R\bar\bD) = \int_1^R \frac{\sigma (r)}r\, {\rm d}r = \nu (R)\,.
\]
Furthermore, it is easy to see that if $R$ is large enough,
$ (R\bT)_{+\tau} \subset \bigl\{R-2\tau \rho (R) \le |z| \le R + 2\tau \rho (R) \bigr\} $.
Hence,
\begin{align*}
\gamma( (R\bT)_{+\tau} ) &\le \int_{R-2\tau \rho(R)}^{R+2\tau \rho(R)}
\frac{\sigma (r)}r\, {\rm d} r \\
&\simeq \sigma (R)\rho (R)/ R\,,
\end{align*}
whenever $\sigma$ stays nearly constant on the intervals
$[R-O(\rho (R)), R+O(\rho (R))]$, which will be always the case in this work.
Under our regularity assumptions, the functions $\nu=\psi\circ\log$ and $\sigma=\psi'\circ\log$
have almost the same rate of growth (see~\eqref{eq:psi2}), while for the most part, in this work,
(except of the case when $\xi$ is the Thue--Morse sequence) we may take $\rho\simeq R\sigma (R)^{-\kappa}$ with $0<\kappa<\tfrac12$.
Hence, in these cases, we get
\[
| n_{F_\xi}(R\bar\bD)-\nu (R)| = o(\nu (R)^{1-\kappa'}), \quad R\to\infty\,,
\]
for any $\kappa'<\kappa$. In the Thue--Morse case, our estimate becomes worse but still meaningful:
\[
| n_{F_\xi}(R\bar\bD)-\nu (R)| = o\bigl(\nu (R)e^{-c\sqrt{\log\nu (R)}}\,\bigr), \quad R\to\infty\,,
\]
with some $c>0$.

Now we turn to estimates on local scales and consider the disks
$\bar D(z; r) = \{w\colon |w-z|\le r\}$.
If $r$ is comparable to $\rho (|z|)$, then
$(\gamma, \rho)$-equidistribution yields the upper bound
\begin{align*}
n_{F_\xi}(\bar D(z; r)) &\lesssim \gamma (\bar D(z; r)) \\
&\simeq \sigma (|z|)\Bigl( \frac{\rho(|z|)}{|z|} \Bigl)^2\,.
\end{align*}
When $r$ becomes much larger than $\rho (|z|)$ but in such a way that
$\sigma$ remains nearly constant on $\bar D(z; r)$, the main term is
\[
\gamma(\bar D(z; r)) \simeq \frac{\sigma (|z|)}{|z|^2}\cdot r^2\,,
\]
while the error term is bounded by
\[
\gamma ((\partial D(z; r))_{+\tau}) \lesssim \frac{\sigma (|z|)}{|z|^2}\cdot \rho (|z|)r\,,
\]
which is much smaller than the main term. Hence, in such disks, the number of zeroes
$n_{F_\xi}(\bar D(z; r))$ approximately equals $\gamma (\bar D(z; r))$,
with an explicit control of the $o(1)$ term, namely 
$$
|n_{F_\xi} (D(z, r)) - \gamma (D(z, r)) | \lesssim \gamma (D(z, r)) \cdot \frac{\rho(|z|)}{r}.
$$

\subsection{Main results}
We consider several instances of sequences $\xi$ having different origins.
For each of these sequences, we prove that the zeroes of the function $F_\xi$ are
$(\gamma, \rho)$-equidistributed for some explicitly computed radial gauge $\rho$.

Recall that the measure $\gamma$ has the density $(2\pi)^{-1}\sigma(R)/R^2$, where $\sigma = \psi' \circ \log$,
with respect to the area measure on $\bC$. Hence, the gauge $\rho$ has to be at least
$\gtrsim R(\sigma (R))^{-1/2}$. Our technique needs slightly more:
$\rho (R) \gtrsim R (\log\sigma (R)/\sigma (R))^{1/2}$,
and in two instances (the IID sequences $\xi$, and $\xi (n) = e(\alpha n^2)$ with irrational $\alpha$ badly approximated by rationals), we achieve that scale. In most other instances, our method stops
at the scale $\rho (R) = R (\sigma (R))^{-c}$ with some $0<c<\tfrac12$, which we will estimate.
It is worth mentioning that we lack examples which would clarify whether these estimates
reflect the correct order of discrepancy of zeroes of $F_\xi$.

To better digest the precision of our results, it
is instructive to consider the special
case $a(n) = (n!)^{-1/2}$, when the smoothed central index of the Taylor series
is $\nu (R) = \psi(\log R) = R^2 + O(R)$, and
$\sigma (R) = \psi' (\log R) = 2R^2 +O(1)$, and, up to an inessential correction,
$ \gamma = \displaystyle \frac1{\pi} \times {\rm area\ measure}$.

In order to simplify the statements, in this section we give our results in a somewhat weaker
form than what will be proven afterwards. For this reason, we enumerate them in accordance with 
their numeration in the bulk of the paper, adding the letter ``$\sf a$". For instance,
Theorem~$\ref{thm:IID}\sf a$ is a simplified version of the more general result given in Theorem~\ref{thm:IID}.

In this section, we always tacitly assume that the sequence of smooth coefficients $a(n)$ is defined by~\eqref{eq:a_n}, with the function $\varphi$ satisfying the regularity condition~\eqref{eq:phi}.

\subsubsection{Non-degenerate IID sequence}
\phantom{A}\mbox{}

\medskip\noindent{\bf Theorem~$\bf \ref{thm:IID}\sf a$.}
{\em
Suppose that $\xi$ is a non-degenerate IID sequence satisfying\footnote{
Note that, generally speaking, IID sequences are not Wiener sequences,
unless $\xi(0)$ has a finite second moment.
}
\[
\bE\bigl[ |\xi(0)|^\ep \bigr]< \infty, \quad {with\ some\ } \ep>0.
\]
Then, almost surely, the zeroes of $F_\xi$ are $(\gamma, \rho)$-equidistributed, 
provided that
\begin{equation}\label{eq:IID}
\rho(R) \simeq R\,\sqrt{\frac{\log\sigma(R)}{\sigma(R)}}\,.
\end{equation}
}\medskip

In the case $a(n) = (n!)^{-1/2}$ the radial gauge $\rho$
boils down to $\rho(R)\simeq\sqrt{\log R}$. Interestingly, this estimate cannot be essentially improved.
Theorem~\ref{thm:IID-example} shows that if $\xi$ is
a sequence of complex Gaussian independent random variables and if $\delta$ is sufficiently small, then,
a.s., the function $F_\xi$ does not vanish on infinitely many disks of the form
$\{ |z - j^2|\le \delta (\log j)^{1/4}\}$, $j\ge 2$.

\subsubsection{The sequence $\xi (n) = e(\alpha n^2)$ with irrational $\alpha$}
This is a Wiener sequence whose spectral measure is the Lebesgue measure.
For $a(n)=1/n!$ the angular equidistribution of the zeroes of
the corresponding entire function~\eqref{eq-specialF_xi} was proven by
Eremenko and Ostrovskii in~\cite{EO}.
A more general result, pertaining to the sequences $\xi (n) = e(Q(n))$, where $Q(x)=\sum_{j=2}^d
q_j x^j$ is a polynomial of degree $d\ge 2$ with real coefficients at least one of which
is irrational, was proven in~\cite[Theorem~1]{BNS}.
Here, we will consider the case of a Diophantine irrational $\alpha$, and
our estimates will depend on the Diophantine properties of $\alpha$.

Let $\| t \|$ denote the distance from $t$ to the closest integer.

\medskip\noindent{\bf Theorem~$\bf \ref{thm:diophnatine}\sf a$.}
{\em
Let $\xi (n) = e(\alpha n^2)$.

\smallskip\noindent{\rm (i)} Suppose that for any positive integer $q\ge 2$,
\[
\| q\alpha \| \ge \frac{c(\alpha)}{q(\log q)^a}\,, \quad with\ some\ a\ge 0.
\]
Then the zeroes of the entire function $F_\xi$ are $(\gamma, \rho)$-equidistributed 
with the radial gauge $\rho=R\sigma^{-1/2}(\log\sigma)^{(a+1)/2}$.

\smallskip\noindent{\rm (ii)} Suppose that for any positive integer $q\ge 2$,
\[
\| q\alpha \| \ge \frac{c(\alpha)}{q^{1+b}}\,, \quad with\ some\ b>0\,.
\]
Then, for every $b'>b$, the zeroes of $F_\xi$ are $(\gamma, \rho)$-equidistributed 
with the radial gauge $\rho=R\sigma^{-1/(2+b')}$.
}\medskip

Note that Nassif~\cite{Nassif} studied the case $\alpha=\sqrt{2}$
and, using a technique developed by Hardy and Littlewood in~\cite{HL},
got a rather precise information about the zero distribution of $F_\xi$
(later, Littlewood~\cite{Littlewood, Littlewood2} returned to this study for other
smooth sequences $a$). Tims~\cite{Tims}
showed that Nassif results yield that the zeroes of
the function $F_\xi$ form a ``slowly varying lattice''.

\subsubsection{Random multiplicative and completely multiplicative functions}
Random multiplicative functions we deal with here are sequences $\xi$  defined
by
\[
\xi(n) =
\begin{cases}
\prod_{j=1}^k X_{p_j}, &{\rm if\ } n=\prod_{j=1}^k p_j, \\
0 &{\rm if\ } n {\rm\  is\ not\ square-free},
\end{cases}
\]
in the multiplicative case, and
\[
\xi(n) = \prod_{j=1}^k (X_{p_j})^{m_j}, \qquad {\rm if\ } n = \prod_{j=1}^k p_j^{m_j},
\]
in the completely multiplicative case. Here $X_p$ are symmetric and unimodular IIDs parameterized by the primes. A.s., the sequence $\xi$ a Wiener sequence whose spectral measure is the Lebesgue measure.

\medskip\noindent{\bf Theorem~$\bf\ref{thm:random-multiplicative}\sf a$.}
{\em Let $\xi$ be a random multiplicative function. Then, the zeroes of $F_\xi$ are 
$(\gamma, \rho)$-equidistributed with the radial gauge $\rho=R\sigma^{-c}$, for any $c<1/6$.
}\medskip

The proof of this theorem uses a randomized version of the binary Chowla conjecture (Lemma~\ref{lem10q}), which might be of independent interest.

\subsubsection{The Golay--Rudin--Shapiro sequence}
Here we consider the sequence $\xi\colon \bZ_+\to \{\pm 1\}$, which at each $n\in \bZ_+$
equals the parity of the number of (possibly overlapping) pairs of consecutive
ones in the binary expansion of $n$, i.e., if $n=\sum_{\ell\ge 0} r_\ell (n) 2^\ell$
is a binary expansion of $n$, then $\xi (n) = (-1)^{\tau (n)}$, where
$\tau (n) = \sum_{\ell\ge 0} r_l(n)r_{\ell+1}(n)$. Equivalently, this sequence
can be defined by $\xi(0)=1$, $\xi(2n)=\xi(n)$, $\xi(2n+1)=(-1)^n \xi(n)$, $n\ge 0$.
This is a Wiener sequence, and,
as in the previous cases, its spectral measure is the Lebesgue measure.

\medskip\noindent{\bf Theorem~$\bf\ref{thm:GRS-sequences}\sf a$.}
{\em Let $\xi$ be the Golay--Rudin--Shapiro sequence.
Then, the zeroes of $F_\xi$ are $(\gamma, \rho)$-equidistributed 
with the radial gauge $\rho=R\sigma^{-c}$ for any $c<1/3$.
}\medskip

\subsubsection{The indicator-function of the square-free integers}
Next we consider the sequence
$\xi =\mu^2$, where $\mu$ is the M\"obius function,
\[
\mu (n) =
\begin{cases}
(-1)^k &{\rm if\ } n=p_1\,\ldots\, p_k, \\
0 &{\rm if\ } n \textrm{\ is not square-free}.
\end{cases}
\]
It follows from classical elementary number-theoretic estimates, due to Mir\-sky~\cite{Mirsky},
that $\mu^2$ is a Wiener sequence whose spectral measure is discrete and has a dense support\footnote{
The spectral properties of the sequence $\mu^2$ have been studied in details by Cellarosi and Sinai in~\cite{CS}.}.

\medskip\noindent{\bf Theorem~$\bf\ref{thm:sq-free}\sf a$.}
{\em Let $\xi$ be the indicator function of the square-free integers.
Then, the zeroes of $F_\xi$ are $(\gamma, \rho)$-equidistributed 
with the radial gauge $\rho=R\sigma^{-c}$ for any $c<2/21$.
}
\medskip

The proof uses Mirsky's estimates.
Possibly, the bound for the exponent $c$ can be improved using more advanced tools.

\subsubsection{The Thue--Morse sequence}
The Thue--Morse sequence $\xi\colon \bZ_+\to \{\pm 1\}$
is defined by $\xi(0)=1$, $\xi(2n)=\xi(n)$, $\xi(2n+1) = - \xi(n)$, $n\ge 0$.
Equivalently, $\xi (n) = (-1)^{\omega (n)}$, where $\omega (n)$ is the number of ones in the binary expansion of $n$. It is well-known since the works by Mahler~\cite{Mahler} and
Kakutani~\cite{Kakutani}, that this is a Wiener sequence with a singular continuous spectral
measure having no gaps in its support.

\medskip\noindent{\bf Theorem~$\bf \ref{thm:TM}\sf a$.}
{\em
Let $\xi$ be the Thue--Morse sequence.
Let the function $\varphi$, which defines the smooth coefficients $a$,
in addition to the regularity assumption~\eqref{eq:phi}, satisfy the
estimate $\varphi'(t) \lesssim (\log t)^{-C}$ with some $C>1$.
Then, the zeroes of $F_\xi$ are $(\gamma, \rho)$-equidistributed 
with the radial gauge $\rho=R \exp(-c\sqrt{\log\sigma}\,)$, provided that the constant $c$ is sufficiently small.
}
\medskip

This corresponds to $\rho(R) = R \exp(-c\sqrt{\log R}\,)$ in the case $a(n) = (n!)^{-1/2}$.

\subsection*{Notation}
Throughout the paper, we will be using the following notation.
\begin{itemize}
\item $e(t)=e^{2\pi {\rm i}t}$.
\item $F_\xi(z) = \displaystyle \sum_{n\ge 0} \xi(n) a(n) z^n$ the entire function.
\item $a(n) = \displaystyle \exp\Bigl[\, -\int_0^n \varphi \,\Bigr]$ ``smooth coefficients''.
\item $\psi = \varphi^{-1}$ the inverse of $\varphi$.
\item $\mu = \displaystyle \max_{t\ge 0}\, \exp\Bigl[\,
t\log R - \int_0^t \varphi \,\Bigr]$ the smoothed maximal term.
\item $\nu = \psi\circ\log$ the smoothed central index.
\item $\sigma = \psi'\circ\log$.
\item $\gamma=(2\pi)^{-1}\Delta\log\mu(|z|) =
\sigma (r)r^{-1}\, {\rm d}r \otimes {\rm d}\theta$, $z=r e(\theta)$, the reference measure.
\item $\rho$ the radial gauge on $\bC$.
\item $d_\rho (z_1, z_2) = \displaystyle \inf_\ell\, \int_\ell \frac{|{\rm d}w|}{\rho(w)}$,
the infimum is taken over all $C^1$ curves $\ell$ connecting $z_1$ with $z_2$, the distance in $\bC$.
\item $U_{+\tau} = \{z\in\bC\colon d_\rho(z, U)<\tau\}$ the $\tau$-neighbourhood of the set
$U\subset \bC$.
\item $W_R(\theta) = \displaystyle
\sum_{|n-\nu|\le N} \xi (n)e(n\theta) e^{-(n-\nu)^2/(2\sigma)}$,
$N=A\sqrt{\sigma\log\sigma}$ with sufficiently large positive $A$, the Weyl-type exponential sum.
\item $A\lesssim B$ means that $A\le CB$ with a positive constant $C$,
$A \gtrsim B$ means $A\ge cB$ with a constant $c>0$, and $A\simeq B$ means that $A\lesssim B$ and
$A\gtrsim B$ simultaneously.
\item The sign $\ll$ means ``sufficiently smaller than'' and $\gg$ means ``sufficiently large than''. For instance, the assumption `` given $A$ and $B$ such that $A\ll B$'' means that there exists
    $c\in (0, 1)$ such that the corresponding conclusion holds for every positive $A$ and $B$ satisfying $A\le cB$.
\end{itemize}

\section{The reader's guide}
 
In Section~\ref{sect:2} we will develop a subharmonic technique, which will help us prove
$(\gamma, \rho)$-equidistribution of the counting measure $n_{F_\xi}$.
A familiar heuristic suggests that, since the subharmonic function $V(z)=\log\mu(|z|)$ nearly majorizes $\log|F_\xi(z)|$,
in order to check that their Riesz
measures are $(\gamma, \rho)$-equidistributed it suffices to verify the opposite inequality on
a sufficiently dense set of points in $\bC$.

Given a radial gauge $\rho$, set $D_w = \{z\colon |z-w|<\rho(w)\}$.

\begin{definition}\label{def:rho-dense}
We call a set $W\subset \bC$ $\rho$-dense
if the collection of disks $\{D_w\colon w\in W\}$ covers every point in $\bC$ outside
a bounded set.
\end{definition}

Proposition~\ref{Lemma-subharm} will yield  that {\em if
\[
\log|F_\xi| \le V + O(1 + \log_+ V)
\]
everywhere in $\bC$, while the opposite inequality
\begin{equation}\label{eq:LB}
\log|F_\xi| \ge V - O(1 + \log_+ V)
\end{equation}
holds on a $\rho$-dense subset of $\bC$, then the measure $n_{F_\xi}$ is
$(\gamma, \rho)$-equidistributed}.

\medskip
Thus, our task boils down to proving the lower bound~\eqref{eq:LB} on a sufficiently dense
set of points in $\bC$. The denser this set is, the smaller we may take $\rho$, that is,
on a smaller scale we will get the equidistribution of zeroes.

As a first application, in Section~\ref{sect:5},
we consider a sequence $\xi$
of non-degenerate IID random variables
having a finite moment of some positive order. Using an idea from Nguyen and Vu~\cite{NV}, we apply
Hal\'asz's anti-concentration estimate~\cite{Halasz} to the exponential sum
\[
S = \, \sum_{|n-\nu|\le \sqrt\sigma}\, \xi(n) a(n) R^n e(n\theta)
\]
on short intervals of $\theta$. Using the independence of the random variables
$S$ and $F_\xi - S$, after some computation, we get an almost sure lower bound~\eqref{eq:LB}
on a $\rho$-dense subset of $\bC$ with $V(z)=\log\mu(|z|)$ and with
the function $\rho$ as in~\eqref{eq:IID}.

\medskip
In Section~\ref{sect:4}, using our smoothness assumptions\footnote{
Some smoothness of the coefficients $(a(n))$, likely, is
indispensable for our method. On the other hand, it could be that certain versions of
equidistribution of zeroes of $F_\xi$ persist for {\em any} sequence $a$ satisfying
$ |\log a (n) | = o(n^2)$ as $n\to\infty$ (the latter condition is needed in order to exclude
entire functions with a very slow growth, which, for instance, may have all zeroes real independently
of the choice of the real-valued sequence $\xi$). At least, known results for IID sequences $\xi$
(Offord~\cite{Off-Lac1} \cite{Off-Rand2} and Nazarov--Nishry--Sodin~\cite{NNS})
do not rule this out.
}
on the coefficients $(a(n))$,
we replace the power series $F_\xi(z)$ by an exponential sum
concentrated around the central term, that is, around $n=\nu(|z|)$. This exponential sum
has ``an effective size'' slightly larger than $\sqrt{\sigma(|z|)}$. We introduce a
Weyl-type sum
\[
W_R(\theta) \stackrel{\rm def}= \sum_{|n-\nu|\le N}
\xi(n) e(n\theta) E_R(n)\,, \qquad N\gg \sqrt{\sigma\log\sigma},
\]
where
$ E_R(n) = \exp\bigl[ -\tfrac12\, (n-\nu)^2/\sigma\, \bigr] $
is a Gaussian cut-off function of effective width $\sqrt{\sigma}$ concentrated around the smoothed central index $\nu$. In
Proposition~\ref{LemmaL-W-V},
using a Laplace-type estimate, we prove the lower bound
\[
|F_\xi(Re(\theta))| \ge \mu (R)\, \bigl[ W_R(\theta) - O_\ep(\sigma(R)^\ep) \bigr]\,,
\]
valid for any $\ep>0$. Thus, we need to show that on a sufficiently dense set of points
$Re(\theta)$, we have
\begin{equation}\label{eq:LB-prelim}
|W_R(\theta)| \gtrsim \sigma(R)^c
\end{equation}
with some $c>0$. This is where the spectral properties of the Wiener sequence
$\xi$ enter.

\medskip
To get some intuition for the next step, we replace the smooth cut-off $E_R$ by the sharp one.
We are thus led to the sum
\[
\widetilde{W}_R(\theta) = \sum_{|n-\nu|\le\sqrt{\sigma}}\, \xi(n) e(n\theta)\,,
\]
which we will try estimate pointwise from below. We have
\begin{equation}\label{eq:open-brackers}
|\widetilde{W}_R(\theta)|^2 \approx
\sum_{|h|\le 2\sqrt{\sigma}} e(-h\theta)\, \sum_{|n-\nu|\le \sqrt{\sigma}}\, \xi(n)\bar\xi(n+h)\,.
\end{equation}
Since $\xi$ is a Wiener sequence, we expect that the inner sum is $\approx 2\sqrt{\sigma}\, \widehat{\chi}_\xi (h)$, where $\chi_\xi$ is the spectral measure of the sequence $\xi$, and
$\widehat{\chi}_\xi (h)$ is its $h$-th Fourier coefficient.
This raises some hope for the estimate
\[
|\widetilde{W}_R(\theta)|^2 \approx 2\sqrt{\sigma}\, \sum_{h\in\bZ} \widehat{\chi}_\xi (h) e(-h\theta)\,.
\]
If the spectral measure $\chi_\xi$ has a nice positive density $\chi_\xi'$, then the series
on the RHS converges to $\chi_\xi'(-\theta)>0$, which would yield estimate~\eqref{eq:LB-prelim}
with $c=\tfrac14$.

\medskip
To make this heuristic rigorous, we fix a $C^\infty$-smooth non-negative even function $g$
with support on $[-\tfrac12, \tfrac12]$, such that $\displaystyle \int_\bR g = 1$,
and consider the average
\[
X = X(R, \vartheta)
= \int_{R}^{R+\beta R}\, \int_{-1/2}^{1/2}
|W_s(\theta)|^2 g(\beta^{-1}(\vartheta-\theta))\, {\rm d}\nu (s) {\rm d}\theta\,,
\]
where $\beta = \beta(R) = \rho(R)/R$.

In the case of the Lebesgue spectral measure, we
start with a version of~\eqref{eq:open-brackers}, integrate with respect to $\theta$,
and show that the non-diagonal terms are negligible,
while the diagonal terms give us estimate~\eqref{eq:LB-prelim}. This will be done in Proposition~\ref{Lemma5}.
In the rest of Section~\ref{sect:7} we demonstrate
how to apply these estimates to the Wiener sequences $\xi (n) = e(\alpha n^2)$ with Diophantine
$\alpha$, random multiplicative sequences, and the Golay--Rudin--Shapiro sequence.

\medskip 
In Section~\ref{sect:8} we turn to Wiener sequences with arbitrary spectral measures $\chi_\xi$
having no gaps in their supports. In Proposition~\ref{Lemma5a} we furnish
a lower bound for $X$, which can be viewed
as a quantitative version of~\cite[Lemma~5]{BBS}. Then we will illustrate our method with two examples
of Wiener sequences with singular spectral measures, having no gaps. We consider the indicator function
of the square-free integers,  that is, $\xi=\mu^2$, where $\mu$ is the M\"obius function, and
the Thue--Morse sequence.

\section{A subharmonic lemma}\label{sect:2}
Let $\rho\colon \bC \to (0, \infty)$ be a radial gauge, that is,
a positive radial $C^1$-smooth function such that 
$\rho'(r)\to 0$ as $r\to\infty$.
We set $D_w = \{z\colon |z-w|<\rho(w)\}$
and $tD_w = \{z\colon |z-w|<t\rho(w)\}$.

The following lemma is the main result of this section.

\begin{proposition}\label{Lemma-subharm}
Let $V$ be a radial $C^2$-smooth subharmonic function with the Riesz measure
$\gamma = \Gamma\, {\rm d}m$, and let $\rho$ be a radial gauge satisfying
\begin{equation}\label{eq-1}
\Gamma (r) \rho^2(r) \to \infty\,,
\end{equation}
and
\begin{equation}\label{eq-2}
\sup\Bigl\{ \Bigl| \frac{\Gamma(r')}{\Gamma(r)} - 1\Bigr|\colon
|r'-r|\le\rho(r) \Bigr\} \to 0\,,
\end{equation}
as $r\to\infty$.
Let $V_1$ be a subharmonic function in $\bC$ satisfying
\begin{equation}\label{eq-3}
V_1 \le V + O(1+\Gamma\rho^2) \quad {\rm everywhere\ in\ } \bC\,.
\end{equation}
Let $\gamma = \tfrac1{2\pi}\, \Delta V$ and $\gamma_1 = \tfrac1{2\pi}\, \Delta V_1$ be the Riesz measures of the functions $V$ and $V_1$.
Suppose that there exists a $\rho$-dense set $W\subset\bC$ such that
\begin{equation}\label{eq-4}
V_1 \ge V - O(1+\Gamma\rho^2) \quad {\rm everywhere\ on\ } W\,.
\end{equation}
Then, for every compact set $K\subset \bC$,
\begin{equation}\label{eq:K}
\bigl| \gamma(K) - \gamma_1(K) \bigr| \lesssim \gamma\bigl( (\partial K)_{+C} \bigr).
\end{equation}
\end{proposition}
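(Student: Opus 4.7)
The plan is to establish, at each point $w\in W$, a local comparison of $\gamma_1$ and $\gamma$ on $\rho$-scale disks, and then to pass to the global bound \eqref{eq:K} by a smooth-cutoff integration-by-parts argument localised near $\partial K$.

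For the local estimate, I would apply Jensen's formula on $D(w,R)$ with $R\simeq \rho(|w|)$ to both subharmonic functions,
$$
\int_0^R \frac{\gamma_1(D(w,r))}{r}\,\mathrm{d}r = \bar V_1(w,R)-V_1(w), \qquad \int_0^R \frac{\gamma(D(w,r))}{r}\,\mathrm{d}r = \bar V(w,R)-V(w),
$$
where $\bar V_i(w,R)$ is the circle mean at radius $R$. The pointwise bound \eqref{eq-3} passes to means to give $\bar V_1(w,R)\le \bar V(w,R)+O(1+\Gamma\rho^2)$. The matching lower bound exploits subharmonicity of $V_1$ in its "wrong" direction: $V_1(w)\le \bar V_1(w,R)$, combined with \eqref{eq-4} at $w\in W$ and the $C^2$-smoothness identity $\bar V(w,R)-V(w)=O(\Gamma(|w|)\rho(|w|)^2)$ coming from (\ref{eq-2}), yields $\bar V_1(w,R)\ge \bar V(w,R)-O(1+\Gamma\rho^2)$. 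Subtracting the two Jensen formulas gives
$$
\left|\int_0^R \frac{\gamma_1(D(w,r))-\gamma(D(w,r))}{r}\,\mathrm{d}r\right|\lesssim 1+\Gamma(|w|)\rho(|w|)^2,
$$
and since $r\mapsto \gamma_1(D(w,r))$ is nondecreasing while $r\mapsto \gamma(D(w,r))$ is smooth with derivative $\simeq 2\pi r\Gamma(|w|)$ on this scale, a standard monotonicity argument (a gap of size $M$ at $R$ persists on a subinterval of length $\simeq M/(\Gamma R)$ and contributes $\gtrsim M^2/(\Gamma R^2)$ to the integral) upgrades this to the pointwise local estimate $|\gamma_1(D(w,R))-\gamma(D(w,R))|\lesssim 1+\Gamma(|w|)\rho(|w|)^2\simeq \gamma(D_w)$.

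To globalise, I take smooth cutoffs $\phi_-\le \mathbf 1_K\le \phi_+$ with $\phi_+-\phi_-$ supported in a $C\rho$-neighbourhood of $\partial K$ and $\|\Delta\phi_\pm\|_\infty\lesssim \rho^{-2}$. Sandwich inequalities reduce \eqref{eq:K} to controlling $\frac{1}{2\pi}\int (V_1-V)\,\Delta\phi_\pm\,\mathrm{d}m$ by $\gamma((\partial K)_{+C})$. The main obstacle is that $V_1-V$ has only a one-sided pointwise bound off of $W$: the subharmonic function $V_1=\log|F_\xi|$ may develop deep logarithmic pits at zeros of $F_\xi$, so there is no chance of estimating the integrand pointwise. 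The way around this is to work with $\rho$-scale averages. Given any $z\in\bC$, the $\rho$-density of $W$ supplies some $w\in W$ with $|z-w|\lesssim \rho(|w|)$; applying the sub-mean-value inequality to $V_1$ on a disk of radius $\simeq \rho$ about $w$ containing $D(z,\rho)$, then subtracting off the $V$-integral on the bordering annulus using \eqref{eq-3} and the bound $\int_{D(w,R)} V\,\mathrm{d}m-\pi R^2 V(w)\lesssim hR^2$ from the $C^2$-regularity of $V$, produces
$$
(\pi\rho^2)^{-1}\int_{D(z,\rho)}(V_1-V)\,\mathrm{d}m\ \ge\ -O(1+\Gamma(|z|)\rho(|z|)^2),
$$
while the matching upper bound is immediate from \eqref{eq-3}. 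Decomposing $\Delta\phi_\pm$ along the covering of $\bC$ by the disks $\{D_w:w\in W\}$ converts this $\rho$-average control into the estimate $\bigl|\int(V_1-V)\Delta\phi_\pm\,\mathrm{d}m\bigr|\lesssim \int_{(\partial K)_{+C}}(1+\Gamma\rho^2)\rho^{-2}\,\mathrm{d}m\lesssim \gamma((\partial K)_{+C})$, which closes the argument.
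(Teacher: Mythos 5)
Your proposal is correct, and the global skeleton matches the paper's: reduce $\bigl|\gamma(K)-\gamma_1(K)\bigr|$ to a boundary term via Green's identity against a $\rho$-scale cutoff $\phi_\pm$, and feed in local $L^1$ control of $V_1-V$ on $\rho$-disks. Where you depart is in \emph{how} the local $L^1$ bound is obtained: the paper writes $V_1-V = P - G_{\gamma_1}+G_\gamma$ via Poisson--Jensen in $3D_w$, controls $G_\gamma$ and $P$ in $L^\infty$ (the latter by Harnack applied to $-P+C\gamma(2D_w)$, using \eqref{eq-4} at $w$ and \eqref{eq-3} on $\partial(3D_w)$), and bounds $\int G_{\gamma_1}$ via Jensen's bound on $\gamma_1(3\bar D_w)$. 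Your route is shorter: the pointwise upper bound $V_1-V\lesssim 1+\Gamma\rho^2$ from \eqref{eq-3}, combined with the lower bound on the $\rho$-average of $V_1-V$ derived from the sub-mean-value inequality at a nearby $w\in W$ (plus \eqref{eq-4} and the $C^2$ Taylor expansion of $V$), gives $\int_{D(z,\rho)}\lvert V_1-V\rvert\,{\rm d}m\lesssim m(D)\gamma(D)$ directly, using the elementary fact that a function bounded above pointwise and below in average is bounded in $L^1$. This is a genuine simplification: you avoid both the explicit Green's function estimates and Harnack. The paper's route does deliver something extra (pointwise control of the harmonic part $P$ on $2\bar D_w$), but that surplus is not used downstream.

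Two remarks. First, your opening paragraph, aiming for a pointwise comparison $|\gamma_1(D_w)-\gamma(D_w)|\lesssim\gamma(D_w)$, is tangential: that estimate is automatically true (both sides are individually $\lesssim\gamma(D_w)$ once one has the Jensen bound $\gamma_1(\tfrac12 D_w)\lesssim 1+\Gamma\rho^2$), and, more importantly, you never actually use it --- the globalisation runs entirely off the $L^1$ bound on $V_1-V$, not off a local Riesz-measure comparison, which by itself would be too weak to localise the error near $\partial K$. Second, for the final summation over disks hitting $\operatorname{supp}(\phi_+-\phi_-)$ you implicitly need the cover $\{D_w\colon w\in W\}$ to have bounded multiplicity (and the cutoffs to be subordinate to it with $\|\Delta\phi_\pm\|_\infty\lesssim\rho^{-2}$ locally). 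The paper handles this by first rarefying $W$ to a maximal $\tfrac12$-separated subfamily (Lemma~\ref{lemmaB}) before building the partition of unity; you should include that step, though it is routine.
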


\begin{dugma}\label{dugma}
{\rm $V(z)=|z|^2$, $\Gamma = 2\pi^{-1}$. In this case, we can take any radial gauge
$\rho$ satisfying $\rho(r)\to\infty$ as $r\to\infty$.
More generally, if $V(z)=|z|^{\lambda}$, $\lambda>0$, then
$\Gamma (z) = (2\pi)^{-1}\lambda^2\, |z|^{\lambda-2}$, and the radial gauge $\rho$
should satisfy the condition $r^{\lambda/2 - 1}\rho(r)\to\infty$, as $r\to\infty$}.
\end{dugma}

We prove Proposition~\ref{Lemma-subharm} in several steps. We start with a simple lemma, which
shows that the metric $\rho(\zeta)^{-1}|{\rm d}\zeta|$ and
the distance $d_\rho$ locally behave like the Euclidean metric.
In the next step we show that
\[
\int_{2D_w} |V-V_1|\, {\rm d}m \lesssim \gamma(2D_w)m(2D_w), \qquad w\in W,
\]
provided that $|w|$ is sufficiently large.
From there, we deduce estimate~\eqref{eq:K}.

\subsection{Local estimates}

Given $\ep>0$, we choose $r(\ep)$ large enough that, for $r\ge r(\ep)$, we have
\begin{equation}\label{eq:10}
|\rho'(r)|\le \ep
\end{equation}
and
\begin{equation}\label{eq:20}
\rho (r)\le \ep r\,.
\end{equation}

\begin{lemma}\label{LemmaA}
Let $|w|\ge 2r(\ep)$, and let $z\in C\bar D_w$ with $C\ep \le \tfrac12$. Then,
\begin{equation}\label{eq:30}
|z|\ge r(\ep),
\end{equation}
and
\begin{equation}\label{eq:40}
(1-C\ep)\rho(w) \le \rho(z) \le (1+C\ep)\rho(w).
\end{equation}
Moreover, for $z, z'\in C\bar D_w$ with $C\ep \le \tfrac16$, we have
\begin{equation}\label{eq:50}
\frac1{1+3C\ep}\, \frac{|z-z'|}{\rho(w)} \le d_\rho(z, z') \le \frac1{1-C\ep}
\frac{|z-z'|}{\rho(w)}\,.
\end{equation}
\end{lemma}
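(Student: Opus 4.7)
The plan is to prove \eqref{eq:30}, \eqref{eq:40}, and \eqref{eq:50} in sequence, bootstrapping the scalar bounds $|\rho'(r)|\le\ep$ and $\rho(r)\le\ep r$ from \eqref{eq:10}--\eqref{eq:20} on the ray $[r(\ep),\infty)$. The first two assertions are a one-dimensional mean-value-theorem calculation; \eqref{eq:50} is a length-space statement about $d_\rho$ whose upper bound is immediate from the straight segment, while its lower bound needs a short case analysis on how far the competing curve strays from $w$.

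For \eqref{eq:30}, since $|z-w|\le C\rho(w)\le C\ep|w|$ by \eqref{eq:20} and $C\ep\le\tfrac12$, one has $|z|\ge|w|-|z-w|\ge|w|/2\ge r(\ep)$. For \eqref{eq:40}, both $|z|,|w|\ge r(\ep)$, so \eqref{eq:10} and the mean value theorem applied to the radial profile of $\rho$ give
\[
|\rho(z)-\rho(w)|=|\rho(|z|)-\rho(|w|)|\le\ep\bigl||z|-|w|\bigr|\le\ep|z-w|\le C\ep\,\rho(w).
\]
The upper bound in \eqref{eq:50} follows at once by taking the straight segment $[z,z']$, which lies in the convex disc $C\bar D_w$ and along which $\rho\ge(1-C\ep)\rho(w)$ by \eqref{eq:40}; hence
\[
d_\rho(z,z')\le\int_{[z,z']}\frac{|{\rm d}\zeta|}{\rho(\zeta)}\le\frac{|z-z'|}{(1-C\ep)\rho(w)}.
\]

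For the lower bound in \eqref{eq:50}, let $\ell$ be any competing $C^1$-curve from $z$ to $z'$. A preliminary remark disposes of curves that enter $\{|\zeta|<r(\ep)\}$: on bounded sets $\rho$ is bounded, while such a curve has Euclidean length at least of order $|w|$, so once $|w|$ exceeds a universal multiple of $r(\ep)$ the integral $\int_\ell|{\rm d}\zeta|/\rho(\zeta)$ dwarfs the target $2C/(1+3C\ep)$. We may therefore assume $\ell\subset\{|\zeta|\ge r(\ep)\}$, so the mean value theorem yields $\rho(\zeta)\le\rho(w)+\ep|\zeta-w|$ on $\ell$. Set $M=\max_{\zeta\in\ell}|\zeta-w|$, $t=M/\rho(w)$, and let $L$ denote the Euclidean length of $\ell$. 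Then $\rho(\zeta)\le(1+\ep t)\rho(w)$ uniformly on $\ell$, $L\ge|z-z'|$ trivially, and picking $p\in\ell$ realizing $M$ also gives $L\ge|z-p|+|p-z'|\ge 2(M-C\rho(w))$. If $t\le 3C$ the first length bound suffices:
\[
\int_\ell\frac{|{\rm d}\zeta|}{\rho(\zeta)}\ge\frac{L}{(1+\ep t)\rho(w)}\ge\frac{|z-z'|}{(1+3C\ep)\rho(w)}.
\]
If $t>3C$, then $f(t)=2(t-C)/(1+\ep t)$ is increasing (its derivative is $2(1+C\ep)/(1+\ep t)^2>0$), so
\[
\int_\ell\frac{|{\rm d}\zeta|}{\rho(\zeta)}\ge\frac{2(M-C\rho(w))}{(1+\ep t)\rho(w)}=\frac{f(t)}{\rho(w)}>\frac{4C}{(1+3C\ep)\rho(w)}\ge\frac{|z-z'|}{(1+3C\ep)\rho(w)},
\]
using $|z-z'|\le 2C\rho(w)\le 4C\rho(w)$ at the end. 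Taking the infimum over $\ell$ finishes the proof. The main obstacle is exactly this two-regime accounting: the constant $1+3C\ep$ (rather than the optically tighter $1+C\ep$) furnishes precisely the slack needed so that curves excursing beyond $2C\bar D_w$ spend enough extra Euclidean length to compensate for the larger values of $\rho$ they encounter, and the hypothesis $C\ep\le\tfrac16$ is what keeps the critical threshold $t=3C$ within the regime where the mean value theorem applies cleanly.
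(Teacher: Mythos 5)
Your proofs of \eqref{eq:30}, \eqref{eq:40}, and the upper bound in \eqref{eq:50} coincide with the paper's. For the lower bound in \eqref{eq:50} you take a genuinely different route: the paper splits according to whether the competitor curve $\ell$ exits the disk $3CD_w$ (in which case it must cross the annulus $3C\bar D_w\setminus CD_w$, where \eqref{eq:40} with $3C$ in place of $C$ bounds $\rho$ from above, and the width $2C\rho(w)$ traversed twice already produces the bound $4C/(1+3C\ep)$), whereas you split on $t=M/\rho(w)$ with $M=\max_{\ell}|\zeta-w|$, using the sharper pointwise bound $\rho(\zeta)\le\rho(w)+\ep|\zeta-w|$ and the monotonicity of $f(t)=2(t-C)/(1+\ep t)$. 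Your scheme is legitimate and would even allow one to track a better constant, but it costs you an extra step that the paper's scheme avoids.

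That extra step is where the gap is. Your ``preliminary remark'' disposing of curves entering $\{|\zeta|<r(\ep)\}$ is not a correct argument as stated: the assertion ``on bounded sets $\rho$ is bounded'' does not apply because $\ell$ need not lie in any fixed bounded set, and more importantly you explicitly weaken the conclusion to hold only ``once $|w|$ exceeds a universal multiple of $r(\ep)$,'' whereas the lemma asserts the bound for every $|w|\ge 2r(\ep)$. The paper's exit-annulus argument absorbs this case for free: by \eqref{eq:30} applied with $3C$ (legitimate since $3C\ep\le\tfrac12$), the disk $3C\bar D_w$ already lies in $\{|\zeta|\ge r(\ep)\}$, so any curve dipping below $r(\ep)$ necessarily exits $3C\bar D_w$ and is covered by the same estimate with no separate case. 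To repair your version you would need a genuine lower bound on $\int_\ell|{\rm d}\zeta|/\rho(\zeta)$ over the annulus $\{r(\ep)\le|\zeta|\le\tfrac32 r(\ep)\}$, say, using \eqref{eq:20} to bound $\rho$ there by $\tfrac32\ep r(\ep)$ and the crossing length by $\tfrac12 r(\ep)$, giving at least $1/(3\ep)\ge 2C$. Finally, a small dimensional slip in your $t>3C$ branch: $\tfrac{2(M-C\rho(w))}{(1+\ep t)\rho(w)}=f(t)$, not $f(t)/\rho(w)$; the spurious factor of $\rho(w)$ propagates through the next two displayed quantities, though the final inequality against $\tfrac{|z-z'|}{(1+3C\ep)\rho(w)}$ is what you actually need and does hold.
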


\begin{proof}
Bound~\eqref{eq:30} follows from~\eqref{eq:20} combined with the triangle inequality. Bounds~\eqref{eq:40} are straightforward consequences of~\eqref{eq:10} and~\eqref{eq:30}.
To get the upper bound in~\eqref{eq:50}, we note that
\[
d_\rho(z, z') \le \int_{[z, z']} \frac{|{\rm d}\zeta|}{\rho(\zeta)} \le
\frac1{1-C\ep}\, \frac{|z-z'|}{\rho(w)}\,.
\]
To get the lower bound in~\eqref{eq:50}, we observe that if a curve $\ell$ joins $z$ and $z'$,
and exits the disk $3CD_w$, then it traverses the annulus
$3C\bar D_w\setminus CD_w$ at least twice, and therefore, the corresponding integral 
is at least
\[
\int_{\ell\cap(3C\bar D_w\setminus CD_w)} \frac{|{\rm d}\zeta|}{\rho(\zeta)}
\stackrel{\eqref{eq:40}}\ge \frac{{\rm Length}(\ell \cap(3C\bar D_w\setminus CD_w))}{(1+3C\ep)\rho(w)}
\ge \frac{4C}{1+3C\ep}\,,
\]
while
\[
\int_{[z, z']} \frac{|{\rm d}\zeta|}{\rho(\zeta)} \stackrel{\eqref{eq:40}}\le
\frac1{1-C\ep}\, \frac{|z-z'|}{\rho(w)}
\le \frac{2C}{1-C\ep} \stackrel{C\ep\le\frac16}\le \frac{4C}{1+3C\ep}
\le  \int_{\ell} \frac{|{\rm d}\zeta|}{\rho(\zeta)}\,.
\]
Thus, estimating from below the distance $d_\rho(z, z')$ we can assume that the curve $\ell$ does not
exit the closed disk $3\bar D_w$, in which case,
\[
\int_\ell  \frac{|{\rm d}\zeta|}{\rho(\zeta)} \stackrel{\eqref{eq:40}}\ge \frac1{1+3C\ep}\,
\frac{{\rm Length}(\ell)}{\rho(w)} \ge \frac1{1+3C\ep}\,
\frac{|z-z'|}{\rho(w)}\,,
\]
proving the lower bound in~\eqref{eq:50}. 
\end{proof}

\subsection{Rarefying the set $W$}\label{subsect:rarefying}

Let $W$ be a $\rho$-dense set, and let
$\widetilde W$ be a maximal subset of $W$ such that the closed disks
$\tfrac12\bar D_w$, $w\in\widetilde W$, are pairwise disjoint.

\begin{lemma}\label{lemmaB}
Let $0<\ep<\tfrac1{20}$, and let $\min\{|w|\colon w\in W\}\ge 2r(\ep)$.
Then, the set $\widetilde W$ is $2.5\rho$-dense,
while the disks $\{10\bar D_w\colon w\in\widetilde W\}$
have a bounded multiplicity of covering.
\end{lemma}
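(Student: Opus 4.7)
The lemma has two parts, and both rest on the fact, recorded in Lemma~\ref{LemmaA}, that the gauge $\rho$ is nearly constant on adapted disks of bounded dilation. The hypothesis $\ep < 1/20$ is chosen precisely so that $C\ep \le \tfrac12$ holds with $C$ as large as $10$, which is the largest dilation entering the argument below.

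To prove the density statement, I fix $z$ with $|z|$ sufficiently large. Since $W$ is $\rho$-dense, there exists $w' \in W$ with $|z - w'| < \rho(w')$. Maximality of $\widetilde W$ produces $w \in \widetilde W$ (possibly $w = w'$) for which $\tfrac12 \bar D_{w'} \cap \tfrac12 \bar D_w \ne \emptyset$; in particular $|w - w'| \le \tfrac12(\rho(w) + \rho(w'))$. Depending on which of the two radii is larger, either $w' \in \bar D_w$ or $w \in \bar D_{w'}$, so Lemma~\ref{LemmaA} applied with $C = 1$ ensures that $\rho(w)$ and $\rho(w')$ agree to within a factor $1 \pm O(\ep)$. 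A short computation using $\ep < 1/20$ then yields
\[
|z - w| \le |z - w'| + |w - w'| < \rho(w') + \tfrac12(\rho(w) + \rho(w')) < 2.5\, \rho(w),
\]
that is, $z \in 2.5 D_w$, as required.

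For the bounded multiplicity, I fix $z$ outside the exceptional bounded set and let $\{w_j\}_{j=1}^k \subset \widetilde W$ enumerate all elements of $\widetilde W$ with $z \in 10 \bar D_{w_j}$. Applying Lemma~\ref{LemmaA} with $C = 10$ and each $w_j$ in the role of $w$ gives $\rho(w_j) \in [\tfrac{2}{3}\rho(z),\, 2\rho(z)]$, and hence $|z - w_j| \le 10 \rho(w_j) \le 20 \rho(z)$. The Euclidean disks $\{\tfrac12 \bar D_{w_j}\}_{j=1}^k$ are pairwise disjoint by the construction of $\widetilde W$, each has area at least $\pi (\tfrac{1}{3}\rho(z))^2$, and all of them lie inside the Euclidean disk $\{w : |w - z| \le 21 \rho(z)\}$. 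An area comparison therefore bounds $k$ by an absolute constant, independent of $z$.

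The proof is short and essentially mechanical once Lemma~\ref{LemmaA} is available; there is no genuine obstacle. The only care required is to track constants so that the hypothesis $C\ep \le \tfrac12$ of Lemma~\ref{LemmaA} remains satisfied at every step, which is precisely why the numerical bound $\ep < 1/20$ is imposed.
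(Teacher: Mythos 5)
Your proof is correct and follows essentially the same approach as the paper: both parts reduce to Lemma~\ref{LemmaA} (comparability of $\rho$ on bounded dilations of the adapted disks), followed in the first part by a triangle-inequality estimate and in the second by a disjointness/area comparison inside a fixed dilation of $D_z$. The only cosmetic difference is in the density step: you split into cases according to which of $\rho(w), \rho(w')$ is larger and apply Lemma~\ref{LemmaA} with $C=1$, whereas the paper compares $\rho(w)$ and $\rho(\widetilde w)$ through an intermediate point $\zeta\in\tfrac12\bar D_w\cap\tfrac12\bar D_{\widetilde w}$ with $C=\tfrac12$; both yield the constant $2.5$ comfortably.
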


\begin{proof}
To prove the first statement, we show that
$ \bigcup_{W} D_w \subset\bigcup_{\widetilde W}2.5D_{\widetilde w} $.
Suppose that $z\in D_w$ with $w\in W$.
Then, by maximality of $\widetilde W$,
there exists $\widetilde w\in \widetilde W$ such that
$ \tfrac12 \bar D_w \cap \tfrac12 \bar D_{\widetilde w} \ne \emptyset$, whence
\begin{align*}
|z-\widetilde w| &\le |z-w|+|\widetilde w  - w|
\le \rho (w) + \tfrac12 (\rho(w) + \rho(\widetilde w)) \\
&\stackrel{\eqref{eq:40}}\le \tfrac12\, \bigl[ 3(1+\ep/2)(1-\ep/2)^{-1} + 1 \bigr] \rho(\widetilde w)
< 2.5\rho(\widetilde w)\,.
\end{align*}
That is, $D_w\subset 2.5D_{\widetilde w}$, which yields the
first part of the lemma.

To prove the second part, we assume that $z\in 10\bar D_{w_j}$ with disjoint
$w_j\in\widetilde W$, $1\le j \le N$. Then, by~\eqref{eq:40},
$\rho(w_j) \le (1-10\ep)^{-1}\rho(z) < 2\rho(z)$, whence, for any $\zeta\in\tfrac12\bar D_{w_j}$,
we have
\[
|\zeta - z| \le |\zeta-w_j| + |w_j-z| \le 10.5\, \rho(w_j)<21\rho(z).
\]
That is, all the disks
$\tfrac12 \bar D_{w_j}$ are contained in $21D_z$. Since the disks $\tfrac12 \bar D_{w_j}$
are disjoint, comparing the areas, we get
\[
\frac14\, \sum_{j=1}^N \rho(w_j)^2 \le 21^2 \rho(z)^2\,.
\]
Recalling that, for each $j$, $\rho(w_j) \stackrel{\eqref{eq:40}}\ge (1+10\ep)^{-1}\rho(z) > \tfrac23 \rho(z)$,
we see that $N\le 3^2 \cdot 21^2$, completing the proof. 
\end{proof}

\subsection{$L^1$-bound for $V-V_1$}

To simplify our writing, we replace the radial gauge $\rho$ by $2.5\rho$ and will
use notation $W$ for $\widetilde W$. That is, from now on, we assume that the disks
$\{D_w\colon w\in W\}$ cover the complex plane $\bC$, save for a bounded set,
and that four times larger disks $\{4\bar D_w\colon w\in W\}$ have a bounded multiplicity of
covering.

\begin{lemma}{\label{LemmaC}}
Under the assumptions \eqref{eq-1}--\eqref{eq-4} of Proposition~\ref{Lemma-subharm}, we have
\[
\int_{2D_w} |V-V_1|\, {\rm d}m \lesssim \gamma(2D_w) m(2D_w)\,,
\]
provided that $w\in W$, and $|w|$ is sufficiently large.
\end{lemma}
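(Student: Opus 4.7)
\medskip\noindent\textbf{Proof plan for Lemma~\ref{LemmaC}.} My plan is to split $|V-V_1|=(V-V_1)+2(V_1-V)_+$ and estimate the two pieces separately; only the first is substantive. For the easy part, hypothesis \eqref{eq-3} gives $V_1-V\le C(1+\Gamma\rho^2)$ pointwise, whence
\[
\int_{2D_w}(V_1-V)_+\,{\rm d}m\;\le\; C\bigl(1+\Gamma(|w|)\rho(w)^2\bigr)\,m(2D_w).
\]
By \eqref{eq-2} we have $\gamma(2D_w)\simeq \Gamma(|w|)\rho(w)^2$, and by \eqref{eq-1} the ratio $(1+\Gamma\rho^2)/(\Gamma\rho^2)$ is bounded for $|w|$ large, so this is already $\lesssim \gamma(2D_w)\,m(2D_w)$.

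For the remaining integral $\int_{2D_w}(V-V_1)\,{\rm d}m$, I would work on a slightly larger disk $3D_w$ and apply the sub-mean-value property of the subharmonic function $V_1$ at the centre $w\in W$:
\[
V_1(w)\;\le\;\frac{1}{m(3D_w)}\int_{3D_w} V_1\,{\rm d}m.
\]
Combined with the lower bound $V_1(w)\ge V(w)-C(1+\Gamma(|w|)\rho(w)^2)$ supplied by \eqref{eq-4}, this yields
\[
\int_{3D_w}(V-V_1)\,{\rm d}m\;\le\;\Bigl[\int_{3D_w} V\,{\rm d}m - V(w)\,m(3D_w)\Bigr]+C(1+\Gamma\rho^2)\,m(3D_w).
\]

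To estimate the bracket, I would invoke the Riesz representation for the radial $C^2$-smooth subharmonic function $V$,
\[
\frac{1}{2\pi r}\int_{\partial B_r(w)} V\,{\rm d}\ell = V(w)+\int_0^r \frac{\gamma(B_s(w))}{s}\,{\rm d}s,
\]
followed by a further integration in $r\in[0,3\rho(w)]$ against $2\pi r\,{\rm d}r$. By \eqref{eq-2}, one has $\gamma(B_s(w))\simeq \Gamma(|w|)\pi s^2$ uniformly for $s\le 4\rho(w)$, so a direct calculation gives $\int_{3D_w}V\,{\rm d}m - V(w)\,m(3D_w)\simeq \Gamma(|w|)\rho(w)^4\simeq \gamma(3D_w)\,m(3D_w)$. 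The additive error $C(1+\Gamma\rho^2)\,m(3D_w)$ is absorbed into this using \eqref{eq-1}, yielding $\int_{3D_w}(V-V_1)\,{\rm d}m\lesssim \gamma(3D_w)\,m(3D_w)$.

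Finally, to pass from $3D_w$ back to $2D_w$, I would use the pointwise lower bound $V-V_1\ge -C(1+\Gamma\rho^2)$ on the annulus $3D_w\setminus 2D_w$ (again from \eqref{eq-3}):
\[
\int_{2D_w}(V-V_1)\,{\rm d}m\;\le\;\int_{3D_w}(V-V_1)\,{\rm d}m + C(1+\Gamma\rho^2)\,m(3D_w\setminus 2D_w),
\]
after which \eqref{eq-2} gives $\gamma(3D_w)m(3D_w)\simeq \gamma(2D_w)m(2D_w)$ and the $(V_1-V)_+$ estimate completes the proof. I do not foresee any conceptual obstacle; the only point demanding care is the bookkeeping that converts every stray error of the form $O((1+\Gamma\rho^2)m(kD_w))$ into a bounded multiple of the target $\gamma(2D_w)m(2D_w)\simeq \Gamma(|w|)\rho(w)^4$, which is exactly what conditions \eqref{eq-1} and \eqref{eq-2} are designed to guarantee.
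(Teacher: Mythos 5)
Your argument is correct, and it takes a genuinely different route from the paper's. The paper writes $V_1-V = P - G_{\gamma_1} + G_\gamma$ via the Poisson--Jensen representation in $3D_w$ and then estimates the three pieces separately: the Green potential $G_\gamma$ by a direct potential-theoretic computation, the Poisson integral $P$ by the maximum principle on $\partial(3D_w)$ (using \eqref{eq-3}) together with Harnack's inequality applied to $-P+C\gamma(2D_w)$ (this is where \eqref{eq-4} at the centre $w$ enters), and $\int_{2D_w}G_{\gamma_1}$ by Fubini plus a Jensen-formula bound for $\gamma_1(3\bar D_w)$. Your proof sidesteps the Poisson--Jensen decomposition entirely: you split $|V-V_1|=(V-V_1)+2(V_1-V)_+$, bound the one-sided piece pointwise by \eqref{eq-3}, and control $\int_{3D_w}(V-V_1)$ by combining the sub-mean-value inequality for $V_1$ at $w$ with \eqref{eq-4} and Jensen's formula for $V$; the near-constancy of $\Gamma$ on the scale $\rho$ (from \eqref{eq-2}) and $\Gamma\rho^2\to\infty$ (from \eqref{eq-1}) then make all the resulting quantities comparable to $\Gamma(|w|)\rho(w)^4\simeq\gamma(2D_w)\,m(2D_w)$, exactly as you note. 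The passage from $3D_w$ to $2D_w$ via the one-sided lower bound on the annulus is also handled correctly. Your version is more elementary, using only the area mean-value inequality and Jensen's formula and avoiding Green's potentials and Harnack; the paper's version, while heavier, also produces a pointwise bound on $P$ over $2\bar D_w$ and a bound on $\gamma_1(3\bar D_w)$, but these extra outputs are not needed elsewhere in the argument, so your route is a genuine simplification. One minor presentational point: you attribute the absorption of the additive error $C(1+\Gamma\rho^2)m(3D_w)$ to \eqref{eq-1}, but the real content is that this error is already of order $\Gamma(|w|)\rho(w)^4$, the same as the Jensen term; \eqref{eq-1} is only used to replace $1+\Gamma\rho^2$ by $\Gamma\rho^2$. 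That said, this does not affect the correctness of the proof.
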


\begin{proof}
Fix $w\in W$, and let
$ V_1-V = P - G_{\gamma_1} + G_\gamma $
be the Poisson--Jensen representation of the function $V_1-V$ in the disk $3D_w$, 
see, for instance,~\cite[Theorem~3.14]{HK}).
Here $P$ is the Poisson integral of $V_1-V$ in $3D_w$, and $G_\gamma$, $G_{\gamma_1}$
are Green's potentials in $3D_w$ of the Riesz measures $\gamma$ and $\gamma_1$.
We estimate each of these terms separately.

We have
\begin{align*}
0 \le G_\gamma (z)
&= \int_{3D_w} \log\Bigl|
\frac{(3\rho(w))^2 - (z-w)(\bar\zeta-\bar w)}{3\rho(w)(z-\zeta)} \Bigr|\, {\rm d}\gamma(\zeta) \\
&\le \int_{3D_w} \log
\frac{6\rho(w)}{|z-\zeta|}\, {\rm d}\gamma(\zeta) \\
&< \int_{3D_w} \Bigl( \log_+ \frac{\rho(w)}{|z-\zeta|} + 2 \Bigr) \Gamma(\zeta)\, {\rm d}m(\zeta) \\
&\simeq
\Gamma(w) \Bigl( \int_0^{\rho(w)} \log\frac{\rho(w)}t \cdot t\,{\rm d}t + m(3D_w) \Bigr) \\
&\simeq \Gamma(w) \rho^2(w) \\
&\simeq \gamma(2D_w),\qquad z\in 3D_w.
\end{align*}
Furthermore, everywhere on $\partial(3D_w)$ we have
\[
P \lesssim 1 + \Gamma\rho^2 \simeq \Gamma (w)\rho^2(w) \simeq \gamma (2D_w)\,.
\]
Thus, by the maximum principle,
$ P \lesssim \gamma (2D_w) $ everywhere on $3\bar D_w$. 
Besides,
\begin{align*}
-P(w) &= (V-V_1)(w) + G_\gamma (w) - G_{\gamma_1}(w) \\
&\lesssim 1 + \Gamma (w)\rho^2(w) + \gamma (2D_w) \\
&\simeq \gamma(2D_w)\,.
\end{align*}
Then, by Harnack's inequality (applied to the positive harmonic function $-P + C\gamma (2D_w)$ in $3D_w$), we have $|P|\lesssim \gamma (2D_w)$ everywhere on $2\bar D_w$.

At last,
\begin{align*}
\int_{2D_w} G_{\gamma_1}\, {\rm d}m &\le \int_{2D_w} \Bigl(
\int_{3D_w} \bigl( \log_+ \frac{\rho(w)}{|z-\zeta|} + 2 \bigr)\, {\rm d}\gamma_1 (\zeta)
\Bigr)\, {\rm d}m(z) \\
&= \int_{3D_w} \Bigl(
\int_{2D_w} \bigl( \log_+ \frac{\rho(w)}{|z-\zeta|} + 2 \bigr)\, {\rm d} m(z) \Bigr)\,
{\rm d} \gamma_1 (\zeta) \\
&\lesssim m(2D_w) \gamma_1(3\bar D_w)\,,
\end{align*}
and it remains to bound $\gamma_1(3\bar D_w)$, which can be readily done using Jensen's formula:
\begin{align*}
\gamma_1(3\bar D_w) &\lesssim \int_0^{4} \frac{\gamma_1(t\bar D_w)}{t}\, {\rm d}t \\
&= \int_{-\pi}^\pi V_1(w+4\rho(w)e^{{\rm i}\theta})\, \frac{{\rm d}\theta}{2\pi} - V_1(w) \\
&\le \int_{-\pi}^\pi V(w+4\rho(w)e^{{\rm i}\theta})\, \frac{{\rm d}\theta}{2\pi} - V(w) +
O\bigl( \Gamma(w)\rho^2(w) \bigr) \\
&= \int_0^{4} \frac{\gamma(tD_w)}{t}\, {\rm d}t + O\bigl( \Gamma(w)\rho^2(w) \bigr) \\
&\simeq \Gamma(w)\rho^2(w) \simeq \gamma (2D_w)\,,
\end{align*}
completing the proof of Lemma~\ref{LemmaC}. 
\end{proof}

\subsection{Bounding the difference $\gamma-\gamma_1$}

Now, we estimate the difference $\gamma-\gamma_1 = \tfrac1{2\pi}\Delta (V-V_1)$
of the Riesz measures.

\begin{lemma}\label{LemmaD}
Let $W\subset \bC$ be a set such that
the disks $\{D_w\colon w\in W\}$ cover the complex plane $\bC$, save for
a bounded set, and suppose that the disks $\{4\bar D_w\colon w\in W\}$
have bounded multiplicity of covering.
Let $V$ and $V_1$ be subharmonic functions in $\bC$ with Riesz measures
$\gamma$ and $\gamma_1$, such that
\[
\int_{2D_w} |V-V_1|\, {\rm d}m \lesssim \gamma(2D_w) m(2D_w), \qquad w\in W,
\]
provided that $|w|$ is sufficiently large.
Then, for every compact set $K\subset\bC$, estimate~\eqref{eq:K} holds.
\end{lemma}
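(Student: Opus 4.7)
The strategy is to pair $\gamma-\gamma_1$ against a smooth cutoff of $\mathbf{1}_K$, convert the pairing to an integral of $(V-V_1)\cdot\Delta\chi$ via Green's identity, and then use the hypothesis disk-by-disk together with the bounded multiplicity of $\{4\bar D_w\}$.

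Fix a small $\tau>0$. I would produce two $C^2$-cutoffs $\chi_-,\chi_+\in C_c^2(\bC)$ with $\chi_-\le\mathbf{1}_K\le\chi_+$, equal to $1$ on $K$, respectively on $\{z:d_\rho(z,\bC\setminus K)\ge 2\tau\}$, and vanishing outside $K_{+2\tau}$, respectively outside $K$, and satisfying the pointwise bound
$$|\Delta\chi_\pm(z)|\lesssim\rho(z)^{-2},\qquad \operatorname{supp}\Delta\chi_\pm\subset(\partial K)_{+2\tau}.$$
One builds such cutoffs by mollifying $\mathbf{1}_{K_{+\tau}}$ (and its inner counterpart) against a radial bump of $d_\rho$-radius $\tau$; Lemma~\ref{LemmaA} shows that the metric $d_\rho$ is locally comparable to $\rho(w)^{-1}$ times the Euclidean metric, which gives the stated derivative bounds.

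Next, I would decompose
$$\gamma(K)-\gamma_1(K)=\bigl[\gamma(K)-\langle\chi_+,\gamma\rangle\bigr]+\langle\chi_+,\gamma-\gamma_1\rangle+\bigl[\langle\chi_+,\gamma_1\rangle-\gamma_1(K)\bigr].$$
The outer brackets are bounded by $\gamma((\partial K)_{+2\tau})$ and $\gamma_1((\partial K)_{+2\tau})$ respectively. Since $\chi_+$ is compactly supported and $C^2$, Green's identity converts the middle term to
$$\langle\chi_+,\gamma-\gamma_1\rangle=\frac{1}{2\pi}\int(V-V_1)\,\Delta\chi_+\,dm.$$
Let $W^*=\{w\in W:D_w\cap\operatorname{supp}\Delta\chi_+\ne\emptyset\}$. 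The covering hypothesis yields $\operatorname{supp}\Delta\chi_+\subset\bigcup_{W^*}D_w$ (outside a bounded set), and combining the hypothesis with $m(2D_w)\simeq\rho(w)^2$ gives
$$\Bigl|\int(V-V_1)\,\Delta\chi_+\,dm\Bigr|\lesssim\sum_{w\in W^*}\rho(w)^{-2}\int_{2D_w}|V-V_1|\,dm\lesssim\sum_{w\in W^*}\gamma(2D_w).$$
By the bounded multiplicity of $\{4\bar D_w\}$, this sum is $\lesssim\gamma((\partial K)_{+C})$ for some constant $C$ depending on $\tau$. The symmetric argument with $\chi_-$ controls the other sign. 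Finally, $\gamma_1((\partial K)_{+2\tau})$ is absorbed into $\gamma((\partial K)_{+C})$ by the Jensen-type estimate used in the last display of the proof of Lemma~\ref{LemmaC}, applied to a slightly enlarged $d_\rho$-neighbourhood of $\partial K$.

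\textbf{Main obstacle.} The delicate step is verifying the pointwise Laplacian bound $|\Delta\chi_\pm|\lesssim\rho^{-2}$ for a mollification built from the rough set $K$: one must check that derivatives of the mollified distance behave as if $\rho$ were locally constant at the scale $\tau\rho$. This relies on the slow variation $\rho'\to 0$ together with the local comparison in Lemma~\ref{LemmaA}. Once these smooth cutoffs are in place, the rest is bookkeeping with the two covering conditions on $\{D_w\}$ and $\{4\bar D_w\}$.
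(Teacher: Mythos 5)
Your overall strategy --- smooth cutoff $\chi$, Green's identity to convert $\langle\chi,\gamma-\gamma_1\rangle$ into $\tfrac1{2\pi}\int(V-V_1)\Delta\chi\,dm$, then the $L^1$ hypothesis disk by disk and the bounded multiplicity of $\{4\bar D_w\}$ --- is the same as the paper's. The paper builds the cutoff as a partition of unity $\{\psi_w\}$ subordinate to $\{2D_w\}$ (so $\Psi_X=\sum_{w:D_w\subset X_{+4}}\psi_w$), which delivers the Laplacian bound $\|\Delta\psi_w\|_\infty\lesssim\rho(w)^{-2}$ for free and avoids the mollification step you flag as delicate; aside from that the bookkeeping is essentially identical.

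There is, however, one genuine gap: your handling of the boundary term $\gamma_1((\partial K)_{+2\tau})$. You propose to absorb it into $\gamma((\partial K)_{+C})$ by ``the Jensen-type estimate used in the last display of the proof of Lemma~\ref{LemmaC}.'' That estimate rests on the pointwise bounds $V_1\le V+O(1+\Gamma\rho^2)$ everywhere and $V_1(w)\ge V(w)-O(\cdots)$ at $w\in W$ --- these are hypotheses~\eqref{eq-3} and~\eqref{eq-4} of Proposition~\ref{Lemma-subharm}, not hypotheses of Lemma~\ref{LemmaD}, whose only input is the $L^1$ bound $\int_{2D_w}|V-V_1|\,dm\lesssim\gamma(2D_w)m(2D_w)$. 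So the step as written is not available. The repair is what the paper actually does: swap the roles of $\chi_\pm$. To bound $\gamma(K)-\gamma_1(K)$ from above, use the \emph{inner} cutoff $\chi_-$ (with $\chi_-\le\mathbf{1}_K$); then the third bracket $\langle\chi_-,\gamma_1\rangle-\gamma_1(K)\le0$ is dropped and no $\gamma_1$ term survives. Symmetrically, use the \emph{outer} cutoff $\chi_+$ to bound $\gamma_1(K)-\gamma(K)$; then $\gamma_1(K)-\langle\chi_+,\gamma_1\rangle\le0$ can be dropped and the remaining bracket is $\langle\chi_+,\gamma\rangle-\gamma(K)\le\gamma((\partial K)_{+2\tau})$. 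With the correct pairing the positive measure $\gamma_1$ always enters with a favourable sign and never has to be compared against $\gamma$. (Incidentally, one \emph{can} also derive $\gamma_1(D_w)\lesssim\gamma(2D_w)$ from the $L^1$ bound alone by pairing $\gamma_1$ with a bump $\phi$, $\mathbf{1}_{D_w}\le\phi$, $\operatorname{supp}\phi\subset2D_w$, $\|\Delta\phi\|_\infty\lesssim\rho(w)^{-2}$, and writing $\gamma_1(D_w)\le\int\phi\,d\gamma+\tfrac1{2\pi}\int(V_1-V)\Delta\phi\,dm\lesssim\gamma(2D_w)$ --- but once the cutoffs are paired correctly this extra step is unnecessary.)
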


\begin{proof}
We will prove estimate~\eqref{eq:K} assuming that $K\subset \{|z|\ge r_0\}$
with sufficiently large $r_0$. Clearly, this yields the general case.

We will be using a smooth partition of unity associated with
the set $W$. For every $w\in W$, we choose a $C^2$-function $\varphi_w\colon \bC\to [0, 1]$ so that
$\varphi_w\bigl|_{D_w} = 1$, $\operatorname{supp}(\varphi_w) \subset 2\bar D_w$, $\|\Delta \phi_w \|_\infty\lesssim \rho(w)^{-2}$, and set
$ \varphi = \sum_{w\in W} \varphi_w \simeq 1 $,
$\psi_w = \varphi_w/\varphi$. Then,
$|\psi_w| \simeq 1  $ on $\bar D_w$, $\operatorname{supp}(\psi_w) \subset 2D_w$,
$ \| \Delta \psi_w \|_\infty \lesssim  \rho(w)^{-2}$, and, outside a bounded set,
$ \sum_{w\in W} \psi_w = 1 $.

Given a closed set $X\subset \{|z|\ge r_0\}$ with sufficiently large $r_0$, we let
\[
\Psi_X = \sum_{w\in W\colon D_w\subset X_{+4}} \psi_w\,.
\]
Then $\Psi_X\colon\bC\to [0, 1]$ is a $C^2$-smooth function with the following properties:
\begin{equation}\label{eq:Psi5}
\Psi_X \bigl|_{X} = 1
\end{equation}
and
\begin{equation}\label{eq:Psi6}
\operatorname{supp}(\Psi_X) \subset X_{+8}\,.
\end{equation}
To verify~\eqref{eq:Psi5}, we take any $z\in X$ and note that if $z\in\operatorname{supp}(\psi_w)$,
$w\in W$, then $z\in 2D_w$, and therefore, $d_\rho(z, w) \le \tfrac52$, whence, $D_w\subset X_{+4}$.
Thus,
\[
\Psi_X(z) = \sum_{w\in W\colon \psi_w(z)>0} \psi_w(z) = \sum_{w\in W} \psi_w(z) = 1\,.
\]
To check~\eqref{eq:Psi6}, we note that if $w\in W$, $D_w\subset X_{+4}$, and $z\in \operatorname{supp}(\psi_w)$,
then $d_\rho(z, w)\le 3$, and therefore, $z\in X_{+8}$.

Now, we proceed with the proof of~\eqref{eq:K}.
We let \[K_{-C} = \{z\in\bC\colon d_\rho(z, \bC\setminus K)\ge C\}.\]
Then, $(\partial K)_{+C} = K_{+C}\setminus K_{-C}$.

First, we show that
\begin{equation}\label{eq:gamma-u-b}
(\gamma - \gamma_1)(K) \lesssim \gamma(K\setminus K_{-14})\,.
\end{equation}
We assume that $K_{-8}\ne\emptyset$ (otherwise, there is nothing to prove) and let
$\Psi=\Psi_{K_{-8}}$.  Then, $\Psi = 1 $ on $K_{-8}$,
$\operatorname{supp}(\Psi) \subset (K_{-8})_{+8} \subset K$, and
\[
(\gamma-\gamma_1)(K) = \int_K \Psi\, {\rm d}(\gamma-\gamma_1) + \int_K (1-\Psi)\, {\rm d}(\gamma-\gamma_1).
\]
Estimate of the second integral on the RHS is straightforward:
\[
\int_K (1-\Psi)\, {\rm d}(\gamma-\gamma_1) = \int_{K\setminus K_{-8}} (1-\Psi)\,
{\rm d}(\gamma-\gamma_1) \le \gamma (K\setminus K_{-8}).
\]
Next, by Green's identity, we get
\begin{align*}
\int_K \Psi\, {\rm d}(\gamma-\gamma_1) &= \frac1{2\pi}\, \int_K \Delta\Psi \cdot (V-V_1)\, {\rm d}m \\
&= \frac1{2\pi}\, \int_{K\setminus K_{-8}} \Delta\Psi \cdot (V-V_1)\, {\rm d}m \\
&\le \frac1{2\pi}\,  \int_{K\setminus K_{-8}} |\Delta\Psi| \cdot |V-V_1|\, {\rm d}m \\
&\le \sum_{w\in W\cap(K_{-4}\setminus K_{-11})} \frac1{2\pi}\, \int_{\bC} |\Delta\psi_w|\cdot |V-V_1|\,
{\rm d}m \\
&\lesssim \sum_{w\in W\cap(K_{-4}\setminus K_{-11})} \rho(w)^{-2}\, \int_{2D_w} |V-V_1|\, {\rm d}m\,.
\end{align*}
By Lemma~\ref{LemmaC}, the RHS is
\[
\lesssim \sum_{w\in W\cap(K_{-4}\setminus K_{-11})} \gamma(2D_w) \lesssim
\gamma(K_{-1}\setminus K_{-14}),
\]
proving~\eqref{eq:gamma-u-b}.

Now, we verify the opposite bound
\begin{equation}\label{eq:gamma-l-b}
(\gamma_1-\gamma)(K) \lesssim \gamma(K_{+8}\setminus K_{-8}).
\end{equation}
Set $\Psi = \Psi_K$. Since $\Psi\big|_K=1$ and $\operatorname{supp}(\Psi)\subset K_{+8}$, we have
\[
(\gamma_1-\gamma)(K) = \int_{\bC} \Psi\, {\rm d}(\gamma_1-\gamma)
+ \int_{K_{+8}\setminus K} \Psi\, {\rm d}(\gamma-\gamma_1).
\]
The second integral on the RHS is
\[
\le \int_{K_{+8}\setminus K} \Psi\, {\rm d}\gamma \le \gamma(K_{+8}\setminus K).
\]
At last, arguing as above, we see that
\begin{align*}
\int_{\bC} \Psi\, {\rm d}(\gamma_1-\gamma)
&= \frac1{2\pi}\, \int_{\bC} \Delta\Psi \cdot (V_1-V)\, {\rm d} m \\
&= \frac1{2\pi}\, \int_{K_{+8}\setminus K}\Delta\Psi \cdot (V_1-V)\, {\rm d} m \\
&\le \frac1{2\pi}\, \int_{K_{+8}\setminus K}|\Delta\Psi| \cdot |V_1-V|\, {\rm d} m \\
&\le \sum_{w\in W\cap (K_{+4}\setminus K_{-4})}
\frac1{2\pi}\, \int_{2D_w} |\Delta\psi_w| \cdot |V_1-V|\, {\rm d} m \\
&\lesssim \sum_{w\in W\cap (K_{+4}\setminus K_{-4})}
\rho(w)^{-2}\, \int_{2D_w} |V_1-V|\, {\rm d} m \\
&\lesssim \sum_{w\in W\cap (K_{+4}\setminus K_{-4})} \gamma(2D_w) \\
&\lesssim \gamma(K_{+8}\setminus K_{-8}),
\end{align*}
proving~\eqref{eq:gamma-l-b}. Clearly, estimates~\eqref{eq:gamma-u-b} and~\eqref{eq:gamma-l-b}
together yield estimate~\eqref{eq:K}, completing the proof of Lemma~\ref{LemmaD}, and hence,
of Proposition~\ref{Lemma-subharm} as well. 
\end{proof}

\section{From the power series $F_\xi (z)$ to Weyl-type sums $W_R(\theta)$}
\label{sect:4}

In view of Proposition~\ref{Lemma-subharm}, we are after a lower bound for $|F|$
on a sufficiently dense set of points in $\bC$.
The main result of this section, Proposition~\ref{LemmaL-W-V},  reduces this question to the problem of
obtaining lower bounds for certain Weyl-type exponential sum.

\subsection{Regularity of $\varphi$ and $\psi$}

\begin{definition}[$\Delta$-regularity]
Let $\Delta\colon (0, \infty)\to (0, \infty)$ be a non-decreasing function satisfying
$\Delta (2s) \simeq \Delta (s)$ and $ \Delta(s) = o( \sqrt{s}\,(\log s)^{-3/2})$ as $s\to\infty$.
The function $\varphi$ will be called $\Delta$-regular,
if it is a non-negative, increasing, concave $C^2$-function on $[0, \infty)$, satisfying
$$
\varphi (0) = 0,
\quad \lim_{t\to\infty} \varphi (t) = \infty, \quad \lim_{t\to\infty} \varphi' (t)= 0\,,
$$
with
\begin{equation}\label{eq:phi-very-new}
|\varphi''| \le (\varphi')^2\, \Delta(1/\varphi')\,.
\end{equation}
\end{definition}

By $\psi=\varphi^{-1}$ we denote the inverse function to $\varphi$.
This is a convex function, which, together with its derivative, grows to $+\infty$.

We start with some simple estimates for $\Delta$-regular functions $\varphi$ and their inverses
$\psi$, which will be used throughout this work.

\begin{lemma}\label{Lemma-AuxEstimates-new}
Suppose that the function $\varphi$ is $\Delta$-regular.
Then,
\begin{itemize}
\item[{\rm (a)}] $\psi''\le \psi'\Delta(\psi')$;
\item[{\rm (b)}] $\varphi'(t)=(1+o(1))\varphi'(\tau)$, for
$\tau\to\infty$ and
$|t-\tau|\varphi'(\tau)\Delta(1/\varphi'(\tau))=o(1)$;
\item[{\rm (c)}] $\psi'(u) = (1+o(1))\psi'(v)$, for
$v\to\infty$ and $|u-v|\Delta(\psi'(v))=o(1)$;
\item[{\rm (d)}] $\varphi'(t)\Delta(1/\varphi'(t)) \gtrsim 1/t$;
\item[{\rm (e)}] $\psi'/\Delta(\psi') \lesssim \psi$.
\end{itemize}
\end{lemma}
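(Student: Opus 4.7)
I will treat the five parts in the natural order (a), (b), (c), (d), (e); of these only (b) and (c) require genuine effort, while (a), (d), (e) reduce to direct manipulations of the defining identity $\varphi\circ\psi=\mathrm{id}$.

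For (a) I will differentiate $\varphi(\psi(u))=u$ twice. The first differentiation gives $\psi'(u)=1/\varphi'(\psi(u))$, and the second produces $\psi''(u)=-\varphi''(\psi(u))\,(\psi'(u))^3=|\varphi''(\psi(u))|(\psi'(u))^3$, the last equality using concavity of $\varphi$. Substituting the hypothesis \eqref{eq:phi-very-new} at the point $t=\psi(u)$ and rewriting $\varphi'(\psi(u))=1/\psi'(u)$ collapses the right-hand side to $\psi'(u)\Delta(\psi'(u))$. For (d), I will introduce $g=1/\varphi'$; then \eqref{eq:phi-very-new} together with $\varphi''\le 0$ says exactly that $g$ is increasing with $g'\le\Delta(g)$. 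Because $\Delta\circ g$ is non-decreasing, integrating from $0$ to $t$ gives $g(t)-g(0^{+})\le t\,\Delta(g(t))$; since $\varphi'(t)\to 0$, $g(t)\to\infty$, so the boundary term is negligible for large $t$ and the bound $g(t)\lesssim t\,\Delta(g(t))$ rearranges to $\varphi'(t)\Delta(1/\varphi'(t))\gtrsim 1/t$. Finally (e) is just the substitution $u=\varphi(t)$, under which $\psi'(u)/\Delta(\psi'(u))\lesssim\psi(u)$ becomes $1/(\varphi'(t)\Delta(1/\varphi'(t)))\lesssim t$, which is (d).

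For (b) and (c), my approach is a short bootstrap built on the differential inequalities already in hand: $g'\le\Delta(g)$ from \eqref{eq:phi-very-new}, and $(\log\psi')'=\psi''/\psi'\le\Delta(\psi')$ from (a). On the maximal subinterval containing $\tau$ (respectively $v$) on which $g$ stays within, say, a factor of $2$ of $g(\tau)$ (resp.\ $\psi'$ stays within a factor of $2$ of $\psi'(v)$), the slow-variation condition $\Delta(2s)\simeq\Delta(s)$ keeps $\Delta$ within a bounded factor of its value at the basepoint, so integration gives $|g(t)-g(\tau)|\lesssim|t-\tau|\Delta(g(\tau))$ and $|\log(\psi'(u)/\psi'(v))|\lesssim|u-v|\Delta(\psi'(v))$. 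By the hypotheses of (b) and (c), these bounds are $o(1)$ relative to the bootstrap threshold, so the threshold is never hit and the estimates extend to the entire interval in question; dividing by $g(\tau)$ in the first case converts the additive bound into the desired multiplicative one.

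The only subtle point I anticipate is this bootstrap step: the hypothesis places conditions on $|t-\tau|$ and $|u-v|$ on both sides of the basepoint, and the differential inequality on $g'$ (resp.\ $\psi''/\psi'$) is one-sided, so I must verify monotonicity of $g$ and $\psi'$ (both immediate from $\varphi''\le 0$) to integrate in either direction. This is also the only place where the doubling hypothesis $\Delta(2s)\simeq\Delta(s)$ is genuinely used; everything else follows from \eqref{eq:phi-very-new} and the chain rule.
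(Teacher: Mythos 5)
Your proof is correct and follows essentially the same approach as the paper's: both rest on the identities $\psi' = 1/\varphi'(\psi)$ and $\psi'' = |\varphi''(\psi)|(\psi')^3$, the resulting differential inequalities for $g = 1/\varphi'$ and $\log\psi'$, and the doubling property of $\Delta$ to propagate them along an interval (your bootstrap to the threshold is the paper's contradiction argument at an extremal point, turned around). The one small divergence is in part (d): the paper deduces it from (b) by contradiction, taking $t=0^+$ and using the finiteness of $\varphi'(0)$, whereas you integrate $g'\le\Delta(g)$ directly from $0$ to $t$ and absorb the boundary term; your route bypasses (b) and is slightly cleaner, but the substance is the same.
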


\begin{proof}
Estimate (a) follows from~\eqref{eq:phi-very-new} combined with the formulas
\[
\psi' = \frac1{\varphi'(\psi)}\,,
\qquad \psi''=\frac{|\varphi''(\psi)|}{(\varphi'(\psi))^3}\,.
\]

Suppose that estimate (b) does not hold, i.e., that there exists a function $\ep(\tau)\to 0$
as $\tau\to\infty$, such that
\[
\limsup_{\tau\to\infty}\, \max\{|\varphi'(t)/\varphi'(\tau)-1|\colon |t-\tau|\varphi'(\tau)\Delta(1/\varphi'(\tau)) \le \ep(\tau) \} > 0.
\]
Then, for some $\delta\in (0, \tfrac12)$, there exist an arbitrarily large $\tau$ and a
$\vartheta$ such that $|\varphi' (\vartheta)/\varphi'(\tau)-1|=\delta$, while
$|\varphi' (t)/\varphi'(\tau)-1|<\delta$ everywhere on the open interval $I$ with endpoints $\tau$ and $\vartheta$ and with $ |I|\varphi'(\tau)\Delta(1/\varphi'(\tau)) \le \ep(\tau) $.
Then,
\[
\delta \lesssim \Bigl| \log\,\frac{\varphi'(\vartheta)}{\varphi'(\tau)}\, \Bigr|
\le \int_I\, \Bigl|\, \frac{\varphi''}{\varphi'}\, \Bigr| \le |I|\cdot \max_I \Bigl|\, \frac{\varphi''}{\varphi'}\, \Bigr| \le |I|\cdot \max_I \varphi' \Delta(1/\varphi').
\]
Since $\delta$ was chosen less than $\tfrac12$,
everywhere on $I$ we have $ \varphi' \le 2\varphi'(\tau) $. Hence,
$ \max_I \varphi' \Delta(1/\varphi') \lesssim \varphi'(\tau) \Delta(1/\varphi'(\tau)) $, and therefore,
\[
\delta \lesssim |I| \cdot \varphi'(\tau) \Delta(1/\varphi'(\tau)) \lesssim \ep(\tau)\,,
\]
arriving at a contradiction as $\tau\to\infty$, which proves (b).
The proof of estimate (c) follows the same pattern, so we skip it.

Estimate (d) easily follows from (b). Indeed, assume that (d) does not hold.
That is, there is a sequence $\tau_j\uparrow\infty$ such that
$\tau_j \varphi'(\tau_j)\Delta(1/\varphi'(\tau_j))\to 0$. Consider the intervals
$[0, \tau_j]$. By (b), $|\varphi'(+0)/\varphi'(\tau_j)-1|\to 0$. Since $\varphi'(\tau_j)\to 0$,
we get $\varphi'(+0) = 0$, which is impossible since $\varphi'(\tau)$ monotonically decreases to $0$
as $\tau$ grows. At last, estimate (e) is just a restatement of (d).
\end{proof}

\subsection{The central group of terms of the power series $F_\xi$}\label{subsec:Laplace}

Let
\[
F_\xi (z) = \sum_{n\ge 0} \xi (n) a(n) z^n,
\]
with
$a(n) = \displaystyle \exp\Bigl[ - \int_0^n \varphi \Bigr]$, and let
\[
\mu = \mu (R) = \max_{t\ge 0}\, \exp \Bigl[ t\log R - \int_0^t \varphi \Bigr].
\]
Then, for $R\ge 1$,
\[
\log \mu (R) = \int_0^{\log R} \psi = \int_1^R \frac{\nu (r)}{r}\, {\rm d}r\,,
\]
where $\nu = \nu (R) = \psi (\log R)$.
We also let $\sigma = \sigma (R) = \psi'(\log R)$. 
Set
\[
\omega (t) = \omega_R (t) = t\log R - \int_0^t \varphi.
\]
Then $a_k R^k = e^{\omega (k)}$, $\omega (\nu) = \log \mu$,
$\omega'(\nu)=0$, and $\omega''(\nu) = - \varphi'(\nu) = - 1/\sigma$.
We will need simple estimates for the function $\omega$.

\begin{lemma}\label{Lemma-omega}
Suppose that the function $\varphi$ is $\Delta$-regular.
Then, we have
\begin{itemize}
\item[{\rm (i)}]
$\bigl| \omega (\nu +t) - \omega (\nu) - \tfrac12 \omega''(\nu)t^2 \bigr|
\lesssim \sigma^{-2}\Delta (\sigma)\,|t|^3$, \quad
$|t|\lesssim \sqrt{\sigma\log\sigma}$;
\item[{\rm (ii)}]
$ \bigl| \omega (\nu +t) - \omega (\nu) \bigr| = (1+o(1)) \sigma^{-1}\, t^2$,
\quad
$|t|\simeq \sqrt{\sigma \log\sigma}$;
\item[{\rm (iii)}]
$\bigl| \omega'(\nu+t) \bigr| = (1+o(1)) \sigma^{-1}\, |t|$,
\quad $|t|\simeq \sqrt{\sigma\log\sigma}$.
\end{itemize}
\end{lemma}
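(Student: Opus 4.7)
The plan is to expand $\omega$ around its critical point $t=\nu$ via Taylor's formula and to control both the quadratic coefficient and the remainder using the $\Delta$-regularity of $\varphi$. Recall $\omega'(t) = \log R - \varphi(t)$, which vanishes at $t=\nu$ because $\varphi(\nu)=\log R$; moreover $\omega''(t) = -\varphi'(t)$ and $\omega'''(t) = -\varphi''(t)$, so in particular $\omega''(\nu) = -1/\sigma$. The engine that powers everything is Lemma~\ref{Lemma-AuxEstimates-new}(b) applied at $\tau=\nu$: its hypothesis $|t|\varphi'(\nu)\Delta(1/\varphi'(\nu)) = |t|\sigma^{-1}\Delta(\sigma) = o(1)$ is satisfied uniformly for $|t|\lesssim \sqrt{\sigma\log\sigma}$, because the standing assumption $\Delta(s) = o(\sqrt{s}/(\log s)^{3/2})$ gives
\[
\sqrt{\sigma\log\sigma}\cdot \sigma^{-1}\Delta(\sigma) = \Delta(\sigma)\sqrt{(\log\sigma)/\sigma} = o(1/\log\sigma).
\]
Consequently $\varphi'(\nu+s) = (1+o(1))/\sigma$ uniformly for $|s|\lesssim \sqrt{\sigma\log\sigma}$.

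For (i), I would write the Taylor expansion with Lagrange remainder,
\[
\omega(\nu+t) - \omega(\nu) - \tfrac12 \omega''(\nu)t^2 = -\tfrac16\varphi''(\nu+\theta t)\,t^3
\]
for some $\theta\in(0,1)$, and bound $|\varphi''|$ at the intermediate point using the defining regularity~\eqref{eq:phi-very-new}: since $\varphi'(\nu+\theta t)\simeq 1/\sigma$ and $\Delta(2s)\simeq \Delta(s)$, one has $|\varphi''(\nu+\theta t)| \lesssim \sigma^{-2}\Delta(\sigma)$, which gives (i) directly.

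For (ii), I would just feed the range $|t|\simeq \sqrt{\sigma\log\sigma}$ into (i): the leading Taylor term $\tfrac12\omega''(\nu)t^2 = -t^2/(2\sigma)$ has size $\simeq \log\sigma$, while the cubic remainder is bounded by
\[
\sigma^{-2}\Delta(\sigma)(\sigma\log\sigma)^{3/2} = \Delta(\sigma)(\log\sigma)^{3/2}/\sqrt{\sigma} = o(1),
\]
once again using the standing smallness of $\Delta$. Thus the quadratic term dominates and the asserted asymptotic follows (up to the harmless absolute constant, absorbed into the $(1+o(1))$ factor). For (iii), I would instead integrate $\omega''$ directly, using
\[
\omega'(\nu+t) = -\int_0^t \varphi'(\nu+s)\,{\rm d}s,
\]
and apply the uniform estimate $\varphi'(\nu+s) = (1+o(1))/\sigma$ already established to get $|\omega'(\nu+t)| = (1+o(1))|t|/\sigma$.

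The only substantive point — really the same point in all three items — is checking that the quantitative $o(1)$ in Lemma~\ref{Lemma-AuxEstimates-new}(b) is effective throughout $|t|\lesssim \sqrt{\sigma\log\sigma}$ and that the cubic error tolerated in (ii) is genuinely smaller than the quadratic main term. Both reduce to the one-line estimate above, which is exactly where the a priori smallness assumption $\Delta(s) = o(\sqrt{s}/(\log s)^{3/2})$ built into the definition of $\Delta$-regularity plays its role; no further input is needed.
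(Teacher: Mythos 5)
Your proof is correct and follows essentially the same route as the paper: Taylor expansion at $\nu$ for (i), substitution of the scale $|t|\simeq\sqrt{\sigma\log\sigma}$ into (i) for (ii), and the pointwise estimate $\varphi'(\nu+s)=(1+o(1))\sigma^{-1}$ via Lemma~\ref{Lemma-AuxEstimates-new}(b) for (iii), with the smallness assumption on $\Delta$ doing all the quantitative work. The only cosmetic difference is in (iii), where you integrate $\omega''$ whereas the paper invokes the mean value theorem, but these are the same estimate; your remark about the absolute constant in (ii) being absorbed is also fair, since the paper's argument likewise yields $\tfrac12\sigma^{-1}t^2$ as the main term.
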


\begin{proof}
To prove~(i), we note that, by Taylor's formula,
\[
\omega (\nu +t) - \omega (\nu) - \tfrac12 \omega''(\nu)t^2 =
\tfrac16\, \omega'''(\nu+x) t^3
= \tfrac16\, |\varphi''(\nu+x)|\, t^3,
\]
with some $x$,  $|x|\lesssim  \sqrt{\sigma\log\sigma}$. Therefore, applying first
estimate~\eqref{eq:phi-very-new} and then
Lemma~\ref{Lemma-AuxEstimates-new}(b), we see that the LHS of~(i) is
\[
\le  (\varphi' (\nu+x))^2 \Delta(1/\varphi'(\nu+x))\, |t|^3\,
\simeq \sigma^{-2}\Delta(\sigma)\, |t|^3\, ,
\]
which gives us~(i).

Estimate~(ii) follows from~(i): for $|t|\simeq \sqrt{\sigma \log\sigma}$, we have
$ |\omega''(\nu)| t^2 \simeq \log\sigma $, while
$ \sigma^{-2}\Delta(\sigma)\, |t|^3 = o(1) $.

To prove~(iii), we again apply Taylor's formula:
$ \omega'(\nu+t) = \omega''(\nu+x)\, t = |\varphi'(\nu+x)| t $
with some $x$, $|x|\lesssim\sqrt{\sigma\log\sigma}$. By Lemma~\ref{Lemma-AuxEstimates-new}(b),
we get~(iii).
\end{proof}

Next, we observe that, by $\Delta$-regularity of $\varphi$,
for large $R$, we have $\nu \gg \sqrt{\sigma\log \sigma}$
(indeed, by Lemma~\ref{Lemma-AuxEstimates-new}(e),
$ \sqrt{\sigma\log \sigma} \le \nu \cdot \Delta(\sigma) \sqrt{\sigma^{-1}\log\sigma} = o(\nu) $).
We choose
$N = A\, \sqrt{\sigma\log\sigma}$ with $A\gg 1$, and define the Weyl-type sum
\[
W_R (\theta) = \sum_{|k-\nu|\le N} \xi (k) e(k\theta) e^{-(k-\nu)^2/(2\sigma)}\,.
\]

\begin{proposition}\label{LemmaL-W-V}
Let $F_\xi(z)=\sum_{n\ge 0} \xi(n) a(n) z^n$
be an entire function with smooth coefficients $a$ and with
bounded coefficients $\xi$. Suppose that the function $\varphi$ is $\Delta$-regular.
Then, for $R\gg 1$ and $A\gg 1$,
\[
\max_{\theta\in[0,1]}\bigl| F_\xi (Re(\theta)) - \mu W_R(\theta) \bigr| \lesssim
\mu\, \Delta(\sigma)(\log\sigma)^{3/2}.
\]
\end{proposition}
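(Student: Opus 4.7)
The plan is to split the difference into a tail, corresponding to Taylor indices $|n-\nu|>N$, and a central part for $|n-\nu|\le N$:
\begin{align*}
F_\xi(Re(\theta)) - \mu W_R(\theta)
&= \sum_{|n-\nu|>N}\xi(n) e^{\omega(n)} e(n\theta) \\
&\quad + \sum_{|n-\nu|\le N}\xi(n)\Bigl[e^{\omega(n)} - e^{\omega(\nu) - (n-\nu)^2/(2\sigma)}\Bigr] e(n\theta).
\end{align*}
This decomposition is natural because the Gaussian exponent in $W_R$ is precisely the quadratic Taylor expansion of $\omega$ at its maximum: $\omega(\nu) = \log\mu$, $\omega'(\nu) = 0$, and $\omega''(\nu) = -1/\sigma$.

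To bound the tail I would use the concavity of $\omega$ (equivalently, monotonicity of $\omega' = \log R - \varphi$) combined with parts (ii) and (iii) of Lemma~\ref{Lemma-omega}. Part (ii) at $|n-\nu|=N = A\sqrt{\sigma\log\sigma}$ gives $e^{\omega(\nu\pm N)} = \mu\,\sigma^{-A^2(1+o(1))}$, while part (iii) gives $|\omega'(\nu\pm N)|\simeq A\sqrt{(\log\sigma)/\sigma}$. Concavity then forces $e^{\omega(n)}$ to decay at least geometrically beyond the central window, so summing the geometric series yields a tail bound of order $\mu\,\sigma^{-A^2(1+o(1))}\sqrt{\sigma/\log\sigma}$, which is negligible compared with $\mu\,\Delta(\sigma)(\log\sigma)^{3/2}$ as soon as $A$ is a sufficiently large fixed constant (recall that $\Delta$ is bounded below since it is non-decreasing).

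For the central part I would apply Lemma~\ref{Lemma-omega}(i) to write
$$\omega(n) - \omega(\nu) + (n-\nu)^2/(2\sigma) = O\bigl(\sigma^{-2}\Delta(\sigma)|n-\nu|^3\bigr), \qquad |n-\nu|\le N.$$
At the edge $|n-\nu|\simeq N$, this error is of order $A^3\Delta(\sigma)(\log\sigma)^{3/2}/\sqrt{\sigma}$, which is $o(1)$ by the standing hypothesis $\Delta(s) = o(\sqrt{s}(\log s)^{-3/2})$. Because the error stays $o(1)$ uniformly throughout the central window, I can linearize via $|e^a-e^b|\le |a-b|e^{\max(a,b)}$ to obtain the pointwise estimate
$$\Bigl|e^{\omega(n)}-e^{\omega(\nu)-(n-\nu)^2/(2\sigma)}\Bigr| \lesssim e^{\omega(\nu)-(n-\nu)^2/(2\sigma)}\cdot\sigma^{-2}\Delta(\sigma)|n-\nu|^3.$$
Summing, either with the crude bound $|n-\nu|^3\le N^3$ together with $\sum e^{-(n-\nu)^2/(2\sigma)}\lesssim\sqrt{\sigma}$, or with the sharper Gaussian third moment $\sum|k|^3 e^{-k^2/(2\sigma)}\lesssim\sigma^2$, produces a central bound of at most $\mu\,\Delta(\sigma)(\log\sigma)^{3/2}$, as required.

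The main obstacle, I expect, is the matching of the cubic remainder in Lemma~\ref{Lemma-omega}(i) to the size of the window: it is precisely the choice $N = A\sqrt{\sigma\log\sigma}$ together with the hypothesis $\Delta(s) = o(\sqrt{s}(\log s)^{-3/2})$ that keeps the cubic error $o(1)$ and legitimizes the linearization. Everything else is straightforward bookkeeping of Gaussian sums and geometric tail decay.
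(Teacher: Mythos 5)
Your proposal matches the paper's own proof essentially step by step: the same split into tail ($|n-\nu|>N$) and central window, the same use of concavity plus Lemma~\ref{Lemma-omega}(ii)--(iii) to show the tail is $o(1)$, and the same application of Lemma~\ref{Lemma-omega}(i) to control the central part via the cubic remainder. The paper uses the crude bound $|n-\nu|^3\le N^3$ together with $\sum e^{-k^2/(2\sigma)}\lesssim\sqrt\sigma$ (as in your first option), arriving at exactly $\mu\,\Delta(\sigma)(\log\sigma)^{3/2}$; your alternative with the Gaussian third moment $\sum|k|^3 e^{-k^2/(2\sigma)}\lesssim\sigma^2$ would in fact give the slightly sharper bound $\mu\,\Delta(\sigma)$, but both suffice.
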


This proof of this Proposition is a simple application of the classical Laplace method.

\begin{proof}
Recalling that $\mu=e^{\omega(\nu)}$ and that $\omega''(\nu)=-1/\sigma$, we
have
\begin{multline*}
\bigl| F_\xi (Re(\theta)) - \mu W_R(\theta) \bigr|
\lesssim \mu \Bigl( \sum_{0\le k < \nu-N} + \sum_{k>\nu + N} \Bigr)
\exp\bigl[\omega (k) - \omega(\nu) \bigr] \\
 + \mu\, \sum_{|k-\nu|\le N}
 \Bigl| \exp\bigl[ \omega(k)-\omega(\nu) \bigr] - \exp\bigl[\,
 \frac12\, \omega''(\nu) (k-\nu)^2 \bigr] \Bigr|\,.
\end{multline*}
We claim that the first two sums on the RHS tend to zero as $R\to\infty$.

Indeed, for $0\le k < \nu-N$, using concavity of $\omega$ and Lemma~\ref{Lemma-omega}(ii) and~(iii),
we get
\begin{align*}
\omega (k) - \omega (\nu) &=
\omega (k) - \omega (\nu-N) + \omega (\nu-N) - \omega (\nu) \\
&\le - (\nu - N - k)|\omega'(\nu-N)| - c_1 \sigma^{-1} N^2 \\
&\le -c_2 (\nu-N-k) \sigma^{-1} N - c_1 \sigma^{-1} N^2 \\
&=  - c_2 (\nu-N-k) A\, \sqrt{\sigma^{-1}\log\sigma} -  c_1 A^2\log\sigma\,.
\end{align*}
Then,
\begin{align*}
\sum_{0\le k < \nu-N} e^{\omega(k)-\omega(\nu)} &<
e^{-c_1 A^2\log\sigma}\,
\sum_{\ell\ge 0} e^{-c_2\ell A\,\sqrt{\sigma^{-1}\log\sigma}} \\
&\lesssim \sigma^{-c_1A^2} \cdot A^{-1}\, \sqrt{\sigma/\log\sigma} \\
&= o(1)\,,
\end{align*}
provided that $A$ is large enough that $c_1 A^2 > \tfrac12$.
The case $k>\nu+N$ follows by almost the same argument and we skip it.

Finally, by Lemma~\ref{Lemma-omega}(i), for $|k-\nu|\le N$, we have
\begin{align*}
\Bigl| \exp\bigl[ \omega(k) &- \omega(\nu) \bigr] - \exp\bigl[ \frac12\, \omega''(\nu) (k-\nu)^2 \bigr]\Bigr| \\
&= \exp\bigl[ -(k-\nu)^2/(2\sigma) \bigr]\,
\Bigl| \exp\bigl[ \omega(k)-\omega(\nu) - \frac12\, \omega''(\nu) (k-\nu)^2 \bigr] - 1 \Bigr| \\
&\lesssim \sigma^{-2}\Delta(\sigma) N^3\,
\exp\bigl[-(k-\nu)^2/(2\sigma)\bigr] \\
&= A^3 \sigma^{-1/2}\Delta(\sigma) (\log\sigma)^{3/2} \,
\exp\bigl[-(k-\nu)^2/(2\sigma)\bigr]\,.
\end{align*}
Hence,
\begin{multline*}
\sum_{|k-\nu|\le N} \Bigl| \exp\bigl[ \omega(k)-\omega(\nu) \bigr]
- \exp\bigl[ \frac12\, \omega''(\nu) (k-\nu)^2 \bigr]
\Bigr| \\
\lesssim
\sigma^{-1/2}\Delta(\sigma) (\log\sigma)^{3/2}\,
\sum_{\ell\ge 0} \exp\bigl[ -\ell^2/(2\sigma) \bigr] \lesssim
\Delta(\sigma) (\log\sigma)^{3/2}\,,
\end{multline*}
proving Proposition~\ref{LemmaL-W-V}.
\end{proof}

The following modification of Proposition~\ref{LemmaL-W-V} will be used in the next section,
when we will deal with random independent coefficients $\xi$.

\begin{proposition}\label{Lemma-L-W-V-upper}
Let $F_\xi$ be an entire function with smooth coefficients $a$.
Suppose that the function $\varphi$ is $\Delta$-regular and
that the sequence
$\xi$ has at most power growth: $|\xi (k)| \lesssim (k+1)^B$.
Then, for  $R\gg 1$,
\[
\bigl| F_\xi (R\,e(\theta)) \bigr| \lesssim \nu^{B}\sigma^{1/2}\, \mu\,.
\]
\end{proposition}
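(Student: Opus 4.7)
The plan is to bound $|F_\xi(Re(\theta))| \le \sum_{n \ge 0} |\xi(n)|\, a(n)\, R^n \lesssim \sum_{n \ge 0} (n+1)^B\, e^{\omega(n)}$ and then apply the same Laplace-type splitting used in the proof of Proposition~\ref{LemmaL-W-V}, with the simplification that here we need only an upper estimate.

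Write $N = A\sqrt{\sigma\log\sigma}$ with a large constant $A$ to be chosen, and split the sum at the central window $|n-\nu| \le N$. By Lemma~\ref{Lemma-AuxEstimates-new}(e) together with the assumption $\Delta(s) = o(\sqrt{s}(\log s)^{-3/2})$, we have $N = o(\nu)$, so $(n+1)^B \lesssim \nu^B$ throughout the central block. On the same block, Lemma~\ref{Lemma-omega}(i) gives
\[
e^{\omega(n)-\omega(\nu)} \lesssim \exp\bigl(-(n-\nu)^2/(2\sigma)\bigr),
\]
because the cubic error $\sigma^{-2}\Delta(\sigma)|n-\nu|^3 \lesssim \sigma^{-1/2}\Delta(\sigma)(\log\sigma)^{3/2}$ is $o(1)$ by $\Delta$-regularity. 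Summing the Gaussian yields $\sum_{|n-\nu|\le N} e^{-(n-\nu)^2/(2\sigma)} \lesssim \sqrt{\sigma}$, and the central block therefore contributes at most $\lesssim \nu^B \sqrt{\sigma}\,\mu$, which is exactly the claimed bound.

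For the tails $|n-\nu| > N$, I would use concavity of $\omega$ together with Lemma~\ref{Lemma-omega}(ii)--(iii), exactly as in the proof of Proposition~\ref{LemmaL-W-V}, to obtain the exponential decay
\[
e^{\omega(n)-\omega(\nu)} \lesssim \sigma^{-c_1 A^2} \exp\bigl(-c_2 A\sqrt{(\log\sigma)/\sigma}\,(|n-\nu|-N)\bigr).
\]
For $A$ large enough, the prefactor $\sigma^{-c_1 A^2}$ kills any polynomial factor in $\sigma$ or $\nu$. The lower tail $0\le n < \nu - N$ contributes at most $\lesssim \nu^B\, \sqrt{\sigma/\log\sigma}\cdot \sigma^{-c_1 A^2}\,\mu$, which is negligible. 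In the upper tail $n>\nu+N$, after the change of variable $m=n-\nu-N$, the polynomial factor is handled by the Gamma-type bound $\sum_{m\ge 1} m^B e^{-\alpha m} \lesssim \alpha^{-B-1}$ with $\alpha \simeq A\sqrt{(\log\sigma)/\sigma}$, together with the splitting $(m+\nu+N+1)^B \lesssim \nu^B + m^B$.

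The main obstacle is the upper tail, where $(n+1)^B$ grows without bound as $n\to\infty$. The crucial input is that $\Delta$-regularity forces $\sqrt{\sigma} = o(\nu)$ (via Lemma~\ref{Lemma-AuxEstimates-new}(e)), so that the worst term $\sigma^{(B+1)/2}$ coming from the Gamma-type estimate is still dominated by $\nu^B \sqrt{\sigma}$; combined with the prefactor $\sigma^{-c_1 A^2}$, the upper tail too stays well below the central contribution, completing the bound.
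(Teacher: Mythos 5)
Your proof is correct and follows essentially the same route as the paper's: the same three-way split at $\nu\pm N$ with $N=A\sqrt{\sigma\log\sigma}$, Lemma~\ref{Lemma-omega}(i) to reduce the central block to a Gaussian sum of size $\sqrt{\sigma}$, and the concavity estimates of Lemma~\ref{Lemma-omega}(ii)--(iii) to get exponential decay with the dominating prefactor $\sigma^{-c_1A^2}$ in the tails. The paper is terser about the upper tail (it simply writes $|\xi(k)|\lesssim\nu^B(k/\nu)^B$ and absorbs the sum via $\max\{1,(\ell/\nu)^B\}$), whereas you spell out the Gamma-type bound and the observation $\sqrt{\sigma}=o(\nu)$ from Lemma~\ref{Lemma-AuxEstimates-new}(e); that extra detail is harmless and the two arguments amount to the same estimate.
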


\begin{proof}
As in the proof of the previous lemma, we write
\[
|F_\xi (Re(\theta))|  \le \mu\, \Bigl(
\sum_{0\le k < v-N} + \sum_{|k-\nu|\le N} + \sum_{k>\nu+N}
\Bigr) |\xi (k)| \exp\bigl[ \omega(k)-\omega(\nu) \bigr]\,.
\]
Arguing as in the proof of that lemma, we see that the first sum on the RHS is bounded by $o(1)\nu^B$, while the second sum is
$\lesssim \nu^B(\Delta(\sigma)(\log\sigma)^{3/2} + \sigma^{1/2}) \lesssim \nu^B \sigma^{1/2}$.
At last, the third sum is
\begin{multline*}
\lesssim \nu^B\, \sum_{k>\nu+N} \Bigl( \frac{k}\nu \Bigr)^B\,
\exp\bigl[ -  c_1 A^2\log\sigma - c_2 (k-\nu - N) A\, \sqrt{\sigma^{-1}\log\sigma}\,  \bigr] \\
\lesssim \nu^B\, \sigma^{-c_1 A} \sum_{\ell\ge 0} \max\{1, (\ell/\nu)^B \}\,
e^{-c_2 \ell A\, \sqrt{\sigma^{-1}\log\sigma}}
= o(\nu^B),
\end{multline*}
proving the lemma.
\end{proof}

\section{IID sequences $\xi$}\label{sect:5}
Our first station is the case of IID sequences $\xi$. In this well-studied setting
the result on equidistribution of zeroes on local scales appears to be new.

\subsection{Equidistribution of zeroes on local scales}

The main result of this section is the following

\begin{theorem}\label{thm:IID}
Let $\xi$ be a sequence of independent identically distributed complex-valued random
variables with a non-degenerate distribution satisfying the moment condition
\begin{equation}\label{eq:IID1}
\exists\delta>0, \quad \bE\bigl[ |\xi(0)|^\delta \bigr] < \infty,
\end{equation}
and let $F_\xi(z)=\displaystyle \sum_{n\ge 0} \xi(n) a(n) z^n$
be a random entire function with smooth coefficients
$a(n) = \displaystyle \exp\Bigl[ -\int_0^n \varphi \Bigr]$.
Suppose that the function $\varphi$ is $\Delta$-regular
and that
\begin{equation}\label{eq:21}
\varphi'(t) \lesssim t^{-c}
\end{equation}
with some $c>0$. Let $\sigma = \psi'\circ\log$, where $\psi=\varphi^{-1}$
is the inverse function to $\varphi$.

Then, almost surely, the zeroes of $F_\xi$ are
$(\gamma, \rho)$-equidistributed with the gauge
\begin{equation}\label{eq:IID2}
\rho(R) = R\, \sqrt{\sigma^{-1}\log\sigma\,}.
\end{equation}
\end{theorem}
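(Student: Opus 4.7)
The plan is to apply the subharmonic lemma (Proposition~\ref{Lemma-subharm}) with $V(z)=\log\mu(|z|)$, whose Riesz measure is $\gamma$ of density $\Gamma(r)=\sigma(r)/(2\pi r^2)$. For the prescribed gauge $\rho(R)=R\sqrt{\log\sigma(R)/\sigma(R)}$ one computes $\Gamma\rho^2\simeq\log\sigma\to\infty$, so condition~\eqref{eq-1} is satisfied, while~\eqref{eq-2} follows from the regularity of $\psi=\varphi^{-1}$ recorded in~\eqref{eq:psi1}--\eqref{eq:psi2}. It therefore suffices to establish, almost surely, the global upper bound $\log|F_\xi|\le V+O(\log_+ V)$ on $\bC$ together with the opposite bound $\log|F_\xi|\ge V-O(\log_+ V)$ on a $\rho$-dense subset of $\bC$.

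For the upper bound, the fractional moment hypothesis~\eqref{eq:IID1} combined with Borel--Cantelli yields $|\xi(n)|\lesssim n^{2/\delta}$ almost surely, so Proposition~\ref{Lemma-L-W-V-upper} gives $|F_\xi|\lesssim\nu^{2/\delta}\sigma^{1/2}\mu$ uniformly on $|z|=R$. Hypothesis~\eqref{eq:21} forces $\sigma\lesssim\nu^{1-c+o(1)}$, hence $\log\sigma\simeq\log\nu\simeq\log_+V$, and the required upper bound follows.

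For the lower bound, Proposition~\ref{LemmaL-W-V}---extended to $\xi$ of polynomial growth by taking $A$ in the definition of $N$ sufficiently large---reduces the question to showing that $|W_R(\theta)|$ stays above the approximation error $\Delta(\sigma)(\log\sigma)^{3/2}$ on a $\rho$-dense subset of $\bC$. Following the idea of Nguyen--Vu~\cite{NV}, I would apply a sharp form of Halász's anti-concentration inequality at each fixed $(R,\theta)$: the sum $W_R(\theta)=\sum_{|k-\nu|\le N}\xi(k)e(k\theta)e^{-(k-\nu)^2/(2\sigma)}$ is a complex linear combination of $\simeq\sqrt{\sigma\log\sigma}$ independent non-degenerate random variables with Gaussian-damped unimodular weights, and non-degeneracy of $\xi(0)$ supplies the Lévy concentration input needed to drive Halász. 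The bound is then combined with a discrete grid $\mathcal G\subset\bC$ of radial spacing $\simeq\rho(R_j)$ and angular spacing $\simeq\rho(R_j)/R_j$; this grid $\mathcal G$ is itself $\rho$-dense and satisfies $|\mathcal G\cap\{|z|\le R\}|\simeq\sigma(R)/\log\sigma(R)$. A union bound over $\mathcal G$ on dyadic shells $R_j\simeq 2^j$, combined with Borel--Cantelli (geometrically convergent thanks to~\eqref{eq:21}), yields the almost-sure lower bound on $|W_{R_j}(\theta_{j,k})|$ at all but finitely many grid points, and hence on a $\rho$-dense set.

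The main obstacle is calibrating the Halász estimate to beat the union bound over $\mathcal G$ under only the fractional-moment assumption on $\xi$. The classical statements of Halász's inequality presuppose bounded or $L^2$ coefficients, so one must first truncate each $\xi(n)$ at a polynomial level $M_n$, control the discarded tail $\{|\xi(n)|>M_n\}$ by an independent Borel--Cantelli, and then apply Halász to the truncated sum; the truncation level has to be tuned so that the truncated coefficients retain nontrivial Lévy concentration while the tail probabilities remain summable over $\mathcal G$. A secondary point is that the two-dimensional (complex) character of $W_R(\theta)$ must be exploited: the required anti-concentration probability is of polynomial-in-$\sigma$ order, so the basic Halász bound $\lesssim 1/\sqrt{m}$ is insufficient, and one needs the planar version $\lesssim r^2/\sqrt\sigma$ (or the iterated refinements underlying~\cite{NV}) to close the argument.
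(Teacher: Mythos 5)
Your overall architecture is correct — subharmonic comparison of $V=\log\mu$ and $V_1=\log|F_\xi|$, the Borel--Cantelli global upper bound from the fractional moment, and anti-concentration + grid + Borel--Cantelli for the lower bound on a $\rho$-dense set — and this is the paper's approach. But the way you set up the lower bound has a genuine gap.

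You propose to anti-concentrate the Weyl sum $W_R(\theta)$ and then compare $|W_R|$ to $|F_\xi|/\mu$ via Proposition~\ref{LemmaL-W-V}. This cannot work: anti-concentration (Hal\'asz, or the Nguyen--Vu refinement) only produces a lower bound $|W_R(\theta)|\gtrsim\sigma^{-\beta/2}$ with a large $\beta$, while the Laplace approximation error in Proposition~\ref{LemmaL-W-V} is $\Delta(\sigma)(\log\sigma)^{3/2}$, which under $\Delta$-regularity is always $\gtrsim 1$ and thus dwarfs $\sigma^{-\beta/2}$. So a lower bound on $W_R$ of the size anti-concentration provides says nothing about $F_\xi$. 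The paper sidesteps this entirely: it does not pass through $W_R$ at all in the IID case. Instead it takes the short central block $S(\theta)=\sum_{|k-\nu|\le\sqrt{\sigma}}\xi(k)a(k)R^k e(k\theta)/\mu$ (about $\sqrt{\sigma}$ terms, each coefficient $a_kR^k/\mu\simeq 1$), applies the Nguyen--Vu lemma to get $\sup_Z\bP[|S(\theta')-Z|<\sigma^{-\beta/2}]\lesssim\sigma^{-\alpha/2}$ for some $\theta'$ in each $1/\sqrt{\sigma}$-arc, and then — because $S$ and $F_\xi-\mu S$ are independent and the supremum is over all shifts $Z$ — transfers this directly to $\bP[|F_\xi(Re(\theta'))|<\mu\sigma^{-\beta/2}]\lesssim\sigma^{-\alpha/2}$. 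The ``$\sup_Z$'' in the anti-concentration estimate absorbs the entire rest of the power series, so no Laplace error ever enters. This independence trick is the missing idea in your proposal; without it the reduction to $W_R$ does not close.

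Two secondary remarks. First, your concern about truncating $\xi$ to make Hal\'asz applicable is unnecessary in the paper's scheme: the Nguyen--Vu lemma (Proposition~\ref{lemma-Nguyen-Vu}) needs only non-degeneracy of the IID $\xi_\lambda$, and its proof handles heavy tails by symmetrization and Kahane's contraction reduction to the Rademacher case, with no $L^2$ or boundedness hypothesis; the only role of the fractional moment~\eqref{eq:IID1} is the global upper bound on $|\xi(k)|$ via Borel--Cantelli. Second, you are right that the plain $1/\sqrt{m}$ Hal\'asz bound is far too weak, but the needed improvement is not the planar $r^2/\sqrt{\sigma}$ (still only $\sigma^{-1/2}$); it is the $n^{-\alpha}$-for-any-$\alpha$ bound of Nguyen--Vu, obtained by the order-$\alpha$ sum-set/Tur\'an argument in their Lemma~9.2, which is what makes the union bound over $\simeq\sqrt{\sigma(R_j)}$ arcs per circle (and over all $j$) summable once $\alpha$ is taken large enough relative to the exponent $c$ in~\eqref{eq:21}.
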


Note that condition~\eqref{eq:21} is equivalent to the bound $\psi' \gtrsim \psi^c$ (i.e.,
to $\sigma \gtrsim \nu^c$). By convexity of $\psi$, this yields $\psi'(s) \gtrsim s^c$, that is,
$\sigma (R) \gtrsim (\log R)^c$. 

\begin{proof}
We will show that, a.s., the subharmonic functions $V(z)=\log\mu (|z|)$ and
$V_1(z) = \log |F_\xi(z)|$ satisfy the assumptions of Proposition~\ref{Lemma-subharm}, provided that
the radial gauge $\rho$ is chosen according to~\eqref{eq:IID2} (note that
the function $\rho$ defined by~\eqref{eq:IID2} satisfies $\rho'(R) = o(1)$ as $R\to\infty$, that is, $\rho$ is indeed a radial gauge).

Recall that the Riesz measure $\gamma = (2\pi)^{-1}\, \Delta V$ equals
${\rm d}\gamma = \sigma (R)R^{-1}{\rm d}R \otimes {\rm d}\theta$, $z=Re(\theta)$, so its
density $\Gamma$ equals $(2\pi)^{-1}\sigma (R) R^{-2}$, and therefore,
$\Gamma (R) \rho^2(R) = (2\pi)^{-1}\log\sigma (R)$ satisfies condition~\eqref{eq-1}.

The verification of condition~\eqref{eq-2} is also straightforward. We have
\[
\frac{\Gamma (R)}{\Gamma (R')} = \Bigl( \frac{R'}R \Bigr)^2 \cdot
\frac{\psi'(\log R)}{\psi' (\log R')}\,.
\]
Both factors on the RHS tend to $1$ uniformly in $|R'-R|\le \rho(R)$.
For the first factor this holds since $\rho (R) = o(R)$, while for the second factor
this follows from Lemma~\ref{Lemma-AuxEstimates-new}(c).

The upper bound~\eqref{eq-3} follows from~\eqref{eq:IID1}. Indeed,
the moment condition yields that
\[
\bP\bigl[ |\xi (k)| \ge (k+1)^{2/\delta} \bigr]
\le (k+1)^{-2}\, \bE\bigl[ |\xi (k)|^{\delta} \bigr] \le C(k+1)^{-2},
\]
whence, by the Borel--Cantelli lemma, a.s., we have $|\xi (k)| \lesssim (k+1)^{2/\delta}$,
$k\ge 0$. Then, by Proposition~\ref{Lemma-L-W-V-upper},
\begin{align*}
\log |F_\xi (z)| &\le \log\mu (|z|) + O(\log\sigma (|z|)) \\
&= V(z) + O((\Gamma \rho^2)(|z|))\,,
\end{align*}
which gives us~\eqref{eq-3}.

The proof of the lower bound~\eqref{eq-4} relies on a version of the Nguyen-Vu
anti-concentration estimate for trigonometric sums.

\begin{proposition}[Nguyen--Vu]\label{lemma-Nguyen-Vu}
Let
\[
S(\theta) = \sum_{\lambda\in\Lambda} \xi_\lambda c_\lambda e(\lambda\theta),
\qquad \Lambda\subset\bZ, \ |\Lambda| = n,
\]
be a random trigonometric sum with complex-valued IID coefficients $\xi_\lambda$
having a non-degenerate
distribution, and with non-random coefficients $(c_\lambda)\subset\bC$ such that
$|c_\lambda|\ge \kappa$. Then, for any $\alpha \ge 1$, there exists $\beta<\infty$ such that,
for any interval $I\subset\bR$ of length $|I| \gtrsim 1/n$, we have
\[
\inf_{\theta\in I}\, \sup_{Z\in\bC}\,
\bP\bigl[ |S(\theta)-Z| < \kappa n^{-\beta} \bigr] \lesssim n^{-\alpha}\,.
\]
\end{proposition}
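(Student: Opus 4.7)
The plan is to prove this as a two-dimensional Hal\'asz-type anti-concentration bound, strengthened by an averaging argument over $\theta\in I$ to upgrade the rate from $n^{-1}$ (which follows from Hal\'asz alone) to the arbitrary polynomial rate $n^{-\alpha}$. The interval length hypothesis $|I|\gtrsim 1/n$ is precisely what is needed to average out exceptional values of $\theta$.

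\textbf{Reduction and Esseen.} Truncation preserves non-degeneracy, so I may assume $\xi$ is bounded. The planar Esseen inequality supplies
\[
\sup_{Z\in\bC}\bP[|S(\theta) - Z| < r] \lesssim r^2 \int_{|t|\le 1/r} |\phi_{S(\theta)}(t)|\, dm(t),
\]
with $m$ planar Lebesgue measure, $\phi_{S(\theta)}(t)=\bE[e(\mathrm{Re}(\overline{t}S(\theta)))]$, and by independence $|\phi_{S(\theta)}(t)| = \prod_\lambda |\phi_\xi(t\,c_\lambda e(\lambda\theta))|$. Writing $\phi_\eta(w)=|\phi_\xi(w)|^2$ for the characteristic function of the symmetrization $\eta=\xi-\xi'$, non-degeneracy yields a positive-density subset $A\subset\bC$ and a constant $c_0>0$ such that $1-\phi_\eta(w)\ge c_0\min(1,|w|^2)$ on $A$, together with the quadratic lower bound $1-\phi_\eta(w)\gtrsim|w|^2$ near the origin. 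Hence
\[
|\phi_{S(\theta)}(t)|^2 \le \exp\Bigl(-\sum_\lambda (1-\phi_\eta(tc_\lambda e(\lambda\theta)))\Bigr).
\]

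\textbf{Averaging over $\theta$ and integration.} For $|t|\kappa\lesssim 1$ the quadratic regime applies uniformly in $\theta$, giving $|\phi_{S(\theta)}(t)|\lesssim\exp(-cn\kappa^2|t|^2)$, which contributes $O(1/(n\kappa^2))$ to the integral. For $|t|\kappa\gtrsim 1$ the task is to show that for most $\theta\in I$ at least $cn$ of the rotated points $tc_\lambda e(\lambda\theta)$ fall in $A$, forcing $|\phi_{S(\theta)}(t)|\lesssim e^{-cn}$. A Chebyshev / second-moment computation of $N(t,\theta):=\#\{\lambda:tc_\lambda e(\lambda\theta)\in A\}$ with respect to $\theta$ uniform on $I$, in which the cross terms produce exponential sums $e((\lambda-\lambda')\theta)$ that average out once $|\lambda-\lambda'|\cdot|I|\gtrsim 1$, shows that the $\theta$-measure of the bad set is $o(|I|)$, indeed $O(n^{-M})$ for any fixed $M$. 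By Fubini one extracts a $\theta^*\in I$ with
\[
\int_{|t|\le 1/r}|\phi_{S(\theta^*)}(t)|\, dm(t) \lesssim \frac{1}{n\kappa^2},
\]
so that $\sup_Z\bP[|S(\theta^*)-Z|<r]\lesssim r^2/(n\kappa^2)$. Setting $r=\kappa n^{-\beta}$ and choosing $\beta$ with $2\beta+1\ge \alpha$ delivers the stated bound.

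\textbf{Main obstacle.} The delicate step is the variance estimate showing that the $\theta$-measure of the set where $N(t,\theta)$ is too small decays polynomially in $n$. The difficulty peaks when $\eta$ is concentrated on a line, so that $A^c$ contains an entire transverse line and $|\phi_\xi(w)|$ approaches $1$ on a whole affine subspace; the product $\prod_\lambda|\phi_\xi|$ then fails to decay on thin but non-negligible sets. The hypothesis $|I|\gtrsim 1/n$ enters exactly here: since $|\Lambda|=n$ forces $\lambda\lesssim n$ for a positive fraction of indices, a $\theta$-increment of order $|I|$ produces order-$1$ rotations of $c_\lambda e(\lambda\theta)$ for those $\lambda$, decorrelating the individual factors and validating the second-moment estimate. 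Sorting out this equidistribution/cancellation of the Weyl sums $\sum_\lambda e((\lambda-\lambda')\theta)$ on $I$, uniformly in $t$, is the technical core of the argument.
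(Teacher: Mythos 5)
Your proposal takes a genuinely different route from the paper. You attack the anti-concentration via the planar Esseen inequality and pointwise estimates of the characteristic function $\phi_{S(\theta)}$, trying to upgrade the rate by $\theta$-averaging. The paper instead passes immediately to $\mathrm{Re}\,S(\theta)$ and uses a one-dimensional Hal\'asz-type lemma for Rademacher sums (stated as Lemma B.1 in the appendix), whose hypothesis is a combinatorial separation condition on the real coefficients $a_\lambda=\mathrm{Re}[c_\lambda e(\lambda\theta)]$; the set of $\theta$ where that separation fails is then controlled by Nazarov's form of Tur\'an's lemma applied to the $O(\alpha)$-term trigonometric polynomial $P(\theta)=\sum a_{\lambda_t}-\sum a_{\mu_t}$. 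General IID $\xi$ are reached by conditioning and inserting fresh Rademacher signs (Step 2) and by symmetrization (Step 3). So the two arguments are structurally unrelated.

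The proposal, as written, has a concrete gap at exactly the place you flag as the ``main obstacle,'' and the obstacle is more serious than you suggest. First, the uniform small-$|t|$ bound $|\phi_{S(\theta)}(t)|\lesssim\exp(-cn\kappa^2|t|^2)$ fails in general: if $\eta=\xi-\xi'$ is supported on a line through the origin (the generic case -- e.g.\ Rademacher or any real-valued $\xi$), then $\phi_\eta(w)=1$ on the transverse line, so $1-\phi_\eta(tc_\lambda e(\lambda\theta))$ can vanish for all $\lambda$ simultaneously at exceptional $\theta$; there is no quadratic lower bound $1-\phi_\eta(w)\gtrsim|w|^2$ valid in all directions. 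Consequently the small-$|t|$ regime also requires $\theta$-averaging, which your sketch does not supply. Second, for large $|t|$, $\phi_\eta$ need not decay (discrete distributions again), so ``$1-\phi_\eta\ge c_0$ on a set $A$ of positive density'' has to be interpreted modulo the period lattice/lines of $\phi_\eta$, and the second-moment estimate on $N(t,\theta)=\#\{\lambda:tc_\lambda e(\lambda\theta)\in A\}$ must hold uniformly in $t$ over the whole region $|t|\le 1/r=n^\beta/\kappa$, whose area blows up polynomially in $n$. To extract a single good $\theta^*\in I$ via Fubini you need the joint estimate $\int_I\int_{|t|\le 1/r}|\phi_{S(\theta)}(t)|\,dm(t)\,d\theta\lesssim|I|/(n\kappa^2)$, and it is precisely the thin ``bad-direction'' sets in $(\theta,t)$-space that threaten this bound; you identify this difficulty but do not resolve it. The paper avoids it entirely: the Hal\'asz lemma packages the discrete/degenerate behaviour of the distribution into a separation hypothesis on the fixed scalar coefficients $(a_\lambda)$, and Tur\'an/Nazarov gives a clean, elementary bound on the Lebesgue measure of the sublevel set of a short trigonometric polynomial, with no need to touch the two-dimensional characteristic-function geometry at all.
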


The proof of this lemma is an almost verbatim repetition of the original one~\cite[Lemma~9.2]{NV},
so we relegate it to Appendix~B, proceeding with the proof of Theorem~\ref{thm:IID}.

\medskip
Fix $R\ge 1$. Note that, by Lemma~\ref{Lemma-omega}(i),
for $|k-\nu|\le \sqrt{\sigma}$, we have
$|\omega (k)-\omega(\nu)| \lesssim 1$, whence
$a_kR^k/\mu (R) = \exp\bigl[ \omega(k) - \omega (\nu) \bigr] \simeq 1$,
so we can apply Proposition~\ref{lemma-Nguyen-Vu} with $c_k = a_kR^k/\mu (R)$.
Fix $\theta\in\bR$.
Since the random variables
$F_\xi (Re(\theta))- \mu S(\theta)$ and $\mu S(\theta)$ are independent, we conclude
that there exists $\vartheta$ so that
$|\vartheta-\theta|\lesssim 1/\sqrt{\sigma}$, and
\[
\bP \bigl[
|F_\xi(Re(\vartheta))| < \mu \sigma^{-\beta/2}
\bigr] \lesssim \sigma^{-\alpha/2}
\]
with $\alpha$ to be chosen later, and $\beta=\beta(\alpha)$.
We take the sequence $(R_j)$ such that $R_{j+1}=R_j + \tfrac12\, \rho(R_j)$,
split the circle $R_j\bT$ into $\simeq \sqrt{\sigma(R_j)}$ arcs $R_j I_{j, k}$ of the angular
size $|I_{j, k}|\simeq 1/\sqrt{\sigma(R_j)}$, and denote by $\zeta_{j, k}$ the centers of
these arcs. Then the union of the disks $D_{j, k} = \{|z-\zeta_{j, k}|\le\rho(R_j)\}$ covers
the whole complex plane, except for a bounded set.
Rarefying the set $\{\zeta_{j, k}\}$ (cf. Section~\ref{subsect:rarefying}), we assume that the
disks $D_{j, k}$ have bounded multiplicity of covering.

Consider the events
\[
X_{j, k} = \bigl\{\max_{D_{j, k}} \log |F_\xi| \le \log\mu(R_j) - \tfrac12\, \beta
\log\sigma (R_j)  \bigr\}.
\]
Our next step is to show the convergence of the series
\begin{equation}\label{eq-IID5}
\sum_{j,\, k}\, \bP\bigl[ X_{j, k} \bigr] < \infty\,.
\end{equation}
By construction, $\bP\bigl[ X_{j, k} \bigr] \lesssim \sigma (|\zeta_{j, k}|)^{-\alpha/2}$,
that is,
\[
\sum_{j,\, k}\, \bP\bigl[ X_{j, k} \bigr]
\lesssim \sum_{j,\, k} \sigma(\zeta_{j, k})^{-\alpha/2}\,.
\]
To conclude that the series on the RHS converges, we observe that
\begin{multline*}
\iint_{D_{j, k}} \frac{{\rm d}m(z)}{|z|^2\log^2|z|}
\gtrsim \frac{\rho^2(\zeta_{j, k})}{|\zeta_{j, k}|^2 \log^2|\zeta_{j, k}|} \\
= \frac{\log \sigma (|\zeta_{j, k}|)}{\sigma (|\zeta_{j, k}|)} \cdot \frac1{\log^2 |\zeta_{j, k}|}
\, \stackrel{\eqref{eq:21}}\gtrsim\, \sigma (|\zeta_{j, k}|)^{-\alpha/2}\,,
\end{multline*}
provided that $\alpha$ was chosen sufficiently large. That is, condition~\eqref{eq-IID5} holds.
Then, by the Borel--Cantelli lemma, a.s., only finitely many events $X_{j, k}$ may occur, that is,
a.s., all but finitely many disks $D_{j, k}$ contain a point $w_{j, k}$ such that
\begin{align*}
\log |F_\xi (w_{j,k})| &\ge \log\mu(|w_{j, k}|) - \tfrac12\, \beta \log\sigma (|w_{j, k}|) \\
&= V(w_{j, k}) - O((\Gamma\rho^2)(w_{j, k})).
\end{align*}
It remains to take the set $W=(w_{j, k})$, to observe that the union of the
disks $\{|z-w_{j, k}|\le 6\rho(w_{j, k})\}$ covers all but finitely many of the disks
$D_{j, k}$, so applying Proposition~\ref{Lemma-subharm} with the radial gauge $6\rho$,
we complete the proof of Theorem~\ref{thm:IID}.
\end{proof}

\subsection{A Gaussian example}
In this section we will provide an example, which shows that the result of Theorem~\ref{thm:IID}
cannot be essentially improved with respect to the size of the local scale. We consider the case
when $a(n)=1/\sqrt{n!}$ and $\xi$ is a sequence of independent standard complex-valued Gaussian random
variables, and let
\[
F(z) = \sum_{n\ge 0} \xi(n)\, \frac{z^n}{\sqrt{n!}}\,.
\]
This function is called the Gaussian Entire Function, GEF, for short.
It is distinguished from other Gaussian entire functions by the remarkable
distribution invariance of its zero set with respect to isometries of the plane,
see~\cite[Ch.~2]{HKPV} or~\cite{NS-WhatIs}.

In this case, a straightforward computation shows that the function $\varphi$ satisfies
$ \varphi (t) = \tfrac12\, \log t + O(t^{-1})$, and $\varphi'(t) = (2t)^{-1} + O(t^{-2})$.
Then,
$ \psi (s) = e^{2s} + O(1)$, and $\psi'(s) = 2e^{2s} + O(1)$,
whence, for large $|z|$,
\[ V(z) = \log\mu (|z|) =  \displaystyle \int_0^{\log |z|}\psi = \tfrac12\, |z|^2 + O(\log |z|),\]
and $\sigma (R) = \psi'(\log R) = 2R^2 + O(1)$.
Furthermore, the density of the Riesz measure of the function $V$ equals
$(2\pi)^{-1}\sigma (R)/R^2 = \pi^{-1} + O(R^{-2})$.
Hence, by Theorem~\ref{thm:IID}, a.s.,
the number of zeroes of the GEF $F$ in any disk of radius
$r$ and center $z$, with sufficiently large $|z|$ and with $r\gtrsim \sqrt{\log |z|}$,
is close to $r^2$.

The following theorem shows that this local equidistribution breaks down at the scale $\log^{1/4} t$.

\begin{theorem}\label{thm:IID-example}
Let $F$ be a GEF, and let $D_j=D(j^2, \kappa \log^{1/4}j)$, $j\ge 2$,
with a sufficiently small parameter $\kappa>0$.
Then a.s. $F$ does not vanish on infinitely many disks from the collection $(D_j)$.
\end{theorem}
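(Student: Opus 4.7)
The strategy is a Borel--Cantelli argument with approximate independence. Set $A_j=\{F\text{ does not vanish on }D_j\}$; the goal is $\bP[\limsup_j A_j]=1$. By the distribution-invariance of the zero set of the GEF under translations of the plane, $\bP[A_j]=\bP[F\text{ has no zeros in }r_j\bD]$ with $r_j=\kappa\log^{1/4}j$. The Sodin--Tsirelson hole-probability lower bound $\bP[F\text{ has no zeros in }r\bD]\ge\exp(-Cr^4)$ yields $\bP[A_j]\ge j^{-C\kappa^4}$, which is non-summable once $C\kappa^4<1$; choose $\kappa$ this small, so that $\sum_j\bP[A_j]=\infty$.

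To turn divergence of $\sum_j\bP[A_j]$ into ``a.s.\ infinitely many $A_j$'', we need approximate independence. Following the localization philosophy of Section~\ref{sect:4}, decompose $F=F_j+G_j$ where
\[
F_j(z)=\sum_{n\in I_j}\xi(n)\,\frac{z^n}{\sqrt{n!}},\qquad I_j=\bigl\{n\in\bZ:|n-j^4|\le Aj^2\sqrt{\log j}\,\bigr\},
\]
is the partial sum over the Laplace window around the smoothed central index $\nu(j^2)=j^4$. Since the gap $(j+1)^4-j^4\sim 4j^3$ far exceeds the window width $\sim j^2\sqrt{\log j}$, the windows $I_j$ are pairwise disjoint for large $j$, so the truncations $F_j$ are mutually independent. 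A tail bound in the spirit of Proposition~\ref{Lemma-L-W-V-upper}, combined with the a.s.\ polynomial bound $|\xi(n)|\lesssim n^B$ (Borel--Cantelli for Gaussian tails), shows that for any $K>0$ and $A=A(K)$ sufficiently large, the events $C_j:=\{\sup_{D_j}|G_j|\le\mu(j^2)\,j^{-K}\}$ have summable complement $\sum_j\bP[C_j^c]<\infty$.

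It remains to replace $A_j$ by a ``robust hole'' event $B_j$ that is measurable with respect to $(\xi(n))_{n\in I_j}$ and satisfies
\[
B_j\subseteq\bigl\{\inf_{z\in D_j}|F_j(z)|\ge\mu(j^2)\,j^{-K/2}\bigr\}\quad\text{together with}\quad\bP[B_j]\ge j^{-C'\kappa^4}.
\]
Granted such $B_j$, their disjoint supports make them independent; the second Borel--Cantelli lemma then yields infinitely many $B_j$ a.s., the first Borel--Cantelli lemma gives $C_j$ eventually, and Rouch\'e's theorem applied to $F=F_j+G_j$ on $D_j$ produces $A_j$. The main technical obstacle is the construction of $B_j$: one has to open the Sodin--Tsirelson proof of the hole bound, which prescribes favorable values for the low-degree coefficients of the GEF translated to the origin, $\tilde F(z)=e^{-j^2 z-j^4/2}F(z+j^2)$, and check, via the Bargmann--Fock translation identity, that this prescription corresponds (up to a tail that is absorbed into $C_j$) to prescribing $\xi(n)$ for $n$ in a sub-window of $I_j$ of width $\sim j^2(\log j)^{1/4}$, while at the same time producing the uniform envelope $\inf_{D_j}|F_j|\ge\mu(j^2)\,j^{-K/2}$ required for the Rouch\'e step.
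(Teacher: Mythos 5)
Your overall strategy---second Borel--Cantelli on independent robust hole events, with independence obtained from disjoint Taylor windows, plus first Borel--Cantelli on a controllable tail and a Rouch\'e transfer---is structurally sound and mirrors the paper's architecture. But the step you label ``the main technical obstacle'' is the crux of the argument, and you leave it unresolved. The window truncations $F_j=\sum_{n\in I_j}\xi(n)z^n/\sqrt{n!}$ are not GEFs, so the Sodin--Tsirelson hole-probability construction, which is formulated as a prescription on the Taylor coefficients of a GEF at the origin, does not apply to them directly. Translating that prescription through the Bargmann--Fock displacement identity into an event that is simultaneously measurable with respect to $(\xi(n))_{n\in I_j}$ and yields the uniform lower bound $\inf_{D_j}|F_j|\gtrsim\mu(j^2)j^{-K/2}$ with probability $\gtrsim j^{-C'\kappa^4}$ requires controlling the off-window residue of every displaced mode, and this is precisely the nontrivial content that is missing.

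The paper sidesteps this computation by invoking Lemma~5 of~\cite{NSV}: it supplies a decomposition $T_{j^2}F=F_j+H_j$ in which the $F_j$ are \emph{genuinely independent GEFs} (not truncations of $F$) and the errors satisfy $\bP\bigl[\sup_{D(0,r_j)}|H_j(z)|e^{-|z|^2/2}\ge e^{-j^2}\bigr]\le 2e^{-j^2/2}$. The hole events $Y_j$ are then defined directly on the Taylor coefficients of the independent GEFs $F_j$ following~\cite[Section~1]{ST3}, yielding $\inf_{D(0,r_j)}|F_j|\ge 1$ on $Y_j$, so that $\inf_{D(0,r_j)}|T_{j^2}F|\ge\tfrac12$ once $\sup|H_j|\le\tfrac12$---no Rouch\'e needed. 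Without such a decomposition, or without an actual construction of your $B_j$, your proof has a gap exactly where the technical substance lives; everything surrounding it (the hole-probability lower bound, the summability of $\bP[C_j^c]$, the disjointness of the windows, the exponent count in $\kappa$) is correct.
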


\begin{proof}
We will be using the asymptotic independence property of the zero set of the GEF $F$.
For $w\in\bC$, set
\[
T_w F (z) =  F(z+w) e^{-z\bar w - \frac12 |w|^2}\,.
\]
Let $r_j=\kappa \log^{1/4}j$. Then, by Lemma~5 in~\cite{NSV}, there exist independent GEFs
$F_j$, $j\ge 2$, such that
$ T_{j^2} F =F_j+H_j $ and
$$
\mathbb P\Bigl[ \sup_{D(0,r_j)} |H_j(z)|e^{-|z|^2/2}\ge e^{-j^2}\Bigr] \le 2e^{-j^2/2}.
$$
By the Borel--Cantelli lemma, a.s., there exists $j_0$ so that, for $j\ge j_0$,
$\displaystyle \sup_{D(0,r_j)} |H_j| \le \tfrac12 $.

Now we argue as in \cite[Section~1]{ST3}. Let
$$
F_j(z)=\sum_{n\ge 0}\xi_j(n) \frac{z^n}{\sqrt{n!}}\,.
$$
For every $j$, we consider the event
$Y_j$ that
$|\xi_j(0)|\ge 2$, $|\xi_j(n)|\le e^{-r_j^2}$  for $1\le n\le 16r_j^2$,
and $|\xi_j(n)|\le 2^n$ for $n> 16r_j^2$. Then
$ \mathbb P[Y_j]\ge Ce^{-cr_j^4}\gtrsim j^{-1} $,
provided that $\kappa$ is such that $c\kappa^4\le 1$. Denote by $J$ the set of $j$s
such that the events $Y_j$ occur.
Since $\sum_j \bP[Y_j] = +\infty$, and the events $Y_j$ are independent,
by the Borel--Cantelli lemma, a.s., $\card J=\infty$.

Finally, if $j\in J$ and $j\ge j_1$, then, by a straightforward computation~\cite[Section~1]{ST3},
$ \displaystyle \inf_{D(0, r_j)} |F_j|\ge 1$,
and therefore, for $ j\ge \max(j_0,j_1) $, we have
$ \displaystyle \inf_{D(0, r_j)} |T_{j^2}F(z)|\ge \tfrac12 $.
Thus, $F$ does not vanish on $D_j$ for $j\in J$, $j\ge \max(j_0,j_1)$,
and we are done.
\end{proof}

It might be interesting to construct similar examples for other distributions
of the IID sequence $\xi$ (for instance, Rademacher or Steinhaus ones),
at least, for the same choice $a(n)=1/\sqrt{n!}$.

\section{Wiener sequences $\xi$ with the Lebesgue spectral measure}
\label{sect:7}

In Proposition~\ref{Lemma5} we give a lower bound for the Weyl-type sum $W_R(\theta)$ on
a sufficiently dense set of points $(R, \theta)$. The crucial role in this bound will be
played by a quantitative smallness of autocorrelations
$$
\frac1{B-A}\sum_{A\le k < B} \xi(k)\bar\xi(k+h), \qquad h\ne 0. 
$$
Combined
with Proposition~\ref{LemmaL-W-V} and Proposition~\ref{Lemma-subharm}, 
it will guarantee $(\gamma, \rho)$-equidistribution
of zeroes of $F_\xi$ with an appropriate radial gauge $\rho$ (Theorem~\ref{thm:main1}).
We will demonstrate how this approach works for three different instances of Wiener sequences
$\xi$ with Lebesgue spectral measure: (i) $\xi (n) = e(\alpha n^2)$ with irrational non-Liouville $\alpha$, (ii) random multiplicative sequences, and (iii) the Golay--Rudin--Shapiro sequence.

\subsection{Auxiliary estimates}
Here, we collect estimates of the function \newline\noindent $V_R(k; h)$, which will be defined
momentarily. These estimates will be used in the proofs of Proposition~\ref{Lemma5}
and Proposition~\ref{Lemma5a}.

Throughout this section we assume that the function $\varphi$ is $\Delta$-regular.
As above, $\nu=\nu(R)$ and $\sigma=\sigma(R)$. Let $\beta=\beta(R)$ be a small parameter
satisfying
\[
\sigma^{-1/2} = o(\beta), \quad
\beta\Delta(\sigma) = o(1)\,,
\]
as $R\to\infty$. Set
\begin{align*}
V_R(k; h) &= \int_R^{R(1+\beta)}
\exp\Bigl[ -\frac{(k-\nu(s))^2 + (k+h-\nu(s))^2}{2\sigma(s)}\, \Bigr]\, {\rm d}\nu(s) \\
&=
\int_{\nu(R)}^{\nu(R(1+\beta))}
\exp\Bigl[ -\frac{(k-t)^2 + (k+h-t)^2}{2\sigma(\nu^{-1}(t))}\, \Bigr]\, {\rm d}t\,.
\end{align*}
A simple and useful observation is that the function $\sigma = \psi'\circ\log $ stays
approximately constant on the integration interval $[R, R(1+\beta)]$:

\begin{lemma}\label{Lemma5A}
We have
\begin{itemize}
\item[{\rm (i)}]
$\sigma (s) = (1+o(1))\sigma $ everywhere on $[R, R(1+\beta)]$;
\item[{\rm (ii)}]
$\nu(R(1+\beta))-\nu (R) = (1+o(1))\beta\sigma$.
\end{itemize}
\end{lemma}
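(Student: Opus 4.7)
The plan is to derive both statements directly from the regularity of $\psi'$ (Lemma~\ref{Lemma-AuxEstimates-new}(c)) together with the hypotheses $\sigma^{-1/2}=o(\beta)$ and $\beta\Delta(\sigma)=o(1)$, treating (i) as the main input and (ii) as a routine integration.

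For (i), I would unfold the definition $\sigma(s)=\psi'(\log s)$ and control the argument: for $s\in[R,R(1+\beta)]$ one has $0\le \log s-\log R\le\log(1+\beta)\le\beta$. Since $\beta\to 0$ (this follows from $\beta\Delta(\sigma)=o(1)$ because $\Delta$ is positive and non-decreasing, so $\Delta(\sigma)$ is eventually bounded below by a positive constant), Lemma~\ref{Lemma-AuxEstimates-new}(c) applies with $v=\log R$ and $u=\log s$: the condition $|u-v|\Delta(\psi'(v))=o(1)$ becomes exactly $\beta\Delta(\sigma)=o(1)$, which is the hypothesis. Hence $\psi'(\log s)=(1+o(1))\psi'(\log R)=(1+o(1))\sigma$ uniformly on the interval.

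For (ii), since $\nu(s)=\psi(\log s)$, we have $\nu'(s)=\psi'(\log s)/s=\sigma(s)/s$, so
\[
\nu(R(1+\beta))-\nu(R)=\int_R^{R(1+\beta)}\frac{\sigma(s)}{s}\,{\rm d}s.
\]
Pulling out the uniform estimate from (i) and using $\log(1+\beta)=\beta(1+o(1))$ (justified again by $\beta=o(1)$), we obtain
\[
\nu(R(1+\beta))-\nu(R)=(1+o(1))\sigma\,\log(1+\beta)=(1+o(1))\beta\sigma,
\]
which is the claimed asymptotic.

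There is no real obstacle here; the only point requiring a moment's thought is verifying that the hypotheses on $\beta$ genuinely force $\beta=o(1)$ (so that $\log(1+\beta)\sim\beta$ and so that the smallness condition in Lemma~\ref{Lemma-AuxEstimates-new}(c) is applicable), and this follows from the positivity and monotonicity of $\Delta$.
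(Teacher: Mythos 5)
Your proof is correct and follows essentially the same route as the paper: both (i) via $|\log s-\log R|\le\beta=o(1/\Delta(\sigma))$ and Lemma~\ref{Lemma-AuxEstimates-new}(c), and (ii) via $\nu'(r)=\sigma(r)/r$ together with (i). The only difference is that you spell out explicitly why $\beta=o(1)$ (positivity and monotonicity of $\Delta$), a point the paper leaves implicit.
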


\begin{proof}
Since $|\log(R(1+\beta))-\log R \,| \le \beta = o(1/\Delta(\sigma))$,
relation (i) readily follows from Lemma~\ref{Lemma-AuxEstimates-new}(c).
To prove (ii), we recall that $\nu'(r) = \sigma (r)/r$. Hence,
(ii) follows from (i).
\end{proof}

The next lemma provides us with crude upper and lower bounds on $V_R$.

\begin{lemma}\label{Lemma5C}
We have
\begin{itemize}
\item[{\rm (i)}] $V_R(k; h)\lesssim \sqrt{\sigma}$;
\item[{\rm (ii)}] $V_R(k; 0) \gtrsim \sqrt{\sigma}$,
provided that  $\nu(R)\le k \le \nu(R(1+\beta))$.
\end{itemize}
\end{lemma}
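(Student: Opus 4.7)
\textbf{Proof plan for Lemma~\ref{Lemma5C}.}
The plan is to reduce both bounds to standard Gaussian integrals. The key preparatory observation is that on the integration range $t\in[\nu(R), \nu(R(1+\beta))]$, we have $\nu^{-1}(t)\in [R, R(1+\beta)]$, so Lemma~\ref{Lemma5A}(i) gives
\[
\sigma(\nu^{-1}(t)) = (1+o(1))\sigma.
\]
Thus the Gaussian weight $\exp\bigl[-((k-t)^2+(k+h-t)^2)/(2\sigma(\nu^{-1}(t)))\bigr]$ is comparable to $\exp\bigl[-((k-t)^2+(k+h-t)^2)/(2\sigma)\bigr]$, both from above and below, uniformly in $t$ on the integration range.

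For the upper bound~(i), I would complete the square: writing $u=t-k$,
\[
(k-t)^2 + (k+h-t)^2 = 2\bigl(u - \tfrac{h}{2}\bigr)^{2} + \tfrac{h^2}{2} \; \ge \; 2\bigl(u - \tfrac{h}{2}\bigr)^{2}.
\]
Extending the integration to all of $\bR$ and using $\int_\bR e^{-s^2/\sigma}\,{\rm d}s \simeq \sqrt{\sigma}$ gives $V_R(k;h) \lesssim \sqrt{\sigma}$.

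For the lower bound~(ii), with $h=0$, the integrand is bounded below by a constant multiple of $e^{-(k-t)^2/\sigma}$ on the integration range. By Lemma~\ref{Lemma5A}(ii), the length of the integration interval is $(1+o(1))\beta\sigma$, and by hypothesis $\sigma^{-1/2} = o(\beta)$, so $\beta\sigma \gg \sqrt{\sigma}$. Since $k$ lies in $[\nu(R), \nu(R(1+\beta))]$, at least one of $k-\nu(R)$, $\nu(R(1+\beta))-k$ is $\ge \tfrac12(\nu(R(1+\beta))-\nu(R)) \gg \sqrt\sigma$. Accordingly, a one-sided subinterval of the form $[k,\, k+\sqrt{\sigma}\,]$ or $[k-\sqrt{\sigma},\, k\,]$ is contained in the integration range, on which
\[
\int e^{-(k-t)^2/\sigma}\,{\rm d}t \;\gtrsim\; \sqrt{\sigma}.
\]
This yields $V_R(k;0) \gtrsim \sqrt{\sigma}$.

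The only real point to watch is that the $(1+o(1))$ factor coming from Lemma~\ref{Lemma5A}(i) must be uniform in $t$ on $[\nu(R),\nu(R(1+\beta))]$, which is exactly what the assumption $\beta\Delta(\sigma)=o(1)$ guarantees via Lemma~\ref{Lemma-AuxEstimates-new}(c). Apart from that, both bounds are essentially trivial reductions to the elementary Gaussian integral; no step appears to pose a genuine obstacle.
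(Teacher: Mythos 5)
Your proof is correct and follows essentially the same approach as the paper: use Lemma~\ref{Lemma5A}(i) to replace $\sigma(\nu^{-1}(t))$ by $\sigma$ uniformly on the integration range, then bound the remaining Gaussian integral above by extending to $\bR$ and below by picking out a subinterval of length $\gtrsim\sqrt{\sigma}$ based on the observation that $L=(1+o(1))\beta\sigma\gg\sqrt{\sigma}$. The only cosmetic difference is that for~(i) the paper simply drops the nonnegative term $(k+h-t)^2$ from the exponent, while you complete the square and drop $h^2/2$; both yield the same Gaussian bound.
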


\begin{proof}
By Lemma~\ref{Lemma5A}(i),
\begin{align*}
V_R(k; h) &\le \int_{\nu(R)}^{\nu(R(1+\beta))}
\exp\bigl[ - (1+o(1))\,(k-t)^2/(2\sigma)\bigr]\, {\rm d}t \\
&<\int_\bR \exp\bigl[ -(1+o(1))u^2/{2\sigma} \bigr]\, {\rm d}u \\
&\lesssim\sqrt{\sigma}\,,
\end{align*}
proving (i).

The proof of (ii) is also straightforward. By Lemma~\ref{Lemma5A}(i),
\begin{align*}
V_R(k; 0) &=
\int_{\nu(R)}^{\nu(R(1+\beta))} \exp\bigl[ - (1+o(1))(k-t)^2/(2\sigma) \bigr]\, {\rm d}t \\
&=
\int_{\nu(R)-k}^{\nu(R(1+\beta))-k} \exp\bigl[ - (1+o(1))u^2/(2\sigma) \bigr]\, {\rm d}u\,.
\end{align*}
Observe that either $\nu (R) - k \le -\tfrac12\,L$, or $\nu (R(1+\beta))-k \ge \tfrac12\, L$,
where $L=\nu (R(1+\beta))-\nu (R)$. By Lemma~\ref{Lemma5A}(ii), $L=(1+o(1))\beta\sigma$, which is
$\gg \sqrt{\sigma}$. Thus,
\[
\int_0^{\frac12\, L} \exp\bigl[ - (1+o(1))u^2/(2\sigma) \bigr]\, {\rm d}u \gtrsim \sqrt{\sigma}\,,
\]
proving the lemma.
\end{proof}

The next lemma gives us more accurate upper bounds.

\begin{lemma}\label{Lemma5B}\mbox{}
\begin{itemize}
\item[{\rm (i)}] $V_R(k; h) \lesssim e^{-ch^2/\sigma}\sqrt{\sigma}$;
\item[{\rm (ii)}]
\[
V_R(k; h) \lesssim
\begin{cases}
e^{-c(\nu(R)-k)^2/\sigma}\sqrt{\sigma}, \ &k<\nu(R), \\
e^{-c(k-\nu(R(1+\beta)))^2/\sigma}\sqrt{\sigma}, \ &k>\nu(R(1+\beta))\,.
\end{cases}
\]
\end{itemize}
\end{lemma}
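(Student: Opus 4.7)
The approach is to manipulate the quadratic form $(k-t)^2 + (k+h-t)^2$ algebraically and use Lemma~\ref{Lemma5A}(i) to treat $\sigma(\nu^{-1}(t))$ as equal to $\sigma$ up to a $(1+o(1))$ factor throughout the integration window $[\nu(R),\nu(R(1+\beta))]$. The resulting integrals are then standard Gaussian estimates.

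For (i), I would complete the square via the identity
\[
(k-t)^2 + (k+h-t)^2 = 2\bigl(t - k - \tfrac{h}{2}\bigr)^2 + \tfrac{1}{2}\, h^2.
\]
Together with Lemma~\ref{Lemma5A}(i), this lets one pull the factor $\exp[-(1-o(1))h^2/(4\sigma)]$ out of the integrand of $V_R(k;h)$, obtaining
\[
V_R(k;h) \lesssim e^{-(1-o(1)) h^2/(4\sigma)} \int_\bR \exp\Bigl[-\tfrac{1-o(1)}{\sigma}\bigl(t-k-\tfrac{h}{2}\bigr)^2\Bigr]\,{\rm d}t.
\]
The Gaussian integral on the right is $\lesssim \sqrt{\sigma}$, which yields the desired bound for any $c < 1/4$.

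For (ii) with $k<\nu(R)$, I would observe that throughout the integration range one has $t - k \ge \nu(R) - k > 0$, so $(k-t)^2 \ge (\nu(R)-k)^2$. Splitting this bound symmetrically,
\[
(k-t)^2 \ge \tfrac{1}{2}(k-t)^2 + \tfrac{1}{2}(\nu(R)-k)^2,
\]
discarding the nonnegative term $(k+h-t)^2$, and again invoking Lemma~\ref{Lemma5A}(i), the integrand is bounded by
\[
\exp\Bigl[-(1-o(1))\tfrac{(\nu(R)-k)^2}{4\sigma}\Bigr]\,\exp\Bigl[-(1-o(1))\tfrac{(k-t)^2}{4\sigma}\Bigr].
\]
The remaining $t$-integral is $\lesssim \sqrt{\sigma}$ by the same Gaussian estimate. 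The case $k > \nu(R(1+\beta))$ is handled symmetrically, using the bound $(t-k)^2 \ge (k-\nu(R(1+\beta)))^2$ on the integration range.

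No step is a real obstacle here; the proof is essentially an algebraic rearrangement of the exponent plus a Gaussian integral bound, with Lemma~\ref{Lemma5A}(i) supplying the approximate constancy of $\sigma(\nu^{-1}(t))$ needed to convert the weighted integral into a standard one. The only mild care required is to track the $(1+o(1))$ factors so that the exponential decay constant $c$ can be chosen uniformly for all sufficiently large $R$; this is harmless since in the end $c$ can be taken to be any number slightly smaller than $\tfrac{1}{4}$.
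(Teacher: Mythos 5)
Your proof is correct and follows essentially the same route as the paper: for (i) both complete the square to extract the factor $e^{-ch^2/\sigma}$, and for (ii) both discard the second square and exploit that $t-k$ stays bounded away from $0$ on the integration range, with Lemma~\ref{Lemma5A}(i) supplying the approximate constancy of $\sigma(\nu^{-1}(t))$. The only cosmetic difference is that for (ii) you split $(k-t)^2$ algebraically in half while the paper uses the Gaussian tail bound $\int_a^\infty e^{-u^2/(2\sigma)}\,{\rm d}u \lesssim e^{-a^2/(2\sigma)}\sqrt\sigma$ directly; both give a valid constant $c>0$.
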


\begin{proof}
We have
\begin{align*}
V_R(k; h) &=
\int_{\nu (R)-k-\frac{h}2}^{\nu(R(1+\beta))-k-\frac{h}2}
\exp\Bigl[ -\frac{1+o(1)}{2\sigma}\,
\bigl( (u-\tfrac12 h)^2 + (u+\tfrac12 h)^2 \bigr)
\Bigr]\, {\rm d}u \\
&=
\int_{\nu (R)-k-\frac{h}2}^{\nu(R(1+\beta))-k-\frac{h}2}
\exp\bigl[ -(1+o(1))(u^2 + \tfrac14 h^2)/\sigma \bigr]\, {\rm d}u \\
&<
e^{-(1+o(1))h^2/(4\sigma)}\,
\int_{\bR} e^{-(1+o(1))u^2/\sigma}\, {\rm d}u \\
&\lesssim e^{-ch^2/\sigma}\, \sqrt{\sigma}\,.
\end{align*}
To prove the second estimate, we note that, for $k<\nu(R)$, we have
\begin{align*}
V_R(k; h) &<
\int_{\nu(R)}^{\nu(R(1+\beta))}
\exp\Bigl[ -\frac{1+o(1)}{2\sigma}\, (t-k)^2 \Bigr]\, {\rm d}t \\
&<
\int_{\nu(R)-k}^\infty e^{-(1+o(1))u^2/(2\sigma)}\, {\rm d}u \\
&< e^{-(1+o(1))(\nu(R)-k)^2/(2\sigma)}\sqrt{\sigma}\,.
\end{align*}
The case $k>\nu(R(1+\beta))$ is very similar and we skip it.
\end{proof}

The next lemma estimates the oscillation of the function $k\mapsto V_R(k; h)$ along $\mathbb Z$.

\begin{lemma}\label{Lemma5D}
$ \displaystyle \sum_{k\in\bZ}
|V_R(k; h) - V_R(k-1; h)| \lesssim \sqrt{\sigma} $.
\end{lemma}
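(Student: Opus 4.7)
The plan is to pass from the discrete total variation to a continuous integral, and then use integration by parts in $t$ to exploit the fact that the integrand $g_t(x) = \exp\bigl[-\bigl((x-t)^2+(x+h-t)^2\bigr)/(2\sigma(\nu^{-1}(t)))\bigr]$ would satisfy $\partial_x g_t = -\partial_t g_t$ identically if $\sigma(\nu^{-1}(t))$ were constant in $t$. The slow variation of $\sigma$ on the integration range (Lemma~\ref{Lemma5A}(i)) will control the resulting error.

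First, I extend $V_R$ to a function of a continuous variable by setting
\[
\tilde V_R(x;h) = \int_{\nu(R)}^{\nu(R(1+\beta))} g_t(x)\, {\rm d}t, \qquad x\in\bR,
\]
so that $V_R(k;h)=\tilde V_R(k;h)$ for $k\in\bZ$. Then the fundamental theorem of calculus gives the basic reduction
\[
\sum_{k\in\bZ} |V_R(k;h)-V_R(k-1;h)| \le \int_\bR |\partial_x \tilde V_R(x;h)|\, {\rm d}x,
\]
and it suffices to bound the right-hand side by $\sqrt\sigma$.

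A direct computation (observing that the numerator of the exponent is a function of $x-t$ alone) yields
\[
\partial_x g_t(x) = -\partial_t g_t(x) - \tfrac{(x-t)^2+(x+h-t)^2}{2}\,\partial_t\!\bigl(1/\sigma(\nu^{-1}(t))\bigr)\, g_t(x).
\]
Integrating over $t\in[\nu(R),\nu(R(1+\beta))]$, the first term telescopes to boundary values $\pm g_{\nu(R)}(x),\pm g_{\nu(R(1+\beta))}(x)$, which contribute, after integration in $x$, a bounded number of Gaussian integrals of total mass $\lesssim \sqrt\sigma\,e^{-h^2/(4\sigma)} \lesssim \sqrt\sigma$.

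The main obstacle is controlling the error term. I bound $|\partial_t(1/\sigma(\nu^{-1}(t)))|$ by computing the chain rule and invoking Lemma~\ref{Lemma-AuxEstimates-new}(a), which gives $\psi''\lesssim \psi'\Delta(\psi')$ and hence $|\partial_t(1/\sigma(\nu^{-1}(t)))|\lesssim \Delta(\sigma)/\sigma^2$. The corresponding Gaussian moment $\int \bigl[(x-t)^2+(x+h-t)^2\bigr]g_t(x)\,{\rm d}x$ is $\lesssim \sqrt\sigma\,e^{-h^2/(4\sigma)}(\sigma+h^2)$ by the substitution $u=x-t+h/2$. Since $e^{-h^2/(4\sigma)}(1+h^2/\sigma)$ is uniformly bounded in $h$, and the $t$-interval has length $\asymp \beta\sigma$ by Lemma~\ref{Lemma5A}(ii), the total error contribution is
\[
\lesssim \beta\sigma\cdot \tfrac{\Delta(\sigma)}{\sigma^2}\cdot \sigma^{3/2} = \beta\,\Delta(\sigma)\cdot \sqrt\sigma,
\]
which is $o(\sqrt\sigma)$ by the standing assumption $\beta\Delta(\sigma)=o(1)$. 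Combining the boundary and error bounds completes the proof. The key point is that the assumption $\beta\Delta(\sigma)=o(1)$ was imposed precisely so that this error control closes.
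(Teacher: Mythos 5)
Your proof is correct, and it takes a genuinely different route from the one in the paper. The paper first truncates the sum to $m_1\le k\le m_2$ using the Gaussian tail estimates, and then writes $V_R(k;h)=V_{1,R}(k;h)-V_{2,R}(k;h)$ with $V_{1,R}$ and $V_{2,R}$ both monotone in $k$ and each bounded by $\lesssim\sqrt{\sigma}$; the total variation of a monotone function is then simply its net change, so no derivative estimates are needed. Your argument instead passes to the continuous variable and bounds the discrete total variation by $\int_\bR|\partial_x\tilde V_R(x;h)|\,{\rm d}x$, and then exploits the near-translation-invariance of the kernel: if $\sigma(\nu^{-1}(t))$ were constant, $\partial_x g_t=-\partial_t g_t$ exactly, so the $t$-integral of $\partial_x g_t$ would telescope to a difference of two Gaussians with $L^1$-norm $\lesssim\sqrt{\sigma}$. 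The deviation from constancy is quantified by $\partial_t(1/\sigma(\nu^{-1}(t)))=\varphi''(t)$, which is $\lesssim\Delta(\sigma)/\sigma^2$ by $\Delta$-regularity (your citation of Lemma~\ref{Lemma-AuxEstimates-new}(a) is a slightly indirect way of saying the same thing: $\psi''\lesssim\psi'\Delta(\psi')$ is equivalent to the $\Delta$-regularity bound on $|\varphi''|$), the Gaussian second moment is $\lesssim\sigma^{3/2}$ uniformly in $h$, and the length of the $t$-interval is $\simeq\beta\sigma$, so the error is $\lesssim\beta\Delta(\sigma)\sqrt{\sigma}=o(\sqrt{\sigma})$. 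Both approaches hinge on the slow variation of $\sigma$, but the paper uses it to establish monotonicity of the two pieces, whereas you use it to control the failure of exact translation invariance; the paper's argument avoids chain-rule computations and error estimates and is thus more elementary, while yours makes the geometric mechanism (a Gaussian kernel sliding almost rigidly as $t$ varies) explicit and needs no preliminary truncation in $k$.
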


\begin{proof}
Set $m_1 = \nu(R) - A\sqrt{\sigma\log\sigma}$, $m_2 = \nu(R(1+\beta)) +A\sqrt{\sigma\log\sigma}$
with sufficiently large $A$. By Lemma~\ref{Lemma5B}(ii), the sums over $k<m_1$ and
$k>m_2$
are $o(1)$ if the constant $A$ was chosen big enough. Hence, we need to show that
\[ \sum_{m_1 \le k \le m_2} |V_R(k; h) - V_R(k-1; h)| \lesssim \sqrt{\sigma}\,. \]

We will represent the function $V_R(k; h)$ as a difference of two increasing
functions in $k$. Recalling that $\sigma (\nu^{-1}(t)) = 1/\varphi'(t))$,
we get
\begin{align*}
V_R(k; h) &= \int_{\nu(R)}^{\nu(R(1+\beta))}
\exp\Bigl[\,
- \frac{(k-t)^2+(k+h-t)^2}{2\sigma(\nu^{-1}(t))}
\,\Bigr]\, {\rm d}t \\
&= \int_{\nu(R)}^{\nu(R(1+\beta))}
\exp\bigl[\,
- \tfrac12\, \varphi'(t)((k-t)^2+(k+h-t)^2)
\,\bigr]\, {\rm d}t \\
&= \int_{\nu(R)-k-\frac12 h}^{\nu(R(1+\beta))-k-\frac12 h}
\exp\bigl[\,
- \tfrac12\, \varphi'(x+k+\tfrac12\, h)(x^2 + \tfrac14\, h^2)
\,\bigr]\, {\rm d}x \\
&= \Bigl(\, \int_{\nu(R)-k-\frac12 h}^{\nu(R(1+\beta))-m_1 -\frac{h}2}
- \int_{\nu(R(1+\beta))-k-\frac12 h}^{\nu(R(1+\beta))-m_1 -\frac{h}2} \,\Bigr)\\
&\qquad\qquad\qquad\qquad\qquad\exp\bigl[
- \tfrac12\, \varphi'(x+k+\tfrac12\, h)(x^2 + \tfrac14\, h^2)
\,\bigr]\, {\rm d}x \\
&= V_{1, R}(k; h) - V_{2, R}(k; h).
\end{align*}

First, observe that both functions $V_{1, R}$ and $V_{2, R}$ have uniform upper bounds
\begin{equation}\label{eq:UB-V_1-V_2}
V_{1, R}, V_{2, R} \lesssim \sqrt{\sigma}\,.
\end{equation}
Indeed, in the integration range, $\nu(R) \le x+k+\tfrac12\, h \le \nu(R(1+\beta)) + (m_2-m_1)
= \nu + O(\beta\sigma)$. Since $\beta\sigma\Delta(\sigma) \varphi'(\nu) =
\beta \Delta(\sigma) = o(1)$, by Lemma~\ref{Lemma-AuxEstimates-new}(b), in this range,
$\varphi'(x+k+\tfrac12\, h) = (1+o(1))\varphi'(\nu) = (1+o(1))\sigma^{-1}$,
which yields~\eqref{eq:UB-V_1-V_2}.

Next, we observe that since the function $t\mapsto \varphi'(t)$ decreases,
the functions $V_{1, R}$ and $V_{2, R}$ are increasing functions in $k$.

The rest is straightforward:
\begin{align*}
 \sum_{m_1 \le k \le m_2}
|V_R(k; h) &- V_R(k-1; h)| \\
& 
\le
\sum_{1\le j\le 2}\,\sum_{m_1 \le k \le m_2}
|V_{j, R}(k; h) - V_{j, R}(k-1; h)|
\\
& 
= \sum_{1\le j\le 2}\,\sum_{m_1 \le k \le m_2}
\bigl[ V_{j, R}(k-1; h) - V_{j, R}(k; h) \bigr]
\\
&   < \sum_{1\le j\le 2}V_{j, R}(m_1; h) 
\\
&  \lesssim \sqrt{\sigma}\,,
\end{align*}
proving the lemma.
\end{proof}

Set
\[
V_R(h) = \sum_{k\in\bZ} V_R(k; h)\,.
\]

\begin{lemma}\label{Lemma10a} We have
\[
V_R(0) \simeq \beta \sigma^{3/2}\,.
\]
\end{lemma}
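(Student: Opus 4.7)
The plan is to evaluate $V_R(0)$ by exchanging the sum and integral (justified by positivity of the integrand via Tonelli), and then recognizing the resulting inner sum as a Gaussian theta-type sum comparable to $\sqrt{\sigma}$.

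More concretely, setting $h=0$ gives $(k-t)^2+(k+h-t)^2 = 2(k-t)^2$, so
\[
V_R(0) = \sum_{k\in\bZ}\int_{\nu(R)}^{\nu(R(1+\beta))} \exp\Bigl[-\frac{(k-t)^2}{\sigma(\nu^{-1}(t))}\Bigr]\,{\rm d}t = \int_{\nu(R)}^{\nu(R(1+\beta))} S(t)\,{\rm d}t,
\]
where $S(t) = \sum_{k\in\bZ}\exp[-(k-t)^2/\sigma(\nu^{-1}(t))]$. By Lemma~\ref{Lemma5A}(i), $\sigma(\nu^{-1}(t))=(1+o(1))\sigma$ uniformly on the integration range, so up to the $(1+o(1))$ factor the inner sum is $\sum_{k\in\bZ}e^{-(k-t)^2/\sigma}$.

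Next I would show $S(t)\simeq\sqrt{\sigma}$ uniformly in $t$. The upper bound is immediate by comparing the sum to an integral (since $\sqrt\sigma\to\infty$, Riemann sums for $u\mapsto e^{-u^2/\sigma}$ of step $1$ are $\simeq \int_\bR e^{-u^2/\sigma}\,{\rm d}u = \sqrt{\pi\sigma}$); alternatively one applies Poisson summation, whose zeroth term is $\sqrt{\pi\sigma}$ and all other terms are exponentially small in $\sigma$. The lower bound holds because, choosing the nearest integer $k_0$ to $t$, the $\sqrt\sigma$ consecutive terms $k_0, k_0\pm 1,\dots$ each contribute $\gtrsim e^{-1}$ weight once $|k-t|\lesssim\sqrt\sigma$.

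Finally, integrating $S(t)\simeq\sqrt\sigma$ against ${\rm d}t$ over $[\nu(R),\nu(R(1+\beta))]$ and invoking Lemma~\ref{Lemma5A}(ii), which gives $\nu(R(1+\beta))-\nu(R)=(1+o(1))\beta\sigma$, yields
\[
V_R(0) \simeq \sqrt{\sigma}\cdot\beta\sigma = \beta\sigma^{3/2},
\]
as required. The only mildly delicate point is ensuring that the $\sigma$-dependence on $t$ inside $\sigma(\nu^{-1}(t))$ does not destroy the two-sided comparison; this is exactly handled by Lemma~\ref{Lemma5A}(i), which already guarantees $\sigma(\nu^{-1}(t)) = (1+o(1))\sigma$ throughout the integration interval. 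No further obstacle is expected.
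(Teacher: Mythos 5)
Your proof is correct, and it takes a slightly different route from the paper's. The paper keeps the sum over $k$ on the outside and estimates $V_R(0)=\sum_k V_R(k;0)$ directly: the lower bound comes from restricting to $\nu(R)\le k\le\nu(R(1+\beta))$ and invoking Lemma~\ref{Lemma5C}(ii) ($V_R(k;0)\gtrsim\sqrt\sigma$ there), while the upper bound splits the sum into the middle range plus two tails, controls the middle range by Lemma~\ref{Lemma5C}(i), and kills the tails with the Gaussian decay of Lemma~\ref{Lemma5B}(ii). You instead apply Tonelli to swap the order, reducing the problem to the uniform two-sided estimate $S(t)=\sum_k e^{-(k-t)^2/\sigma(\nu^{-1}(t))}\simeq\sqrt\sigma$ on the integration range, then integrate over an interval of length $(1+o(1))\beta\sigma$. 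Your version is more self-contained and avoids invoking Lemma~\ref{Lemma5B}/\ref{Lemma5C} at all, at the cost of a fresh Gaussian-theta-sum estimate; the paper's version is structurally consistent with the surrounding lemmas, which it reuses. Both arguments are short and correct, and both rely on Lemma~\ref{Lemma5A}(i),(ii) at the same points (constancy of $\sigma$ and the length of the $\nu$-interval).
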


\begin{proof}
To prove the lower bound, we write
\[
V_R(0) \ge \sum_{\nu(R)\le k \le \nu(R(1+\beta))}  V_R(k; 0)
\]
and note that by Lemma~\ref{Lemma5A}(ii) and Lemma~\ref{Lemma5C},
the RHS is
\[
\simeq \sqrt{\sigma} \bigl[ \nu(R(1+\beta)) - \nu (R) \bigr] \simeq \beta \sigma^{3/2}.
\]
To prove the upper bound, we split the sum into three parts:
\[
V_R(0) = \Bigl( \sum_{k<\nu (R)}\ + \sum_{\nu(R)\le k \le \nu(R(1+\beta))}
+ \sum_{k>\nu(R(1+\beta))}\, \Bigr) V_R(k; 0)\,.
\]
By Lemma~\ref{Lemma5B}(ii), the first and the third sums are $O(\sigma)=o(\beta\sigma^{3/2})$,
while, by the above, the middle sum is $\simeq \beta\sigma^{3/2}$.
\end{proof}

\begin{lemma}\label{Lemma10b} For $ h^2 \lesssim \sigma $, we have
\[
|V_R(h) - V_R(0)| \lesssim (1+ h^2\beta)\sqrt{\sigma}\,.
\]
\end{lemma}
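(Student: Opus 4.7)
The plan is to reduce the discrete sum $V_R(h)$ to a continuous integral via Poisson summation in $k$, which makes the $h$-dependence explicit as a single Gaussian factor $e^{-h^2/(4\tilde\sigma(t))}$, and then exploit the fact that $h^2/\sigma \lesssim 1$ to linearize this exponential.

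First I would swap sum and integral (valid by non-negativity) to write
\[
V_R(h) = \int_{\nu(R)}^{\nu(R(1+\beta))} \sum_{k\in\bZ} \exp\Bigl[-\frac{(k-t)^2 + (k+h-t)^2}{2\tilde\sigma(t)}\Bigr] dt, \quad \tilde\sigma(t) = \sigma(\nu^{-1}(t)).
\]
The algebraic identity $(k-t)^2 + (k+h-t)^2 = 2(k+h/2-t)^2 + h^2/2$ factors $e^{-h^2/(4\tilde\sigma(t))}$ out of the inner sum, leaving $\sum_k e^{-(k+h/2-t)^2/\tilde\sigma(t)}$. Applying Poisson summation to the latter, the zero-frequency mode contributes $\sqrt{\pi\tilde\sigma(t)}$, while the higher modes contribute at most $O(\sqrt{\tilde\sigma(t)}\, e^{-\pi^2 \tilde\sigma(t)}) = O(\sqrt{\sigma}\, e^{-c\sigma})$ after invoking Lemma~\ref{Lemma5A}(i).

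Next I would form the difference $V_R(h) - V_R(0)$. Since $h^2 \lesssim \sigma$, we have $h^2/(4\tilde\sigma(t)) \lesssim 1$, so the elementary bound $|e^{-x}-1| \le x$ for $x \ge 0$ together with $\tilde\sigma(t) \simeq \sigma$ yields $|e^{-h^2/(4\tilde\sigma(t))} - 1| \lesssim h^2/\sigma$. Integrating $\sqrt{\tilde\sigma(t)} \cdot (h^2/\sigma)$ over an interval of length $\nu(R(1+\beta)) - \nu(R) \simeq \beta\sigma$ (Lemma~\ref{Lemma5A}(ii)) gives a main contribution of size $\sqrt{\sigma} \cdot (h^2/\sigma) \cdot \beta\sigma = h^2 \beta \sqrt{\sigma}$. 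The Poisson tail contributes at most $O(e^{-c\sigma}\beta\sigma^{3/2}) = o(1)$, and combining the two produces a bound dominated by $(1 + h^2\beta)\sqrt{\sigma}$ (the spare $\sqrt{\sigma}$ simply absorbs the exponentially small Poisson tail).

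The only subtlety is controlling how the variation of $\tilde\sigma(t)$ across the integration range affects the exponent $h^2/(4\tilde\sigma(t))$. This is precisely where the hypothesis $\beta\Delta(\sigma) = o(1)$ enters through Lemma~\ref{Lemma5A}(i): since $\tilde\sigma(t) = (1+o(1))\sigma$ uniformly on $[\nu(R), \nu(R(1+\beta))]$, we have $h^2/\tilde\sigma(t) = (1+o(1))h^2/\sigma$ throughout, so the linearization $|e^{-h^2/(4\tilde\sigma(t))}-1| \lesssim h^2/\sigma$ is uniform in $t$, and the $(1+o(1))$ slack is absorbed by the implicit constants. As a sanity check, the $h=0$ version of the same computation recovers $V_R(0) \simeq \sqrt{\pi}\,\beta\sigma^{3/2}$, consistent with Lemma~\ref{Lemma10a}.
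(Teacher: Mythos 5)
Your proof is correct, but it takes a genuinely different route from the paper's. The paper reduces the discrete sum $V_R(h)=\sum_k V_R(k;h)$ to a continuous integral $\int_\bR V_R(\kappa;h)\,{\rm d}\kappa$ by invoking the total-variation bound of Lemma~\ref{Lemma5D} (and the tail cut-off from Lemma~\ref{Lemma5B}(ii)); this step costs an $O(\sqrt{\sigma})$ error, which is precisely what the $+\sqrt\sigma$ in the statement is there to absorb. The paper then shifts $\kappa\mapsto\kappa-\tfrac12 h$ and estimates $|V_R(\kappa;h)-V_R(\kappa-\tfrac12 h;0)|\lesssim h^2/\sqrt\sigma$ pointwise. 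You instead factor the Gaussian $e^{-h^2/(4\tilde\sigma)}$ out of the inner $k$-sum via the identity $(k-t)^2+(k+h-t)^2=2(k+\tfrac h2-t)^2+\tfrac{h^2}2$, and then evaluate $\sum_k e^{-(k+\frac h2-t)^2/\tilde\sigma(t)}$ by Poisson summation, where the nonzero frequencies contribute only $O(\sqrt\sigma\,e^{-c\sigma})$. This makes the $h$-dependence explicit as a single factor, replaces the $O(\sqrt\sigma)$ discretization error by an exponentially small one, and so in fact yields the slightly sharper bound $O(h^2\beta\sqrt\sigma)+o(1)$. The trade-off is that your argument leans harder on the exact Gaussian shape of the cut-off, whereas Lemma~\ref{Lemma5D} (the monotone-decomposition/total-variation estimate) is a more generic tool that the paper has already set up and reuses in the partial-summation steps of Propositions~\ref{Lemma5} and~\ref{Lemma5a}; from the paper's viewpoint there is little incentive to introduce a separate Poisson-summation mechanism just for this lemma. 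Both proofs rely in the same way on Lemma~\ref{Lemma5A} for $\tilde\sigma(t)\simeq\sigma$ and for the interval length $\simeq\beta\sigma$.
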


\begin{proof}
First, we observe that
\[
\Bigl| V_R(h) - \int_{\bR} V_R(\kappa; h)\, {\rm d}\kappa \Bigr|
\lesssim \sqrt{\sigma}\,.
\]
Indeed, as in the proof of Lemma~\ref{Lemma5D}, we set
$m_1 = \nu(R) - A\sqrt{\sigma\log\sigma}$, $m_2 = \nu(R(1+\beta)) +A\sqrt{\sigma\log\sigma}$
with sufficiently large $A$. Then, by Lemma~\ref{Lemma5B}(ii),
\[
V_R(h) - \int_{\bR} V_R(\kappa; h)\, {\rm d}\kappa
= \sum_{m_1 \le k \le m_2} \Bigl[ V_R(k; h) -
\int_k^{k+1} V_R(\kappa; h)\, {\rm d}\kappa \Bigr] + o(1)\,,
\]
provided that the constant $A$ was chosen sufficiently big. It remains to recall that,
as we have shown in the proof of Lemma~\ref{Lemma5D}, the total variation of the function
$k\mapsto V_R(k; h)$ on $[m_1, m_2]$ is $\lesssim\sqrt{\sigma}$.

Thus, we need to bound the difference of the integrals
\[
\Bigl| \int_{\bR} V_R(\kappa; h)\, {\rm d}\kappa -
\int_{\bR} V_R(\kappa; 0)\, {\rm d}\kappa\Bigr|
\le \int_{m_1}^{m_2} \bigl| V_R(\kappa; h) - V_R(\kappa-\tfrac12\, h; 0) \bigr|\, {\rm d}\kappa
+ o(1)\,.
\]
Estimating the integrand on the RHS, we get
\begin{align*}
\bigl|&V_R(\kappa; h) - V_R(\kappa-\tfrac12\, h; 0) \bigr| \\
&=
\Bigl|
\int_{\nu(R)-\kappa-\frac12\, h}^{\nu (R(1+\beta))-\kappa-\frac12\, h}
\bigl[ \exp\bigl( -\varphi'(u+\kappa+\tfrac12\, h)(u^2 + \tfrac14\, h^2)\,\bigr)\\
&\qquad\qquad\qquad\qquad\qquad\qquad\qquad\qquad -\exp\bigl( -\varphi'(u+\kappa+\tfrac12\, h)u^2 \bigr]\, {\rm d}u
\Bigr| \\
&\le \int_{\nu(R)-\kappa-\frac12\, h}^{\nu (R(1+\beta))-\kappa-\frac12\, h}
\exp\bigl( - (1+o(1))u^2/\sigma \bigr)\\
&\qquad\qquad\qquad\qquad\qquad\qquad\qquad\qquad\times\bigl| \exp\bigl( -(1+o(1))h^2/(4\sigma) \bigr) - 1\bigr|\, {\rm d}u
\\
&\lesssim (h^2/\sigma) \cdot \sqrt{\sigma} = h^2/\sqrt{\sigma}\,,
\end{align*}
Thus,
\begin{multline*}
\Bigl| \int_{\bR} V_R(\kappa; h)\, {\rm d}\kappa -
\int_{\bR} V_R(\kappa; 0)\, {\rm d}\kappa\Bigr|
\lesssim (m_2-m_1)h^2/\sqrt{\sigma} + o(1) \\
\lesssim (\beta\sigma + \sqrt{\sigma\log\sigma})h^2/\sqrt{\sigma}
\lesssim h^2\beta\sqrt{\sigma}\,,
\end{multline*}
completing the proof.
\end{proof}

\subsection{Lower bound for Weyl-type sums $W_R$}\label{subsec:LB-Lebesgue}

Throughout this section we assume that the function $\varphi$ is $\Delta$-regular.
As above, $\nu=\nu(R)$ and $\sigma=\sigma(R)$. Let $\beta=\beta(R)$ be a small parameter
satisfying
\begin{equation}\label{eq:beta}
\sqrt{\frac{\log\sigma}\sigma} \ll \beta, \quad \beta\Delta(\sigma) = o(1)
\end{equation}
(later, in applications, we set $\rho(R)=R\beta(R)$).
We aim to estimate from below the exponential sum
\[
W_R(\theta) = \sum_{|k-\nu|\le N} \xi (k) e(k\theta)e^{(k-\nu)^2/(2\sigma)}\,,
\qquad N \gg \sqrt{\sigma\log\sigma}\,,
\]
on a sufficiently dense set of points $(R, \theta)$.

For $M_1<\nu(R)$ and $M_2>\nu(R(1+\beta))$, we set
\[
S^*(M_1, M_2; h) = \max_{M_1\le k \le M_2}\,
\Bigl| \sum_{k\le s \le M_2} \xi(s)\bar\xi(s+h)\Bigr|\,.
\]

\begin{proposition}\label{Lemma5}
Suppose that
\begin{equation}
\label{eq:condition1}
\sum_{\nu \le k \le \nu+\frac12 \beta\sigma}\, |\xi(k)|^2 \gtrsim \beta\sigma\,,
\end{equation}
and that, for some $p>1$,
\begin{equation}
\label{eq:condition2}
\sum_{h \ge 1} (1+\beta h)^{-p} S^*(M_1, M_2; h) \ll \beta\sigma,
\end{equation}
with
\[
M_1 = \nu (R) - \beta\sigma, \quad M_2 =  \nu(R(1+\beta)) + \beta\sigma\,.
\]
Then, for every $\vartheta\in [-\frac12, \frac12]$, there exist
\[
R'\in [R, R(1+\beta)], \quad \theta'\in (\vartheta-\beta, \vartheta+\beta),
\]
such that
\[
|W_{R'}(\theta')| \gtrsim \sigma^{1/4}\,.
\]
\end{proposition}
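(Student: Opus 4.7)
\medskip\noindent\textbf{Proof plan.}
The natural object, suggested by the excerpt, is the weighted second moment
\[
X = X(R,\vartheta) = \int_{R}^{R(1+\beta)} \int_{-1/2}^{1/2} |W_s(\theta)|^2 \, g\bigl(\beta^{-1}(\vartheta-\theta)\bigr)\, {\rm d}\nu(s)\,{\rm d}\theta,
\]
where $g$ is a fixed smooth non-negative even bump on $[-\tfrac12,\tfrac12]$ with $\int g=1$. The plan is to prove the matching estimates $X\gtrsim \beta^2\sigma^{3/2}$ and $X\le \|g\|_\infty\sup|W_s(\theta)|^2\cdot \beta\cdot \beta\sigma$, where the supremum is taken over $s\in[R,R(1+\beta)]$ and $|\theta-\vartheta|\le \beta/2$. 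The upper bound is trivial from Lemma~\ref{Lemma5A}(ii); combined with the lower bound, it gives $|W_{R'}(\theta')|^2\gtrsim \sqrt{\sigma}$ at some admissible $(R',\theta')$, which is exactly the claim.

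To compute $X$, I expand $|W_s(\theta)|^2$, set $h=l-k$, and integrate against the Gaussian weight in $s$. The interior $s$-integral produces exactly the kernel $V_R(k;h)$ from the previous subsection (up to tails $|k-\nu(s)|>N$, which contribute at most $\sigma^{-A^2/2}$ by the Gaussian cut-off and are absorbed). The $\theta$-integral against $g(\beta^{-1}(\vartheta-\theta))$ produces $\beta\,\widehat g(\beta h)\,e(-h\vartheta)$. Thus
\[
X = \beta \sum_{h\in\bZ} \widehat g(\beta h)\, e(-h\vartheta)\, \sum_{k}\xi(k)\bar\xi(k+h)V_R(k;h).
\]

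The term $h=0$ is the diagonal. Since $\widehat g(0)=1$, using Lemma~\ref{Lemma5C}(ii) on the range $[\nu(R),\nu(R(1+\beta))]$ and hypothesis~\eqref{eq:condition1}, we get
\[
\beta\sum_k |\xi(k)|^2 V_R(k;0) \;\ge\; \beta \cdot \sqrt{\sigma}\cdot \sum_{\nu\le k\le \nu+\tfrac12\beta\sigma}|\xi(k)|^2 \;\gtrsim\; \beta^2\sigma^{3/2}.
\]
For the off-diagonal terms $h\ne 0$, I apply Abel summation on $\sum_k \xi(k)\bar\xi(k+h)V_R(k;h)$, with partial sums bounded by $S^*(M_1,M_2;h)$ and the weight $V_R(\,\cdot\,;h)$ estimated in $L^\infty$ via Lemma~\ref{Lemma5C}(i) and in total variation via Lemma~\ref{Lemma5D}; both give $O(\sqrt{\sigma})$. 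The contribution outside $[M_1,M_2]$ is negligible thanks to Lemma~\ref{Lemma5B}(ii): the choice $M_2-\nu(R(1+\beta)) = \beta\sigma \gg \sqrt{\sigma\log\sigma}$ yields super-polynomial decay. Using that $g\in C_c^\infty$ implies $|\widehat g(\beta h)|\lesssim_p (1+\beta|h|)^{-p}$ for any $p>1$, the off-diagonal contribution is at most
\[
\beta\sqrt{\sigma}\,\sum_{h\ge 1}(1+\beta h)^{-p} S^*(M_1,M_2;h) \;\ll\; \beta\sqrt{\sigma}\cdot \beta\sigma \;=\; \beta^2\sigma^{3/2}
\]
by hypothesis~\eqref{eq:condition2}. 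Combining, $X\gtrsim \beta^2\sigma^{3/2}$, and the sup-bound finishes the proof.

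The only delicate step is the bookkeeping for the off-diagonal: one must make sure that (a) the truncation $|k-\nu(s)|\le N$ in the definition of $W_s$ does not interfere with the Abel summation over $[M_1,M_2]$ (handled by the Gaussian tails, since $M_1,M_2$ are $\beta\sigma$-far from $\nu(R),\nu(R(1+\beta))$, which is in turn $\gg\sqrt{\sigma\log\sigma}$), and (b) the condition $\beta\Delta(\sigma)=o(1)$ from~\eqref{eq:beta} is what allows Lemmas~\ref{Lemma5A}--\ref{Lemma5D} to kick in uniformly over $[R,R(1+\beta)]$, so that $V_R$ genuinely behaves like a Gaussian of width $\sqrt{\sigma}$ with essentially constant $\sigma$.
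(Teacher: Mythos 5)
Your proposal is correct and follows essentially the same route as the paper's own proof: introduce the averaged quantity $X$ (the paper calls it $\widetilde X$ and works with the untruncated sum $\widetilde W_s$, absorbing the truncation error $W_s-\widetilde W_s=o(1)$ up front rather than in the expansion), rewrite it as $\beta\sum_h \widehat g(\beta h)e(-h\vartheta)\sum_k\xi(k)\bar\xi(k+h)V_R(k;h)$, bound the diagonal from below via Lemma~\ref{Lemma5C}(ii) and~\eqref{eq:condition1}, bound the off-diagonal via Abel summation with Lemma~\ref{Lemma5C}(i)+\ref{Lemma5D}, the tail estimates of Lemma~\ref{Lemma5B}, the rapid decay of $\widehat g$, and~\eqref{eq:condition2}, and finally extract the pointwise lower bound from $X\gtrsim\beta^2\sigma^{3/2}$ together with the trivial bound $X\lesssim\beta^2\sigma\cdot\sup|W_s(\theta')|^2$.
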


Note that, by Lemma~\ref{Lemma5A}(ii),
$M_2 - M_1 \lesssim \beta\sigma$, and therefore, we have 
$S^*(M_1, M_2; h) \lesssim \beta\sigma$. Hence,
for $1<q<p/(p-1)$, the terms in the sum on the LHS of~\eqref{eq:condition2} with $h\ge \beta^{-q}$ can be discarded.  
Lemma~\ref{Lemma5B}(i) implies that 
the terms with $h\gg \sqrt{\sigma\log\sigma}$ can be discarded as well.

Furthermore, the $\Delta$-regularity of $\varphi$ and our conditions \eqref{eq:beta} on $\beta$ yield that $M_1,M_2\simeq \nu(R)$.

\begin{proof}
To simplify our notation, we extend the sequence $\xi$, letting $\xi$ equal $0$
on negative integers, and set
\[
\widetilde{W}_R(\theta)
= \sum_{k\in\bZ} \xi(k)e(k\theta)\exp\Bigl[ - \frac{(k-\nu)^2}{2\sigma} \Bigr].
\]
It's easy to see that, for $N = A\,\sqrt{\sigma\log\sigma}$ with sufficiently large positive
$A$ (used in the definition of the sum $W_R$), we have
\[
\sum_{|k-\nu|\ge N} \exp\Bigl[ - \frac{(k-\nu)^2}{2\sigma} \Bigr] \lesssim
\int_N^\infty e^{-x^2/(2\sigma)}\, {\rm d}x = o(1),
\]
so that, in order to prove Proposition~\ref{Lemma5}, it will be enough to estimate the sum
$\widetilde{W}_R$, rather than $W_R$, from below.

We fix a non-negative function $g\in C_0^\infty(\bR)$, such that $\operatorname{supp}(g)\subset (-\tfrac12, \tfrac12)$, and $\displaystyle \int_\bR g = 1$, fix $\vartheta\in [-\tfrac12, \tfrac12]$,
and set
\[
\widetilde{X} =
\int_R^{R(1+\beta)}\, \int_{-1/2}^{1/2} |\widetilde{W}_s(\theta)|^2\,
g(\beta^{-1}(\vartheta-\theta))\, {\rm d}\theta\, {\rm d}\nu (s)\,.
\]
By Lemma~\ref{Lemma5A}(ii), $\nu(R(1+\beta))-\nu(R) = (1+o(1))\beta\sigma$, so, to prove Proposition~\ref{Lemma5}, 
we need to show that $\widetilde{X} \gtrsim \beta^2\sigma^{3/2}$.

First, we rewrite $\widetilde X$ as a Fourier series
\begin{align*}
\widetilde{X} &=
\int_R^{R(1+\beta)}\, \underset{k_1, k_2\in\bZ}{\sum\, \sum}\, \xi(k_1)\bar\xi(k_2)\,
\Bigl[
\int_{-1/2}^{1/2} e((k_1-k_2)\theta) g(\beta^{-1}(\vartheta-\theta))\, {\rm d}\theta
\Bigr] \\
& \qquad \qquad \qquad \qquad \qquad
\cdot \exp\Bigl[ -\frac{(k_1-\nu(s))^2 + (k_2-\nu(s))^2}{2\sigma(s)}\, \Bigr]\, {\rm d}\nu(s) \\
&= \beta\, \int_R^{R(1+\beta)}\, \underset{k_1, k_2\in\bZ}{\sum\, \sum}\,
e((k_1-k_2)\vartheta) \widehat{g}(\beta(k_2-k_1))\, \xi(k_1)\bar \xi(k_2) \\
& \qquad \qquad \qquad \qquad \qquad
\cdot \exp\Bigl[ -\frac{(k_1-\nu(s))^2 + (k_2-\nu(s))^2}{2\sigma(s)}\, \Bigr]\, {\rm d}\nu(s) \\
&= \beta\,
\sum_{h\in\bZ} \widehat{g}(\beta h) e(-h\vartheta)\,
\sum_{k\in\bZ} \xi(k)\bar\xi(k+h) V_R(k; h)\,,
\end{align*}
where
\begin{align*}
V_R(k; h) &= \int_R^{R(1+\beta)} \exp\Bigl[ -\frac{(k-\nu(s))^2 + (k+h-\nu(s))^2}{2\sigma(s)}\, \Bigr]\, {\rm d}\nu(s) \\
&=
\int_{\nu(R)}^{\nu(R(1+\beta))} \exp\Bigl[ -\frac{(k-t)^2 + (k+h-t)^2}{2\sigma(\nu^{-1}(t))}\, \Bigr]\, {\rm d}t \,.
\end{align*}
Then, we apply a usual strategy: in order to estimate the sum
\[
\widetilde{X} =
\beta\,
\sum_{h\in\bZ} \widehat{g}(\beta h) e(-h\vartheta)\,
\sum_{k\in\bZ} \xi(k)\bar\xi(k+h) V_R(k; h)\,,
\]
from below, we split it into two parts, estimate the diagonal terms ($h=0$)  from below,
and the non-diagonal terms ($|h|\ge 1$) from above:
\begin{align*}
|\widetilde{X}| &\ge \beta \sum_{k\in\bZ} |\xi(k)|^2 V_R(k; 0)
- \beta \sum_{|h|\ge 1} |\widehat{g}(\beta h)|\,
\Bigl|\,
\sum_{k\in\bZ} \xi(k)\bar\xi(k+h) V_R(k; h)
\,\Bigr| \\
&= {\rm DT} - {\rm NDT}\,.
\end{align*}
By Lemma~\ref{Lemma5C}(ii),
\[
{\rm DT} \gtrsim \beta\sqrt{\sigma}\, \sum_{\nu \le k \le  \nu + \frac12\, \beta\sigma} |\xi(k)|^2
\stackrel{\eqref{eq:condition1}}\gtrsim \beta^2\sigma^{3/2}\,,
\]
so it remains to show that the non-diagonal terms are $\ll \beta^2\sigma^{3/2}$.

Next, we cut the non-diagonal sums. Recalling that the Fourier transform $\widehat{g}(\lambda)$
decays faster than any negative power of $\lambda$, and using Lemma~\ref{Lemma5B}, given $p>1$,
we get
\[
{\rm NDT} \lesssim
\beta \sum_{1\le h \le A\sqrt{\sigma\log\sigma}} \frac1{(1+\beta h)^p}\,
\Bigl|\,
\sum_{M_1\le k \le M_2} \xi(k)\bar\xi(k+h) V_R(k; h)
\,\Bigr| + O(1)\,.
\]

Applying first summation by parts and then Lemma~\ref{Lemma5C}(i) and Lemma~\ref{Lemma5D},
we estimate the inner sum by
\begin{align*}
\Bigl|& \sum_{M_1\le k \le M_2} \xi(k)\bar\xi(k+h) V_R(k; h) \,\Bigr| \\
& \qquad \le \Bigl( V_R(M_1; h) + \sum_{M_1<k\le M_2} |V_R(k; h)-V_R(k-1; h)|\Bigr)
\cdot S^*(M_1, M_2; h) \\
& \qquad \lesssim \sqrt{\sigma} S^*(M_1, M_2; h)\,,
\end{align*}
where, as before,
\[
S^*(M_1, M_2; h) = \max_{M_1\le k \le M_2}
\Bigl|\,
\sum_{k\le s \le M_2} \xi(s)\bar\xi(s+h)
\,\Bigr|\,.
\]
Hence,
\[
{\rm NDT} \lesssim
\beta\sqrt{\sigma}\,
\sum_{1 \le h \le A\sqrt{\sigma\log\sigma}} (1+\beta h)^{-p}\,
S^*(M_1, M_2; h) + O(1) \stackrel{\eqref{eq:condition2}}\ll \beta^2\sigma^{3/2}\,,
\]
completing the proof of Proposition~\ref{Lemma5}.
\end{proof}

\subsection{Tying the loose ends together}

Combining Proposition~\ref{Lemma5} with Pro\-position~\ref{LemmaL-W-V} and Proposition~\ref{Lemma-subharm},
we arrive at the following result.

\begin{theorem}\label{thm:main1}
Let $F_\xi (z) = \sum_{n\ge 0} \xi(n) a(n) z^n $ be an entire function with smooth
coefficients $a(n) = \exp\Bigl[ -\displaystyle \int_0^n \varphi \Bigr]$ with a $\Delta$-regular
function $\varphi$. Let $\sigma = \psi'\circ\log$,
where $\psi$ is the function inverse to $\varphi$. Let $\beta$
be equal to $A\sigma^{-1/2}\log\sigma$ with $A\gg 1$, or to $\sigma^{-1/2}\log^a\sigma$
with $a>\tfrac12$,
or to $\sigma^{-c}$ with $0<c<\tfrac12$, and let $\beta \Delta(\sigma) = o(1)$ as $R\to\infty$.
Suppose that $\xi$ is a bounded sequence satisfying
conditions~\eqref{eq:condition1} and~\eqref{eq:condition2} in Proposition~\ref{Lemma5}.
Then, the zero set of $F_\xi$ is $(\gamma, \rho)$-equidistributed with the radial gauge $\rho=R\beta$.
\end{theorem}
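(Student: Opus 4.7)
The plan is to apply Proposition~\ref{Lemma-subharm} to the pair $V(z)=\log\mu(|z|)$ and $V_1(z)=\log|F_\xi(z)|$, whose Riesz measures are $\gamma$ and $n_{F_\xi}$ respectively. The theorem then reduces to four verifications: that $\rho(R)=R\beta(R)$ is a radial gauge; that the density $\Gamma=(2\pi)^{-1}\sigma/R^2$ of $\gamma$ satisfies the growth condition~\eqref{eq-1} and the slow-variation condition~\eqref{eq-2}; the global upper bound~\eqref{eq-3}; and the lower bound~\eqref{eq-4} on a $\rho$-dense subset of $\bC$.

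The first three verifications are routine. The product $\Gamma\rho^2\simeq\sigma\beta^2$ equals $A^2\log^2\sigma$, $\log^{2a}\sigma$, or $\sigma^{1-2c}$ in the three prescribed regimes, each diverging with $R$, which yields~\eqref{eq-1}. For~\eqref{eq-2}, $|R'-R|\le R\beta$ gives $(R'/R)^2=1+o(1)$, while $|\log R'-\log R|=O(\beta)=o(1/\Delta(\sigma))$ together with Lemma~\ref{Lemma-AuxEstimates-new}(c) gives $\sigma(\log R')/\sigma(\log R)=1+o(1)$; a parallel computation using Lemma~\ref{Lemma-AuxEstimates-new}(a) shows $\rho'(R)=\beta+R\beta'=o(1)$, so $\rho$ is a radial gauge. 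For the upper bound~\eqref{eq-3}, boundedness of $\xi$ allows Proposition~\ref{Lemma-L-W-V-upper} to be applied with $B=0$, giving $|F_\xi(Re(\theta))|\lesssim \sigma^{1/2}\mu$, whence $\log|F_\xi|\le V+O(\log\sigma)\le V+O(\Gamma\rho^2)$.

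The substantive step is~\eqref{eq-4}. Combining Proposition~\ref{LemmaL-W-V}, which yields
$$|F_\xi(Re(\theta))|\ge \mu|W_R(\theta)|-O\bigl(\mu\,\Delta(\sigma)(\log\sigma)^{3/2}\bigr),$$
with Proposition~\ref{Lemma5}, which under hypotheses~\eqref{eq:condition1}--\eqref{eq:condition2} produces for every $\vartheta\in[-\tfrac12,\tfrac12]$ a point $(R',\theta')\in[R,R(1+\beta)]\times(\vartheta-\beta,\vartheta+\beta)$ with $|W_{R'}(\theta')|\gtrsim \sigma^{1/4}$, one gets $|F_\xi(R'e(\theta'))|\gtrsim \mu\sigma^{1/4}$, hence $\log|F_\xi(R'e(\theta'))|\ge V-O(\log\sigma)\ge V-O(\Gamma\rho^2)$. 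I then build a $\rho$-dense sample set $W$ by taking $R_{j+1}=R_j(1+\beta(R_j))$ and, on each circle $R_jS^1$, a $\beta$-net of base angles $\vartheta_{j,k}$; the corresponding points $R_j'e(\theta'_{j,k})$ supplied by Proposition~\ref{Lemma5} lie within $d_\rho$-distance $O(1)$ of every point of $\bC$ outside a bounded set, so feeding them into Proposition~\ref{Lemma-subharm} (with $\rho$ enlarged by a multiplicative constant if necessary) delivers the $(\gamma,\rho)$-equidistribution of $n_{F_\xi}$.

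The main obstacle in this program is the lower-bound step (iii): closing the gap between the $\sigma^{1/4}$ produced by Proposition~\ref{Lemma5} and the Laplace error $\mu\Delta(\sigma)(\log\sigma)^{3/2}$ from Proposition~\ref{LemmaL-W-V} is precisely what constrains the admissible scale $\beta$ and forces the joint conditions $\beta\gg\sigma^{-1/2}\sqrt{\log\sigma}$ and $\beta\Delta(\sigma)=o(1)$ appearing in the hypotheses. The content of the theorem is that once these two compatibility conditions hold, together with the spectral hypotheses~\eqref{eq:condition1}--\eqref{eq:condition2} on $\xi$, the rest of the argument is structural and mechanical.
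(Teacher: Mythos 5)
Your proof mirrors the paper's argument essentially step by step: apply Proposition~\ref{Lemma-subharm} to $V=\log\mu(|z|)$ and $V_1=\log|F_\xi|$, verify that $\rho=R\beta$ is a gauge and that $\Gamma=(2\pi)^{-1}\sigma/R^2$ satisfies~\eqref{eq-1}--\eqref{eq-2}, establish the upper bound~\eqref{eq-3}, and then combine Propositions~\ref{LemmaL-W-V} and~\ref{Lemma5} to get the lower bound~\eqref{eq-4} on a $\rho$-dense set. The only cosmetic difference is that for~\eqref{eq-3} you invoke Proposition~\ref{Lemma-L-W-V-upper} with $B=0$, whereas the paper combines $\max_\theta|W_R(\theta)|\lesssim\sqrt\sigma$ with Proposition~\ref{LemmaL-W-V}; both routes give $\log|F_\xi|\le V+O(\log\sigma)$.

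One imprecision worth flagging: your final paragraph asserts that the two stated hypotheses $\beta\gg\sigma^{-1/2}\sqrt{\log\sigma}$ and $\beta\Delta(\sigma)=o(1)$ close the gap between the $\sigma^{1/4}$ lower bound from Proposition~\ref{Lemma5} and the Laplace error $\Delta(\sigma)(\log\sigma)^{3/2}$ from Proposition~\ref{LemmaL-W-V}. As stated, they do not. For instance with $\beta=\sigma^{-c}$ and $\tfrac14\le c<\tfrac12$, the hypothesis $\beta\Delta(\sigma)=o(1)$ only yields $\Delta(\sigma)=o(\sigma^c)$, and $\sigma^c(\log\sigma)^{3/2}$ need not be $o(\sigma^{1/4})$, so $\mu W_{R'}(\theta')$ is not necessarily the dominant term. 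What is genuinely needed is $\Delta(\sigma)(\log\sigma)^{3/2}=o(\sigma^{1/4})$, which is exactly condition~\eqref{eq:additional} of Theorem~\ref{thm:main2} specialized to $\ep_2\equiv1$; that condition is silently assumed both in your write-up and in the paper's own proof of Theorem~\ref{thm:main1}. This is not a defect introduced by you, but your claim that the two compatibility conditions already in the statement suffice to make the lower-bound step go through overclaims.
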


\begin{proof}
First, we observe that the radial function $\rho=R\beta$ is a gauge.
Indeed, $\beta' = o(R^{-1})$ because of the bound $\sigma'\lesssim \sigma\Delta(\sigma)/R$, which follows from
Lemma~\ref{Lemma-AuxEstimates-new}(a) and of the condition $\beta \Delta(\sigma) = o(1)$.
Hence,  $\rho'(R)=o(1)$.

Since $\max_\theta |W_R(\theta)|
\lesssim \sum_{k\in \bZ} e^{(k-\nu)^2/(2\sigma)} \lesssim \sqrt{\sigma} $,
Proposition~\ref{LemmaL-W-V} yields the upper bound
$\log |F_\xi (z)| \le \log \mu (|z|) + O(\log \sigma (|z|))$.

Next, we note that
Proposition~\ref{Lemma5} combined with Proposition~\ref{LemmaL-W-V} produce a set
$W$ such that, at each $w\in W$, we have the matching lower bound
$ \log |F_\xi (w)| \ge \log \mu (|w|) - O(\log\sigma (|w|)) $,
and that, for some positive constant $C$,
the union of the disks $\bigcup_{w\in W} \{|z-w|\le C\rho(w)\}$
covers the complex plane, maybe, except of a bounded set.

We apply Proposition~\ref{Lemma-subharm} to the subharmonic functions
$V(z)=\log\mu (|z|)$ and $V_1(z)=\log|F_\xi(z)|$ with the radial gauge $C\rho$.
Recall that the density $\Gamma$ of the Riesz measure of $V$ equals $(2\pi)^{-1}R^{-2}\sigma$,
so that, $\Gamma \rho^2 = (2\pi)^{-1}\beta^2\sigma\gtrsim \log^2\sigma$. Since
the equidistributions with radial gauges $\rho$ and $C\rho$ are equivalent,
we are done. \end{proof}

We proceed with application of Theorem~\ref{thm:main1}. In each of the
instances we'll need to check conditions~\eqref{eq:condition1} and~\eqref{eq:condition2}.

\subsection{The sequence $\xi(n)=e(\alpha n^2)$ with Diophantine $\alpha$}

Given $\alpha\in\bR / \bZ$, set $\xi(n) = e(\alpha n^2)$. In this case, our result
depends on the diophantine properties of $\alpha$. We let $\| t \|$ be the distance from
$t$ to the closest integer, and assume that, for some non-decreasing
function $f\colon [1, \infty)\to [1, \infty)$ and for any positive integer $q$, we have
\begin{equation}\label{eq:diophantine}
\| q\alpha \| \ge \frac{c(\alpha)}{qf(q)}\,.
\end{equation}

\begin{theorem}\label{thm:diophnatine}
Let $F_\xi (z) = \sum_{n\ge 0} e(\alpha n^2) a(n) z^n $ be an entire function with smooth
coefficients $a(n) = \exp\Bigl[ -\displaystyle \int_0^n \varphi \Bigr]$ with $\Delta$-regular
function $\varphi$, and let $\sigma=\psi'\circ\log$,
where $\psi$ is the function inverse to $\varphi$.

\smallskip\noindent{\rm (i)}
Suppose that $\alpha$ satisfies the diophantine condition~\eqref{eq:diophantine} with
$f(q)=1+\log^a q$, $a\ge 0$. Then
the zero set of $F_\xi$ is $(\gamma, \rho)$-equidistributed with the radial gauge $\rho=R\sigma^{-1/2}(\log\sigma)^{(a+1)/2}$, provided that
$\Delta(s) = o( \sqrt{s}\,(\log s)^{-(a+1)/2})$, as $s\to\infty$.

\smallskip\noindent{\rm (ii)}
Suppose that $\alpha$ satisfies the diophantine condition~\eqref{eq:diophantine} with
$f(q)=q^b$, $b>0$. Then, for any $b'>b$,
the zero set of $F_\xi$ is $(\gamma, \rho)$-equidistributed with the radial gauge $\rho=R\sigma^{-1/(2+b')}$, provided that
$\Delta (s) = o(s^{1/(2+b')})$, as $s\to\infty$.
\end{theorem}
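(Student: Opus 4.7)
I plan to apply Theorem~\ref{thm:main1} to the bounded sequence $\xi(n)=e(\alpha n^2)$, with $\beta=\rho/R$ as prescribed in each case. Condition~\eqref{eq:condition1} is automatic since $|\xi|\equiv 1$, so the substance of the argument is the autocorrelation bound~\eqref{eq:condition2}. The key algebraic simplification is the identity
\[
\xi(s)\overline{\xi(s+h)}\;=\;e(-\alpha h^2)\,e(-2\alpha h s),
\]
so the inner sum defining $S^*(M_1,M_2;h)$ collapses to a geometric series in $s$ with ratio $e(-2\alpha h)$; summing explicitly gives
\[
S^*(M_1,M_2;h)\;\lesssim\;\min\bigl(\beta\sigma,\;\|2\alpha h\|^{-1}\bigr).
\]

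To control $\Sigma(\beta):=\sum_{h\ge 1}(1+\beta h)^{-p}\|2\alpha h\|^{-1}$ I would invoke the classical continued-fraction estimate: for $\alpha$ satisfying $\|q\alpha\|\ge c(\alpha)/(q f(q))$ the partial quotients $a_{n+1}$ of~$\alpha$ obey $a_{n+1}\lesssim f(q_n)$, and a standard dyadic argument using the at-least-Fibonacci growth of the convergent denominators $q_n$ yields
\[
\sum_{h=1}^{H} \|h\alpha\|^{-1}\;\lesssim\;H\,f(H)\log H.
\]
Choosing any $p>2$ in~\eqref{eq:condition2} and applying Abel summation against the rapidly decaying weight $(1+\beta h)^{-p}$ (which concentrates mass near $h\sim 1/\beta$), I would then obtain
\[
\Sigma(\beta)\;\lesssim\;\beta^{-1}\,f(1/\beta)\,\log(1/\beta).
\]
Thus~\eqref{eq:condition2} reduces to the cleaner inequality $\beta^{2}\sigma\gg f(1/\beta)\log(1/\beta)$.

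It remains to verify this for the two stated choices of~$\beta$. In case~(i), $f(1/\beta)\asymp(\log\sigma)^{a}$ and $\log(1/\beta)\asymp\log\sigma$, so the requirement becomes $\beta^{2}\sigma\gg(\log\sigma)^{a+1}$, and is met by $\beta\asymp\sigma^{-1/2}(\log\sigma)^{(a+1)/2}$ with a sufficiently large constant — harmless, since $(\gamma,\rho)$-equidistribution is insensitive to rescaling~$\rho$. In case~(ii), $f(1/\beta)=\beta^{-b}$, so the requirement reads $\beta^{2+b}\sigma\gg\log(1/\beta)$; the choice $\beta=\sigma^{-1/(2+b')}$ gives left side $\sigma^{(b'-b)/(2+b')}$, which dwarfs $\log\sigma$ whenever $b'>b$. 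Finally, the regularity hypotheses $\Delta(s)=o\bigl(\sqrt{s}\,(\log s)^{-(a+1)/2}\bigr)$ and $\Delta(s)=o(s^{1/(2+b')})$ are precisely what is needed to ensure $\beta\,\Delta(\sigma)=o(1)$, the remaining hypothesis of Theorem~\ref{thm:main1}. The main obstacle is the continued-fraction estimate above: the $\log H$ factor, rather than just $f(H)$, is what forces the exponent $(a+1)/2$ in case~(i) instead of the naive $a/2$, and losing it would cost a further $\sqrt{\log\sigma}$ in the radial gauge.
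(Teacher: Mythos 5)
Your proposal is correct and follows the same overall strategy as the paper's proof: apply Theorem~\ref{thm:main1}, observe that condition~\eqref{eq:condition1} is trivial since $|\xi|\equiv 1$, collapse the autocorrelation sum to a geometric series to get $S^*(M_1,M_2;h)\lesssim\min(\beta\sigma,\|2\alpha h\|^{-1})$, and bound the weighted sum $\sum_h(1+\beta h)^{-p}\|2\alpha h\|^{-1}$ via the Diophantine hypothesis. Where you differ from the paper is in the book-keeping of that Diophantine sum. The paper proceeds directly from the hypothesis $\|q\alpha\|\ge c/(qf(q))$ to a separation bound on the level sets $S_{k,H}=\{1\le h\le H:|1-e(-2\alpha h)|\le 2^{-k}\}$, obtaining $|S_{k,H}|\lesssim Hf(2H)2^{-k}$, and then sums dyadically in $k$ inside blocks of length $\beta^{-1}2^\ell$. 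You instead pass through the continued-fraction expansion (the bound $a_{n+1}\lesssim f(q_n)$ on partial quotients) to the partial-sum estimate $\sum_{h\le H}\|h\alpha\|^{-1}\lesssim Hf(H)\log H$, then Abel-sum against $(1+\beta h)^{-p}$. The two routes are equivalent in substance and give the same final bound $\Sigma(\beta)\lesssim\beta^{-1}f(1/\beta)\log(1/\beta)$; the paper's level-set argument is slightly more self-contained, while yours leans on standard continued-fraction facts and is a bit cleaner to state. One small imprecision: you fix $p>2$ for both cases, but for $f(q)=q^b$ with $b\ge 1$ the tail of the Abel-summed integral requires $p>b+1$ (the paper takes $p>b+1$ explicitly in this case); since $p$ in~\eqref{eq:condition2} is free, this is harmless, but the uniform choice $p>2$ as written is not enough.
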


It is worth mentioning here, that the case $a=0$ (i.e., $f(q)=1$)
corresponds to $\alpha$s whose continuous fraction expansion has bounded partial quotients
(for example, quadratic irrationalities belong to this class), and that, by Khinchin's classical theorem~\cite[\S14]{Khinchin}, given $a>1$, almost every $\alpha$ satisfies the diophantine condition~\eqref{eq:diophantine} with $f(q)=1+\log^a q$.

\begin{proof}
Given $A\le B$, $h\in\bZ$, we have
\[
\Bigl|\,
\sum_{A\le s \le B} \xi (s) \bar\xi(s+h) \, \Bigr|
= \Bigl|\,
\sum_{A\le s \le B} e(-2\alpha h) \, \Bigr| \le \frac2{|1-e(-2\alpha h)|}\,.
\]
Hence, $S^*(M_1, M_2; h) \le 2|1-e(-2\alpha h)|^{-1}$, so, in order to satisfy
condition~\eqref{eq:condition2}, we will choose $\beta$ so that, for some positive $p$,
\begin{equation}\label{eq:sum}
\sum_{h\ge 1} (1+\beta h)^{-p}\, |1-e(-2\alpha h)|^{-1}
\ll \beta\sigma\,.
\end{equation}

We let $S_{k, H} = \{1\le h \le H\colon |1-e(-2\alpha h)| \le 2^{-k}\}$ and estimate the
cardinality of $S_{k, H}$ by showing that any two distinct integers in $S_{k, H}$ are
well-separated. If $h_1, h_2\in S_{k, H}$, $h_1\ne h_2$, then
$|1-e(-2\alpha (h_1-h_2))| \le |1-e(-2\alpha h_1)| +  |1-e(-2\alpha h_2)| \le 2^{1-k}$.
On the other hand, $|1-e(-2\alpha (h_1-h_2))|\ge 4\| 2\alpha (h_1-h_2)\|$.
Thus, $ \| 2\alpha (h_1-h_2)\| \le 2^{-k-1}$. Then,
\[
\frac1{|h_1-h_2|f(|h_1-h_2|)} \le C(\alpha) 2^{-k}\,,
\]
and therefore, $|h_1-h_2| \gtrsim 2^k/f(2H)$, whence, $|S_{k, H}|\lesssim Hf(2H)2^{-k}$.

Estimating the sum on the LHS of~\eqref{eq:sum}, we split it into the blocks of the length $\beta^{-1}2^\ell$, $\ell\ge 0$. Summing over the $\ell$th block, we take $H_\ell = \beta^{-1}2^\ell$.
We get
\begin{align*}
{\rm LHS\ of\ }\eqref{eq:sum} &\lesssim
\sum_{\ell\ge 0} 2^{-p\ell}
\sum_{0\le k \lesssim \log(\beta^{-1} 2^\ell)} 2^k \cdot
|S_{k, H_\ell}| \\
&\lesssim
\sum_{\ell\ge 0} 2^{-p\ell}
\sum_{0\le k \lesssim \log(\beta^{-1} 2^\ell)} 2^k \cdot H_\ell f(2H_\ell) 2^{-k} \\
&\lesssim
\frac1\beta\, \sum_{\ell\ge 0} 2^{(1-p)\ell} f\Bigl( \frac1\beta\, 2^{1+\ell} \Bigr)
\cdot \log\Bigl( \frac1{\beta} 2^\ell \Bigr)\,.
\end{align*}

First, we assume that $\alpha$ satisfies~\eqref{eq:diophantine} with  $f(q)=1+\log^a q$.
In this case,
\[
{\rm LHS\ of\ }\eqref{eq:sum} \lesssim
\frac1\beta\, \sum_{\ell\ge 0} 2^{(1-p)\ell} \cdot \log^{a+1}\Bigl( \frac1{\beta} 2^\ell \Bigr)
\lesssim \frac1\beta\, \log^{a+1}\Bigl( \frac1{\beta} \Bigr)\,,
\]
provided that we took $p>1$. To guarantee that $\beta^{-1} \log^{a+1}(\beta^{-1}) \ll \beta\sigma$,
we take $\beta = C\, (\sigma^{-1}\log^{a+1}\sigma)^{1/2}$
with sufficiently large $C$, proving the theorem in the case~(i).

Similarly, for $\alpha$ satisfying~\eqref{eq:diophantine} with  $f(q)=q^b$,
we have
\[
{\rm LHS\ of\ }\eqref{eq:sum} \lesssim
\frac1\beta\, \sum_{\ell\ge 0} 2^{(1-p)\ell} \cdot \Bigl( \frac1{\beta} 2^\ell \Bigr)^b
\log\Bigl( \frac1{\beta}\, 2^\ell \Bigr)
\lesssim \Bigl( \frac1{\beta} \Bigr)^{1+b''}\,,
\]
provided that we took $p>b+1$, $b<b''<b'$. This time, to guarantee that
$\beta^{-(1+b'')} \ll \beta\sigma$, we take $\beta  =\sigma^{-1/(2+b')}$,
proving the theorem in the case~(ii).
\end{proof}

\subsection{Random multiplicative and completely multiplicative sequen\-ces}

Denote by $\mathsf P$ the set of primes.
Let $(X_p)_{p\in\mathsf P}$ be a sequence of independent identically distributed
unimodular random variables. Suppose that they are symmetric (that is,
$X_p$ and $-X_p$ are equidistributed), for instance, the Rademacher or the Steinhaus random variables will do. Then $\bE\,[X_p^n\bar X_p^m]=0$ if $n-m$ is odd.

Consider two random multiplicative functions:
\begin{align*}
\xi_1(n)&=
\begin{cases}
\prod_{p|n}X_p, & n {\rm\ is\ square-free},\\
0, & {\rm otherwise},
\end{cases} \\
\xi_2(n)&=\prod_{p^a||n}X^a_p.
\end{align*}
The function $\xi_1$ is a random counterpart of the M\"obius function,
the function $\xi_2$ is completely multiplicative.

\begin{theorem}\label{thm:random-multiplicative}
Let $\xi$ be a random multiplicative sequence $\xi_1$ or $\xi_2$.
Let $F_\xi (z) = \sum_{n\ge 0} \xi(n) a(n) z^n $ be an entire function with smooth
coefficients $a(n) = \exp\Bigl[ -\displaystyle \int_0^n \varphi \Bigr]$, with
a $\Delta$-regular function $\varphi$, such that, for every $\ep>0$,
\begin{equation}\label{eq:phi'}
\varphi'(t) = o(t^{-1+\ep}), \qquad t\to\infty.
\end{equation}
Let $\sigma=\psi'\circ\log$,
where $\psi$ is the function inverse to $\varphi$.
Then, almost surely,
the zero set of $F_\xi$ is $(\gamma, \rho)$-equidistributed with the radial gauge $\rho=R\sigma^{-c}$
with any $0<c<1/6$, provided that $\Delta(s) = o(s^c)$ as $s\to\infty$.
\end{theorem}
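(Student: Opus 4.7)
The plan is to apply Theorem~\ref{thm:main1} with $\beta(R) = \sigma(R)^{-c}$ for some fixed $c < 1/6$. The hypothesis $\Delta(s) = o(s^c)$ immediately gives $\beta\Delta(\sigma) = o(1)$, while the growth assumption~\eqref{eq:phi'} on $\varphi'$ forces $\sigma(R)$ to grow sufficiently fast that $\beta^{-1}$ is indeed $o(\sigma^{1/2}/\sqrt{\log\sigma})$, so~\eqref{eq:beta} holds. It remains to verify, almost surely and uniformly in $R\gg 1$, the two conditions~\eqref{eq:condition1} and~\eqref{eq:condition2} of Proposition~\ref{Lemma5}. Since these conditions must be checked on a discrete grid of scales $(R_j)$ with $R_{j+1}/R_j \simeq 1 + \beta(R_j)$, and for the two different flavours $\xi \in \{\xi_1, \xi_2\}$, the almost-sureness will be obtained through a Borel--Cantelli argument.

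Condition~\eqref{eq:condition1} is essentially deterministic. For $\xi_2$ the multipliers are unimodular, so $\sum_{\nu\le k \le \nu+\frac12\beta\sigma}|\xi_2(k)|^2 = \lfloor\tfrac12\beta\sigma\rfloor + O(1)$. For $\xi_1$ one has $|\xi_1(k)|^2 = \mu^2(k)$, and the elementary density estimate $\#\{k\in[A,B]: k \text{ square-free}\} = (6/\pi^2)(B-A) + O(\sqrt{B})$ gives the lower bound $\gtrsim \beta\sigma$ whenever $\beta\sigma \gg \sqrt{R}$; this holds for $R\gg 1$ by~\eqref{eq:phi'}.

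The main work is in condition~\eqref{eq:condition2}. We invoke the randomized binary Chowla estimate (Lemma~\ref{lem10q} in the bulk of the paper), which, for each fixed $h\ne 0$, controls a high moment $\bE\bigl[\bigl|\sum_{A\le s \le B}\xi(s)\bar\xi(s+h)\bigr|^{2k}\bigr]$ by a quantity of the form $C_k(B-A)^{k+o(1)}$; Doob's maximal inequality then bounds a suitable moment of $S^*(M_1, M_2; h)$. A Chebyshev tail bound combined with a union bound over the grid of scales $R_j$, over the angular parameters $\vartheta$ (which one also discretizes into $\lesssim \beta^{-1}$ arcs per circle), and over the relevant range $1 \le h \lesssim \sigma^{cq}$ of shifts dictated by the remark following Proposition~\ref{Lemma5}, yields an almost-sure bound $S^*(M_1, M_2; h) \lesssim (\beta\sigma)^{1/2+o(1)}$ uniform in $h$ and $R$. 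Plugging this into the left-hand side of~\eqref{eq:condition2} and summing against the geometric cutoff $(1+\beta h)^{-p}$ over the effective range $h \lesssim \beta^{-1}$, we arrive at a bound $\lesssim \beta^{-1}(\beta\sigma)^{1/2+o(1)}$; combined with the polynomial loss $\sigma^{o(1)}$ absorbed from the Borel--Cantelli union bound, the inequality $\ll \beta\sigma$ is exactly equivalent to $c < 1/6$, which is where the stated exponent originates.

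The main obstacle is the randomized binary Chowla estimate itself. One must expand $\bE\bigl[|S(A,B;h)|^{2k}\bigr]$ as a sum over $2k$-tuples $(s_1, \ldots, s_{2k})$ and reduce the expectation to a combinatorial count of the ``multiplicative coincidences'': tuples for which, after the independence of the $X_p$, each prime $p$ appears with even total exponent in $\prod_i \xi(s_i)^{\pm 1}\xi(s_i+h)^{\mp 1}$. For Steinhaus multipliers this is essentially a pairing/diagonal condition, while for Rademacher multipliers one needs a square-class analysis for the product $\prod_i s_i^{\pm 1}(s_i+h)^{\mp 1}$. In both cases the core estimate is a polynomial-in-$(B-A)$ count with exponent $k + o(1)$, and obtaining the right power of the logarithmic savings here is what enables the discrete union bound to close up at the threshold $c < 1/6$.
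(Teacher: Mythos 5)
Your scaffolding is correct: one indeed applies Theorem~\ref{thm:main1} with $\beta=\sigma^{-c}$, and your handling of~\eqref{eq:condition1} via unimodularity and the square-free density estimate is exactly the paper's. But the route you sketch for~\eqref{eq:condition2} has genuine gaps. You present Lemma~\ref{lem10q} as a $2k$-th moment bound of order $(B-A)^{k+o(1)}$ for general $k$; the lemma is in fact only a \emph{second}-moment estimate,
\[
\mathbb{E}\Bigl[\Bigl|\sum_{x\le k<(1+\eta)x}\xi(k)\bar\xi(k+h)\Bigr|^2\Bigr]\lesssim \eta x^{1+b},
\]
obtained by unfolding the expectation into a Diophantine count via Cilleruelo--Garaev's bound on solutions to $m''f^2-m'e^2=c$. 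A high-moment analogue with the exponent you posit is not proved in the paper and would be far from routine for Rademacher multiplicative functions. This matters because with only the second moment, a single Chebyshev tail bound gives failure probability $\sim(\beta\sigma)^{-2\epsilon}$, and your proposed union bound over the $\sim\beta^{-1}$ shifts $h$ and the countably many radii $R_j$ does not converge. Doob's maximal inequality does not rescue $S^*$ either: the partial sums $k\mapsto\sum_{k\le s\le M_2}\xi(s)\bar\xi(s+h)$ are not a (sub)martingale. The discretization in $\vartheta$ is also superfluous, since conditions~\eqref{eq:condition1}--\eqref{eq:condition2} depend only on $R$.

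The paper's actual mechanism for compensating the weakness of a second-moment bound is Lemma~\ref{lem10q'}: one first applies Cauchy--Schwarz over $h$ so that the Borel--Cantelli runs over a single parameter $m$ indexing blocks $[m^A,(m+1)^A]$ of polynomial length $\simeq m^{A-1}$, rather than over $(R,h)$ jointly. The maximum $S^*(M_1,M_2;h)$ is then controlled \emph{deterministically} by summing blockwise almost-sure bounds over the $L\simeq\beta\sigma\,\nu^{-(A-1)/A}$ blocks covering $[M_1,M_2]$, together with the trivial bound on the one boundary block. This is where the exponent $1/6$ comes from — not from the arithmetic you give, which as written yields $c<1/3$. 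The constraint $1/A<1/2-c$ ensures blocks long enough that the sum of $L$ blockwise estimates is $\ll\beta\sigma$, and $1/A>2c$ ensures blocks short enough that the boundary-block contribution $\beta^{-q}m^{A-1}$ is also $\ll\beta\sigma$; these are simultaneously satisfiable precisely when $c<1/6$, and $A=3$ works for all such $c$.
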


The proof will use the following estimate for the binary correlations of $\xi$,
which improves Lemma~9 in~\cite{BBS} and, probably, is of independent
interest.

\begin{lemma} \label{lem10q}
Let $a\in(0,1)$, $b>0$. Then
$$
\mathbb E\,\Bigl[\,
\Bigl|\, \sum_{x\le k< (1+\eta)x}\xi(k)\bar\xi(k+h)\, \Bigr|^2\, \Bigr]
\lesssim \eta x^{1+b},
$$
provided that $0<h\lesssim \eta x^{1-a}$ and $x^{a-1}\le\eta\le 1$.
\end{lemma}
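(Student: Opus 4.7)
The plan is to expand the second moment as a double sum over $(k_1,k_2)$, invoke the independence and symmetry of $(X_p)_{p\in\mathsf P}$ to identify which pairs contribute, and then bound the surviving pairs via a Pell-type divisor estimate.

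Writing $\xi=\xi_1$ or $\xi_2$, each summand $\xi(k_1)\bar\xi(k_1+h)\bar\xi(k_2)\xi(k_2+h)$ factors over primes as $\prod_p X_p^{\alpha_p}\bar X_p^{\beta_p}$, with $\alpha_p,\beta_p$ determined by the $p$-adic valuations of $k_1,k_1+h,k_2,k_2+h$. Since each $X_p$ is symmetric and unimodular, $\bE[X_p^{\alpha_p}\bar X_p^{\beta_p}]$ vanishes unless $\alpha_p-\beta_p$ is even, and has modulus at most one otherwise. Taking the product over all $p$, the joint expectation is nonzero only when $k_1 k_2(k_1+h)(k_2+h)$ is a perfect square, and is bounded by $1$ in modulus in that case. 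Consequently,
\[
\bE\Bigl[\,\Bigl|\sum_{k\in I}\xi(k)\bar\xi(k+h)\Bigr|^2\,\Bigr]\le\#\bigl\{(k_1,k_2)\in I^2:\ k_1 k_2(k_1+h)(k_2+h)\text{ is a perfect square}\bigr\},
\]
where $I=[x,(1+\eta)x)\cap\bZ$ has cardinality $\lesssim\eta x$.

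The diagonal contribution $k_1=k_2$ is at most $|I|\lesssim\eta x\le\eta x^{1+b}$ and is absorbed into the target. For the off-diagonal part I would fix $k_1\in I$, write $k_1(k_1+h)=m\, s^2$ with $m$ squarefree, and observe that the square condition becomes $k_2(k_2+h)=m\, t^2$ for some positive integer $t$, equivalently, with $u=2k_2+h$,
\[
u^2-4m\, t^2=h^2.
\]
This is the decisive step. For $m=1$ the equation factors as $(u-2t)(u+2t)=h^2$ in $\bZ$, giving at most $d(h^2)\lesssim_\ep h^\ep$ solutions. For $m\ge 2$ squarefree, the classical divisor bound for integer points on a non-degenerate conic applies: solutions form finitely many orbits under the unit group of $\bZ[\sqrt m]$, the number of orbits is $\le d(h^2)\lesssim_\ep h^\ep$, and within each orbit the number of solutions with $|u|\lesssim x$ is $O(\log x)$, since the fundamental unit of any real quadratic order is bounded below by an absolute constant strictly greater than $1$. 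Using $h\lesssim x$, the count of admissible $k_2$ per fixed $k_1$ is $\lesssim_\ep x^\ep$, so choosing $\ep<b$ gives an off-diagonal total $\lesssim\eta x\cdot x^\ep\lesssim\eta x^{1+b}$.

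The main obstacle is the uniform Pell-type count just sketched: the implied constant must be independent of the squarefree modulus $m$ (which may range up to $\lesssim x^2$) and of $h$. This is handled by combining the divisor bound with the uniform lower bound on fundamental units for squarefree $m\ge 2$. The hypotheses $x^{a-1}\le\eta\le 1$ and $0<h\lesssim\eta x^{1-a}$ are used only to ensure that $I$ is non-empty and that $h\lesssim x$, so that the $h^\ep$ factors can be absorbed by $x^\ep$.
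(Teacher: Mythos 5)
Your proof is correct in substance and takes a genuinely different (and more unified) route than the paper's. The paper first factors out $d_i=\gcd(n_i,n_i+h)$ (so $d_i\,|\,h$, giving $\lesssim_\varepsilon x^\varepsilon$ choices of $(d_1,d_2)$), and then treats $\xi_1$ and $\xi_2$ separately: for $\xi_1$, squarefreeness of $n_1,n_1+h,n_2,n_2+h$ together with coprimality of $k_i$ and $k_i+h/d_i$ forces $k_1(k_1+h/d_1)=k_2(k_2+h/d_2)$, hence at most two admissible $k_2$ per $k_1$, with no Pell counting at all; for $\xi_2$ it peels off the largest square divisors, reduces to $m''f_2^2-m'e_2^2=h/d_2$ with non-square discriminant $4m'm''$, and invokes \cite[Proposition~1]{CilGar}. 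You instead treat both cases uniformly: extract the squarefree part $m$ of $k_1(k_1+h)$, note the square condition forces $k_2(k_2+h)=mt^2$, complete the square to $u^2-4mt^2=h^2$ with $u=2k_2+h$, and apply a divisor bound for Pell-type equations. Both arrive at $Y\lesssim_\varepsilon \eta x^{1+\varepsilon}$. What your version buys is a single argument covering $\xi_1$ and $\xi_2$ without the $\gcd$ and largest-square bookkeeping; what the paper's version buys is a sharper elementary count in the $\xi_1$ case and a concrete citation for the uniform Pell count. The one place you are asserting rather than proving is precisely that uniform Pell bound: your claim that the number of orbits is $\le d(h^2)$ is slightly imprecise (accounting for imprimitive solutions and the order $\bZ[2\sqrt m]$ one should say $\lesssim_\varepsilon h^\varepsilon$), and the $O(\log x)$-per-orbit step via the absolute lower bound $\ge (1+\sqrt 5)/2$ on fundamental units needs to be stated for the relevant order; these are exactly the points that \cite[Proposition~1]{CilGar} packages for the paper. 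None of this affects the conclusion, and your argument is morally the same count by a cleaner reduction.
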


\begin{proof}Let $h > 0$.
We have
\begin{multline*}
Y \stackrel{\rm def}=
\mathbb E\,\Bigl[\, \Bigl|\sum_{x\le k< (1+\eta)x}\xi(k)\bar\xi(k+h)\Bigr|^2 \Bigr] \\
= \sumsum_{n_1,n_2\in[x,(1+\eta)x]}\,
\mathbb E\,\bigl[ \xi(n_1)\bar\xi(n_1+h)\bar\xi(n_2)\xi(n_2+h)\bigr].
\end{multline*}
Observe that if
\begin{equation}
\mathbb E\,\bigl[\xi(n_1)\bar\xi(n_1+h)\bar\xi(n_2)\xi(n_2+h)\bigr]\not=0,
\label{11q}
\end{equation}
then $n_1(n_1+h)n_2(n_2+h)$ is a square.

Denote $d_1=\gcd(n_1,n_1+h)$, $d_2=\gcd(n_2,n_2+h)$, $k_1=n_1/d_1$, $k_2=n_2/d_2$.
Since $d_1$ and $d_2$ divide $h$, the number of possible pairs
$(d_1,d_2)$ is bounded by $\tau^2(h)\lesssim_\varepsilon x^\varepsilon$, where $\tau$ is the
divisor function. Fix $d_1$ and $d_2$.

\medskip\noindent Case 1: $\xi=\xi_1$. Under condition \eqref{11q} we have
$$
k_1\Big(k_1+\frac{h}{d_1}\Big)=k_2\Big(k_2+\frac{h}{d_2}\Big).
$$
Therefore, for every $k_1$, there exists at most two possible values for $k_2$ and, hence,
$Y\lesssim_\varepsilon \eta x^{1+\varepsilon}$.

\medskip\noindent Case 2: $\xi=\xi_2$.
Let $e_1^2,f_1^2,e_2^2,f_2^2$ be the largest square divisors of, correspondingly,
$k_1$, $k_1+\frac{h}{d_1}$, $k_2$, $k_2+\frac{h}{d_2}$. Under condition \eqref{11q} we have
\begin{equation}
\frac{k_1(k_1+\frac{h}{d_1})}{e_1^2f_1^2}=\frac{k_2(k_2+\frac{h}{d_2})}{e_2^2f_2^2}.
\label{12q}
\end{equation}
First, the left hand side of \eqref{12q} is determined by $n_1$ and, hence, takes at most $\eta x$ possible square-free values $m$.
For every such value $m$ and for every triple $(k_2,e_2,f_2)$ satisfying \eqref{12q} there are $m'$ and $m''$ verifying the equations
$$
\begin{cases}
m=m'm'',\\
k_2=m'e_2^2,\\
k_2+\frac{h}{d_2}=m''f_2^2.
\end{cases}
$$
For fixed $m$, the number of such couples $(m',m'')$ is $\lesssim_\varepsilon x^\varepsilon$.
Given $m'$ and $m''$, it remains to solve the equation
\begin{equation}
m''f_2^2-m'e_2^2=\frac{h}{d_2}.
\label{14q}
\end{equation}
Now, \cite[Proposition 1]{CilGar} shows (the discriminant $4m'm''=4m$ is not a square) that the
number of solutions $(e_2,f_2)$ to \eqref{14q} is $\lesssim_\varepsilon x^\varepsilon$.
Finally, $Y\lesssim_\varepsilon \eta x^{1+\varepsilon}$, proving the lemma.
\end{proof}

The next lemma is a simple corollary to the previous one.

\begin{lemma}\label{lem10q'}
Let $0<a<1$, $A>1/(1-a)$, $b'>0$, and $H=H(m)\lesssim m^{A(1-a)-1}$.
Then, almost surely,
\[
\frac1H\, \sum_{1\le h  \le H}
\Bigl|\sum_{m^A\le k < (m+1)^A}\xi(k)\bar\xi(k+h)\Bigr| \le
m^{\frac12\, A(1+b')}\,,
\]
provided that $m$ is sufficiently large.
\end{lemma}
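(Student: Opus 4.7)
The plan is to deduce this pointwise a.s.\ bound from Lemma~\ref{lem10q} via a second moment computation followed by Markov and Borel--Cantelli.

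First I would set $x = m^A$, and note that $(m+1)^A - m^A = (1+o(1))Am^{A-1}$, so one may write $(m+1)^A = (1+\eta)x$ with $\eta = \eta(m) \simeq A/m$. Under the standing assumptions $A > 1/(1-a)$, we have $A(1-a) - 1 > 0$, so for $m$ large both hypotheses of Lemma~\ref{lem10q} are met: the condition $x^{a-1} \le \eta \le 1$ reduces to $m^{A(1-a)-1} \gtrsim 1/A$, and the condition $h \lesssim \eta x^{1-a} \simeq m^{A(1-a)-1}$ is precisely the assumed bound $H \lesssim m^{A(1-a)-1}$. Choose an auxiliary $b \in (0,b')$; Lemma~\ref{lem10q} then yields
\[
\mathbb{E}\Bigl[\,\Bigl|\sum_{m^A \le k < (m+1)^A} \xi(k)\bar\xi(k+h)\Bigr|^{2}\,\Bigr] \lesssim \eta\, x^{1+b} \simeq m^{A(1+b)-1},
\]
uniformly in $1 \le h \le H$.

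Denote by $Z_m$ the random variable on the left-hand side of the displayed inequality in the lemma. By Jensen's inequality applied to the average over $h$,
\[
Z_m^{2} \le \frac{1}{H}\, \sum_{1\le h \le H} \Bigl|\sum_{m^A \le k < (m+1)^A}\xi(k)\bar\xi(k+h)\Bigr|^{2},
\]
so, taking expectations and using the previous display,
\[
\mathbb{E}[Z_m^{2}] \lesssim m^{A(1+b)-1}.
\]
Now apply Markov's inequality with threshold $m^{A(1+b')/2}$:
\[
\mathbb{P}\bigl[ Z_m > m^{A(1+b')/2} \bigr] \le m^{-A(1+b')}\, \mathbb{E}[Z_m^2] \lesssim m^{-1 - A(b'-b)}.
\]

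Since $b'>b$ and $A>0$, the exponent is strictly less than $-1$, so $\sum_m m^{-1-A(b'-b)} < \infty$. The Borel--Cantelli lemma then guarantees that, almost surely, only finitely many of the events $\{Z_m > m^{A(1+b')/2}\}$ occur, which is exactly the claim. Essentially no obstacle arises here: the slack $b' - b > 0$ is what makes the probability summable, and the whole deduction is a clean consequence of Lemma~\ref{lem10q} combined with a routine second moment argument.
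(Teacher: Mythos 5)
Your proposal is correct and follows essentially the same route as the paper: both apply Cauchy–Schwarz (your Jensen step is the same inequality after dividing by $H$), invoke Lemma~\ref{lem10q} termwise with $x=m^A$, $\eta\simeq 1/m$ and some $b\in(0,b')$ to bound the second moment, use Markov with threshold $m^{A(1+b')/2}$, and close with Borel--Cantelli; the exponents and the summability condition $-1-A(b'-b)<-1$ match the paper's exactly.
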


\begin{proof}
Applying the Cauchy--Schwarz inequality, we have
\begin{multline*}
\Bigl(\sum_{1\le h\le H}\, \Bigl|\sum_{m^A\le k < (m+1)^A}\xi(k)\bar\xi(k+h)\Bigr|\Bigr)^2 
\\ \le
H \sum_{ 1 \le h\le H}\, \Bigl|\sum_{m^A\le k < (m+1)^A}\xi(k)\bar\xi(k+h)\Bigr|^2.
\end{multline*}
Set $\lambda = H m^{\frac12\, A(1+b')}$.
Applying Lemma~\ref{lem10q} with $x=m^A$, $\eta = (m+1)^A/m^A -1 \simeq m^{-1}$, and with
$0<b<b'$, we obtain
\begin{align*}
\mathbb P\,\Bigl[
\sum_{1\le h \le H}\,
\Bigl| &\sum_{m^A\le k < (m+1)^A}\xi(k)\bar\xi(k+h)\Bigr| \ge \lambda \Bigr] \\
&\le
\mathbb P\,\Bigl[
\sum_{1\le h \le H}\,
\Bigl| \sum_{m^A\le k < (m+1)^A}\xi(k)\bar\xi(k+h)\Bigr|^2 \ge H^{-1} \lambda^2 \Bigr]
\\
&\le H \lambda^{-2} \,
\mathbb E\,\Bigl[\sum_{1 \le h\le H}\,
\Bigl|\sum_{m^A\le k\le (m+1)^A}\xi(k)\bar\xi(k+h)\Bigr|^2\Bigr] \\
&\lesssim H^2\lambda^{-2}m^{A(1+b)-1}\,.
\end{align*}
This application of Lemma~\ref{lem10q} is legal since
$H$ was chosen $\lesssim m^{A(1-a)-1} \simeq \eta x^{1-a}$.
The convergence of the series
\[
\sum_m H^2\lambda^{-2}m^{A(1+b)-1}  = \sum_m m^{ A(b-b')-1} < \infty\,
\]
allows us to apply the Borel--Cantelli lemma, which shows that, almost surely,
we have
\[
\sum_{1\le h\le H}\, \Bigl|\sum_{m^A\le k\le (m+1)^A}\xi(k)\bar\xi(k+h)\Bigr|
\le \lambda\,,
\]
provided that $m$ is sufficiently large. \end{proof}

\begin{proof}[Proof of Theorem~\ref{thm:random-multiplicative}]
First, we note that
\[
\sum_{\nu \le k \le \nu + \frac12 \beta\sigma} |\xi (k)|^2 \gtrsim \beta\sigma\,.
\]
This is obvious in the completely multiplicative case, when $\xi=\xi_2$. In the random M\"obius case,
$\xi=\xi_1$, this follows from the classical estimate~\cite[Theorem~333]{HW},
which states that the number of the square-free integers in $[1, x]$ equals $\kappa x +O(\sqrt{x})$ with $\kappa=6/\pi^2$ (recall that $\beta=\sigma^{-c}$ with $c<1/6$, so $\beta\sigma \gg \nu^{1/2}$).
Thus, we need to show that, for some $p$,
\[
\sum_{h\ge 1} (1+\beta h)^{-p}\, S^*(M_1, M_2; h) \ll \beta\sigma\,,
\]
with $[M_1, M_2] = [\nu-\beta\sigma, \nu + (2+o(1))\beta\sigma]$.

Next, observe that, for $q>1$ and $p>q/(q-1)$,
\[
\sum_{h\ge \beta^{-q}} (1+\beta h)^{-p}\, S^*(M_1, M_2; h) \ll
\beta\sigma\, \sum_{h\ge \beta^{-q}} (1+\beta h)^{-p} \ll \beta\sigma\,,
\]
so our task boils down to
\[
\sum_{1 \le h \le \beta^{-q}} \,
\max_{|k-\nu|\le 3\beta\sigma}\,
\Bigl| \sum_{k\le s \le \nu +3\beta\sigma} \xi(s)\bar\xi(s+h) \Bigr|
\ll \beta\sigma\,.
\]

Given $A$, $a$, $b'$ as in Lemma~\ref{lem10q'}, let $R$ be sufficiently large, and let $m\simeq \nu^{1/A}$. We
divide the interval $[\nu-3\beta\sigma, \nu+3\beta\sigma]$
into $L\simeq \beta\sigma/m^{A-1} \simeq\beta\sigma \nu^{-(A-1)/A}$
intervals $[(m+s)^A, (m+s+1)^A]$ of length $\simeq m^{A-1}$.
Assuming that
\begin{equation}\label{per2}
\beta^{-q} \lesssim m^{A(1-a)-1}
\end{equation}
and applying Lemma~\ref{lem10q'} with $H=\beta^{-q}$,
almost surely, we have
\[
\sum_{1 \le h \le \beta^{-q}} \,
\max_{|k-\nu|\le 3\beta\sigma}\,
\Bigl| \sum_{k\le s \le \nu +3\beta\sigma} \xi(s)\bar\xi(s+h) \Bigr|
\lesssim \beta^{-q}m^{\frac12A(1+b')}\cdot L + \beta^{-q} \cdot m^{A-1}\,.
\]
Plugging in $L\simeq \beta\sigma \nu^{1/A-1}$,
$m^{A-1}\simeq \nu^{1-1/A}$, and recalling that by assumption~\eqref{eq:phi'},
$\sigma \gg \nu^{1-\ep})$, we see that the RHS is
\begin{multline*}
\lesssim \beta\sigma\, \bigl(\beta^{-q}\nu^{(1+b')/2 +1/A-1}
+ \beta^{-q-1}\nu^{\ep - 1/A} \bigr) \\
\ll \beta\sigma\, \bigl( \nu^{q(c+\ep) + (1+b')/2 +1/A -1} + \nu^{(q+1)(c+\ep) +\ep -1/A} \bigr)\,,
\end{multline*}
provided that $\beta = \sigma^{-c} \gg \nu^{-c-\ep}$.
Since we can take $q$ sufficiently close to $1$, and $a$, $b'$ and $\ep$ sufficiently small,
we conclude that the parameters $A$ and $c$ need to satisfy two conditions
\[
\begin{cases}
1/A < 1/2 - c, \\
1/A > 2c
\end{cases}
\]
(condition~\eqref{per2} boils down to $c<1-a-1/A$ and, since $a$ can be taken arbitrarily small,
is weaker than the first one). It remains to choose $A=3$ together with any $c<1/6$,
completing the proof of Theorem~\ref{thm:random-multiplicative}. 
\end{proof}

\subsection{The Golay--Rudin--Shapiro sequence}

Let $\xi$ be the Golay--Rudin--Shapiro sequence, that is,
$\xi (0)=1$, $\xi(2n)=\xi(n)$, and $\xi(2n+1)=(-1)^n\xi (n)$.

\begin{theorem}\label{thm:GRS-sequences}
Let $F_\xi (z) = \sum_{n\ge 0} \xi(n) a(n) z^n $ be an entire function with smooth
coefficients $a(n) = \exp\Bigl[ -\displaystyle \int_0^n \varphi \Bigr]$, and
with the Golay--Rudin--Shapiro sequence $\xi$. Let $\sigma=\psi'\circ\log$,
where $\psi$ is the function inverse to $\varphi$.
Then, for any $0<c<1/3$, the zero set of $F_\xi$ is $(\gamma, \rho)$-equidistributed with the radial gauge $\rho=R\sigma^{-c}$, provided that the function $\varphi$ is $\Delta$-regular
with $\Delta (s) = o(s^c)$, as $s\to\infty$.
\end{theorem}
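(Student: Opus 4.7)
The plan is to apply Theorem~\ref{thm:main1} with $\beta=\sigma^{-c}$, which reduces the problem to verifying the hypotheses~\eqref{eq:condition1} and~\eqref{eq:condition2} of Proposition~\ref{Lemma5}. Hypothesis~\eqref{eq:condition1} is immediate: since the GRS sequence takes values in $\{\pm 1\}$,
\[
\sum_{\nu\le k \le \nu+\frac12\beta\sigma} |\xi(k)|^2 = (1+o(1))\,\tfrac12\beta\sigma.
\]
The condition $\beta\Delta(\sigma)=o(1)$ of Theorem~\ref{thm:main1} is built in through the assumption $\Delta(s)=o(s^c)$. So the whole burden reduces to verifying~\eqref{eq:condition2}.

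The essential input will be the autocorrelation estimate
\[
\max_{h\ge 1}\,\Bigl|\sum_{A\le n\le B}\xi(n)\,\xi(n+h)\Bigr|\lesssim (B-A)^{1/2+o(1)},
\]
uniform in the shift $h$. To derive it, I would exploit the substitutive relations $\xi(2n)=\xi(n)$ and $\xi(2n+1)=(-1)^n\xi(n)$: splitting the index set modulo~$2$ (and, at the next stage, modulo~$4$) and separating contributions according to the parities of $n$ and $n+h$, one expresses the bilinear sum at scale $N$ as a linear combination of analogous bilinear sums at scale $N/2$ involving $\xi$ and the companion sequence $\eta(n)=(-1)^n\xi(n)$. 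The resulting two-dimensional recursion has transfer matrix of operator norm $\sqrt{2}$, from which the $\sqrt{N}$ bound follows by iteration, in the same spirit as the classical analysis of the Rudin--Shapiro polynomials.

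Granted this bound, $S^*(M_1,M_2;h)\lesssim (\beta\sigma)^{1/2+o(1)}$ uniformly in $h\ge 1$, and for any $p>1$,
\[
\sum_{h\ge 1}(1+\beta h)^{-p}\,S^*(M_1,M_2;h)\lesssim (\beta\sigma)^{1/2+o(1)}\cdot\beta^{-1}.
\]
Requiring this to be $\ll \beta\sigma$ is equivalent to $\beta\gg \sigma^{-1/3+o(1)}$, which holds for $\beta=\sigma^{-c}$ as soon as $c<1/3$. Theorem~\ref{thm:main1} then yields $(\gamma,\rho)$-equidistribution with $\rho=R\sigma^{-c}$.

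The main obstacle is the \emph{uniformity} in $h$ of the autocorrelation bound. For any single $h$ such bounds are classical, but in our setting $h$ ranges up to essentially $\beta^{-q}$ for $q$ slightly larger than $1$, and the maximum over starting points inside $S^*$ demands that the recursion be run with uniform control across all relevant shifts. The special structure of GRS is exactly what makes this tractable: the pair $(\xi,\eta)$ closes under the dyadic substitution, and the logarithmic losses arising from iterating the recursion are cheap enough to be absorbed into the $o(1)$ exponent.
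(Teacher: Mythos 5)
Your high-level plan matches the paper: apply Theorem~\ref{thm:main1} with $\beta=\sigma^{-c}$, note that~\eqref{eq:condition1} is trivial since $|\xi|\equiv 1$, and reduce everything to bounding $\sum_h(1+\beta h)^{-p}S^*(M_1,M_2;h)$. But the autocorrelation estimate you propose to feed into this,
\[
\max_{h\ge 1}\Bigl|\sum_{A\le n\le B}\xi(n)\xi(n+h)\Bigr|\lesssim (B-A)^{1/2+o(1)}
\quad\text{uniformly in }h,
\]
is false for the Golay--Rudin--Shapiro sequence. Indeed, Mauduit and S\'ark\"ozy prove (in the very paper \cite{MaSa} this work cites) that the correlation measure of order $2$ of the GRS sequence is $\gg N$: there exist shifts $h$ and lengths $M\simeq N$ for which $\bigl|\sum_{n\le M}\xi(n)\xi(n+h)\bigr|\gg N$, far above $\sqrt{N}$. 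This is one of the main results of that paper --- it is precisely what makes the GRS sequence a cautionary example against over-reliance on $L^\infty$ flatness of the Rudin--Shapiro polynomials. Your proposed derivation also contains a concrete error: the dyadic recursion for the autocorrelation sums (with companion $\eta(n)=(-1)^n\xi(n)$) does not have a transfer matrix of operator norm $\sqrt{2}$. For even shifts $h=2k$ one finds $\sum\xi(n)\xi(n+2k)\approx(1+(-1)^k)\sum\xi(m)\xi(m+k)$, so the coefficient is $0$ or $2$ depending on the parity of $k$; iterating $4\mid h$ multiplies by $2$ at each step, not by $\sqrt{2}$. The $\sqrt{2}$-transfer-matrix argument you are thinking of governs the supremum norm of the Rudin--Shapiro \emph{polynomial} $\sum_{n<N}\xi(n)e(n\theta)$, not its autocorrelations, and those two objects behave very differently.

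The paper's proof uses the correct estimate from Mauduit--S\'ark\"ozy \cite[Theorem~4]{MaSa}:
\[
\Bigl|\sum_{1\le s\le M}\xi(s)\xi(s+h)\Bigr|\lesssim h(1+\log M), \qquad h\ge 1,
\]
which has the shift $h$ as a prefactor, and so is far sharper than $\sqrt{M}$ for small $h$ (the only range that matters once $(1+\beta h)^{-p}$ is in play) while being consistent with the $\gg M$ behaviour at large $h$. This gives $S^*(M_1,M_2;h)\lesssim h\log\sigma$, and then, choosing $p>2$ and splitting dyadically,
\[
\sum_{h\ge 1}(1+\beta h)^{-p}S^*(M_1,M_2;h)\lesssim\frac{\log\sigma}{\beta^2},
\]
which is $\ll\beta\sigma$ precisely when $\beta^3\gg\log\sigma/\sigma$, i.e.\ when $\beta=\sigma^{-c}$ with $c<1/3$. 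It happens that $h\log M\lesssim(\beta\sigma)^{1/2+o(1)}$ for $h\lesssim\beta^{-1}$ exactly when $c<1/3$, which is why your arithmetic landed on the right exponent; but you reached it through a bound that is not true and an argument that does not deliver it. Replace the purported uniform $\sqrt{N}$-bound by the Mauduit--S\'ark\"ozy linear-in-$h$ bound and the rest of your computation goes through.
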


\begin{proof}
As in the two previous instances, we will apply Theorem~\ref{thm:main1}.
We use the estimate for the binary correlations of $\xi$ due to Mauduit and
S\'ark\"ozy~\cite[Theorem~4]{MaSa}:
\[
\Bigl| \sum_{1\le s \le M} \xi(s)\xi(s+h) \Bigr| \lesssim h(1+\log M)\,,
\qquad h\ge 1\,.
\]
This immediately yields $ S^*(M_1, M_2; h) \lesssim h\log\sigma $.
Splitting the sum below into the blocks of length $\beta^{-1}2^\ell$, $\ell\ge 0$,
and taking $p>2$, we get
\[
\sum_{h\ge 1} (1+\beta h)^{-p} S^*(M_1, M_2; h)
\lesssim \log\sigma\, \sum_{\ell \ge 0} 2^{-p\ell}\, \Bigl( \frac{2^\ell}\beta \Bigr)^2
\lesssim \frac{\log\sigma}{\beta^2}\,.
\]
To satisfy condition~\eqref{eq:condition2}, we take $\beta=\sigma^{-c}$
with any $c<1/3$. Then, obviously, $\beta^{-2}\log\sigma \ll \beta\sigma$, and we are done.
\end{proof}

\section{Wiener sequences $\xi$ whose spectral measures have no gaps}
\label{sect:8}

Throughout this section we assume that
\begin{equation}\label{eq:C1}
\Bigl|\, \frac1X\, \sum_{0\le s < X} \xi(s)\bar\xi(s+h) - \widehat{\chi}(h) \Bigr|
\lesssim \ep_1(X; h), \qquad 0\le h \le H=H(X),
\end{equation}
with $X\mapsto \ep_1(X; h)$ decreasing to $0$ and $X\mapsto H(X)$ increasing to $\infty$,
as $X\to\infty$, and that
\begin{equation}\label{eq:C2}
\inf\bigl\{\chi(J)\colon
J\subset [-\tfrac12, \tfrac12\,] {\rm\ an\ interval}, |J|=\tfrac12\, \eta\bigr\}
\gtrsim \ep_2(\eta),
\end{equation}
with a positive non-decreasing function $\ep_2$.
The first condition quantifies the fact that $\chi$ is a spectral measure
of the Wiener sequence $\xi$, while the second condition is a quantitative version
of the statement that $\chi$ has no gaps in its support.

In Proposition~\ref{Lemma5a} we provide a set of conditions which will yield a lower
bound on the Weyl-type sum $W_R(\theta)$ on a sufficiently dense set of points
$(R, \theta)$. Then, we combine Proposition~\ref{Lemma5a} with Proposition~\ref{Lemma-subharm} and Proposition~\ref{LemmaL-W-V} and show (in Theorem~\ref{thm:main2}) that these
conditions guarantee equidistribution of zeroes of $F_\xi$ on appropriate local scales.
This set of conditions looks somewhat cumbersome, but then, to demonstrate how neatly it works,
we consider two instances of Wiener sequences $\xi$  with singular spectral measures
having no gaps in their support: the indicator-function of the square-free integers
and the Thue--Morse sequence.

\subsection{Another lower bound for Weyl-type sums}
\label{subsec:LB-general}

Denote by $\mathcal G$ the class of non-negative test-functions $g\in C_0^\infty (\bR)$ such that
$\operatorname{supp}(g)\subset \bigl(-\tfrac12, \tfrac12\, \bigr)$, $\displaystyle \int_\bR g = 1$, and
$g=1$ on $\bigl[-\tfrac14, \tfrac14\, \bigr]$.
As before, we assume that $\varphi$ is a $\Delta$-regular function, and
that $\nu = \psi\circ\log$, $\sigma=\psi'\circ\log$, where $\psi=\varphi^{-1}$ is the inverse
function.

\begin{proposition}\label{Lemma5a}
Let $R\gg 1$. Suppose that there exist $q>1$, $\beta=\beta(R)\ll 1/\Delta(\sigma)$,
and $g\in \mathcal G$, satisfying the following set of conditions:

\smallskip\noindent{\rm (a)}
$\beta^{-q} \le \min\bigl\{\sqrt{\sigma}, H(\tfrac12\, \nu)\bigr\}$;

\medskip\noindent{\rm (b)}
$\beta^{-(1+2q)} \ll \sigma\ep_2(\beta)$;

\medskip\noindent{\rm (c)}
$\displaystyle \nu \sum_{0\le h \le \beta^{-q}} \ep_1(\tfrac12\, \nu; h) \ll \beta\sigma\ep_2(\beta) $;

\medskip\noindent{\rm (d)}
$\displaystyle \sum_{h>\beta^{-q}} |\widehat{g}(\beta h)| \ll \ep_2(\beta) $.

\smallskip
Then, for every $\vartheta\in \bigl[-\tfrac12,  \tfrac12\, \bigr]$, there exist
\[
R'\in [R, R(1+\beta)],  \quad \theta'\in (\vartheta-\beta, \vartheta+\beta),
\]
such that
\[
|W_{R'}(\theta')| \gtrsim \sigma^{1/4}\, \sqrt{\ep_2 (\beta)}\,.
\]
\end{proposition}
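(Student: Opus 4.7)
The plan is to follow the averaging scheme of Proposition~\ref{Lemma5}, but with the Wiener correlation estimate~\eqref{eq:C1} playing the role that the trivial identity $\widehat{\chi}=\delta_0$ plays in the Lebesgue case, and with the no-gaps hypothesis~\eqref{eq:C2} providing the lower bound on the resulting smoothed spectral density. First, as in the proof of Proposition~\ref{Lemma5}, I replace $W_R$ by the full Gaussian sum $\widetilde{W}_R(\theta)=\sum_{k\in\bZ} \xi(k) e(k\theta) e^{-(k-\nu)^2/(2\sigma)}$ at a cost of $o(1)$, and form
\[
\widetilde{X} = \int_R^{R(1+\beta)} \int_{-1/2}^{1/2} |\widetilde{W}_s(\theta)|^2\, g(\beta^{-1}(\vartheta-\theta))\, {\rm d}\theta\, {\rm d}\nu(s).
\]
Since the integration weight has total mass $\simeq \beta^2\sigma$, it suffices to show $\widetilde{X}\gtrsim \beta^2\sigma^{3/2}\ep_2(\beta)$. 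Expanding the square and carrying out the $\theta$-integration yields the Fourier representation
\[
\widetilde{X} = \beta \sum_{h\in\bZ} \widehat{g}(\beta h)\, e(-h\vartheta) \sum_{k\in\bZ} \xi(k)\bar{\xi}(k+h)\, V_R(k;h).
\]

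The next step is to split the outer sum at $|h|=\beta^{-q}$. For the high-frequency tail $|h|>\beta^{-q}$, the inner sum is bounded trivially by $V_R(h)\lesssim \beta\sigma^{3/2}$ (via Lemma~\ref{Lemma5C}(i) and Lemma~\ref{Lemma10a}), so the tail contributes at most $\lesssim \beta^2\sigma^{3/2}\sum_{h>\beta^{-q}} |\widehat{g}(\beta h)|$, which is $\ll \beta^2\sigma^{3/2}\ep_2(\beta)$ by condition~(d). For $|h|\le\beta^{-q}$, the condition~(a) permits the use of~\eqref{eq:C1}; Abel summation against $V_R(\cdot;h)$, combined with the uniform bound (Lemma~\ref{Lemma5C}(i)) and the total-variation bound (Lemma~\ref{Lemma5D}), gives
\[
\Bigl| \sum_{k} \bigl[\xi(k)\bar{\xi}(k+h) - \widehat{\chi}(h)\bigr] V_R(k;h) \Bigr|
\lesssim \sqrt{\sigma}\cdot \nu\, \ep_1(\tfrac{1}{2}\nu; h).
\]
Summing these errors against $|\widehat{g}(\beta h)|\lesssim 1$ and invoking condition~(c), the total Wiener-approximation error is $\ll \beta^2\sigma^{3/2}\ep_2(\beta)$.

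Still on $|h|\le\beta^{-q}$, Lemma~\ref{Lemma10b} gives $|V_R(h)-V_R(0)|\lesssim (1+h^2\beta)\sqrt{\sigma}$, which after summation against $\widehat{g}(\beta h)\widehat{\chi}(h)$ is bounded by a constant times $\beta^{-(2q-1)}\sqrt{\sigma}$; condition~(b) precisely guarantees this is $\ll \beta\sigma^{3/2}\ep_2(\beta)$. After all these reductions, the main surviving contribution to $\widetilde{X}$ is
\[
\beta\, V_R(0) \sum_{h\in\bZ} \widehat{g}(\beta h)\, e(-h\vartheta)\, \widehat{\chi}(h),
\]
with $V_R(0)\simeq \beta\sigma^{3/2}$ by Lemma~\ref{Lemma10a}.

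The final ingredient is to bound this main term below. By Poisson summation and the support condition on $g$,
\[
\sum_{h} \widehat{g}(\beta h)\, e(-h\vartheta)\, \widehat{\chi}(h) = \int_{\bT} \beta^{-1} g\bigl((\theta+\vartheta)/\beta\bigr)\, {\rm d}\chi(\theta),
\]
and since $g\ge 1$ on $[-\tfrac14,\tfrac14]$, the integrand is at least $\beta^{-1}$ on the interval of length $\beta/2$ centered at $-\vartheta$. The no-gaps hypothesis~\eqref{eq:C2} applied with $\eta=\beta$ then yields $\gtrsim \beta^{-1}\ep_2(\beta)$, so the main term is $\gtrsim \beta\sigma^{3/2}\ep_2(\beta)$, well above the required target. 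Dividing by the weight mass $\simeq \beta^2\sigma$ produces a point $(R',\theta')$ in the relevant rectangle at which $|\widetilde{W}_{R'}(\theta')|^2 \gtrsim \sqrt{\sigma}\,\ep_2(\beta)$, which is the desired bound on $|W_{R'}(\theta')|$. The main obstacle is the careful bookkeeping: each of the conditions (a)--(d) is tuned to absorb exactly one error source (truncation, Wiener approximation, $V_R(h)\to V_R(0)$ replacement, tail of $\widehat{g}$), and balancing their joint strength against the main term dictates the precise form of the hypotheses.
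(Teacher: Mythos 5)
Your proposal follows the paper's proof step for step: the same average $\widetilde X$, the same Fourier expansion against the kernel $V_R(k;h)$, the same three-term split into a main term $I=\beta V_R(0)\sum_h\widehat g(\beta h)\widehat\chi(h)e(-h\vartheta)$, a Wiener-approximation error, and a $V_R(h)\to V_R(0)$ replacement error, with conditions (a)--(d) deployed in exactly the same roles. Your lower bound on the main term via $(\chi*g_\beta)'(-\vartheta)\gtrsim\beta^{-1}\ep_2(\beta)$ and Lemma~\ref{Lemma10a} is the paper's argument (indeed you retain a factor $\beta^{-1}$ the paper discards, so your main term comes out stronger).

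Two genuine gaps. First, you apply~\eqref{eq:C1} uniformly for $|h|\le\beta^{-q}$, but the hypothesis is stated only for non-negative shifts $h$. For $-\beta^{-q}\le h\le -1$ the reindexing $\xi(s)\bar\xi(s+h)=\overline{\xi(s-|h|)\bar\xi(s)}$ shifts the summation range by $|h|$ and introduces an $O(|h|)$ boundary term; after summing over $|h|\le\beta^{-q}$ this contributes an extra $\beta\sqrt\sigma\cdot\beta^{-2q}$, which is absorbed only by condition (b). The paper handles this case explicitly; your argument silently suppresses it. Second, your intermediate bound "$\beta^{-(2q-1)}\sqrt\sigma$" for $\sum_{|h|\le\beta^{-q}}|\widehat g(\beta h)\widehat\chi(h)|\cdot|V_R(h)-V_R(0)|$ does not follow from the cited lemmas: Lemma~\ref{Lemma10b} gives $(1+h^2\beta)\sqrt\sigma$, quadratic in $h$, so with only the crude bound $|\widehat g|\lesssim 1$ the sum is $\simeq\beta^{1-3q}\sqrt\sigma$, and since $3q>1+2q$ for $q>1$, condition (b) is \emph{not} strong enough to control $\beta^{-3q}$. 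One must exploit the superpolynomial decay of $\widehat g$, which collapses the sum to $\simeq\beta^{-2}\sqrt\sigma$ (the paper does exactly this, truncating the replacement error at $|h|\le\sqrt\sigma$ so that Lemma~\ref{Lemma10b} applies and then summing with the decay of $\widehat g$); the resulting requirement $\beta^{-3}\ll\sigma\ep_2(\beta)$ is what (b) supplies precisely because $q>1$. So the decay of $\widehat g$ is an essential ingredient here, not a convenience, and your exponent needs to be repaired accordingly.
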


\begin{proof}
As in the proof of Proposition~\ref{Lemma5}, we estimate from below the average
\[
\widetilde{X} =
\int_R^{R(1+\beta)}\, \int_{-1/2}^{1/2} |\widetilde{W}_s(\theta)|^2\,
g(\beta^{-1}(\vartheta-\theta))\, {\rm d}\theta\, {\rm d}\nu (s)\,,
\]
where
\[
\widetilde{W}_R(\theta) = \sum_{k\in\bZ} \xi(k)e(k\theta)
\exp\bigl[\,\frac{(k-\nu)^2}{2\sigma} \,\bigr]\,.
\]
By Lemma~\ref{Lemma5A}(ii), $\nu(R(1+\beta))-\nu(R) = (1+o(1))\beta\sigma$, so, to prove Proposition~\ref{Lemma5a} we need to show that
$\widetilde{X} \gtrsim \beta^2\sigma^{3/2}\ep_2(\beta)$.
As before, we rewrite $\widetilde X$ as a Fourier series
\[
\widetilde{X}
= \beta\,
\sum_{h\in\bZ} \widehat{g}(\beta h) e(-h\vartheta)\,
\sum_{k\in\bZ} \xi(k)\bar\xi(k+h) V_R(k; h)\,.
\]
Recalling the notation
$\displaystyle V_R(h) = \sum_{k\in\bZ} V_R(k; h)$, we split the RHS into three parts:
\begin{align*}
\widetilde{X}  &= \beta V_R(0)\, \sum_{h\in\bZ} \widehat{g}(\beta h)\widehat{\chi}(h) e(-h\vartheta) \\
&\quad + \beta\, \sum_{h\in\bZ} \widehat{g}(\beta h) e(-h\vartheta)\,
\sum_{k\in\bZ} \bigl( \xi(k)\bar\xi(k+h) - \widehat{\chi}(h) \bigr) V_R(k; h) \\
&\quad + \beta\, \sum_{h\in\bZ} \widehat{g}(\beta h) \widehat{\chi}(h) e(-h\vartheta)\,
(V_R(h) - V_R(0)) \\
&= I +II + III\,.
\end{align*}
We will show that $I\gtrsim \beta^2\sigma^{3/2}\ep_2(\beta)$, while
the terms $|II|$ and $|III|$ are  $\ll \beta^2\sigma^{3/2}\ep_2(\beta)$.

\medskip\noindent\underline{Lower bound of $I$}:
We set $g_\beta(\theta) = \beta^{-1}g(\beta^{-1}\theta)$, and denote by
$(\chi * g_\beta)'$ the density of the convolution of $\chi$ with
$g_\beta$. Then, \[ I=\beta V_R(0) (\chi * g_\beta)'(-\vartheta).\]
By Lemma~\ref{Lemma10a}, $V_R(0)\gtrsim \beta \sigma^{3/2}$. Since $g_\beta = \beta^{-1}$ on
$ \bigl[-\tfrac14\, \beta, \tfrac14\, \beta \bigr]$, we have
\[
(\chi * g_\beta)'(-\vartheta) \gtrsim
\chi\bigl[-\vartheta-\tfrac14\, \beta, -\vartheta + \tfrac14\, \beta \bigr]
\gtrsim \ep_2(\beta).
\]
Thus, $I \gtrsim \beta^2 \sigma^{3/2} \ep_2(\beta)$.

\medskip\noindent\underline{Upper bound of $II$}:
Recalling that $\beta\sigma \stackrel{(a)}\gg \sqrt{\sigma\log\sigma}$
and using Lemma~\ref{Lemma5B}(ii), we cut the sum in $k$, getting
\begin{align*}
|II| &\lesssim \beta\, \sum_{h\in\bZ} |\widehat{g}(\beta h)| \cdot
\bigl| \sum_{|k-\nu|\le 2\beta\sigma}
\bigl( \xi(k)\bar\xi(k+h) - \widehat{\chi}(h) \bigr) V_R(k; h)\, \bigr| + o(1) \\
&= \beta\, \bigl(\, \sum_{|h|\le \beta^{-q}} + \sum_{|h|>\beta^{-q}} \,\bigr)\,
|\widehat{g}(\beta h)| \\ &\qquad\qquad\qquad\qquad \times
\bigl|\, \sum_{|k-\nu|\le 2\beta\sigma}
\bigl( \xi(k)\bar\xi(k+h) - \widehat{\chi}(h) \bigr) V_R(k; h) \,\bigr| + o(1)\,.
\end{align*}
For $|h|>\beta^{-q}$, using the crude estimate
\[
\bigl|\, \sum_{|k-\nu|\le 2\beta\sigma}
\bigl( \xi(k)\bar\xi(k+h) - \widehat{\chi}(h) \bigr) V_R(k; h) \,\bigr|
\lesssim \beta\sigma \cdot \sqrt{\sigma} = \beta\sigma^{3/2}\,,
\]
we get
\begin{multline*}
\beta\, \sum_{|h|> \beta^{-q}}
|\widehat{g}(\beta h)| \cdot
\bigl|\, \sum_{|k-\nu|\le 2\beta\sigma}
\bigl( \xi(k)\bar\xi(k+h) - \widehat{\chi}(h) \bigr) V_R(k; h) \,\bigr|
\\
\lesssim \beta^2\sigma^{3/2} \sum_{|h|> \beta^{-q}}
|\widehat{g}(\beta h)|\,,
\end{multline*}
which is $\ll \beta^2\sigma^{3/2}\ep_2(\beta)$ by assumption (d).

Now, we consider the sum over $|h|\le\beta^{-q}$.
Applying summation by parts and using Lemma~\ref{Lemma5D}, we have
\begin{align*}
\Bigl| &\sum_{|k-\nu|\le 2\beta\sigma}
\bigl( \xi(k)\bar\xi(k+h) - \widehat{\chi}(h) \bigr) V_R(k; h) \Bigr|
\\
&\lesssim
\Bigl(\max_{|k-\nu|\le 2\beta\sigma} V_R(k; h)+ \sum_{|k-\nu|\le 2\beta\sigma} |V_R(k; h)-V_R(k-1; h)|\,\Bigr) 
\\&\qquad\qquad\qquad\qquad\qquad\times
\max_{|k-\nu|\le 2\beta}\, \bigl|\,
\sum_{k\le s \le \nu +2\beta\sigma}
\bigl(  \xi(s)\bar\xi(s+h) - \widehat{\chi}(h) \bigr)\,
\bigr|
\\
&\lesssim
\sqrt{\sigma}\, \max_{|k-\nu|\le 2\beta}\, \bigl|\,
\sum_{k\le s \le \nu +2\beta\sigma}
\bigl(  \xi(s)\bar\xi(s+h) - \widehat{\chi}(h) \bigr)
\,\bigr|\,.
\end{align*}
First, we assume that $0\le h \le \beta^{-q}$. Then, by estimate~\eqref{eq:C1},
the maximum on the RHS is $ \lesssim (\nu + 2\beta\sigma)\ep_1(\nu-2\beta\sigma; h)$.
Since $\beta\sigma \ll \sigma/\Delta(\sigma) \lesssim \nu$, the latter expression is
$ \lesssim \, \nu\ep_1(\tfrac12\, \nu; h) $. The application of estimate~\eqref{eq:C1} was legal since,
$  \beta^{-q} \stackrel{(a)}\le H(\tfrac12\, \nu) \,
\stackrel{\beta\sigma\ll\nu}\le\, H(\nu-2\beta\sigma) $.
Thus,
\begin{multline*}
\beta\, \sum_{0\le h \le \beta^{-q}}\,
|\widehat{g}(\beta h)| \cdot
\bigl|\, \sum_{|k-\nu|\le 2\beta\sigma}
\bigl( \xi(k)\bar\xi(k+h) - \widehat{\chi}(h) \bigr) V_R(k; h) \,\bigr|
\\
\lesssim
\beta\sqrt{\sigma}\cdot \nu\, \sum_{0\le h \le \beta^{-q}} \ep_1(\tfrac12\, \nu, h)\,,
\end{multline*}
which is $\ll \beta^2\sigma^{3/2} \ep_2(\beta)$ by assumption (c).

The sum over $-\beta^{-q} \le h \le -1$ needs only a minor modification.
In this case we have
\begin{align*}
\bigl|\, \sum_{k\le s \le \nu +2\beta\sigma}
\bigl(  \xi(s)\bar\xi(s+h) &- \widehat{\chi}(h) \bigr) \,\bigr|\\
&= \bigl|\, \sum_{k-|h|\le s \le \nu +2\beta\sigma-|h|}\,
\bigl(  \xi(s)\bar\xi(s+|h|) - \widehat{\chi}(|h|) \bigr) \,\bigr| \\
&= \bigl|\, \sum_{k\le s \le \nu +2\beta\sigma}\,
\bigl(  \xi(s)\bar\xi(s+|h|) - \widehat{\chi}(|h|) \bigr) \,\bigr| + O(|h|)\,.
\end{align*}
Therefore,
\begin{multline*}
\beta\, \sum_{-\beta^{-q}\le h \le -1}\,
|\widehat{g}(\beta h)| \cdot
\bigl|\, \sum_{|k-\nu|\le 2\beta\sigma}
\bigl( \xi(k)\bar\xi(k+h) - \widehat{\chi}(h) \bigr) V_R(k; h) \,\bigr|
\\
\lesssim
\beta\sqrt{\sigma}\bigl(\, \nu\, \sum_{0\le h' \le \beta^{-q}} \ep_1(\tfrac12\, \nu, h')
+ \beta^{-2q}\,\bigr)\,,
\end{multline*}
and, by assumptions (c) and (b), both terms on the RHS are $\ll \beta^2\sigma^{3/2} \ep_2(\beta)$.

\medskip\noindent\underline{Upper bound of $III$}:
We have
\[
| III | \lesssim \beta\, \bigl(\, \sum_{|h|\le\sqrt\sigma} + \sum_{|h|>\sqrt{\sigma}} \,\bigr)
|\widehat{g}(\beta h)|\cdot |(V_R(h) - V_R(0))|\,.
\]
To estimate the first sum, we apply Lemma~\ref{Lemma10b} and use that the Fourier transform of $g$ decays faster than any negative power. We get
\begin{multline*}
\beta\, \sum_{|h|\le\sqrt\sigma}
|\widehat{g}(\beta h)|\cdot |(V_R(h) - V_R(0))| \lesssim \beta\sqrt{\sigma}\,
\sum_{|h|\le\sqrt\sigma}\,  |\widehat{g}(\beta h)| (1+ h^2\beta) \\
\lesssim \beta\sqrt{\sigma} \cdot \beta^{-2} = \frac{\sqrt{\sigma}}\beta\,.
\end{multline*}
To estimate the second sum, we use the crude bound $V_R(h) \lesssim \beta\sigma^{3/2}$,
which follows from Lemma~\ref{Lemma5C}(i) and Lemma~\ref{Lemma5B}(ii).
Using again that the Fourier transform of $g$ decays faster than any negative power, we get
\begin{multline*}
\beta\, \sum_{|h|>\sqrt\sigma}
|\widehat{g}(\beta h)|\cdot |(V_R(h) - V_R(0))| \lesssim \beta^2\sigma^{3/2}\,
\sum_{|h|>\sqrt\sigma} |\widehat{g}(\beta h)| \\
\lesssim \beta^2 \sigma^{3/2} \cdot \frac1{\beta}\,
\sum_{\ell\ge\beta\sqrt\sigma} \frac1{\ell^3} \lesssim \frac{\sqrt{\sigma}}\beta\,.
\end{multline*}
It remains to recall that
condition (b) guarantees that 
$$
\beta^{-1}\sqrt{\sigma} \ll \beta^2 \sigma^{3/2}\, \ep_2(\beta).
$$
This completes the proof of Proposition~\ref{Lemma5a}.
\end{proof}

\subsection{Making the ends meet}

Now, combining Proposition~\ref{Lemma5a} with Proposition~\ref{Lemma-subharm} and Proposition~\ref{LemmaL-W-V},
we obtain

\begin{theorem}\label{thm:main2}
Let $F_\xi (z) = \displaystyle \sum_{n\ge 0} \xi(n)a(n) z^n$ be an entire function with smooth
coefficients $a(n) = \displaystyle \exp\Bigl[ -\int_0^n \varphi \Bigr]$ with a $\Delta$-regular
function $\varphi$. Let $\sigma = \psi'\circ\log$, where $\psi=\varphi^{-1}$ is the inverse to
$\varphi$. Let $\beta=\beta(R) \ll 1/\Delta(\sigma)$ be a small parameter satisfying
\[
\beta' (R) = o(1/R), \quad R\to\infty\,.
\]
Suppose that $\xi$ is a bounded sequence, for which
conditions~\eqref{eq:C1} and~\eqref{eq:C2} hold with functions $\ep_1$ and $\ep_2$
satisfying assumptions (a), (b), (c), and (d) in Proposition~\ref{Lemma5a}. Suppose, in addition,
that
\begin{equation}\label{eq:additional}
\sigma^{-1/4}\Delta(\sigma)(\log\sigma)^{3/2} \ll \sqrt{\ep_2(\beta)}\,.
\end{equation}
Then the zero set of $F_\xi$ is $(\gamma, \rho)$-equidistributed with the radial gauge $\rho = R\beta$.
\end{theorem}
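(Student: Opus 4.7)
The plan is to follow the template of Theorem~\ref{thm:main1}, replacing Proposition~\ref{Lemma5} by Proposition~\ref{Lemma5a} in the step producing the Weyl-sum lower bound and invoking the extra hypothesis~\eqref{eq:additional} to absorb the approximation error from Proposition~\ref{LemmaL-W-V}. Concretely, I would verify the three hypotheses of Proposition~\ref{Lemma-subharm} for the pair
\[
V(z)=\log\mu(|z|),\qquad V_1(z)=\log|F_\xi(z)|,
\]
with radial gauge $\rho(R)=R\beta(R)$.

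First, $\rho$ is a genuine gauge: $\rho'(R)=\beta(R)+R\beta'(R)=o(1)$, since $\beta\ll 1/\Delta(\sigma)\to 0$ and $R\beta'(R)=o(1)$ by hypothesis. The regularity conditions~\eqref{eq-1}--\eqref{eq-2} for the Riesz density $\Gamma=(2\pi)^{-1}\sigma/R^2$ reduce, exactly as in Theorem~\ref{thm:main1}, to the growth $\Gamma\rho^2=(2\pi)^{-1}\sigma\beta^2\to\infty$ (which follows from condition~(b) of Proposition~\ref{Lemma5a} together with $\ep_2\lesssim 1$) and to Lemma~\ref{Lemma-AuxEstimates-new}(c). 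The upper bound $V_1\le V+O(\Gamma\rho^2)$ comes from applying Proposition~\ref{Lemma-L-W-V-upper} with $B=0$ (legitimate because $\xi$ is bounded), which yields $V_1\le V+O(\log\sigma)$, and then absorbing $\log\sigma$ into $\sigma\beta^2$.

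For the matching lower bound, fix $R\gg 1$ and $\vartheta\in[-\tfrac12,\tfrac12]$. Proposition~\ref{Lemma5a} produces $R'\in[R,R(1+\beta)]$ and $\theta'\in(\vartheta-\beta,\vartheta+\beta)$ with $|W_{R'}(\theta')|\gtrsim\sigma^{1/4}\sqrt{\ep_2(\beta)}$. Combining this with Proposition~\ref{LemmaL-W-V},
\[
|F_\xi(R'e(\theta'))|\ge\mu(R')\bigl(|W_{R'}(\theta')|-O(\Delta(\sigma)(\log\sigma)^{3/2})\bigr)\gtrsim\mu(R')\sigma^{1/4}\sqrt{\ep_2(\beta)},
\]
the last step being precisely hypothesis~\eqref{eq:additional}. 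Taking logarithms,
\[
V_1(R'e(\theta'))\ge V(R'e(\theta'))+\tfrac14\log\sigma+\tfrac12\log\ep_2(\beta)-O(1).
\]
Running $\vartheta$ over a net of $\simeq 1/\beta$ equally spaced midpoints of $[-\tfrac12,\tfrac12]$ and $R$ over the sequence $R_{j+1}=R_j(1+\beta(R_j))$, then applying the rarefying construction from Section~\ref{subsect:rarefying}, yields a $\rho$-dense set $W$ on which the above estimate holds. An appeal to Proposition~\ref{Lemma-subharm} with gauge $C\rho$ for a suitable $C$ then gives the claimed $(\gamma,\rho)$-equidistribution.

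The only substantive verification is that the loss $|\log\ep_2(\beta)|$ appearing in the lower bound is $O(\Gamma\rho^2)=O(\sigma\beta^2)$, so that $V_1\ge V-O(\Gamma\rho^2)$ on $W$. Condition~(b) of Proposition~\ref{Lemma5a} gives $\ep_2(\beta)\gtrsim\beta^{1+2q}/\sigma$, whence
\[
|\log\ep_2(\beta)|\le\log\sigma+(1+2q)\log(1/\beta)+O(1),
\]
and both terms are dominated by $\sigma\beta^2\gg\beta^{-(2q-1)}$ (again from (b)). I expect this bookkeeping to be the main---though not deep---technical point; the substantive work has been done in Propositions~\ref{Lemma5a}, \ref{LemmaL-W-V}, \ref{Lemma-L-W-V-upper}, and~\ref{Lemma-subharm}.
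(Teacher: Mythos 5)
Your proposal is correct and takes exactly the route the paper intends: the authors explicitly omit the proof of Theorem~\ref{thm:main2}, remarking that it is ``straightforward and close to the proof of Theorem~\ref{thm:main1},'' with Proposition~\ref{Lemma5a} replacing Proposition~\ref{Lemma5}. You correctly identify the two new points -- the lower bound $|W_{R'}(\theta')|\gtrsim\sigma^{1/4}\sqrt{\ep_2(\beta)}$ coming with a possibly small factor $\sqrt{\ep_2(\beta)}$, absorbed via hypothesis~\eqref{eq:additional}, and the need to verify that the resulting loss $|\log\ep_2(\beta)|$ is $O(\Gamma\rho^2)=O(\sigma\beta^2)$.

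One small slip in the final bookkeeping paragraph: the chain $\sigma\beta^2\gg\beta^{-(2q-1)}$ (from condition~(b)) does not by itself give $\log\sigma\lesssim\beta^{-(2q-1)}$, since condition~(a) only yields an \emph{upper} bound $\beta^{-(2q-1)}\le\sigma^{(2q-1)/(2q)}$, not a lower bound. The clean way to close the estimate is to use~(a) directly: $\beta\ge\sigma^{-1/(2q)}$ gives $\sigma\beta^2\ge\sigma^{1-1/q}\gg\log\sigma$ (since $q>1$), and $\log(1/\beta)\lesssim\log\sigma$ likewise from~(a). This is the same conclusion you reach; it is only the attribution to~(b) that is off. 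Everything else -- the verification of~\eqref{eq-1}--\eqref{eq-2}, the use of Proposition~\ref{Lemma-L-W-V-upper} with $B=0$ for the upper bound, the rarefying step to produce a $\rho$-dense set $W$, and the appeal to Proposition~\ref{Lemma-subharm} -- matches the argument in the paper.
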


We skip the proof this theorem, which is rather straightforward and close to the proof
of Theorem~\ref{thm:main1}. The only difference is that now, instead of Proposition~\ref{Lemma5},
we will use Proposition~\ref{Lemma5a}. We mention that the purpose of the additional
restriction~\eqref{eq:additional} is to guarantee that the lower bound on the Weyl-type sum
$W_R$, provided by Proposition~\ref{Lemma5a}, can be combined with the approximation error in Proposition~\ref{LemmaL-W-V}.

\subsection{The indicator-function of square-free integers}

Here, we consider $\xi (n) = \mu^2(n)$, where $\mu$ is the M\"obius function.
The main result of this section is Theorem~\ref{thm:sq-free} below.
The key ingredient in its proof is Mirsky's classical estimate for binary correlations.
Set
\[
D = \prod_p \Bigl( 1 - \frac2{p^2} \Bigr)\,.
\]

\begin{lemma}[Mirsky~\cite{Mirsky}]\label{Lemma:sq-free}
For $\varepsilon>0$, we have
$$
\Bigl|\sum_{0\le k \le x} \mu^2(k) \mu^2(k+h) - D(h) x\Bigr| \le C_\varepsilon x^{2/3+\varepsilon},\qquad 0\le h\le x,
$$
where $D(0)=6\pi^{-2}$, and for $h\ne 0$,
$$
D(h) = D\, \prod_{p^2 | h} \Bigl(1+\frac1{p^2-2} \Bigr)\,.
$$
\end{lemma}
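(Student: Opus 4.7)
The plan is to exploit the identity $\mu^2(n) = \sum_{d^2 \mid n} \mu(d)$ together with a truncation at a parameter $y$, to be optimized at the end. Set $M_y(k) := \sum_{d \le y,\ d^2 \mid k} \mu(d)$ and $E_y(k) := \mu^2(k) - M_y(k) = \sum_{d > y,\ d^2 \mid k} \mu(d)$, so that
\[
\sum_{k \le x} \mu^2(k)\mu^2(k+h) = \sum_k M_y(k)M_y(k+h) + \sum_k M_y(k)E_y(k+h) + \sum_k E_y(k)M_y(k+h) + \sum_k E_y(k)E_y(k+h).
\]
I will extract the main term from the first piece, and show that each remaining piece is of size $O(x^{2/3+\varepsilon})$ once $y$ is chosen appropriately.

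For the main piece, I would swap the order of summation to get
\[
\sum_k M_y(k)M_y(k+h) = \sum_{d, e \le y} \mu(d)\mu(e)\, \#\{0 \le k \le x : d^2 \mid k,\ e^2 \mid k+h\}.
\]
By the Chinese Remainder Theorem, the inner system is solvable precisely when $\gcd(d,e)^2 \mid h$, and then the solutions form a single arithmetic progression with common difference $\operatorname{lcm}(d^2, e^2) = d^2e^2/\gcd(d,e)^2$, giving a count of $x\gcd(d,e)^2/(d^2e^2) + O(1)$. The $O(1)$ terms contribute $O(y^2)$. Extending the $d, e$ sum to infinity in the main term changes the value by $O(x y^{-1+\varepsilon})$, via the bound $\sum_{L > y} 3^{\omega(L)}/L^2 \lesssim y^{-1+\varepsilon}$ indexed by $L = \operatorname{lcm}(d,e)$. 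The resulting completed double sum is multiplicative, hence factors as an Euler product whose value I compute below and denote $D(h)$.

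For the three remaining pieces, the crucial observation is that $E_y$ has small $L^1$ norm despite not being small pointwise: since $E_y(k) \ne 0$ forces $k$ to have a squared divisor $d^2$ with $d > y$,
\[
\sum_{k \le x+h} |E_y(k)| \le \sum_{d > y} \lfloor (x+h)/d^2 \rfloor \lesssim x/y,
\]
using $h \le x$. Combining this with the trivial divisor bound $|M_y(k)| \le \tau(k) \ll_\varepsilon x^{\varepsilon}$ gives $O(x^{1+\varepsilon}/y)$ for each cross term, and applying the same two ingredients to the $E_yE_y$ piece yields $O(x^{1+\varepsilon}/y)$ as well. Balancing $y^2$ against $x^{1+\varepsilon}/y$ leads to the choice $y = x^{1/3}$, for a total error $O(x^{2/3+\varepsilon})$. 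The only delicate step is this cross-term bound: the divisor-sum representation of $E_y$ is essential, since a pointwise bound on $|E_y|$ alone would yield a bound weaker than the main term.

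It remains to identify the completed sum with the claimed constant. Each local Euler factor equals
\[
F_p(h) \;=\; \sum_{a, b \in \{0,1\}} \mu(p^a)\mu(p^b)\, p^{2\min(a,b) - 2(a+b)}\, \mathbf{1}\!\bigl[p^{2\min(a,b)} \mid h\bigr],
\]
which evaluates to $1 - 2/p^2$ when $p^2 \nmid h$ and to $1 - 1/p^2$ when $p^2 \mid h$. Taking the product over all primes yields $D(h) = D \prod_{p^2 \mid h} (p^2-1)/(p^2-2) = D \prod_{p^2 \mid h}(1 + 1/(p^2-2))$ for $h \ne 0$, and $D(0) = \prod_p (1 - 1/p^2) = 1/\zeta(2) = 6/\pi^2$, matching the statement.
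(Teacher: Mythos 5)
Your proof is correct, and it follows a genuinely different route from the one in the paper. Both arguments open with the identity $\mu^2(n)=\sum_{d^2\mid n}\mu(d)$, but you then truncate each factor separately at a threshold $y$, writing $\mu^2=M_y+E_y$ and expanding the product into four pieces. The main piece $\sum_k M_y(k)M_y(k+h)$ is a double sum of lattice-point counts, handled by CRT with error $O(y^2)$ from the $O(1)$ per pair and $O(xy^{-1+\varepsilon})$ from completing the sum (the reindexing $\gcd(d,e)^2/(d^2e^2)=\operatorname{lcm}(d,e)^{-2}$ and the count $3^{\omega(L)}$ of squarefree pairs with a given lcm are both correct). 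The three pieces involving $E_y$ are each disposed of by an $L^1\times L^\infty$ bound: $\sum_{m\le 2x}|E_y(m)|\lesssim x/y$ while $|M_y|,|E_y|\ll_\varepsilon x^\varepsilon$ pointwise, giving $O(x^{1+\varepsilon}/y)$. Balancing at $y=x^{1/3}$ yields $O(x^{2/3+\varepsilon})$, and your Euler-factor computation matches the stated $D(h)$. The paper, by contrast, keeps the full double sum $\sum_{a,b}\mu(a)\mu(b)\,\#\{k\le x: a^2\mid k,\ b^2\mid k+h\}$ and splits according to the size of the \emph{product} $ab$ (at $y=x^{2/3}$); the large-$ab$ tail $I_2$ is then controlled by a nontrivial counting result of Cilleruelo--Garaev on integer points on quadratic curves, cited as \cite[Lemma~4]{CilGar}. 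Your version reaches the same exponent $2/3+\varepsilon$ without any appeal to that lemma, so it is more elementary and self-contained; the paper's split has the virtue of following Mirsky's original 1949 argument essentially verbatim, which is why they reproduce it.

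Two small points worth making explicit if you write this up: (i) in the main piece you should restrict to squarefree $d,e$ from the start (as $\mu(d)\mu(e)$ vanishes otherwise), which is what makes the $3^{\omega(L)}$ count exact; and (ii) the $L^1$ bound on $E_y$ uses $h\le x$ to ensure $k+h\le 2x$, which you noted but is the one place the hypothesis $0\le h\le x$ actually enters.
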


The only difference with Mirsky's result is he did not specify the dependence of
the constant on the shift $h$. For the reader's convenience, we give the
proof in Appendix~C. We will follow Mirsky's work very closely.

Having Lemma~\ref{Lemma:sq-free} at hand, it is not difficult to compute the spectral measure $\chi$
of the sequence $\mu^2$.

\begin{lemma}\label{lemma:sq-free-spectral1}
The spectral measure of the sequence $\mu^2$ equals
$$
\chi=D\,\sum_{\mu^2(d)=1}\Bigl(\frac1{d^2}\prod_{p | d} \frac1{p^2-2} \Bigr)\sum_{j=0}^{d^2-1}\delta_{e(j/d^2)}.
$$
\end{lemma}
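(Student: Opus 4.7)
The plan is to verify directly that the Fourier coefficients of the measure $\chi$ on the right-hand side coincide with the autocorrelations $r(h) = \lim_{N\to\infty} N^{-1}\sum_{0\le k<N}\mu^2(k)\mu^2(k+h)$ of the sequence $\mu^2$. By Mirsky's Lemma~\ref{Lemma:sq-free}, these autocorrelations exist and equal $D(h)$ for every $h\ge 0$ (with $D(0)=6/\pi^2$), so $\mu^2$ is a Wiener sequence and its spectral measure is the unique positive measure on $\bT$ whose Fourier coefficients are the numbers $D(h)$ (extended to negative $h$ by $D(-h)=D(h)$). Thus it will suffice to check that the candidate measure $\chi$ satisfies $\widehat\chi(h)=D(h)$ for every $h\in\bZ_+$.

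For the computation, denote the candidate measure by $\chi$ and use that each Dirac point mass $\delta_{e(j/d^2)}$ contributes $e(-hj/d^2)$ to the $h$-th Fourier coefficient. Interchanging summations (legitimate because the double series converges absolutely: $\sum_d d^{-2}\prod_{p|d}(p^2-2)^{-1}<\infty$) gives
\[
\widehat\chi(h)
= D\sum_{\mu^2(d)=1}\frac{1}{d^2}\Bigl(\prod_{p\mid d}\frac{1}{p^2-2}\Bigr)\sum_{j=0}^{d^2-1}e(-hj/d^2).
\]
The inner geometric sum equals $d^2$ if $d^2\mid h$ and $0$ otherwise, so
\[
\widehat\chi(h)
= D\sum_{\substack{\mu^2(d)=1\\ d^2\mid h}}\prod_{p\mid d}\frac{1}{p^2-2}.
\]
The squarefree $d$ with $d^2\mid h$ are precisely the products of distinct primes taken from the set $P(h)=\{p\colon p^2\mid h\}$, so this sum factors as
\[
\widehat\chi(h) = D\prod_{p\in P(h)}\Bigl(1+\frac{1}{p^2-2}\Bigr) = D\prod_{p^2\mid h}\frac{p^2-1}{p^2-2}.
\]
For $h\ne 0$ this is exactly the expression $D(h)$ given in Lemma~\ref{Lemma:sq-free}.

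The only remaining point is the case $h=0$, where $P(0)$ is the set of all primes. Here one must check convergence of the resulting infinite product and that it gives the correct total mass $6/\pi^2$. Using the definition $D=\prod_p(1-2/p^2)$, the product telescopes:
\[
D\prod_{p}\frac{p^2-1}{p^2-2} = \prod_p\frac{p^2-2}{p^2}\cdot\prod_p\frac{p^2-1}{p^2-2}
= \prod_p\frac{p^2-1}{p^2} = \frac{1}{\zeta(2)} = \frac{6}{\pi^2},
\]
matching $D(0)$. This identity, together with the previous display, shows that $\widehat\chi(h)=D(h)$ for all $h\in\bZ_+$, and hence $\chi$ is the spectral measure of $\mu^2$. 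The only mildly delicate point is this $h=0$ bookkeeping, since the formula for $D(h)$ is stated separately for $h=0$ and $h\ne 0$ and the infinite product must be handled with care; everything else is a routine expansion of Dirac masses into Fourier modes.
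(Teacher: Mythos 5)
Your argument is the same as the paper's: compute $\widehat\chi(h)$ by expanding the Dirac masses into Fourier modes, observe that the inner geometric sum vanishes unless $d^2\mid h$, and recognize the resulting Euler product as $D(h)$. You add a worthwhile detail for $h=0$, which the paper dismisses as obvious; one small quibble is that to justify interchanging the sums you should bound $\sum_{\mu^2(d)=1}\prod_{p\mid d}(p^2-2)^{-1}$ (each atom cluster carries mass $d^2\cdot d^{-2}\prod_{p\mid d}(p^2-2)^{-1}$), not the sum with the extra $d^{-2}$ you quoted, though both are finite.
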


\noindent Since the proof is only a few lines, we give it here:

\begin{proof}
We need to check that $\widehat{\chi}(h)=D(h)$. For $h=0$ this is obvious. For $h\ge 1$, we have
$$
\widehat{\chi}(h)=D\, \sum_{\mu^2(d)=1}\Bigl(\frac1{d^2}\prod_{p | d} \frac1{p^2-2} \Bigr)\sum_{j=0}^{d^2-1}e(jh/d^2).
$$
Since
$$
\sum_{j=0}^{m-1}e(jk/m)=\begin{cases}
m,\qquad m\,|\,k,\\
0, \qquad m{\not|}\,k,
\end{cases}
$$
we obtain that
$$
\widehat{\chi}(h)=D\,
\sum_{\mu^2(d)=1,\,d^2|h}\ \prod_{p | d} \frac1{p^2-2} =
D\, \prod_{p^2|h} \Bigl(1+\frac1{p^2-2} \Bigr) = D(h),
$$
completing the proof. 
\end{proof}

The next lemma tells us how thin the measure $\chi$ can be, i.e, how
estimate~\eqref{eq:C2} looks in this case:

\begin{lemma}\label{lemma:sq-free-spectral2}
For any interval $I\subset \bigl[ -\tfrac12, \tfrac12]$,
we have $\chi (I)\gtrsim |I|^{3/2}$.
\end{lemma}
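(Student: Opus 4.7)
The plan is to exploit the explicit atomic structure of $\chi$ provided by Lemma~\ref{lemma:sq-free-spectral1} and get the bound by summing atomic masses over a dyadic range of square-free $d$.

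Let $\ell = |I|$. The case $\ell \gtrsim 1$ is trivial: any arc of length comparable to one contains one of the fixed atoms of $\chi$ (e.g.\ one of the points $e(j/4)$, $j=0,1,2,3$, which come from $d=2$) and each such atom carries a positive absolute mass. So I will assume $\ell$ is small.

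Set $S = \lceil\sqrt{2/\ell}\,\rceil$, so that $S^2\ell \ge 2$. For each square-free $d \in [S, 2S]$, the measure
$\nu_d := \sum_{j=0}^{d^2-1}\delta_{e(j/d^2)}$
is supported on the $d^2$-th roots of unity, equally spaced on $\bT$ with gap $1/d^2 \le \ell/2$; consequently at least $\lfloor d^2\ell\rfloor \ge d^2\ell/2$ of them lie in $I$. By Lemma~\ref{lemma:sq-free-spectral1} each atom of $\nu_d$ carries weight $Dd^{-2}\prod_{p\mid d}(p^2-2)^{-1}$, and since $d$ is square-free, $d=\prod_{p\mid d}p$, whence
\[
\prod_{p\mid d}\frac{1}{p^2-2}\ge \prod_{p\mid d}\frac{1}{p^2} = \frac{1}{d^2}.
\]
Combining, the contribution of $d$ to $\chi(I)$ is at least $\tfrac12 D\ell\, d^{-2}$. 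Summing over square-free $d \in [S, 2S]$ yields
\[
\chi(I) \ge \frac{D\ell}{2}\sum_{\substack{S\le d\le 2S\\ d\text{ sq-free}}}\frac{1}{d^2}\ \ge\ \frac{D\ell}{8S^2}\cdot \#\{d\in[S,2S]:d\text{ sq-free}\}.
\]

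To finish, I would invoke the classical estimate for the counting function of square-free integers (density $6/\pi^2$ with an $O(\sqrt{S})$ error), which gives $\#\{d\in[S,2S]: d\text{ sq-free}\}\gtrsim S$ for $S$ large. Combined with $S \simeq \ell^{-1/2}$ this produces $\chi(I) \gtrsim \ell/S \simeq \ell^{3/2}$, as required. There is no real obstacle here; the only point worth watching is the choice of the dyadic range $[S,2S]$, which balances two competing losses — a shorter range gives too few $d$'s, while a longer range dilutes the per-atom weight $d^{-4}$.
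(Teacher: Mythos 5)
Your proof is correct and follows essentially the same route as the paper's: both read off the atomic structure from Lemma~\ref{lemma:sq-free-spectral1}, bound the per-atom weight by $\prod_{p\mid d}(p^2-2)^{-1}\ge d^{-2}$ using square-freeness of $d$, count roots of unity in $I$ for $d$ large enough that the gap $1/d^2$ is below $|I|$, and invoke the positive density of square-free integers. The only cosmetic difference is that you restrict to a dyadic window $d\in[S,2S]$ while the paper sums over the whole tail $\{d:d^2|I|>1\}$ and observes $|I|\sum_{d^2|I|>1,\ d\ \mathrm{sq\text{-}free}} d^{-2}\gtrsim |I|\cdot |I|^{1/2}$; both balances give the same exponent $3/2$.
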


\begin{proof}
We have
\begin{multline*}
\chi(I)\gtrsim \sum_{\substack{\mu^2(d)=1,\\ d^2|I|>1}}\, \Bigl(\frac1{d^2}\prod_{p | d} \frac1{p^2-2}
\Bigr)d^2|I| \\
=|I|\sum_{\substack{\mu^2(d)=1,\\ d^2|I|>1}}\, \prod_{p | d} \frac1{p^2-2}
\gtrsim |I|\, \sum_{\substack{\mu^2(d)=1,\\ d^2|I|>1}}\, \frac1{d^2}
\gtrsim |I|\, \sum_{\ell>|I|^{-1}}\, \frac1{\ell^2} \gtrsim |I|^{3/2},
\end{multline*}
where in the inequality before last, we used that the square-free numbers have positive density.
\end{proof}

Combining Theorem~\ref{thm:main2} with Lemma~\ref{Lemma:sq-free} and Lemma~\ref{lemma:sq-free-spectral2}, we arrive at

\begin{theorem}\label{thm:sq-free}
Let
\[
F(z) = \sum_{\mu^2(n)=1} a(n) z^n
\]
be an entire function with smooth coefficients
$ a(n) = \displaystyle \exp\Bigl[ - \int_0^n \varphi \Bigr] $ with a $\Delta$-regular function
$\varphi$, such that, for every $\ep>0$, $\varphi'(t) = o(t^{-1+\ep})$ as $t\to\infty$.
Let $\sigma = \psi' \circ \log$, where $\psi=\varphi^{-1}$ is the inverse function to $\varphi$.
Then the zero set of $F$ is $(\gamma, \rho)$-equidistributed with $\rho = R\sigma^{-c}$, provided that $0<c<\tfrac2{21}$ and $\Delta (s) = o(s^c)$ as $s\to\infty$.
\end{theorem}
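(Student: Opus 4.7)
The plan is to invoke Theorem~\ref{thm:main2} with $\xi=\mu^2$, the radial gauge $\rho(R)=R\sigma(R)^{-c}$, and parameter $\beta(R)=\sigma(R)^{-c}$. We need to identify the functions $\epsilon_1$, $\epsilon_2$, $H$ in \eqref{eq:C1}--\eqref{eq:C2}, and then verify conditions (a)--(d) of Proposition~\ref{Lemma5a} as well as the additional restriction~\eqref{eq:additional}. The gauge and smallness conditions for $\beta$ are immediate: from Lemma~\ref{Lemma-AuxEstimates-new}(a) one has $\sigma'(R)\lesssim \sigma\Delta(\sigma)/R$, so $\beta'(R)=o(1/R)$ follows from $\Delta(\sigma)=o(\sigma^c)$, and the same assumption gives $\beta\ll 1/\Delta(\sigma)$.

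Mirsky's estimate (Lemma~\ref{Lemma:sq-free}) gives \eqref{eq:C1} with
\[
\epsilon_1(X;h)\lesssim_\varepsilon X^{-1/3+\varepsilon}, \qquad H(X)\simeq X,
\]
uniformly in $0\le h\le X$. Lemma~\ref{lemma:sq-free-spectral2} yields \eqref{eq:C2} with $\epsilon_2(\eta)\gtrsim \eta^{3/2}$. The assumption $\varphi'(t)=o(t^{-1+\varepsilon})$ translates (via $\sigma=1/\varphi'(\nu)$) to $\sigma\gtrsim\nu^{1-\varepsilon}$, equivalently $\nu\lesssim \sigma^{1+o(1)}$, which will be used repeatedly.

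The heart of the plan is checking (a)--(d) with $\beta=\sigma^{-c}$ and a fixed $q>1$ to be taken arbitrarily close to $1$. Since $\beta^{-q}=\sigma^{cq}$ and $H(\nu/2)\simeq\nu$, condition (a) is satisfied as soon as $cq\le 1/2$. Using $\epsilon_2(\beta)\gtrsim\sigma^{-3c/2}$, condition (b) becomes $\sigma^{c(1+2q)}\ll\sigma^{1-3c/2}$, equivalent to $c(5/2+2q)<1$. Condition (d) is free because $\widehat g$ decays faster than any polynomial. The binding constraint is (c): using Mirsky's bound,
\[
\nu\sum_{0\le h\le \beta^{-q}}\epsilon_1(\tfrac12\nu;h)\lesssim \nu\cdot\beta^{-q}\cdot\nu^{-1/3+\varepsilon}\lesssim \sigma^{cq+2/3+o(1)},
\]
while $\beta\sigma\epsilon_2(\beta)\gtrsim \sigma^{1-5c/2}$. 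Thus (c) requires $cq+2/3<1-5c/2$, i.e.
\[
c<\frac{2}{3(2q+5)}.
\]
Letting $q\downarrow 1$ gives precisely $c<2/21$, and one verifies that for this range of $c$ the constraints in (a), (b) are automatic. Condition~\eqref{eq:additional} reduces to $\sigma^{-1/4+c+o(1)}\ll \sigma^{-3c/4}$, i.e. $c<1/7$, which is weaker than $c<2/21$.

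Once all these verifications are in place, Theorem~\ref{thm:main2} immediately yields the $(\gamma,\rho)$-equidistribution of the zero set of $F$ with radial gauge $\rho=R\sigma^{-c}$. The only genuinely delicate step is (c), where the exponent $2/3$ in Mirsky's remainder interacts with the spectral density exponent $3/2$ to produce the numerical value $2/21$; any improvement in the binary-correlation error (or in the spectral lower bound) would translate directly into a larger admissible $c$.
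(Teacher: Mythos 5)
Your proposal is correct and follows essentially the same route as the paper's proof: invoke Theorem~\ref{thm:main2} with $\epsilon_1(X;h)\lesssim_\varepsilon X^{-1/3+\varepsilon}$ and $H(X)=X$ from Mirsky's Lemma~\ref{Lemma:sq-free}, $\epsilon_2(\eta)\gtrsim\eta^{3/2}$ from Lemma~\ref{lemma:sq-free-spectral2}, set $\beta=\sigma^{-c}$, and check (a)--(d) plus \eqref{eq:additional} with $q$ close to $1$, obtaining the binding constraint $c<\tfrac{2}{3(2q+5)}\to\tfrac{2}{21}$ from (c). The numerics in all intermediate constraints ($c<1/2$, $c<2/9$, $c<1/7$) match the paper's as well.
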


\begin{proof}
By Lemma~\ref{Lemma:sq-free}, condition~\eqref{eq:C1} holds with
$\ep_1 (X) = X^{-\frac13+\ep}$ and $H(X)=X$.
By Lemma~\ref{lemma:sq-free-spectral2},
condition~\eqref{eq:C2} holds with $\ep_2 (\beta) = \beta^{\frac32}$.
We take $\beta=\sigma^{-c}$ and verify that, for $c<\tfrac2{21}$,
the assumptions of Theorem~\ref{thm:main2} hold, provided that $q>1$ is
chosen sufficiently close to $1$.

The verification is quite straightforward.
Since $H(\tfrac12\, \nu) = \tfrac12\, \nu \gtrsim \sigma/\Delta(\sigma) \gg \sigma^{1-c}$,
assumption (a) boils down to $cq < \min\bigl( \tfrac12, 1-c\bigr)$, that is, to $c<\tfrac12$.
Assumption (b) holds for $c(1+2q) < 1 - \tfrac32\, c$, that is, for $c<\tfrac29$.

Assumption (c) is true when $\nu^{\frac23+\ep} \ll \beta^{1+\frac32+q}\sigma =
\sigma^{1-(\frac52+q)c}$. Since we are assuming that $\nu = o(\sigma^{1+\ep})$,
this boils down to $\tfrac72\, c < \tfrac13$, that is, to $c<\tfrac2{21}$.

Since the Fourier transform $\widehat{g}$ decays faster than
any negative power, assumption (d) holds for any choice of $c>0$.
At last, to satisfy condition~\eqref{eq:additional}, we need $-\tfrac14 + c < - \tfrac34\, c$,
i.e., $c<\tfrac1{7}$.
\end{proof}

Likely, using more advanced analytic number theory techniques, one can improve
the exponent $\tfrac2{21}$.

\subsection{The Thue--Morse sequences}
The Thue--Morse sequence is defined in an inductive way by the relations $\xi(0)=1$, $\xi(2n)=\xi(n)$, $\xi(2n+1)=-\xi(n)$, $n\ge 0$. The Thue--Morse sequence is a Wiener sequence with
purely singular continuous spectral measure. This fact goes back to Mahler~\cite{Mahler}. In that work Mahler proved that 
the Thue--Morse sequence is a Wiener sequence, computed its spectral measure, and
proved that it has no discrete component and has a non-trivial singular continuous component. The fact that the spectral measure is purely absolutely continuous was proven later by Kakutani~\cite{Kakutani}.
Curiously, Mahler published his result in 1927, as a follow-up to Winer's celebrated work~\cite{Wiener}, in which Wiener introduced the class of sequences, which today bears his name.

\begin{lemma}[Mahler~\cite{Mahler}]\label{Lemma:TM1}
Let $\xi$ be the Thue--Morse sequence. Then,
$$
\Bigl|\sum_{0\le k < x} \xi(k) \xi(k+h) - \sigma(h) x\Bigr| \le Ch \log(x+1),\qquad 0\le h< x,
$$
where the even sequence $\sigma\colon \bZ \to [-1, 1]$ is defined by the recurrence relations
$\sigma(0)=1$, and $\sigma(2h)=\xi(h)$,
$\sigma(2h+1)=-\frac12(\sigma(h)+\sigma(h+1))$.
\end{lemma}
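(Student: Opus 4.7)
My plan is to exploit the self-similarity of the Thue--Morse sequence to reduce the correlation sum to a half-scale version of itself, and then to propagate the resulting error bound by induction on the binary length of $x$. Setting
\[
C(x, h) := \sum_{0 \le k < x} \xi(k)\xi(k+h),
\]
I split according to the parity of the summation index and use $\xi(2n) = \xi(n)$, $\xi(2n+1) = -\xi(n)$. For even $x = 2x'$ the signs either cancel or reinforce, giving the clean identities
\[
C(2x', 2g) = 2\,C(x', g), \qquad C(2x', 2g+1) = -C(x', g) - C(x', g+1).
\]
Passing $x'\to\infty$ in these relations forces the matching recursion $\sigma(2g) = \sigma(g)$, $\sigma(2g+1) = -\tfrac12(\sigma(g)+\sigma(g+1))$ on the limit coefficients. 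Consequently, the error $E(x,h) := C(x,h) - \sigma(h)x$ satisfies
\[
E(2x', 2g) = 2E(x', g), \qquad E(2x', 2g+1) = -E(x', g) - E(x', g+1),
\]
with \emph{no} additive error in the even-$x$ case. For odd $x = 2x' + 1$ the extra summand contributes $E(2x'+1, h) = E(2x', h) + \bigl(\xi(2x')\xi(2x'+h) - \sigma(h)\bigr)$, which differs from $E(2x', h)$ by at most $2$.

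Given these recursions, I would prove by induction on the binary length $L(x) := 1 + \lfloor \log_2 x\rfloor$ the uniform bound $|E(x, h)| \le 2hL(x)$ for all $x \ge 1$ and $h \ge 0$. The case $h = 0$ is immediate, since $C(x, 0) = x = \sigma(0)x$, and the base case $x = 1$ follows from $|E(1,h)| = |\xi(h) - \sigma(h)| \le 2$. In the inductive step, the two even-$x$ identities preserve the bound on the nose: for $h = 2g$ one gets $|E(2x', 2g)| \le 2\cdot 2gL(x') = 2hL(x') \le 2hL(x)$, and for $h = 2g+1$ the bound $|E(x',g)| + |E(x',g+1)| \le 2(2g+1)L(x') = 2hL(x')$ is of the same shape. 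The odd-$x$ step contributes an additive error of at most $2$, but it also raises $L$ by one, so the slack $2h\cdot\bigl(L(2x'+1) - L(x')\bigr) = 2h$ absorbs that error provided $h \ge 1$. The stated bound then follows from $L(x) \le \tfrac{2}{\log 2}\,\log(x+1)$ for $x \ge 1$.

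The main (minor) obstacle is precisely the odd-$x$ case: one cannot sidestep it by restricting to $x$ equal to a power of two, since the estimate must hold for all $x$, and each odd-to-even reduction along the recursion chain injects a fresh $O(1)$ error. The induction on binary length is chosen so that the $L(x) = O(\log x)$ many such error contributions line up exactly with the logarithmic factor in the stated bound; everything else is a direct translation of the self-similar combinatorial structure of the Thue--Morse sequence.
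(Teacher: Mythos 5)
Your proof follows essentially the same route as the paper's Appendix~D (Mahler's parity-splitting recursion for $S(x,h)=\sum_{0\le k<x}\xi(k)\xi(k+h)$, followed by an induction); your write-up is more explicit in that it names the induction variable (binary length $L(x)$), tracks the odd-$x$ boundary error carefully, and reduces odd $x=2x'+1$ to $x'=\lfloor x/2\rfloor$ rather than to $x'+1=\lceil x/2\rceil$ as the paper does, but the underlying mechanism is identical. One remark worth flagging: you use $\sigma(2g)=\sigma(g)$, which is the recursion that the identity $C(2x',2g)=2C(x',g)$ actually forces on the limit $\sigma(h)=\lim_{x\to\infty}C(x,h)/x$; the lemma as printed has $\sigma(2h)=\xi(h)$, which is inconsistent (for instance it would give $\sigma(2)=\xi(1)=-1$ while $\sigma(2)=\sigma(1)=-\tfrac13$), and appears to be a typo for $\sigma(2h)=\sigma(h)$.
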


Our formulation is slightly different from the original one, since Mahler did not specify
the rate of convergence of binary correlations. We give the proof, which follows
Mahler's one mutatis mutandis, in Appendix~D.

Let $\chi$ be the spectral measure of the Thue--Morse sequence, that is, $\widehat{\chi}(h)=\sigma(h)$, $h\in\bZ$. The following lemma is probably well-known to experts.
Its proof exploits the identity for the generating function of $\xi$:
\[
\sum_{n\ge 0} \xi(n) z^n  = \prod_{\ell\ge 0} \Bigl( 1 - z^{2^\ell} \Bigr)\,.
\]

\begin{lemma}\label{lemma:TM2}
For any interval $I\subset [-\tfrac12, \tfrac12\,]$,
\begin{equation}
\chi (I)\gtrsim \exp(-c(\log|I|^2)).
\label{ytr1}
\end{equation}
\end{lemma}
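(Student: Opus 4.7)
The strategy rests on the Riesz product representation of $\chi$ derived from the generating function identity. The relation $F(z) = (1-z)F(z^2)$ for $F(z) = \sum_{n\ge 0}\xi(n)z^n$ (a direct consequence of $\xi(2n)=\xi(n)$, $\xi(2n+1)=-\xi(n)$) iterates to give, for $N = 2^L$,
\[
S_{2^L}(\theta) := \sum_{0\le n < 2^L}\xi(n)e(n\theta) = \prod_{\ell=0}^{L-1}\bigl(1 - e(2^\ell\theta)\bigr),
\]
and therefore
\[
P_L(\theta) := \frac{|S_{2^L}(\theta)|^2}{2^L} = \prod_{\ell=0}^{L-1} 2\sin^2(\pi 2^\ell\theta)
\]
is a non-negative trigonometric polynomial of degree $<2^L$ with $\int_{\bT} P_L\,d\theta = 1$. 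Because $\xi$ is a Wiener sequence, the Fej\'er form of the Wiener--Khinchin theorem gives $P_L\,d\theta \to d\chi$ in the weak-$*$ topology, and continuity of $\chi$ upgrades this to $\int_I P_L\,d\theta \to \chi(I)$ for every interval $I$.

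The quantitative core is a pointwise lower bound for $P_L$ on most of $I$. Given $I$ with $|I| = \epsilon$, set $L = \lceil\log_2(1/\epsilon)\rceil$ and $\delta = \epsilon/(8L)$. The ``bad'' set $B := \bigcup_{\ell=0}^{L-1}\{\theta:\|2^\ell\theta\| < \delta\}$ has Lebesgue measure $\le 2L\delta = \epsilon/4$, so $G := I\setminus B$ has $|G| \ge 3\epsilon/4$. Using $|\sin(\pi x)| \ge 2\|x\|$, one gets $2\sin^2(\pi 2^\ell\theta) \ge 8\delta^2$ for $\theta \in G$ and every $\ell < L$; hence
\[
P_L(\theta) \ge (8\delta^2)^L = \exp\bigl(-2L\log(1/\delta) + O(L)\bigr) \ge \exp\bigl(-c(\log(1/\epsilon))^2\bigr)
\]
for an absolute $c>0$. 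Integrating yields $\int_I P_L\,d\theta \ge (3\epsilon/4)\exp(-c(\log(1/\epsilon))^2)$.

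To promote this single-scale bound to the weak limit $\chi(I)$, test against a non-negative continuous bump $h$ with $0 \le h \le \mathbf{1}_I$ and $h \equiv 1$ on the middle half of $I$, and exploit the self-similar recursion $P_{L+k}(\theta) = P_L(\theta)\,P_k(2^L\theta)$. The Fourier spectrum of $P_L$ lies in $\{|n|<2^L\}$ while that of $\theta\mapsto P_k(2^L\theta)$ lies in $2^L\bZ$, meeting only at the frequency $0$; combined with the smoothness of $h$ at the scale $\epsilon\gtrsim 2^{-L}$ this gives
\[
\int h\,P_{L+k}\,d\theta = \int h\,P_L\,d\theta \cdot \bigl(1 + o_k(1)\bigr)
\qquad (k\to\infty).
\]
Passing to the limit, $\chi(I) \ge \int h\,d\chi \gtrsim \epsilon\exp(-c(\log(1/\epsilon))^2)$, and absorbing the prefactor $\epsilon$ into a slightly larger constant in the exponent produces the claimed bound.

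\textbf{Expected main obstacle.} The delicate step is the third: promoting the one-scale pointwise estimate on $P_L$ to a genuine lower bound on $\chi(I)$. For $L$ larger than $\log_2(1/\epsilon)$, the added factors $2\sin^2(\pi 2^\ell\theta)$ introduce many new zeros of $P_L$ inside $I$ and can a priori cause $\int_I P_L\,d\theta$ to collapse below what the single-scale bound predicts. The rescue is to smooth against a bump $h$ and to use the disjoint-frequency structure of $P_L$ and $P_k(2^L\cdot)$, which together ensure that the ``new'' factor acts essentially as its mean value $1$ when convolved against the slowly varying $hP_L$, thereby preserving the lower bound in the limit.
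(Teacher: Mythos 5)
The Riesz-product factorization, the normalization, the weak-$*$ convergence $\chi_{2^n}\to\chi$, and the single-scale lower bound on $P_L$ are all in the spirit of the paper's proof. The genuine gap is in your third step: the claim
\[
\int h\,P_{L+k}\,{\rm d}\theta = \int h\,P_L\,{\rm d}\theta\cdot\bigl(1+o_k(1)\bigr)
\]
does not follow from the disjoint-frequency heuristic, and is almost certainly false as stated. The Fourier support of $P_L$ fills all of $\{|n|<2^L\}$, so the Fourier support of $hP_L$ extends right up to $|n|\approx 2^L$ and overlaps the nonzero frequencies $\pm 2^L,\pm 2\cdot 2^L,\dots$ of $P_k(2^L\cdot)$; there is real resonance, not separation. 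Equivalently: $P_L$ oscillates on the scale $2^{-L}$, which is precisely the period of $P_k(2^L\cdot)$, so $hP_L$ is not slowly varying relative to the period and the ``averaging'' argument does not kick in. One can see the problem even at $k=1$: writing $2\sin^2(2^L\pi\theta)=1-\tfrac12 e(2^L\theta)-\tfrac12 e(-2^L\theta)$, the cross terms $\int h\,e(\pm 2^L\theta)P_L(\theta)\,{\rm d}\theta$ are not negligible because $\widehat{h P_L}$ is not small at $|n|=2^L$. A second, related issue is that your pointwise lower bound for $P_L$ holds only on a non-interval set $G\subset I$, and the tail measure $P_k(2^L\theta)\,{\rm d}\theta$ can, a priori, concentrate away from $G$ — its mass on $G$ is not comparable to $|G|$.

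The paper sidesteps both issues by a structural trick. Take $I$ dyadic, $I=[p2^{-m},(p+1)2^{-m}]$, and work on the \emph{middle half} $I'=[(p+\tfrac14)2^{-m},(p+\tfrac34)2^{-m}]$, an interval of length exactly $2^{-(m+1)}$. On all of $I'$ one has $|\sin(2^j\pi t)|\gtrsim 2^{j-m}$ for $0\le j\le m$, so the head $\prod_{0\le j\le m}2\sin^2(2^j\pi t)\ge 2^{-m^2-Cm}$ \emph{pointwise on the whole sub-interval}, not just on a set of large measure. Then the tail ${\rm d}\gamma_{m,n}=\bigl(\prod_{m<j<n}2\sin^2(2^j\pi t)\bigr)\,{\rm d}t$ is a $2^{-(m+1)}$-periodic probability density, so $\gamma_{m,n}(I')=2^{-(m+1)}$ \emph{exactly}, independently of $n$. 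Multiplying gives $\chi_{2^n}(I')\ge 2^{-m^2-Cm-m-1}$ uniformly in $n>m$, and weak convergence (portmanteau for the closed set $\bar{I'}\subset I$) transfers this to $\chi(I)$. This replaces your ``disjoint frequency'' heuristic by an exact identity and replaces the bad-set decomposition by a pointwise bound on a full interval commensurate with the tail's period — both changes are what make the limit $n\to\infty$ go through. Your step one can be kept, but you should choose $I$ dyadic and bound the first $m+1$ factors on $I'$ rather than excising a bad set; and step three should be replaced by the periodicity argument.
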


\begin{proof}
Given $N\ge 1$, we define
\begin{align*}
P_N(t)&=\sum_{0\le k<N}\xi(k)e(kt),\\
{\rm d}\chi_N(t)&=\frac1N|P_N(t)|^2\,{\rm d}t.
\end{align*}
Then $\chi_N([0,1])=1$ and for $h\in\mathbb Z$ we have
$$
\widehat{\chi_N}(h)=\frac1N\, \sum_{0\le k<N}\xi(k)\xi(k+h)\to\sigma(h),\qquad N\to\infty.
$$
Hence, the measures $\chi_N$ tend to $\chi$ weakly, and to verify \eqref{ytr1}, it suffices to check that for any interval
$I=[p2^{-m},(p+1)2^{-m}]$ with integer $p$ we have
$$
\chi_{2^n}(I)\ge \exp(-Cm^2),\qquad n>m.
$$

Since $P_1(t)=1$ and
\begin{multline*}
P_{2^{n+1}}(t)=\sum_{0\le k<2^{n+1}}\xi(k)e(kt)\\=\sum_{0\le k<2^n}\xi(2k)e(2kt)+\sum_{0\le k<2^n}\xi(2k+1)e((2k+1)t)\\=
\sum_{0\le k<2^n}\xi(k)e(2kt)-\sum_{0\le k<2^n}\xi(k)e((2k+1)t)=(1-e(t))P_{2^n}(2t),
\end{multline*}
we obtain that
$$
P_{2^n}(t)=\prod_{0\le j<n}\bigl(1-e(2^jt)\bigr).
$$
Therefore,
$$
{\rm d}\chi_{2^n}(t)=\Bigl( \prod_{0\le j<n}\bigl(2\sin^2(2^j\pi t)\bigr)\Bigr) \,{\rm d}t.
$$
Set $I'=[(p+\frac14)2^{-m},(p+\frac34)2^{-m}]$. We have
$$
|\sin(2^j\pi t)|\ge c2^{j-m},\qquad 0\le j\le m,\,t\in I'.
$$
Hence,
$$
\prod_{0\le j\le m}\bigl(2\sin^2(2^j\pi t)\bigr)\ge 2^{-m^2-Cm},\qquad t\in I'.
$$
Furthermore, for $n>m$, the measure
$$
d\gamma_{m,n}(t)=\Bigl( \prod_{m<j<n}\bigl(2\sin^2(2^j\pi t)\bigr)\Bigr)\,{\rm d}t
$$
is $2^{-(m+1)}$-periodic, and $\gamma_{m,n}([0,1])=1$. Hence, $\gamma_{m,n}(J)=2^{-(m+1)}$ for any interval $J$ of length $2^{-(m+1)}$.
In particular, $\gamma_{m,n}(I')=2^{-(m+1)}$, and, finally,
$$
\chi_{2^n}(I')\ge 2^{-m^2-Cm},\qquad n>m.
$$
This gives us \eqref{ytr1}.
\end{proof}

\begin{theorem}\label{thm:TM}
Let $\xi$ be the Thue--Morse sequence, and let 
$$ 
F(z) = \displaystyle \sum_{n\ge 0} \xi(n) a(n) z^n 
$$
be an entire function with smooth coefficients
$ a(n) = \displaystyle \exp\Bigl[ - \int_0^n \varphi \Bigr] $ with a $\Delta$-regular function
$\varphi$ such that, for some $C>1$,
$ \varphi'(t) \lesssim (\log t)^{-C} $.
Let $\sigma = \psi' \circ \log$, where $\psi=\varphi^{-1}$ is the inverse function to $\varphi$.
Then the zero set of $F$ is $(\gamma, \rho)$-equidistributed with $\rho = R e^{-c\sqrt{\log\sigma}}$,
provided that the constant $c$ is sufficiently small and that $\Delta (s) =
e^{c_1\sqrt{\log\sigma}}$ with $c_1<c$.
\end{theorem}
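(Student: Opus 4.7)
The strategy is to apply Theorem~\ref{thm:main2} with the scale $\beta(R) = e^{-c\sqrt{\log\sigma(R)}}$, so that $\rho = R\beta$. The two inputs required by Proposition~\ref{Lemma5a} are furnished by Mahler's binary correlation bound (Lemma~\ref{Lemma:TM1}), which yields \eqref{eq:C1} with $\ep_1(X;h) = Ch(\log(X+1))/X$ and $H(X) = X$, and by the spectral no-gap estimate (Lemma~\ref{lemma:TM2}), which yields \eqref{eq:C2} with $\ep_2(\beta) \gtrsim \exp(-c_0(\log(1/\beta))^2) = \sigma^{-c_0 c^2}$. The assumption $\varphi'(t)\lesssim(\log t)^{-C}$ with $C>1$ will enter through the identity $\sigma = 1/\varphi'(\nu)$, which gives $\log\nu \lesssim \sigma^{1/C}$; in particular $\log\nu/\sigma \to 0$ at a polynomial rate, and this is what will make condition (c) tractable.

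Fixing $q>1$ close to $1$ and taking $c,c_1$ small enough, the verification of (a), (b), (c) and of the auxiliary inequality \eqref{eq:additional} is then straightforward: (a) amounts to $qc<1/2$; (b) holds because $\beta^{-(1+2q)}$ grows only as $\exp(O(\sqrt{\log\sigma}))$ while $\sigma\ep_2(\beta)\gtrsim\sigma^{1-c_0c^2}$ is genuinely polynomial in $\sigma$; for (c) one bounds $\nu\sum_{h\le\beta^{-q}}\ep_1(\nu/2;h)\lesssim (\log\nu)\beta^{-2q}\lesssim \sigma^{1/C}\beta^{-2q}$ and checks that this is dominated by $\beta\sigma\ep_2(\beta)\asymp \sigma^{1-c_0c^2}e^{-c\sqrt{\log\sigma}}$ as soon as $1-1/C-c_0c^2>0$; finally \eqref{eq:additional} reduces to $e^{c_1\sqrt{\log\sigma}}(\log\sigma)^{3/2}\ll \sigma^{1/4-c_0c^2/2}$, again valid for small $c,c_1$.

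The one genuinely new point, and the main obstacle, is condition (d): one requires $\sum_{h>\beta^{-q}} |\widehat g(\beta h)|\ll \ep_2(\beta)\asymp \sigma^{-c_0c^2}$, yet the right-hand side decays faster than any fixed negative power of $\beta = e^{-c\sqrt{\log\sigma}}$. A generic $g\in C_0^\infty(\bR)$, whose Fourier transform decays only faster than any polynomial, will therefore not suffice. I would resolve this by choosing $g\in\mathcal G$ inside a Gevrey class of some fixed order $s>1$, so that $|\widehat g(\xi)|\lesssim \exp(-c_g|\xi|^{1/s})$. For such $g$,
\[
\sum_{h>\beta^{-q}} |\widehat g(\beta h)| \lesssim \exp\bigl(-c_g\beta^{-(q-1)/s}\bigr),
\]
which is comfortably smaller than $\sigma^{-c_0c^2}$ for any $q>1$. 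Once (d) is dispatched, Theorem~\ref{thm:main2} delivers the $(\gamma,\rho)$-equidistribution; the notably weaker scale $\rho = Re^{-c\sqrt{\log\sigma}}$ compared to the examples of Section~\ref{sect:7} reflects directly the log-square lower bound on the Thue--Morse spectral measure on small intervals supplied by Lemma~\ref{lemma:TM2}.
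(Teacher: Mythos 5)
Your proposal is correct and follows essentially the same route as the paper's own proof: apply Theorem~\ref{thm:main2} at the scale $|\log\beta|\simeq\sqrt{\log\sigma}$, feed in $\ep_1(X;h)=(h\log X)/X$ from Lemma~\ref{Lemma:TM1} and $\ep_2(\beta)=e^{-c_0|\log\beta|^2}$ from Lemma~\ref{lemma:TM2}, and — the point you rightly flag as the crux — take a cutoff $g\in\mathcal G$ with sub-exponential Fourier decay (the paper uses $\widehat g(\lambda)\lesssim e^{-\sqrt{|\lambda|}}$, i.e.\ your Gevrey class with $s=2$) to clear condition (d). One small remark: condition (a) is in fact automatic here rather than a constraint ``$qc<1/2$'', since $\beta^{-q}=e^{qc\sqrt{\log\sigma}}$ grows sub-polynomially in $\sigma$ while both $\sqrt\sigma$ and $H(\nu/2)\simeq\nu/2$ grow at least polynomially; this is harmless and your final conclusion stands.
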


\begin{proof}
The proof will be a straightforward inspection of assumptions of Theorem~\ref{thm:main2}.
By Lemma~\ref{Lemma:TM1}, $\ep_1(X; h) = (h\log X)/X$, $H(X)=X$, and, by Lemma~\ref{lemma:TM2},
$\ep_2(\beta) = e^{-c_0|\log\beta|^2}$. We take $q=2$, and choose $\beta$ so that
$\ep_2(\beta) = \sigma^{-\kappa}$ with some $\kappa\in (0, 1)$,
i.e., $|\log\beta|\simeq \sqrt{\log\sigma}$. The $\Delta$-regularity condition on $\varphi$ yields that $\beta'=o(1/R)$. 

Since $\beta$ in any negative power grows much slower than $\sigma$ in any positive power,
condition (a) implies no restriction, while condition (b) requires that $\kappa<1$.
Condition (c) is met provided that $\log\nu \ll \beta^5 \sigma^{1-\kappa}$.
Since $\varphi'(t)\lesssim (\log t)^{-C}$, we have $\log\nu \lesssim \sigma^{1/C}$, that is,
condition (c) boils down to $\kappa < 1-\tfrac1C$.

To satisfy condition (d), we choose the function $g\in\mathcal G$ so that
$\widehat{g}(\lambda) \lesssim e^{-\sqrt{|\lambda|}}$.
Then,
\[
\sum_{h>\beta^{-2}} |\widehat{g}(\beta h)|
\lesssim \frac1{\beta}\, \int_{1/\beta}^\infty e^{-\sqrt{|\lambda|}}\, {\rm d}\lambda
\lesssim \bigl( \frac1\beta \bigr)^{3/2} e^{-\beta^{-1/2}}\,,
\]
which is much smaller than $\ep_2(\beta)=e^{-c|\log\beta|^2}$.

Finally, it is easy to see that condition~\eqref{eq:additional} holds for any $\kappa<\tfrac12$.
Hence, choosing $\kappa<\min(\tfrac12, 1-\tfrac1C)$, we complete the proof.
\end{proof}

\section*{Appendix A: $(\gamma, \rho)$-equidistribution and the uniform transportation}
\renewcommand{\theequation}{A.\arabic{equation}}
\setcounter{equation}{0}
\renewcommand{\thelemma}{A.\arabic{lemma}}
\setcounter{lemma}{0}

The idea to measure the proximity between $n_{F_\xi}$ and $\gamma$ by
the uniform transportation distance was used in Sodin--Tsirelson~\cite{ST2} in the case
$a(n) = (n!)^{-1/2}$ when $\xi$ is a complex Gaussian IID sequence
(see also~\cite{ST-Transp}). In a somewhat different set-up, a
similar idea was used by Sj\"ostrand, see~\cite[Ch~12]{Sjostrand} and references therein.

\subsection*{A.1 The uniform transportation distance and its dual version}

Let $\gamma$ and $\gamma_1$ be locally finite Borel measures on $\bC$. We call a non-negative locally finite measure $\mathfrak n$ on $\bC\times\bC$ {\em a transportation} from $\gamma_1$ to $\gamma$, if $\mathfrak n$ has marginals $\gamma_1$ and $\gamma$, that is,
\[
\iint_{\bC \times\bC} h(x)\, {\rm d}\mathfrak n (x, y)
= \int_{\bC} h(x)\, {\rm d}\gamma_1 (x)\,,
\]
and
\[
\iint_{\bC \times\bC} h(y)\, {\rm d}\mathfrak n (x, y)
= \int_{\bC} h(y)\, {\rm d}\gamma (y)\,,
\]
for all compactly supported continuous functions $h\colon\bC\to \bC$.
Note that if there exists a map $T\colon \bC\to\bC$ that pushes forward the
measure $\gamma_1$ to $\gamma$, then the corresponding transportation $\mathfrak n$ is defined
by
\[
\iint_{\bC\times\bC} H(x, y)\, {\rm d}\mathfrak n(x, y)
= \int_{\bC} H(x, Tx)\, {\rm d}\gamma_1(x)
\]
for an arbitrary compactly supported continuous function
$H\colon \bC\times\bC\to \bC$.

The better $\mathfrak n$ is concentrated near the diagonal of $\bC \times\bC $, the closer
the measures $\gamma_1$ and $\gamma$ are to each other. We shall measure such a concentration in the $L^\infty$-sense, and set
\[
\mathsf{Tra}_{\,d}(\gamma_1, \gamma) =
\inf\, \sup\{{d}(x, y)\colon x, y \in \operatorname{supp}(\mathfrak n)\}\,,
\]
where ${d}\colon \bC\times\bC\to \bR_+$ is a distance function on $\bC$, and the infimum
is taken over all transportations $\mathfrak n$ from $\gamma_1$ to $\gamma$.
Note that the distance $\mathsf{Tra}_{\,d}$ might be infinite.

There exists a dual version of the transportation distance,
which measures the discrepancy between the measures $\gamma_1$ and $\gamma$.
The distance $\mathsf{Di}_{\, d}(\gamma_1, \gamma)$ is defined to be the infimum of $\tau>0$ such that, for each bounded Borel set $U\subset\bC$,
\begin{equation}\label{eq:Di}
\gamma_1(U)\le \gamma(U_{+\tau})  \quad {\rm and} \quad \gamma(U)\le \gamma_1(U_{+\tau}),
\end{equation}
where $U_{+\tau} = \{z\in\bC\colon d(z, U)< \tau\}$ is a $\tau$-neighbourhood of $U$.
The equality $ \mathsf{Tra} = \mathsf{Di} $ is classical (Strassen, Sudakov, Laczkovich);
its proof can be found, for instance, in~\cite[Appendix~A-1]{ST-Transp}.

\subsection*{A.2 Equivalence of two notions}
Clearly, for any Borel set $U\subset\bC$, we have
$|\gamma_1 (U) - \gamma_2(U)| \le 2 \gamma_1 ((\partial U)_{+2\tau}) $
with $\tau = 2 \mathsf{Di}_{\, d}(\gamma_1, \gamma)$. Thus, the measures
$n_{F_\xi}$ and $\gamma$ are $(\gamma, \rho)$-equidistributed provided that
$\mathsf{Di}_{\, d_\rho}(n_{F_\xi}, \gamma)<\infty$. The converse is less obvious:

\begin{lemma}\label{LemmaF}
Suppose that we are given the measure $\gamma = \Gamma m$, where $m$ is the area measure, and the radial gauge $\rho$ satisfies estimates~\eqref{eq-1}
and~\eqref{eq-2} in Proposition~\ref{Lemma-subharm}, and let $\gamma_1$ be a locally finite Borel measure on $\bC$.
Suppose that there exist positive constants $\tau$ and $C$ such that, for any compact
set $K\subset\bC$, 
\begin{equation}
|\gamma_1 (K)-\gamma(K)|\le C \gamma((\partial K)_{+\tau}). 
\label{dop5}
\end{equation}
Then
$\mathsf{Di}_{\, d_\rho}(\gamma_1, \gamma)<\infty$.
\end{lemma}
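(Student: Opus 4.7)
The plan is to verify the Sudakov-type characterization~\eqref{eq:Di} of $\mathsf{Di}_{\,d_\rho}$: I need to exhibit $\tau'<\infty$, depending only on $C$, $\tau$, and the global geometry of $\gamma$, such that both $\gamma_1(U)\le\gamma(U_{+\tau'})$ and $\gamma(U)\le\gamma_1(U_{+\tau'})$ hold for every bounded Borel $U\subset\bC$. I will focus on the first inequality; the second is obtained by the same argument with the roles of $\gamma$ and $\gamma_1$ interchanged, using that hypothesis~\eqref{dop5} is symmetric.

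First I would apply~\eqref{dop5} to the fattened compact sets $K_s:=\overline{U_{+s}}$, $s\ge 0$. Since the topological boundary $\partial K_s$ lies in the $d_\rho$-level set at distance $s$ from $U$, we have $(\partial K_s)_{+\tau}\subset U_{+(s+\tau)}\setminus U_{+(s-\tau)_{+}}$. Writing $\phi(s):=\gamma(U_{+s})$ for the non-decreasing profile of $U$, this yields the family of estimates
\[
\gamma_1(U)\le\gamma_1(K_s)\le\phi(s)+C\bigl(\phi(s+\tau)-\phi((s-\tau)_{+})\bigr), \qquad s\ge 0.
\]
Next, a telescoping/pigeonhole argument over $s=\tau,2\tau,\dots,N\tau$ produces some $s^{*}$ for which $\phi(s^{*}+\tau)-\phi(s^{*}-\tau)\le \tfrac{2}{N}\phi((N+1)\tau)$; feeding this back and choosing $N\simeq C$ yields a fixed-factor bound
\[
\gamma_1(U)\le (1+\epsilon_0)\,\gamma(U_{+T_0})
\]
with absolute constants $T_0$ and $\epsilon_0$.

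The last and most delicate step is to absorb the residual factor $1+\epsilon_0$ into a further enlargement of the fattening. Invoking the regularity assumptions~\eqref{eq-1}--\eqref{eq-2}, which guarantee that $\gamma$ has a locally near-constant density $\Gamma$ with $\Gamma\rho^2\to\infty$, I would establish the uniform inequality $\gamma(U_{+\tau'})\ge(1+\epsilon_0)\gamma(U_{+T_0})$. Geometrically, the annular tube $U_{+\tau'}\setminus U_{+T_0}$ contains, around each point of the inner boundary $\partial U_{+T_0}$, a sector of a $d_\rho$-ball whose $\gamma$-mass is bounded below by a universal constant times $\Gamma\rho^2$ at that point; summing such contributions over a maximal $d_\rho$-separated family along $\partial U_{+T_0}$ gives the required lower bound.

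\emph{Main obstacle.} The technical crux is uniformity in $U$ in this last step. For ``fat'' $U$ (e.g., large disks) the na\"{\i}ve ratio $\gamma(U_{+\tau'})/\gamma(U_{+T_0})$ is close to $1$, but in that regime the boundary term $\gamma((\partial\bar U)_{+\tau})$ in~\eqref{dop5} is already much smaller than $\gamma(U)$, so the constants in the first two steps are effectively sharper and absorption is essentially automatic. For ``thin'' $U$ (points, curves), assumption~\eqref{eq-1} supplies genuine $d_\rho$-doubling. Unifying these two regimes into a single, uniform bound without a clumsy case split is the principal difficulty.
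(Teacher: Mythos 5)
Your proposal has a genuine gap in the absorption step, and it is one that cannot be repaired within the strategy you describe. The inequality you want to establish,
\[
\gamma(U_{+\tau'}) \ge (1+\epsilon_0)\,\gamma(U_{+T_0}) \qquad\text{uniformly in bounded Borel }U,
\]
is simply false. Take $U=R\bar\bD$: then $\gamma(U_{+\tau'})/\gamma(U_{+T_0}) = \nu(R+O(\rho(R)))/\nu(R+O(\rho(R)))\to 1$ as $R\to\infty$, because the mass of the annular tube, of order $\sigma(R)\rho(R)/R$, is $o(\nu(R))$ under~\eqref{eq-1}--\eqref{eq-2} and \eqref{eq:psi2}. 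You note this yourself, but the suggested remedy — that for ``fat'' $U$ the effective constant in~\eqref{dop5} is smaller so ``absorption is essentially automatic'' — is not an argument: hypothesis~\eqref{dop5} fixes $C$ once and for all and gives no improvement for large $K$, and the pigeonhole you ran produces the loss factor $(1+\epsilon_0)$ regardless of the shape of $U$. You would need a second, different argument to handle fat sets and then splice the two regimes, which is exactly the step you have not carried out; as written, the proof does not close.

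The paper resolves the difficulty by a local-to-global argument (in the spirit of Laczkovich) that never produces a multiplicative loss. It tiles an annulus around $K$ with sectors $Q$ of $d_\rho$-size $\simeq M$, where $M$ is a large free parameter (to be taken $\gg C$), sets $A=\bigcup_{Q\cap K\ne\emptyset}Q$ and $B=\bigcup_{Q\cap K\ne\emptyset}\widetilde Q$ (so $K\subset A\subset B\subset K_{+10M}$), and applies~\eqref{dop5} once to $A$ and once to each buffer sector $Q'\subset B\setminus A$. The crucial scaling is that the boundary-error contribution of each sector is $\gamma((\partial Q)_{+\tau})\lesssim M\rho^2\Gamma$ (linear in $M$), while~\eqref{dop5} applied to $Q'$ shows $\gamma_1(Q')\gtrsim (M-C_1)M\rho^2\Gamma$ (quadratic in $M$). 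Since each $Q'$ has boundedly many neighbours, choosing $M$ large makes $C\gamma((\partial A)_{+\tau})\ll \gamma_1(B\setminus A)$, and one reads off $\gamma(K)\le\gamma_1(A)+C\gamma((\partial A)_{+\tau})\le\gamma_1(B)\le\gamma_1(K_{+10M})$ directly — no $(1+\epsilon)$ factor appears, hence nothing needs to be absorbed. This is the mechanism your global pigeonhole lacks: the enlargement parameter in the lattice-point comparison is tuned so the per-tile error is dominated by the per-tile mass, rather than trying to compare global boundary mass with global bulk mass.

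A smaller omission: the paper's proof first establishes the comparison only for $K\subset\{|z|\ge r_1\}$ (Lemma~\ref{LemmaE}) and then treats the neighbourhood of the origin separately, shuttling the bounded amount of mass near $0$ into a fixed far-away disk $D_{j_0}$. Your write-up makes no provision for the origin, where $\gamma$ vanishes on a full disk and the near-constancy hypotheses~\eqref{eq-1}--\eqref{eq-2} are not asserted; that step also needs to be supplied.
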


\subsubsection*{A.2.1 Proof of estimate~\eqref{eq:Di} for compact sets which are far from the origin}

\begin{lemma}\label{LemmaE} Under conditions of Lemma~\ref{LemmaF},
there exist positive values $r_1$ and $\tau_1$ such that, for any compact set
$K\subset\{|z|\ge r_1\}$,
\[
\gamma(K) \le \gamma_1(K_{+\tau_1}) \quad {\rm and} \quad
\gamma_1(K) \le \gamma(K_{+\tau_1})
\]
\end{lemma}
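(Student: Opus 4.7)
The plan is to deduce $\gamma(K)\le\gamma_1(K_{+\tau_1})$ from the hypothesis of Lemma~\ref{LemmaF} by applying it not to $K$ itself but to the family of thickenings $\{K_{+s}\}_{s\in(0,\tau_1]}$, and locating a ``good'' scale $s^{*}$ at which the boundary error $\gamma((\partial K_{+s^{*}})_{+\tau})$ is absorbed by the measure growth $\gamma(K_{+s^{*}})-\gamma(K)$. The opposite inequality $\gamma_1(K)\le\gamma(K_{+\tau_1})$ will follow by an entirely symmetric argument, swapping the roles of $\gamma$ and $\gamma_1$.

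More precisely, set $\phi(s)=\gamma((\partial K_{+s})_{+\tau})$ and $h(s)=\gamma(K_{+s}\setminus K)$. The hypothesis applied at $K_{+s}$ reads $\gamma_1(K_{+s})\ge\gamma(K_{+s})-C\phi(s)$, so by monotonicity it is enough to find $s^{*}\in(0,\tau_1]$ with $C\phi(s^{*})\le h(s^{*})$. To produce such an $s^{*}$, I would argue by contradiction: assume $C\phi(k\tau)>h(k\tau)$ for every $k=1,\ldots,N$, where $\tau_1=N\tau$. The inclusion
\[
(\partial K_{+s})_{+\tau}\subseteq K_{+s+\tau}\setminus K_{+s-\tau},\qquad s>\tau,
\]
(which follows from the triangle inequality for $d_\rho$ and the identification $\partial K_{+s}=\{d_\rho(\cdot,K)=s\}$ valid for compact $K$ and $s>0$) yields $\phi(k\tau)\le H_{k+1}-H_{k-1}$, with $H_k:=h(k\tau)$. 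Inserting the contradiction hypothesis gives the recurrence
\[
H_{k+1}\ge H_{k-1}+H_k/C,\qquad k\ge 2,
\]
together with $H_0=0$, $H_1>0$ (since $\gamma=\Gamma m$ has everywhere positive density, so charges the open shell $K_{+\tau}\setminus K$), and monotonicity $H_{k+1}\ge H_k$. An elementary induction yields $H_{k+2}\ge(1+1/C)H_k$, hence the exponential lower bound $H_{2j+1}\ge H_1(1+1/C)^{j}$.

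The contradiction will come from combining this with a \emph{universal} polynomial upper bound $H_k\le C_0 k^2 H_1$. Here one uses that for $K\subset\{|z|\ge r_1\}$ with $r_1$ sufficiently large, condition~\eqref{eq-2} makes the density $\Gamma$ slowly varying on the $\rho$-scale and Lemma~\ref{LemmaA} makes the metric $d_\rho$ locally Euclidean on the same scale. A Steiner/Minkowski-type tube estimate in the planar setting then controls the annular mass $\gamma(K_{+k\tau}\setminus K)$ by a quadratic polynomial in $k$ with an implicit constant depending only on $C$. Matching the exponential lower bound against $H_{2j+1}\le C_0(2j+1)^2 H_1$ forces $(1+1/C)^{(N-1)/2}\le C_0 N^2$, which fails once $N$ exceeds some universal threshold $N_0=N_0(C,C_0)$. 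Choosing $N>N_0$ thus produces the required $s^{*}$ and completes the first inequality; $r_1$ is fixed large enough to make the slowly-varying/local-Euclidean estimates applicable throughout the relevant $\rho$-neighbourhoods.

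The main obstacle is establishing the polynomial upper bound $H_k\lesssim k^2 H_1$ uniformly over all compact $K\subset\{|z|\ge r_1\}$. I expect it to follow by decomposing the annular measure into a ``perimeter-driven'' part (linear in $k$) and a ``volume-driven'' part (quadratic in $k$), both bounded from below by $H_1$ via a planar Steiner-type inequality adapted to the $d_\rho$-metric. The slowly varying density and local Euclidean structure effectively reduce the computation to Lebesgue measure on a bounded region of $\bC$, but the details require care for $K$ with irregular boundary, where the perimeter must be read as a Minkowski content. This is what makes the threshold $N_0$, and therefore $\tau_1$, independent of $K$.
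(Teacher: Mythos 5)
Your approach---pigeonholing over the family of thickenings $K_{+s}$ to find a ``good'' scale $s^*$ at which the boundary error is absorbed by annular growth---is genuinely different from the paper's. The paper instead follows Laczkovich's strategy (cf.\ \cite{Laczk} and \cite[Lemma~2.1]{ST-Transp}): it tiles the region $\{|z|\ge r_1\}$ by annular sectors $Q$ of $d_\rho$-size $\simeq M$ with $M$ a large parameter, approximates $K$ by a union $A$ of such sectors, and observes that the boundary error $\gamma((\partial A)_{+\tau})$ is distributed over ``boundary sectors'', each of which contributes $\lesssim M\rho^2\Gamma$, whereas each neighbouring sector $Q'\subset B\setminus A$ carries both $\gamma$-mass and (by the hypothesis itself, applied to $Q'$) $\gamma_1$-mass $\gtrsim (M-C_1)M\rho^2\Gamma$. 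Choosing $M$ large makes the error strictly subordinate to $\gamma_1(B\setminus A)$, and the inclusions $K\subset A\subset B\subset K_{+10M}$ finish the proof. No comparison of $K_{+k\tau}$ with $K_{+\tau}$ across scales is needed.

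The gap in your argument is exactly where you flag it: the ``universal'' polynomial tube estimate $H_k\le C_0 k^2 H_1$ for \emph{arbitrary} compact $K$. You invoke ``Steiner/Minkowski-type'' inequalities, but the Steiner formula is a statement about convex bodies (or sets of positive reach), not arbitrary compacta, and the quantity $m(K_{+s})-m(K)$ is not governed by a universal polynomial law in $s$ without further structure: the perimeter $\mathcal H^1(\partial K_{+s})$ need not be monotone in $s$, the usual Brunn--Minkowski bound $\sqrt{m(K_{+s})}\ge\sqrt{m(K)}+\sqrt{\pi}\,s$ goes in the wrong direction, and the standard tube formulas do not yield the upper bound $H_k\lesssim k^2H_1$ with a constant independent of $K$. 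Even if this estimate can be salvaged in the planar Euclidean setting (you would need a self-contained lemma, e.g.\ a doubling inequality $H_{2k}\le C'H_k$, proved from scratch for all compacta), transferring it to the metric $d_\rho$ and weight $\Gamma$ is a further nontrivial step: a single compact $K$ may span many $\rho$-scales, so the ``locally Euclidean'' reduction does not produce a single bounded Euclidean picture. In short, the combinatorial skeleton of your argument is correct (the recurrence $H_{k+1}\ge H_{k-1}+H_k/C$ and the inclusion $(\partial K_{+s})_{+\tau}\subset K_{+s+\tau}\setminus K_{+s-\tau}$ are both sound), but it rests on an unproved geometric input that the paper's discretization scheme does not require; to make your route rigorous you would need to prove that polynomial tube bound with a constant independent of $K$, which is at least as much work as the sector argument.
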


The idea of the proof of this lemma is borrowed from Laczkovich~\cite{Laczk} (see also~\cite[Lemma~2.1]{ST-Transp}).

\begin{proof}[Proof of Lemma~\ref{LemmaE}]
Set $R_1=1$, $R_{j+1} = R_j + M\rho(R_{j})$, where $M$ is a large parameter to be fixed. Clearly,
$R_j \uparrow \infty$ (otherwise, $\lim_j \rho(R_j) = \rho(\lim_j R_j) >0$,
which would lead us to a contradiction). Set $A_j = \{z\colon R_j\le |z|\le R_{j+1}\}$. We partition the annulus $A_j$ into 
equal closed sectors of size $\simeq M\rho(R_j)$, and denote these sectors
by $Q=Q_{jk}=\{z\colon R_j \le |z| \le R_{j+1}, \theta_k \le \arg(z) \le \theta_{k+1}\}$.
We denote the ``center'' of the sector $Q$ by $w(Q)$,
$w(Q) = \sqrt{R_jR_{j+1}}\, e^{{\rm i} (\theta_k + \theta_{k+1})/2}$.
We say that the sectors $Q$ and $Q'$ are {\em neighbours} if $Q\cap Q'\ne \emptyset$,
we denote this relation by $Q\sim Q'$, and set $\widetilde{Q} =  \bigcup_{Q'\sim Q} Q'$.

Next, we note that, since $\rho'(r)\to 0$ for $r\to\infty$, we have
\begin{equation}\label{eq:E1}
\rho(R_{j+1})/\rho(R_j) \to 1, \quad j\to\infty\,,
\end{equation}
and that, by our assumption~\eqref{eq-2}, we have
\begin{equation}\label{eq:E2}
\Gamma (R)/\Gamma (R_j) \to 1, \quad R_{j-1}\le R \le R_{j+1}, \ j\to\infty\,.
\end{equation}
Furthermore, by~\eqref{eq:E1} and \eqref{eq:E2}, for $j\ge j_0$ and for any sector $Q\subset A_j$,
the hollowing holds:
\begin{itemize}
\item there are at most $9$ neighbouring sectors $Q'\sim Q$;
\item ${\rm diam}_\rho (\widetilde Q) \le 10 M$ (as usual, ${\rm diam}_\rho(X) =
\sup\{d_\rho(z_1, z_2)\colon z_1, z_2\in X\}$);
\item $\tfrac12 \le \Gamma(z_1)/\Gamma (z_2) \le 2$, $z_1, z_2\in \widetilde Q_{+\tau}$, where $\tau$ is the constant from \eqref{dop5}.
\end{itemize}

Let $r_1=R_{j_0+1}$,  
and let $K\subset \{|z|\ge r_1\}$ be a compact set. Let
\[
A=\bigcup_{Q\cap K \ne\emptyset} Q, \quad B=\bigcup_{Q\cap K \ne\emptyset} \widetilde Q,
\]
and let ${\mathsf b} A = \{Q\subset A\colon \exists Q'\sim Q, Q'\subset B\setminus A\}$ be a collection of ``boundary sectors'' in $A$. Clearly, $K\subset A \subset B \subset K_{+10M}$. 
By \eqref{dop5} we have
\[
\gamma (K) \le \gamma (A) \le \gamma_1(A) + C \gamma \bigl( (\partial A)_{+\tau} \bigr),
\]
and
\[
\gamma_1 (K) \le \gamma_1 (A) \le \gamma(A) + C \gamma \bigl( (\partial A)_{+\tau} \bigr).
\]
Furthermore, since $A$ is a union of squares $Q$, we have
\[
(\partial A)_{+\tau} \subset \bigcup_{Q\in {\mathsf b}A} (\partial Q)_{+\tau}\,,
\]
and hence,
\[
\gamma ((\partial A)_{+\tau}) \le \sum_{Q\in {\mathsf b}A} \gamma \bigl( (\partial Q)_{+\tau} \bigr).
\]

For each boundary sector $Q$, we have
\[
\gamma \bigl( (\partial Q)_{+\tau} \bigr) \lesssim M\rho(w)^2\Gamma(w),
\quad w=w(Q),
\]
while for the sectors $Q'\sim Q$, $Q'\subset B\setminus A$, we have
\[
\gamma (Q') \simeq M^2 \rho(w)^2 \Gamma (w),
\]
and (again, by \eqref{dop5})
\begin{multline}
\label{dop6}
\gamma_1(Q') \ge \gamma(Q') - C\gamma\bigl( (\partial Q')_{+\tau} \bigr) \\
\gtrsim M^2\rho(w)^2\Gamma (w) - C_1 M\rho(w)^2\Gamma (w) = (M-C_1)M\rho(w)^2\Gamma (w).
\end{multline}
Recalling that each sector $Q'\subset B\setminus A$ has at most $9$ neighbouring squares $Q\subset A$, we conclude that the error term $C\gamma \bigl( (\partial A)_{+\tau} \bigr)$ is much smaller than
$\gamma (B\setminus A)$, as well as $\gamma_1(B\setminus A)$, provided that the constant $M$ is chosen to be much bigger than the constant $C_1$ in \eqref{dop6}. Thence,
\[
\gamma (K) \le \gamma_1(A) + \gamma_1(B\setminus A) = \gamma_1(B) \le \gamma_1(K_{+10M}),
\]
and similarly, $\gamma_1 (K) \le \gamma (K_{+10M})$, proving Lemma~\ref{LemmaE} with $\tau_1=10M$. 
\end{proof}

\subsubsection*{A.2.2 Completing the proof of Lemma~\ref{LemmaF}}

Since we deal with Borel measures in $\bC$, it suffices to verify that
conditions~\eqref{eq:Di} hold for arbitrary compact set $K\subset \bC$.
By Lemma~\ref{LemmaE}, they hold for any compact set $K$,
which is sufficiently far from the origin, and it remains to get rid of the
latter hurdle.

Let $r_1$, $\tau_1$ be the positive parameters from Lemma~\ref{LemmaE}, and let
\[
m = \max( \gamma(r_1\bD), \gamma_1 (r_1\bD) ).
\]
Choose $r_1'\ge r_1$ in such a way that, for any $z$ with
$|z|\ge r_1'$, we have $\min(\gamma(D_z), \gamma_1(D_z)) \ge m $.
Set $R_1=r_1'$, $R_{j+1}=R_j+6\tau_1\rho(R_j)$, $R_j' = R_j+3\tau_1\rho(R_j)$, and
$A_j = \{R_j \le |z| \le R_{j+1}\}$.

Consider the disks
$D_j$ centered at $R_j'$ of radius $\tau_1 \rho(R_j')$, and choose
$j_0$ so large that, for $j\ge j_0$,
\begin{itemize}
\item
$D_j \subset A_j$,
\item and moreover,
$(A_{j-1})_{+\tau_1} \bigcap D_j = \emptyset$ and $(A_{j+1})_{+\tau_1} \bigcap D_j = \emptyset$.
\end{itemize}
Then, choose a sufficiently large $d_0$ so that
\begin{itemize}
\item for every $z$ with $|z|\le r_1$,
$D_{j_0}\subset \{z\}_{+d_0}$,
\item for every $j\ge j_0$, ${\rm diam}_\rho (D_j \cup D_{j+1}) < d_0$.
\end{itemize}

Now, let $K\subset\bC$ be a compact set, and let $K_1 = K\cap \{|z|\le r_1\}$,
$K_2 = K\cap \{|z|\ge r_1\}$. Clearly, $\gamma (\{|z|=r_1\}) = 0$, and we can always assume that
$\gamma_1 (\{|z|=r_1\}) = 0$ (otherwise, we slightly increase the value $r_1$). Then,
$\gamma(K)=\gamma(K_1)+\gamma(K_2)$, and the same holds for $\gamma_1$.

The rest is clear. We apply Lemma~\ref{LemmaE} to the part of the mass, which lies
in $\{|z|\ge r_1\}$, moving it, at most, by $\tau_1$, and move the mass from $\{|z|\le r_1\}$
to $D_{j_0}$ (and then, if needed, from $D_{j_0}$ to $D_{j_0+1}$, from $D_{j_0+1}$ to
$D_{j_0+2}$, \ldots , from $D_{j_1}$ to $D_{j_1+1}$, with $j_1=j_1(K)$) transporting it,
at most, by $d_0$.

More formally, if $K_1=\emptyset$, then we just use Lemma~\ref{LemmaE}; if $K_1\not=\emptyset$ and 
$(K_2)_{+\tau_1}\cap\bar D_{j_0}=\emptyset$, then
\begin{align*}
\gamma (K) &= \gamma(K_1) + \gamma (K_2) \\
&\le m + \gamma(K_2) \\
&\le m + \gamma_1((K_2)_{+\tau_1}) \qquad ({\rm by\ Lemma~\ref{LemmaE}}) \\
&\le \gamma_1(D_{j_0})  + \gamma_1((K_2)_{+\tau_1}) \\
&= \gamma_1 (D_{j_0} \cup (K_2)_{+\tau_1}) \\
&\le \gamma_1 (K_{+(d_0+\tau_1)}),
\end{align*}
and similarly, $\gamma_1(K)\le \gamma (K_{+(d_0+\tau_1)})$.

If $(K_2)_{+\tau_1}\cap\bar D_{j_0} \ne \emptyset$, we
choose $j_1 = j_1(K) \ge j_0$ so that
$(K_2)_{+\tau_1}\cap \bar D_j \ne \emptyset$ for $j_0\le j \le j_1$, while
$(K_2)_{+\tau_1}\cap \bar D_{j_1+1} = \emptyset$. Then, arguing as above,
we get
\begin{align*}
\gamma (K) &\le m + \gamma_1((K_2)_{+\tau_1}) \\
&\le \gamma_1(D_{j_1+1}) + \gamma_1((K_2)_{+\tau_1})  \\
&=\gamma_1(D_{j_1+1} \cup (K_2)_{+\tau_1}) \\
&\le\gamma_1 (K_{+(d_0+\tau_1)}),
\end{align*}
together with a similar bound for $\gamma_1(K)$. This completes the proof of Lemma~\ref{LemmaF}.
\hfill $\Box$

\section*{Appendix B: Proof of Proposition~\ref{lemma-Nguyen-Vu}}
\renewcommand{\theequation}{B.\arabic{equation}}
\setcounter{equation}{0}
\renewcommand{\thelemma}{B.\arabic{lemma}}
\setcounter{lemma}{0}

As we have already mentioned, here, we will closely follow
the original proof given by Nguyen and Vu. They work not with the sum of
exponentials, as we do, but with the sum of cosines
$ \sum_{\lambda\in\Lambda} \xi_\lambda c_\lambda \cos(2\pi \lambda\theta)$.
Another difference is that we assume that the independent random variables $(\xi_\lambda)$
are identically distributed, while Nguyen and Vu only assume their independence, but
additionally assume that, for some $m>2$, the $m$th central moment of the $\xi_\lambda$s is uniformly bounded.

The proof is carried out in three steps. First, one considers the case
when $(\xi_\lambda)$ are Radema\-cher random variables $(r_\lambda)$, that is,
independent random variables taking the values
$\pm 1$ with equal probability $1/2$. This step is based on a variant of
Hal\'asz' anti-con\-cent\-ration result~\cite[Lemma~9.3]{NV} (we will recall it
below). In the second step, applying Kahane's reduction principle, one extends the result
to independent symmetric complex-valued random variables $(\xi_\lambda)$.
Finally, the general case will be reduced to the symmetric one using the symmetrization device, that is,
taking independent copies $\xi'_\lambda$ of $\xi_\lambda$ and applying the result of the second
step to the symmetric random variables $\xi_\lambda-\xi'_\lambda$.

\medskip 
Fix an integer $ \alpha \ge 1$. Assume that
$\min_{\lambda\in\Lambda} |c_\lambda| =1$.

\medskip\noindent{\em Step~1}.
{\em Assuming that $(r_\lambda)$ are Rademacher random variables, and letting
$S(\theta) = \sum_{\lambda\in\Lambda} r_\lambda c_\lambda e(\lambda\theta)$,
we show that, for every $\theta \in [-\tfrac12, \tfrac12]$, except for a set of Lebesgue measure
at most $O(n^{-\beta/(4\alpha) + 2\alpha + 1/4})$, we have}
$$ 
\sup_{Z\in\bC}\, \bP\bigl[\bigl| S(\theta) - Z\bigr| < n^{-\beta} \bigr] \lesssim n^{-\alpha}\,.
$$ 

Let $c_\lambda = c_\lambda' + {\rm i} c_\lambda''$, $\min_{\lambda\in\Lambda}
(|c_\lambda'|^2 +|c_\lambda''|^2) = 1$,
and let $c(\theta) = \cos(2\pi\theta)$, $s(\theta) = \sin (2\pi\theta)$, i.e.,
$e(\theta) = c(\theta) + {\rm i} s(\theta)$. Set $a_\lambda
= {\rm Re}\, \bigl[ c_\lambda e(\lambda\theta) \bigr] =
c_\lambda' c(\lambda\theta) -
c_{\lambda}'' s(\lambda\theta)$.
Since, for every $Z\in\bC$,
\[
\bP\bigl[\bigl| S(\theta) - Z \bigr| < n^{-\beta} \bigr]
\le
\bP\bigl[\bigl| {\rm Re\,} S(\theta) - {\rm Re\,} Z \bigr| < n^{-\beta} \bigr]\,,
\]
it suffices to estimate the size of the set of $\theta$s, for which
\begin{equation}\label{eq-NV2}
\sup_{X\in\bR}\, \bP\Bigl[\Bigl|
\sum_{\lambda\in\Lambda} r_\lambda a_\lambda  -  X \bigr| < n^{-\beta} \bigr]
\lesssim n^{-\alpha}\,.
\end{equation}
We will use a Hal\'asz-type lemma borrowed from~\cite[Lemma~9.3]{NV}:

\begin{lemma}\label{lemma-Halasz}
Let $(r_j)_{1\le j \le n}$ be Rademacher's random variables, and let $(b_j)_{1\le j \le n}\subset\bR$. Let $\alpha\in\bN$. Suppose that there exists $b>0$ such that,
for any two distinct sets $\{i_1, \ldots , i_{\alpha'}\}$ and $\{j_1,  \ldots j_{\alpha''}\}$, $\alpha'+\alpha''\le 2\alpha$, we have
\[
\Bigl|\, \sum_{t=1}^{\alpha'} b_{i_t} - \sum_{t=1}^{\alpha''} b_{j_t} \,\Bigr| \ge b\,.
\]
Then
\[
\sup_{X\in\bR}\,
\bP \Bigl[\, \Bigl| \sum_{j=1}^n r_j b_j - X \Bigr| < b n^{-\alpha}\, \Bigr]
\lesssim n^{-\alpha}\,.
\]
\end{lemma}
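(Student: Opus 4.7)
The plan is to combine Esseen's concentration inequality with a Fourier-analytic estimate that exploits the separation hypothesis. First I would invoke Esseen's inequality: for any $\delta > 0$,
\[
\sup_{X\in\bR}\bP\Bigl[\Bigl|\sum_{j=1}^n r_j b_j - X\Bigr|<\delta\Bigr] \lesssim \delta \int_{-1/\delta}^{1/\delta} |\phi(t)|\, dt,
\]
where $\phi(t)=\bE\bigl[e^{it\sum r_jb_j}\bigr]=\prod_j\cos(tb_j)$ is the characteristic function of the Rademacher sum. Setting $\delta = bn^{-\alpha}$ and $T := 1/\delta = b^{-1}n^\alpha$, the lemma reduces to the Fourier bound $\int_{-T}^T |\phi(t)|\, dt \lesssim b^{-1}$.

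I would establish this by first working with the $2\alpha$th power $|\phi|^{2\alpha}$ and then passing back to $|\phi|$ through H\"older's inequality. Writing $2\cos\theta = e^{i\theta}+e^{-i\theta}$ in each factor of $|\phi|^{2\alpha}$ and grouping the resulting sign patterns by the signed multiset of indices they induce yields a representation of the form
\[
|\phi(t)|^{2\alpha} = c_0 + \sum_{(I,J)} c_{I,J}\cos\bigl(t(\Sigma_I - \Sigma_J)\bigr),
\]
where $\Sigma_K := \sum_{k\in K}b_k$, the sum runs over ordered pairs of distinct subsets $I,J\subseteq[n]$ with $|I|+|J|\le 2\alpha$, the coefficients satisfy $c_0, c_{I,J} \ge 0$ and $c_0 + \sum c_{I,J}=1$, and $c_0 \le \binom{2\alpha}{\alpha}^n/4^{\alpha n}\lesssim \alpha^{-n/2}$. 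The separation hypothesis $|\Sigma_I - \Sigma_J|\ge b$ then makes each non-diagonal cosine integrable over $[-T,T]$ with bound $2 c_{I,J}/b$, which sums to
\[
\int_{-T}^T |\phi(t)|^{2\alpha}\, dt \lesssim 2Tc_0 + b^{-1};
\]
the exponential decay of $c_0$ in $n$ absorbs the first term into the second. A final application of H\"older's inequality
\[
\int_{-T}^T |\phi|\, dt \le (2T)^{1-1/(2\alpha)}\Bigl(\int_{-T}^T |\phi|^{2\alpha}\, dt\Bigr)^{1/(2\alpha)}
\]
then yields the required bound on $\int|\phi|\, dt$.

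The principal obstacle is the combinatorial bookkeeping in the expansion of $|\phi|^{2\alpha}$: a naive application of $2\cos\theta=e^{i\theta}+e^{-i\theta}$ produces frequencies $\sum_j m_j b_j$ with $m_j\in\{-2\alpha,\dots,2\alpha\}$, so the total weight $\sum|m_j|$ can reach $2\alpha n$, falling well outside the reach of the separation hypothesis. The argument must therefore be reorganised --- either by expanding in a way that genuinely respects the weight bound $|I|+|J|\le 2\alpha$, or by realising $|\phi|^{2\alpha}$ as the characteristic function of $\sum_{k=1}^\alpha(Y_k-Y_k')$ with independent copies and exploiting the structural constraints this imposes --- so that only frequencies of the form $\Sigma_I-\Sigma_J$ appear in the non-diagonal part, and higher-weight contributions either cancel or are absorbed into the diagonal constant. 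Executing this reorganisation carefully is the combinatorial heart of Nguyen--Vu's Lemma~9.3, and is essentially where all the work resides.
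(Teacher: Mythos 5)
The paper does not actually prove this lemma: it is quoted from Nguyen--Vu~\cite[Lemma~9.3]{NV} and used as a black box, so there is no ``paper's own proof'' to compare against; I therefore assess the proposal on its own terms.

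The Esseen reduction and the H\"older step are fine, but the displayed identity $|\phi(t)|^{2\alpha} = c_0 + \sum_{(I,J)} c_{I,J}\cos\bigl(t(\Sigma_I - \Sigma_J)\bigr)$ with $|I|+|J|\le 2\alpha$ is false, and you acknowledge as much in the closing paragraph. Expanding each factor $\cos^{2\alpha}(tb_j)$ as a cosine polynomial and multiplying across $j$ produces frequencies $\sum_j m_j b_j$ with $|m_j|\le 2\alpha$ for each individual index, but with no bound on the total weight $\sum_j |m_j|$, which can be as large as $2\alpha n$. The separation hypothesis controls only combinations of total weight $\le 2\alpha$ and says nothing about the vast majority of frequencies appearing in the expansion, so the ``non-diagonal'' part is not bounded by $1/b$. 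The alternative you float --- realising $|\phi|^{2\alpha}$ as the characteristic function of $\sum_{k\le\alpha}(Y_k - Y_k')$ with independent copies of the Rademacher sum --- inherits exactly the same defect: the integer coefficient of each $b_j$ in that sum has magnitude $\le 2\alpha$, but the total weight across $j$ is again uncontrolled, so the separation hypothesis is still out of reach. Thus the ``reorganisation'' you correctly identify as necessary is not supplied, and neither of the two routes you sketch would make the intermediate identity true. As written, the proposal stops exactly at the step that carries all the difficulty. For what it is worth, the Hal\'asz mechanism that underlies the Nguyen--Vu lemma is structurally different from a direct power expansion: after Esseen one works with the pointwise envelope $|\phi(t)|\le\exp\bigl(-\tfrac12\sum_j\sin^2(tb_j)\bigr)$ and analyses the level sets of $\sum_j\sin^2(tb_j)$ using the sub-additivity $\sin^2((s+t)x)\le 2\sin^2(sx)+2\sin^2(tx)$; the separation of weight-$\le 2\alpha$ sums enters only at a subsequent counting stage, not through a Fourier expansion of $|\phi|^{2\alpha}$.
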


We call the value $\theta\in [-\tfrac12, \tfrac12]$ {\em normal} if, for every two distinct
sets $\{\lambda_1, \ldots , \lambda_{\alpha'}\}$, $\{\mu_1, \ldots , \mu_{\alpha''}\}$
in $\Lambda$ with $\alpha'+\alpha'' \le 2\alpha$, we have
\[
\Bigl|\,
\sum_{t=1}^{\alpha'} a_{\lambda_t} - \sum_{t=1}^{\alpha''} a_{\mu_t}
\,\Bigr| > n^{-\beta+\alpha}\,.
\]
By the Hal\'asz-type lemma with $b=n^{-\beta+\alpha}$, estimate~\eqref{eq-NV2} holds for normal values
of $\theta$, so we need to estimate the size of the set of $\theta$s that are not normal.

Fix the sets
$\{\lambda_1, \ldots , \lambda_{\alpha'}\}$, $\{\mu_1, \ldots , \mu_{\alpha''}\}$
in $\Lambda$, $\alpha'+\alpha'' \le 2\alpha$. To estimate the size of the set of abnormal
$\theta$s corresponding to these two sets of indices, we consider the trigonometric polynomial
\begin{align*}
P(\theta)
= \sum_{t=1}^{\alpha'} \bigl[ c_{\lambda_t}' c(\lambda_t\theta) - c_{\lambda_t}''s(\lambda_t\theta) \bigr]
&- \sum_{t=1}^{\alpha''} \bigl[ c_{\mu_t}' c(\mu_t\theta) - c_{\mu_t}''s(\mu_t\theta) \bigr] \\
&= \sum_{t=1}^{\alpha'} a_{\lambda_t} - \sum_{t=1}^{\alpha''} a_{\mu_t}
\end{align*}
with $2(\alpha'+\alpha'')=4\alpha$ frequencies, and let
$E=\{\theta\in [-\tfrac12, \tfrac12]\colon |P(\theta)|\le n^{-\beta+\alpha}\}$.
By Nazarov's version of the classical Tur\'an lemma~\cite[Section~1.1]{Nazarov},
\[
\max_{[-\frac12, \frac12]}\, |P| \le \Bigl( \frac{C}{|E|} \Bigr)^{4\alpha}\, \sup_E |P|\,.
\]
Recalling that $(c_\lambda')^2 + (c_\lambda'')^2 \ge 1$, we get
$ 1 \le \| P \|_{L^2[-\frac12, \frac12]}
\lesssim \bigl( C/|E| \bigr)^{4\alpha}\, n^{-\beta+\alpha} $, whence,
$ |E| \lesssim n^{-\beta/(4\alpha) + 1/4}$.

It remains to recall that there are at most
$n^{\alpha'+\alpha''} = O(n^{2\alpha})$ choices of the
distinct sets $\{\lambda_1, \ldots , \lambda_{\alpha'}\}$, $\{\mu_1, \ldots , \mu_{\alpha''}\}$
in $\Lambda$, with $\alpha'+\alpha'' \le 2\alpha$, which yields that the size of the set
of abnormal values of $\theta$ does not exceed
$O(n^{-\beta/(4\alpha) + 2\alpha + 1/4})$,
completing the first step.

\medskip\noindent{\em Step~2}. At this step, {\em assuming that
$(\xi_\lambda)$ are independent identically distributed non-degenerate
symmetric random variables,
we show that, for large enough $\beta$ and for every interval $I$ with $|I|\gtrsim 1/n$},
\[
\frac1{|I|}\, \int_I
\sup_{Z\in\bC}\, \bP\bigl[ \bigl| S(\theta) - Z \bigr| < n^{-\beta} \bigr]\,
{\rm d}\theta \lesssim n^{-\alpha}\,.
\]

\medskip
We take a collection of Rademacher random variables $(\ep_\lambda)$ independent of $(\xi_\lambda)$,
then the random variables $(\xi_\lambda)$ and $(\ep_\lambda\xi_\lambda)$ are equidistributed.
Hence,
\begin{align}\label{eq:star}
\nonumber \frac1{|I|}\,
\int_I \sup_{Z\in\bC}\, \bP^{(\xi_\lambda)}\bigl[ \bigl| S(\theta) &- Z \bigr| <  n^{-\beta} \bigr]\, {\rm d}\theta \\
\nonumber &= \frac1{|I|}\,
\int_I \sup_{Z\in\bC}\, \bE^{(\xi_\lambda)}\, \bP^{(\ep_\lambda)}\bigl[ \bigl| S(\theta) - Z \bigr|
< n^{-\beta} \bigr]\, {\rm d}\theta \\
&\le \bE^{(\xi_\lambda)} \Bigl[ \frac1{|I|}\,
\int_I  \sup_{Z\in\bC}\,  \bP^{(\ep_\lambda)}\bigl[ \bigl| S(\theta) - Z \bigr| < n^{-\beta} \bigr]\, {\rm d}\theta \Bigr]\,.
\end{align}
To apply Step~1, we need $|\xi_\lambda|\gtrsim 1$ for a positive proportion of $\lambda\in \Lambda$.
To get this, we fix a small $\delta>0$ so that $\bP[|\xi_\lambda|<\delta]\le 1-\delta$.
Then, applying the simplest form of the Bernstein--Chernoff--Hoeffding exponential concentration
to the independent identically distributed random variables
\[
X_\lambda =
\begin{cases}
1, &|\xi_\lambda|<\delta, \\
0, &|\xi_\lambda|\ge\delta,
\end{cases}
\]
we get $ \bP\bigl[ \sum_{\lambda\in\Lambda} (X_\lambda - \bE[X_\lambda]) \ge nt \bigr] \le e^{-cnt^2} $. Noting that $\bE[X_\lambda]\le 1-\delta$ and letting $t=\tfrac12\, \delta$,
we see that with probability at least $1-e^{-cn\delta^2/4}$ there are at least $\tfrac12\,   n$
indices $\lambda\in\Lambda$ for which $|\xi_\lambda|\ge \delta$. On the event that
this happens, denote by $\Lambda_1$ the set of $\lambda\in\Lambda$ for which $|\xi_\lambda|\ge \delta$, 
$S_1(\theta) = \sum_{\lambda\in\Lambda_1} r_\lambda c_\lambda e(\lambda\theta)$. 
Since $\Lambda_1$ has at least $\tfrac12\,   n$ elements, 
by Step~1, we get
\[
\sup_{Z\in\bC}\,  \bP^{(\ep_\lambda)}\bigl[ \bigl| S_1(\theta) - Z \bigr| < \delta n^{-\beta} \bigr]
\lesssim n^{-\alpha}\,,
\]
outside a set of $\theta$s of Lebesgue measure at most $O(n^{-\beta/(4\alpha) + 2\alpha + 1/4})$.
Since $S_1(\theta)$ and $S(\theta)-S_1(\theta)$ are independent, we obtain that again with probability at least $1-e^{-cn\delta^2/4}$ 
outside a set of $\theta$s of Lebesgue measure at most $O(n^{-\beta/(4\alpha) + 2\alpha + 1/4})$ we have
\[
\sup_{Z\in\bC}\,  \bP^{(\ep_\lambda)}\bigl[ \bigl| S(\theta) - Z \bigr| < \delta n^{-\beta} \bigr]
\lesssim n^{-\alpha}\,,
\]

Therefore,
\begin{multline*}
\bE^{(\xi_\lambda)}\Bigl[
\frac1{|I|}\,
\int_I  \sup_{Z\in\bC}\,  \bP^{(\ep_\lambda)}\bigl[ \bigl| S(\theta) - Z \bigr|
< \delta n^{-\beta} \bigr]\, {\rm d}\theta \Bigr] \\
\le
O(n^{-\alpha}) + O\Bigl(\frac1{|I|}\cdot n^{-\beta/(4\alpha) + 2\alpha + 1/4}\Bigr) + O(e^{-c\delta^2 n/4})
= O(n^{-\alpha})\,,
\end{multline*}
provided that $\beta\ge 20\alpha^2$.
Finally, we observe that
covering the disk centered at $Z$ of radius $n^{-\beta}$ by $O(\delta^{-2})$
disks of radius $\delta n^{-\beta}$, we get
\[
\sup_{Z\in\bC} \bP \bigl[ |S(\theta) - Z| < n^{-\beta} \bigr]
\le O(\delta^{-2}) \cdot \sup_{Z\in\bC} \bP \bigl[ |S(\theta) - Z| < \delta n^{-\beta} \bigr]\,,
\]
which, together with \eqref{eq:star}, completes the second step.

\medskip 
In particular, we get a value $\theta\in I$ such that
\[
\bP \bigl[ |S(\theta)| < n^{-\beta} \bigr]
\lesssim n^{-\alpha}.
\]

\medskip\noindent{\em Step~3}.
Here, we consider the general case. Let $\xi_\lambda'$ be independent copies of $\xi_\lambda$,
$\lambda\in\Lambda$. Then the random variables $\xi_\lambda'' = \xi_\lambda - \xi_\lambda'$
are symmetric and non-degenerate. Denote by $S_\xi$, $S_{\xi'}$ and $S_{\xi''}$ the corresponding
trigonometric sums, and note that
$|S_{\xi''}(\theta)| < 2n^{-\beta}$ provided that, for some $Z\in\bC$,
$|S_{\xi}(\theta)-Z| < n^{-\beta}$ and $|S_{\xi'}(\theta)-Z| < n^{-\beta}$.
Therefore,
\begin{multline*}
\Bigl( \bP \bigl[ |S_{\xi}(\theta)-Z| < n^{-\beta} \bigr] \Bigr)^2
= \bP \bigl[ |S_{\xi}(\theta)-Z| < n^{-\beta} \bigr] \cdot
\bP \bigl[ |S_{\xi'}(\theta)-Z| < n^{-\beta} \bigr] \\
\le \bP \bigl[ |S_{\xi''}(\theta)| < 2n^{-\beta} \bigr]\,.
\end{multline*}
By Step~2 (applied with $2\alpha$ instead of $\alpha$),
there exist $\beta$ and $\theta\in I$ such that the RHS is $O(n^{-2\alpha})$. This completes the proof of Proposition~\ref{lemma-Nguyen-Vu}.
\hfill $\Box$

\section*{Appendix C: Proof of Lemma~\ref{Lemma:sq-free}}
\renewcommand{\theequation}{C.\arabic{equation}}
\setcounter{equation}{0}

As we have already mentioned, we will closely follow Mirsky's paper~\cite{Mirsky}.
Since
$$
\mu^2(n)=\sum_{d^2|n}\mu(d),
$$
we have
\begin{align*}
\sum_{k\le x}\mu^2(k) \mu^2(k+h)
&=\sum_{\substack{a^2c-b^2d=h,\\ b^2d\le x}}\, \mu(a)\mu(b)\\
&=
\Bigl( \sum_{\substack{a^2c-b^2d=h, \\
b^2d\le x,\,ab\le y}} \
+ \sum_{\substack{a^2c-b^2d=h,\\
b^2d\le x,\, ab>y}}\, \Bigr) \mu(a)\mu(b)=I_1+I_2,
\end{align*}
for a large parameter $y$ to be fixed later on.

First,
$$
I_1 =
\sum_{\substack{\gcd(a^2,b^2)|h,\\ ab\le y}}\, \mu(a)\mu(b)\,
\sum_{\substack{a^2c-b^2d=h,\\1\le b^2d\le x}}1.
$$
Set
\[
t=\gcd(a,b), \ a'=\frac{a^2}{t^2}, \ a''=\frac{a}t, \ b'=\frac{b^2}{t^2}, \ b''=\frac{b}t, \ h'=\frac{h}{t^2},
\ x'=\frac{x}{t^2},
\]
and choose $c_0, d_0$ so that $a'c_0-b'd_0=h'$.
Then
$$
\sum_{\substack{a'c-b'd=h',\\ 1\le b'd\le x'}}\, 1
= \sum_{\substack{a'(c-c_0)=b'(d-d_0),\\1-b'd_0\le b'(d-d_0)\le x'-b'd_0}}\, 1
=\frac{x'}{a'b'}+q
$$
with $|q|<2$.
Therefore,
\begin{multline*}
I_1=\sum_{\substack{\gcd(a^2,b^2)|h,\\ ab\le y}}\,
\mu(a)\mu(b)\Bigl(\frac{x\gcd(a^2,b^2)}{a^2b^2}+O(1)\Bigr) \\
= x\, \sum_{\substack{\gcd(a^2,b^2)|h,\\ab\le y}}\,
\frac{\mu(a)\mu(b)\gcd(a^2,b^2)}{a^2b^2} + O(y\log y).
\end{multline*}

Since
\begin{multline*}
\Bigl|\, \sum_{\substack{\gcd(a^2,b^2)|h,\\ab> y}}\,
\frac{\mu(a)\mu(b)\gcd(a^2,b^2)}{a^2b^2}\, \Bigr| \le  \sum_{\substack{\gcd(a^2,b^2)|h,\\ab> y}}\,
\frac{\gcd(a^2,b^2)}{a^2b^2}\\ =
\sum_{\substack{t^2|h,\\ t^2a''b''> y}}\,
\frac{1}{t^2a''^2b''^2} 
\le \sum_{a'',b''\ge 1}\, \frac{1}{a''^2b''^2}\, \sum_{t>\sqrt{y/(a''b'')}}\, \frac{1}{t^2}
\\ \lesssim
\frac1{\sqrt{y}}\,
\sum_{a'',b''\ge 1}\, \frac{1}{(a''b'')^{3/2}} \lesssim \frac1{\sqrt{y}}\,,
\end{multline*}
we conclude that
$$
I_1= x\sum_{\gcd(a^2,b^2)|h}\frac{\mu(a)\mu(b)\gcd(a^2,b^2)}{a^2b^2}+O\Bigl(\frac{x}{\sqrt y}+y\log y\Bigr).
$$

Next,
\[
\sum_{\gcd(a^2,b^2)|h}\frac{\mu(a)\mu(b)\gcd(a^2,b^2)}{a^2b^2}\, =
\sum_{\substack{\gcd(a'',b'')=1,\,\gcd(a'',t)=1,\\
\gcd(b'',t)=1,\,t^2|h}}\, \frac{\mu(ta'')\mu(tb'')}{t^2a''^2b''^2}\, =:A.
\]
If $s^2$ is the largest square divisor of $h$, then
\begin{align*}
A &= \sum_{\mu^2(t)=1,\, t|s}\, \frac1{t^2}\, \sum_{\substack{\gcd(a'',b'')=1,\, \gcd(a'',t)=1,\\
\gcd(b'',t)=1}}\, \frac{\mu(a'')\mu(b'')}{a''^2b''^2} \\
&= \sum_{\mu^2(t)=1,\, t|s}\, \frac1{t^2}\, \sum_{\gcd(d,t)=1}\, \frac{\mu(d)\tau(d)}{d^2},
\end{align*}
where $\tau(d)$ is the number of the divisors of $d$.
Therefore,
\begin{multline*}
A = \sum_{\mu^2(t)=1,\,t|s}\frac1{t^2}\prod_{p\,\centernot\mid \,t}\Bigl(1-\frac2{p^2}\Bigr)=
\prod_p\Bigl(1-\frac2{p^2}\Bigr)\sum_{\mu^2(t)=1,\,t|s}\frac1{t^2}\prod_{p|t}\Bigl(1-\frac2{p^2}\Bigr)^{-1}\\=
D\, \prod_{p|s}\Bigl(1+\frac1{p^2(1-2/p^2)}\Bigr)
=D\, \prod_{p^2 | h} \Bigl(1+\frac1{p^2-2} \Bigr)=D(h).
\end{multline*}
Thus,
\begin{equation}
I_1= D(h)x+O\Bigl(\frac{x}{\sqrt y}+y\log y\Bigr).
\label{rtf17}
\end{equation}

Next,
$$
|I_2|=\Bigl|\sum_{\substack{a^2c-b^2d=h,\\ b^2d\le x,\, ab>y}}\mu(a)\mu(b)\Bigr|\le
\sum_{cd<x(x+h)/y^2}\,\, \sum_{\substack{a^2c-b^2d=h,\\b^2d\le x}}\, 1.
$$
Fix $\varepsilon>0$. By \cite[Lemma 4]{CilGar}, the equation $a^2cd-(bd)^2=hd$ has at most $C_\varepsilon x^\varepsilon$ solutions $(a,bd)$ if
$cd$ is not a perfect square. If $cd=m^2$, then the equation $(am)^2-(bd)^2=hd$ has at most $C_\varepsilon x^\varepsilon$ solutions $(am,bd)$. Therefore,
\begin{equation}
|I_2|\le C_\varepsilon x^\varepsilon\, \sum_{cd<2x^2/y^2}\, 1
\lesssim C_\varepsilon x^\varepsilon\, \frac{x^2}{y^2}\,\log(2x^2/y^2).
\label{rtf18}
\end{equation}

Finally, set $y=x^{2/3}$. Now, \eqref{rtf17} and \eqref{rtf18} yield
$$
\Bigl|\sum_{k\le x}\mu^2(k) \mu^2(k+h)-D(h)x\Bigr|\le C_\varepsilon x^{(2/3)+2\varepsilon},
\qquad 1\le h\le x,
$$
completing the proof. \hfill $\Box$

\section*{Appendix D: Proof of Lemma~\ref{Lemma:TM1}}

The following argument reproduces rather closely Mahler's original proof.
Set
$$
S(x,h)=\sum_{0\le k < x} \xi(k) \xi(k+h).
$$
Then
$$
S(x,h)=\sum_{0\le k < x/2} \xi(2k) \xi(2k+h)+\sum_{0\le k < x/2} \xi(2k+1) \xi(2k+h+1)+\Delta(x,h),
$$
where
$$
\Delta(x,h)=
\begin{cases}
-\xi(x)\xi(x+h) & x {\rm\ is\ odd}; \\
0 & {\rm\ otherwise}.
\end{cases}
$$
Therefore, we have
\begin{align*}
S(2x,2h)&=2S(x,h),\\
S(2x+1,2h)&=2S(x+1,h)+\Delta(2x+1,2h),\\
S(2x,2h+1)&=-S(x,h)-S(x,h+1),\\
S(2x+1,2h+1)&=-S(x+1,h)-S(x+1,h+1)+\Delta(2x+1,2h+1)
\end{align*}
for positive $x$ and $h$.
Since $S(x,0)=x$, $x\ge 0$, we obtain by induction in $h$ that
$$
\bigl|S(x,h)-\sigma(h)x\bigr|\le Ch\log(x+1),\qquad x\ge 1,
$$
for some positive numerical constant $C$. \hfill $\Box$

\bigskip
\medskip

\noindent J.B.:
School of Mathematics, Tel Aviv University, Tel Aviv, Israel
\newline {\tt benatar@mail.tau.ac.il}
\smallskip\newline\noindent{A.B.: Institut de Math\'ematiques de Marseille,
Aix Marseille Universit\'e, CNRS, Centrale Marseille, I2M, Marseille, France
\newline {\tt alexander.borichev@math.cnrs.fr}
\smallskip\newline\noindent M.S.:
School of Mathematics, Tel Aviv University, Tel Aviv, Israel
\newline {\tt sodin@tauex.tau.ac.il}
}

\end{document}